\documentclass[oneside]{amsart}
\usepackage[T1]{fontenc}
\usepackage[utf8]{inputenc}
\usepackage[british]{babel}
\usepackage{preamble}
\usepackage{longtable}

\usepackage{amsmath,amsfonts,amsthm,amssymb,mathrsfs,stmaryrd,mathtools,comment,enumitem,soul,booktabs,longtable,tabu,relsize,bbm,geometry,multicol}
\usepackage{todonotes}
\usepackage{hyperref}
\usepackage{cleveref}
\usepackage[usegeometry,paper=8.5in:11in]{typearea}
\KOMAoptions{twoside=false}
\newcommand{\nv}{\nu}

\newcommand{\mmf}{\mathit{mmf}}
\newcommand{\tmf}{\mathit{tmf}}
\newcommand{\ko}{\mathit{ko}}

\definecolor{gray}{gray}{0.5}
\definecolor{cyan}{rgb}{0, 1, 1}
\definecolor{green}{rgb}{0, 0.65, 0}
\definecolor{magenta}{rgb}{1, 0, 1}

\begin{document}

	\title{The $\mathbb C$-motivic Adams-Novikov spectral sequence for topological modular forms}
	
\author[Isaksen]{Daniel C.\ Isaksen}
\address{Department of Mathematics, Wayne State University, Detroit, MI 48009, USA}
\email{isaksen@wayne.edu}
	
		\author[Kong]{Hana Jia Kong}
\address{School of Mathematics, Institute for Advanced Study, NJ, USA}
\email{hana.jia.kong@gmail.com}

	\author[Li]{Guchuan Li}
\address{Department of Mathematics, University of Michigan, MI, USA}\email{guchuan@umich.edu}

  \author[Ruan]{Yangyang Ruan}
\address{Institute of Mathematics,
Academy of Mathematics and Systems Science, Chinese Academy of Sciences, Beijing, China}\email{ruanyy@amss.ac.cn}

	\author[Zhu]{Heyi Zhu}
\address{Department of Mathematics, University of Illinois, Urbana-Champaign, IL, USA}\email{heyizhu2@illinois.edu}

\date{\today}

\thanks{The first author was partially supported by National Science Foundation Grant DMS-2202267. The second author was supported by National Science Foundation grant DMS-1926686. The third author would like to thank the Max Planck Institute for Mathematics and the Hausdorff Research Institute for Mathematics for the hospitality.}

\keywords{topological modular forms, motivic stable homotopy theory, Adams-Novikov spectral sequence, Adams spectral sequence, stable homotopy group}

\subjclass[2010]{Primary 14F42, 55Q10, 55T15; 
  Secondary 55Q45} 

\begin{abstract}
We analyze the $\C$-motivic (and classical) Adams-Novikov spectral sequence
for the $\C$-motivic modular forms spectrum $\mmf$ (and for the classical topological modular forms spectrum $\tmf$).  We primarily use purely algebraic techniques, with a few exceptions.  Along the way, we settle a previously unresolved detail about the multiplicative structure of the homotopy groups of $\tmf$.
\end{abstract}

\maketitle

\section{Introduction}
\label{sec:intro}

The topological modular forms spectrum $\tmf$ plays an essential role
in the study of the stable homotopy groups of spheres
\cite{Bau08} \cite{Beh20} \cite{DFHH14} \cite{Goe10} \cite{Hop95} \cite{Hop02} \cite{HM} \cite{Rezk02}.
The unit map $S \rightarrow \tmf$ from the sphere spectrum to $\tmf$ 
detects much of the structure of the stable homotopy groups
of $S$, including the elements $\eta$ (1-stem), $\nu$ (3-stem),
$\epsilon$ (8-stem), $\kappa$ (14-stem), $\kappabar$ (20-stem),
and many additional elements.  
The unit map is far from injective (for example, $\sigma$ (7-stem) maps to
zero in $\tmf$), so it does not detect all of the stable homotopy groups
of spheres.  Moreover, it is also not surjective.  The computation
of the $\tmf$-Hurewicz image is a difficult problem
that leads to the identification of infinite $v_2$-periodic families
in the stable homotopy groups of spheres \cite{BMQ20}.

The spectrum $\tmf$ serves as an approximation to the sphere spectrum.
This approximation is highly suitable for testing theories and for developing computational techniques.  The structure of $\tmf$ is complicated enough to exhibit the complex phenomena related to the computation of stable homotopy groups, but it is also simple enough to be computed exhaustively.  
We have found that the study of $\tmf$ is an indispensable step along the way to understanding the sphere spectrum.

By comparison, the spectrum $\ko$ is arguably too simple to serve as a test case for computational theories.  For example, its Adams spectral sequence collapses, so its homotopy reduces to an entirely algebraic problem.  Neither the Adams nor the Adams-Novikov spectral sequence collapses for $\tmf$.  However, the analysis of $\tmf$ does not involve crossing differentials or crossing extensions 
in the sense of \cite[Section 2.1]{IWX20}.  This means that the homotopy of $\tmf$ does not share the most delicate parts of the homotopy groups of spheres.

Bruner and Rognes \cite{BR21} have recently exhaustively studied the Adams spectral sequence for $\tmf$.  They completely determine the additive and
(primary) multiplicative structure of the stable homotopy groups of $\tmf$,
with one exception.

The goal of this manuscript is to carry out the Adams-Novikov spectral
sequence for $\tmf$.  In fact, we will work in the more general
$\C$-motivic context and compute the motivic Adams-Novikov spectral sequence
for the $\C$-motivic modular forms spectrum $\mmf$.  The classical computation
is easily recovered from the motivic computation by an algebraic localization.

More specifically, there is a certain motivic element $\tau$.  Inverting
$\tau$ has the effect of collapsing $\C$-motivic computations to
classical computations.  In particular, $\tau$-torsion phenomena disappear
in the classical context.  Henceforth, we will work in the $\C$-motivic context.  The interested reader can easily recover classical computations
from our work by inverting $\tau$.

From another perspective, we also compute the $\C$-motivic effective slice spectral sequence for $\mmf$, since it agrees with the Adams-Novikov spectral sequence over $\C$.
This identification of spectral sequences 
does not appear to be cleanly stated in the literature, but it is a computational consequence of the weight 0 result of \cite[Theorem 1]{Levine15}.

Our goal is not merely to record the details of the Adams-Novikov spectral sequence, which have previously appeared in \cite{Bau08}.  More specifically, we have attempted to give proofs that are as algebraic as possible.  Such algebraic proofs are less likely to contain subtle mistakes, and they are more easily verifiable by machine.  The motivic context provides us with additional algebraic tools that are not accessible in the strictly classical context.  We also correct a few oversights and minor mistakes in the analysis of \cite{Bau08}.

\subsection{Algebraic philosophy}

We do not use any information from the sphere spectrum as input for our computations.  We do, however, assume full knowledge of the
algebraic structure of the motivic Adams and motivic Adams-Novikov $E_2$-pages for $\mmf$.  This is consistent with our goal of using algebraic techniques whenever possible.  It is also consistent with our philosophy that the 
role of $\tmf$ is to inform us about the sphere spectrum.  
By comparison, in \cite{BR21} it is necessary to import the relation
$\eta^2 \kappa = 0$ to $\tmf$ from previous knowledge of the sphere spectrum.  
Fortunately for us, we have the 
relation $h_1^2 d = 0$ in the Adams-Novikov $E_2$-page for $\mmf$.  Because there are no elements in higher filtration, the relation $\eta^2 \kappa = 0$ therefore has an entirely algebraic proof.

A computation involving the Adams or Adams-Novikov spectral sequence breaks into two main stages.  The first stage is entirely algebraic and involves the computation of the $E_2$-page.  In the modern era, this first stage is typically conducted by machine.  The computation of the $E_2$-pages for $\tmf$ is not elementary, but it can be done manually with enough patience
\cite{Baer} \cite[Section 7]{Bau08} \cite{BR21}  \cite[Section 18]{Rezk02}.

The second stage of the process involves the computation of differentials and hidden extensions.  
This stage typically requires input from topology, so it cannot be
fully automated because it is not entirely algebraic.

Our contribution is to recognize that much of this topological second stage actually can be carried out using only algebraic information.  The key idea is to
use the additional structure of the motivic context in order to
pass back and forth between the Adams and Adams-Novikov spectral sequences.  
Each $E_2$-page tells us some things about the homotopy groups of
$\tmf$.
The information contained in these $E_2$-pages does overlap, but not perfectly.  The union of the information in both $E_2$-pages is strictly greater than the information in either one of the $E_2$-pages.

We give several concrete examples of information available in only one
of the two $E_2$-pages.
\begin{enumerate}
\item
In the classical Adams $E_2$-page for $\tmf$, we have the relation $h_1^4 = 0$.  This implies the relation $\eta^4 = 0$ in homotopy.  However, in the classical Adams-Novikov $E_2$-page, the element $h_1^4$ is non-zero and
is hit by an Adams-Novikov $d_3$ differential.  Thus, the relation
$\eta^4 = 0$ has an entirely algebraic proof, but only in the Adams spectral sequence.
\item
In fact, the relation $h_1^4 = 0$ is a consequence of the Massey product
$h_1^2 = \langle h_0, h_1, h_0 \rangle$ in the Adams $E_2$-page.  In the classical Adams-Novikov $E_2$-page, the corresponding Massey product
$\langle 2, h_1, 2 \rangle$ is zero.  Consequently, the Toda bracket
$\eta^2 = \langle 2, \eta, 2 \rangle$ has an entirely algebraic proof, but only in the Adams spectral sequence.
\item
In the classical Adams-Novikov $E_2$-page for $\tmf$, we have the
relation $h_2^3 = h_1 c$.  This implies the relation $\nu^3 = \eta \epsilon$.
However, in the classical Adams $E_2$-page, we have $h_2^3 = 0$.
In fact, there is a hidden $\nu$ extension from $h_2^2$ to $h_1 c$ in the Adams spectral sequence.  Thus, the relation $\nu^3 = \eta \epsilon$
has an entirely algebraic proof, but only in the Adams-Novikov spectral sequence.
\item
In fact, the relation $h_2^3 = h_1 c$ is a consequence of the Massey product
$c = \langle h_2, h_1, h_2 \rangle$ in the Adams-Novikov $E_2$-page.
In the classical Adams $E_2$-page, the corresponding Massey product
is zero.  Consequently, the Toda bracket $\epsilon = \langle \nu, \eta, \nu \rangle$ has an entirely algebraic proof, but only in the Adams-Novikov spectral sequence.  See \cref{lemma:todaepsilon} for more detail on this example.
\end{enumerate}

In order to obtain one key Adams-Novikov differential, 
we use Bruner's theorem on the interaction between algebraic Steenrod operations \cite{May70} and Adams differentials in the context of the Adams spectral sequence.  
We refer to \cite[Theorem 2.2]{Bruner84} for a precise readable statement; 
see also \cite{BMM86} and \cite{Makinen73}.
The practical implementation of Bruner's theorem requires only algebraic information in the form
of algebraic Steenrod operations on $\Ext$ groups.
These operations can be computed by machine, although not as effectively
as the additive and multiplicative structure of the $\Ext$ groups.
The algebraic Steenrod operations are additional structure on top
of what topologists usually think of as ``standard homological algebra".

In the context of the Adams-Novikov spectral sequence, 
we also rely on the Leibniz rule in the form $d_r(x^k) = k x^{k-1} d_r(x)$.
Philosophically, this formula is connected to Bruner's theorem, although we do not know how to make a precise connection.  As in the case of Bruner's theorem, it feels like slightly more information than is usually considered in standard homological algebra.

We also draw attention to \cref{prop:h0ext110}, in which we establish a hidden $2$ extension in the $110$-stem.  Here we use some information about the homotopy groups of $\mmf/\tau^2$.  One might argue that this information is not
entirely of an algebraic nature.
By comparison, the corresponding $2$ extension in the Adams spectral sequence is hidden, but not particularly difficult
\cite[Theorem 9.8(110)]{BR21}.

\subsection{Techniques}

\cref{subsec:hidden} describes a particularly powerful method for studying the $\C$-motivic Adams-Novikov spectral sequence in a way that has no classical analogue.  There is a map
$q: \mmf/\tau \rightarrow \Sigma^{1,-1} \mmf$ that can be viewed as projection to the top cell of the $2$-cell $\mmf$-module $\mmf/\tau$. The homotopy of $\mmf/\tau$ is entirely understood in an algebraic sense since it is isomorphic to the classical Adams-Novikov $E_2$-page for $\tmf$.  Moreover, the map $q$ maps onto the homotopy of $\mmf$ that is annihilated by $\tau$.
Thus $q$ can be used to detect structure in $\mmf$ that is related to classes that are annihilated by $\tau$.

In practice, many specific questions about hidden extensions do not directly involve elements that are annihilated by $\tau$.  Frequently, if we multiply these elements by a power of $\tau$ and a power of $g$, then we end up with elements that are annihilated by $\tau$.  We can use $q$ to understand these latter elements, and finally deduce information about the original elements.  
\cref{hidtaumethod} lists numerous specific examples of this process.
The majority of hidden extensions can be handled very easily in this way,
although a few extensions require more complicated arguments.

We avoid the use of Toda brackets whenever possible, but occasionally they are inevitable.  In those cases where we must compute a Toda bracket, we once again rely exclusively on algebraic techniques.  Namely, our Toda brackets arise from corresponding Massey products in either the Adams or Adams-Novikov $E_2$-page.  The Moss Convergence Theorem \cite{Mos70} says that such algebraic Massey products detect Toda brackets in ``well-behaved" situations.  In practice, all of the situations that we study are well-behaved.

\subsection{The differentials on $\Delta^k$}

Having carried out the entire analysis of the motivic Adams-Novikov spectral sequence for $\mmf$, we can see in hindsight that there are a few key steps from which all of the other miscellaneous computations follow.  Our experience shows that the key steps involve the differentials on elements of the form $2^j \Delta^k$.  This is not particularly surprising; we expect the element $\Delta$ to play a dominant role since it represents $v_2$-periodicity.

First, we establish $d_5(\Delta) = \tau^2 h_2 g$ in \cref{prop:d5_D}.  This follows immediately by comparison to the Adams spectral sequence, in which $\tau^2 h_2 g$ is already zero in the $E_2$-page.  Thus, we have an algebraic proof for $d_5(\Delta)$.  Then the Leibniz rule implies that $d_5(\Delta^2) = 2 \tau^2 \Delta h_2 g$.

The Leibniz rule also implies that $d_5(\Delta^4) = 4 \tau^2 \Delta^3 h_2 g$.
However, $4 \tau^2 \Delta^3 h_2 g$ is zero in the Adams-Novikov $E_2$-page.  Because of the hidden $2$ extension from $2 \tau^2 h_2$ to $\tau^3 h_1^3$, the element
$\tau^3 \Delta^3 h_1^3 g$ ought to play the role of $4 \tau^2 \Delta^3 h_2 g$.
This strongly suggests that there should be a differential
$d_7(\Delta^4) =  \tau^3 \Delta^3 h_1^3 g$.  In fact, this formula is 
correct (see \cref{prop:d7}), but it requires some work to give a precise proof.

Our solution, once again, is to play the Adams and Adams-Novikov spectral sequences against each other.
We used the Adams $E_2$-page to obtain the Adams-Novikov differential $d_5(\Delta)$.  Then we used the Leibniz rule in the Adams-Novikov spectral sequence to obtain $d_5(\Delta^2)$.  In turn, this last Adams-Novikov differential implies an Adams differential $d_2(\Delta^2)$, or
$d_2(w_2)$ in the notation of \cite{BR21}.
Next, we obtain an Adams differential $d_3(\Delta^4)$, or $d_3(w_2^2)$
in the notation of \cite{BR21}, by 
applying Bruner's theorem on the interaction between squaring operations and Adams differentials \cite{BMM86} \cite{Bruner84}.
Finally, the Adams differential $d_3(\Delta^4)$ implies that there is
an Adams-Novikov differential $d_7(\Delta^4)$.
For more details, see \cref{subsctn:d5,subsec:d7}.
Curiously, precise statements about the Adams-Novikov differential 
$d_7(\Delta^4)$ are missing from \cite{Bau08} \cite{HM} \cite{Rezk02}.

\subsection{Main results}

Our main results are expressed in the charts in \cref{sec:charts}.
For completeness, we express this in the form of a main theorem.

\begin{thm}
\label{thm:charts}
The charts in \cref{sec:charts} represent the $\C$-motivic Adams-Novikov spectral sequence for the motivic modular forms spectrum $\mmf$, including 
complete descriptions of 
\begin{itemize}
\item
the $E_2$-page.
\item
all differentials.
\item
the $E_\infty$-page.
\item
all hidden extensions by $2$, $\eta$, and $\nu$.
\end{itemize}
\end{thm}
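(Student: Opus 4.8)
The statement summarizes the computations carried out in the body of the paper, so the plan is to assemble the ingredients in the following order. First I would pin down the $E_2$-page. This is purely algebraic and may be imported from \cite{Baer}, \cite[Section 7]{Bau08}, or \cite{BR21}, or recomputed by machine; over $\C$ it carries the extra variable $\tau$. The structural features that matter downstream are the multiplicative structure, the relation $h_1^2 d = 0$, and the algebraic Massey products and algebraic Steenrod operations that the later arguments invoke.

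Second, and most substantially, I would establish all of the differentials, organized around the family of differentials on $2^j \Delta^k$. One starts from $d_5(\Delta) = \tau^2 h_2 g$ (\cref{prop:d5_D}), which is forced by comparison with the Adams spectral sequence, where $\tau^2 h_2 g$ already vanishes on $E_2$. The Leibniz rule $d_r(x^k) = k\, x^{k-1} d_r(x)$ then yields $d_5(\Delta^2) = 2 \tau^2 \Delta h_2 g$. To reach $d_7(\Delta^4) = \tau^3 \Delta^3 h_1^3 g$ (\cref{prop:d7}), pass to the Adams spectral sequence: $d_5(\Delta^2)$ forces the Adams differential $d_2(w_2)$, Bruner's theorem on the interaction of algebraic Steenrod operations with Adams differentials \cite{Bruner84} \cite{BMM86} then gives $d_3(w_2^2) = d_3(\Delta^4)$, and this translates back into the Adams-Novikov $d_7$ (see \cref{subsctn:d5,subsec:d7}). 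The remaining differentials follow from the multiplicative structure of $E_2$, the Leibniz rule, further comparison between the two spectral sequences, and the sparseness of the charts, which leaves little room for alternatives.

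Third, the $E_\infty$-page is read off mechanically from the differentials. Fourth, I would establish the hidden $2$-, $\eta$-, and $\nu$-extensions. The main engine is the map $q\colon \mmf/\tau \to \Sigma^{1,-1}\mmf$ of \cref{subsec:hidden}: its source has completely algebraic homotopy (the classical Adams-Novikov $E_2$-page for $\tmf$), and it surjects onto the $\tau$-torsion in the homotopy of $\mmf$. After multiplying by suitable powers of $\tau$ and $g$, most extensions reduce to relations among $\tau$-torsion classes that $q$ detects; \cref{hidtaumethod} collects these cases. The residual cases are filled in by hand: the hidden $2$-extension in the $110$-stem uses homotopy information about $\mmf/\tau^2$ (\cref{prop:h0ext110}), and the occasional unavoidable Toda bracket is extracted from a Massey product in one of the two $E_2$-pages via the Moss Convergence Theorem \cite{Mos70}, as in \cref{lemma:todaepsilon}. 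Finally one verifies that the charts are complete, i.e.\ that no further differentials or hidden extensions exist, which is a degree-counting argument made possible by the sparseness of the charts.

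I expect the main obstacle to be twofold. One is making the Adams/Adams-Novikov interchange that underlies $d_7(\Delta^4)$ genuinely airtight, since it threads a specific differential through Bruner's theorem and back. The other is the completeness claim: asserting that the charts omit nothing requires careful bookkeeping across the whole range rather than any single decisive argument, and it is here that the absence of crossing differentials and crossing extensions in the sense of \cite[Section 2.1]{IWX20} is what makes the task tractable.
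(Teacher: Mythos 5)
Your proposal follows the paper's own strategy essentially verbatim: the paper's proof of this theorem is simply the assembly of the propositions in Sections 3 and 4 (cross-referenced in the tables of Section 5), organized exactly as you describe — the $d_5(\Delta)$/Leibniz/Bruner chain for the $\Delta^k$ differentials, and the projection $q\colon \mmf/\tau \to \Sigma^{1,-1}\mmf$ as the main engine for hidden extensions, with \cref{prop:h0ext110} and the Moss-convergence Toda brackets handling the residual cases. No substantive differences to report.
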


The proof of \cref{thm:charts} consists of the sum of a long list of miscellaneous computations, which are carried out throughout the manuscript.  See especially the tables in \cref{sec:tables}.  These tables summarize the main computational facts, and they give cross-references to more detailed proofs of each fact.

Our work is not as complete as \cite{BR21} because we have not completely analyzed the multiplicative structure.  In principle, this could be done using the same techniques.
We do study one family of multiplicative relations in more detail.
Bruner and Rognes identify a family $\nu_k$ of elements in the homotopy of
$\tmf$.  They mostly determine the products among these elements, but they
leave one case unresolved.  Our techniques settle this last detail about the $2$-primary multiplicative structure of the homotopy of $\tmf$.

\begin{thm}
\label{nu4-nu6}
In the context of \cite{BR21}, $\nu_4 \nu_6 = \nu \nu_2 M$.
\end{thm}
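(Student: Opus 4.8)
The plan is to translate Bruner and Rognes' elements $\nu_4$, $\nu_6$, $\nu_2$, and $M$ into the $\C$-motivic Adams--Novikov spectral sequence for $\mmf$ (inverting $\tau$ at the end to recover the strictly classical statement about $\tmf$), to identify the $E_\infty$-classes that detect them, and then to compare the product $\nu_4 \nu_6$ with $\nu \nu_2 M$ directly on that page. Both sides of the asserted identity lie in the same stem, and the additive structure of $\pi_{*,*}\mmf$ determined by \cref{thm:charts} tells us exactly which Adams--Novikov filtrations these elements occupy. The first step is therefore largely organizational: locate the $E_\infty$-classes detecting $\nu_4$, $\nu_6$, $\nu_2$, and $M$, and enumerate the finitely many homotopy classes in the relevant stem, so that the possible values of $\nu_4 \nu_6$ are laid out explicitly, matching the ambiguity left open in \cite{BR21}.

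The heart of the argument is to exploit the fact that, unlike in the Adams spectral sequence --- where this relation is a genuinely hidden extension that \cite{BR21} could not settle --- the product of the detecting classes of $\nu_4$ and $\nu_6$ is accessible in the Adams--Novikov picture: either it is already nonzero on the Adams--Novikov $E_\infty$-page and agrees with the class detecting $\nu \nu_2 M$, or it vanishes on $E_\infty$ but is linked to the class detecting $\nu_2 M$ by one of the hidden $\nu$-extensions recorded in \cref{thm:charts}. If residual ambiguity remains, I would invoke the method of \cref{subsec:hidden} and \cref{hidtaumethod}: multiply the proposed relation by a carefully chosen $\tau^a g^b$ so that both sides become $\tau$-torsion, push forward along $q \colon \mmf/\tau \rightarrow \Sigma^{1,-1}\mmf$, carry out the comparison inside $\pi_{*,*}(\mmf/\tau)$ --- which is entirely algebraic, being the classical Adams--Novikov $E_2$-page for $\tmf$ --- and then divide the powers of $\tau$ and $g$ back out, using the charts to check that this multiplication is injective on the homotopy classes in question.

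The step I expect to be the main obstacle is the final bookkeeping: confirming that $\nu_4$ and $\nu_6$ are pinned down well enough that the relation is insensitive to the choice of representatives modulo higher Adams--Novikov filtration, selecting the exponents $a$ and $b$ so that $\tau^a g^b$ simultaneously kills $\tau$ on both sides and remains injective, and ruling out any competing higher-filtration class --- here a cross-check against the Adams $E_\infty$-page for $\mmf$, together with the products among the $\nu_k$ already determined in \cite{BR21}, supplies the needed rigidity. As further consistency checks I would multiply the relation by $\eta$ and by $\nu$ and compare with the multiplicative data in \cref{sec:tables}; and in the unlikely event that a Toda bracket turns out to be unavoidable, I would extract it from the corresponding Massey product in one of the two $E_2$-pages via the Moss Convergence Theorem, exactly as in the other bracket computations in the manuscript.
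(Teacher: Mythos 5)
Your proposal assembles the right toolkit (the mANss $E_\infty$-page, the projection $q\colon \mmf/\tau \to \Sigma^{1,-1}\mmf$, and its $\mmf$-module structure), but the two concrete strategies you describe do not close the argument. The direct $E_\infty$-page comparison cannot work here: $\nu_4\nu_6$ and $\nu\nu_2 M$ are \emph{both} detected by the same nonzero class $\Delta^{10}h_2^2 = \Delta^8\cdot\Delta^2 h_2^2$, and the 246-stem contains a class in strictly higher filtration (detected by $\tau^4\Delta^8 d g^2$, the $\Delta^8$-translate of the target of the hidden $2$ extension in \cref{exmp:hidextn}). So agreement of detecting classes leaves exactly the ambiguity that \cite{BR21} could not resolve, and no hidden $\nu$-extension in \cref{hidh2} arbitrates it. Moreover, ``insensitivity to the choice of representatives'' is not something you can simply confirm: $\Delta^4 h_2$ and $\Delta^6 h_2$ each detect several homotopy classes, and part of the content of the paper's argument is to fix canonical choices.

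The paper's actual mechanism, which your fallback gestures at but never states, is to \emph{define} $\nu_k$ by the equation $q(\Delta^{k+1}) = -\tau\kappabar\cdot\nu_k$ (\cref{defn:nu_i}), which is legitimate because $q(\Delta^{k+1})$ is a $\tau\kappabar$-multiple and $\tau\kappabar$-multiplication is an isomorphism in the relevant degrees; one then checks (\cref{nu-defn}, \cref{nu-D4}) that these choices satisfy the normalizations of \cite{BR21}. With that in hand, the product is computed by evaluating $q\bigl((k+1)\Delta^{j+k+1}h_2\bigr)$ in two ways using $q(x\cdot i(\alpha)) = q(x)\cdot\alpha$: once as $q(\Delta^{j+1}\cdot i(\nu_k))$ and once as $q(\Delta^{j+k+1}\cdot i((k+1)\nu_0))$, giving $\nu_j\nu_k = (k+1)\nu_{j+k}\nu_0$ (\cref{nu_i-nu_j}). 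The coefficient $(k+1)$ comes from the Leibniz rule $d_5(\Delta^{k+1}) = (k+1)\Delta^k d_5(\Delta)$ via \cref{topcell}, and the theorem follows since $7 \equiv -1 \pmod 4$ in the order-4 group containing $\nu\nu_2 M$. Your recipe --- ``find preimages in $\pi_{*,*}(\mmf/\tau)$ and compare'' --- never produces this coefficient, because without the exact characterization of $\nu_4$ and $\nu_6$ in terms of $q$ you cannot certify which element of $\pi_{*,*}(\mmf/\tau)$ maps to (a $\tau\kappabar$-multiple of) $\nu_4\nu_6$ rather than to it plus a higher-filtration error. That identification is the missing idea.
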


\cref{nu4-nu6} is proved later as \cref{cor:nu4-nu6}.
In fact, it is a consequence of the more general \cref{nu_i-nu_j}, which 
offers a graceful simultaneous analysis of products $\nu_j \nu_k$.  Bruner and Rognes empirically observed the formula
\[
\nu_i \nu_j = (i+1) \nu \nu_{i+j}.
\]
Our proof shows that the coefficients $(i+1)$ arise naturally from the Leibniz rule 
\[
d_5(\Delta^{i+1}) = (i+1) \Delta^i d_5(\Delta).
\]

\subsection{Future directions}

Our work raises some questions that deserve further study.

\begin{problem}
Compute the $\kappabar$-periodic $\C$-motivic spectrum $\mmf[\kappabar^{-1}]$.
\end{problem}

Frequently, we detect elements and relations by first computing their products with various powers of $g$ or $\kappabar$.  In other words, much of the structure of $\mmf$ is reflected in the $\kappabar$-periodic spectrum $\mmf[\kappabar^{-1}]$.  This motivic spectrum is non-trivial, but its homotopy is entirely annihilated by $\tau^{11}$.  Consequently, its Betti realization is trivial, and it represents purely ``exotic" motivic phenomena.
We mention that \cite{BBC23} also studies $g$-periodic phenomena in $\tmf$, 
although not in a way that is particularly close to our perspective.

\begin{problem}
\label{prob:d7-D^4}
Develop better technology to deduce the differential
$d_7(\Delta^4) = \tau^3 \Delta^3 h_1^3 g$ directly from the differential
$d_5(\Delta) = \tau^2 h_2 g$.
\end{problem}

It is conceivable that $d_7(\Delta^4)$ could be deduced directly from 
$d_5(\Delta)$ using a variant of Bruner's theorem that would apply in the Adams-Novikov spectral sequence,
but we have not even formulated a precise statement of such a variant.
There is a connection between Bruner's theorem and the 
Leibniz rule $d_r(x^2) = 2 x d_r(x)$, but the precise relationship is not clear to us.

Another possible approach to \cref{prob:d7-D^4} might involve an enriched $E_2$-page in which the $2$ extension from $2 \tau^2 h_2$ to $\tau^3 h_1^3$ is not hidden.

\begin{problem}
Construct a spectral sequence whose $E_2$-page reflects the algebraic structure of both the Adams and Adams-Novikov $E_2$-pages.
\end{problem}

We frequently pass back and forth between the Adams and Adams-Novikov spectral sequences.  In order to facilitate these transitions, \cref{sec:ASSANSS} introduces a notion of correspondence between elements of
the Adams spectral sequence and elements of the Adams-Novikov spectral sequence.  

This setup feels like a preliminary attempt to describe a richer connection between the two spectral sequences. 
It would be much more convenient and effective
to compute in just a single spectral sequence that reflects the algebraic structure of both the Adams and Adams-Novikov spectral sequences.  There are some preliminary indications that ``bimotivic
homotopy theory" (also known as $H\F_2$-synthetic $BP$-synthetic homotopy theory) provides a context for this.

\subsection{Outline}

We begin in \cref{sec: background} with a discussion of tools that we will
use to carry out our explicit computations.
We describe both the motivic Adams and motivic Adams-Novikov spectral sequences for $\mmf$, and we establish notation for elements in these spectral sequences.  We also establish notation for certain homotopy elements that we will use later.  We draw particular attention to
\cref{subsctn:inclusion-projection,subsec:hidden},
which establish a powerful tool for detecting hidden extensions.
The basic idea is to use the motivic spectrum $\mmf/\tau$, whose
homotopy is entirely algebraic.

Our explicit computations begin in \cref{sec:diff}, where we establish all
of the Adams-Novikov differentials.  The propositions in this section are mostly
in order of increasing length of differentials.  However, we make some
exceptions to this general rule to preserve the logical order,
so each result only depends on previously proved results.

Once the Adams-Novikov differentials are computed, we proceed to compute
all hidden extensions by $2$, $\eta$, and $\nu$ in \cref{Sec:ext}.
Most of these extensions follow immediately by comparison to the homotopy
of $\mmf/\tau$, but there are several cases with more difficult proofs.

Finally, in \cref{sec:nu_k}, we consider an explicit family of products
that are particularly interesting.  Our results on these products fill a 
gap in the product structure of $\pi_* \tmf$, as described in \cite{BR21}.

\subsection{Conventions}


We work exclusively at the prime $2$.  There are interesting aspects to the
computation of $\tmf$ at the prime $3$ (\cite[Chapter 5]{Bau08}, \cite{DFHH14}, \cite[Chapter 13]{BR21}), but we do not address that topic.
We use the motivic Adams-Novikov spectral sequence to compute the homotopy groups of the $2$-localization of $\mmf$. We also use the $E_2$-page of the motivic Adams spectral sequence, which actually converges to the homotopy groups of the $2$-completion of $\mmf$. 
The distinction between localization and completion is not essential since only finitely generated abelian groups appear in our work.
For expository simplicity, these localizations or completions do not appear in our notation.
For example, the symbol $\mathbb{Z}$ refers to the integers localized at $2$, or to the $2$-adic integers.
Similarly, $\pi_{*,*} \mmf$ refers to the motivic stable homotopy groups
of the $2$-localization (or $2$-completion) of $\mmf$.

The adjective ``motivic" always refers exclusively to the $\C$-motivic context.  We consider no base fields other than $\mathbb{C}$.

Many of our explicit results are labelled with the degrees in which they occur.  These degrees
may help the reader navigate the overall computation, especially in 
finding the relevant elements on Adams-Novikov charts.

\subsection{Acknowledgements}
We thank Tilman Bauer, Robert Bruner, and John Rognes for various discussions related to the production of this manuscript.
We also appreciate stimulating discussions with the participants of the Winter 2023 eCHT reading seminar on the Adams spectral sequence for tmf. 

\section{Background}
\label{sec: background}

In this section, we discuss the techniques that we will use later to carry
out our computations.  

\subsection{The $\mathbb{C}$-motivic modular forms spectrum $\mmf$}
\label{sec:mmf}

There is a 
$\C$-motivic $E_\infty$-ring spectrum $\mmf$ 
that can be viewed as the analogue of the classical topological modular forms spectrum $\tmf$ \cite{GIKR21}.
The Betti realization of $\mmf$ is the classical
spectrum $\tmf$.  More\-over, the cohomology of $\mmf$ is
$A\sslash A(2)$, where $A$ denotes the $\C$-motivic Steenrod algebra 
and $A(2)$ is the subalgebra generated by $\Sq^1$, $\Sq^2$, and $\Sq^4$.

\subsection{The $\C$-motivic Adams spectral sequence for $\mmf$}

We abbreviate the motivic Adams spectral sequence for $\mmf$ by mAss.
The cohomology of $\bbC$-motivic $A(2)$ is the $E_2$-page of the mAss.
The manuscript \cite{Isa09} computes the cohomology of $\bbC$-motivic $A(2)$ using the motivic May spectral sequence, and it gives
a complete description of its ring structure. 
The mAss $E_2$-page consists entirely of algebraic information, which we
take as given.
We grade the mAss $E_2$-page in the form $(s,f,w)$, where $s$ is the topological stem, $f$ is the Adams filtration, and $w$ is the motivic weight.

The motivic Adams differentials are recorded in \cite{Isa18}.
However, this manuscript does not depend on previous knowledge of any
Adams differentials, neither classical nor motivic.
For completeness, we provide self-contained 
proofs for two Adams differentials in \cref{Adams-diff}.

We adopt the notation of \cite{Isa09} and \cite{Isa18} for the mAss.
For the reader's convenience, 
\cref{tab:mAss-notation} provides a concordance between our notation
and the notation of \cite{BR21}.
Beware that the motivic generators $u$ and $\Delta u$
have no classical counterparts because they are annhilated by $\tau$.

\begin{longtable}{lll}
\caption{Generators of the motivic Adams $E_2$-page for $\mmf$ \label{tab:mAss-notation} } \\
\toprule
$(s,f,w)$ & \cite{Isa09}  & \cite{BR21} \\
\midrule \endhead
\bottomrule \endfoot
$(0,1,0)$ & $h_0$ & $h_0$ \\
$(1,1,1)$ & $h_1$ & $h_1$ \\
$(3,1,2)$ & $h_2$ & $h_2$ \\
$(8,3,5)$ & $c$ & $c_0$ \\
$(8,4,4)$ & $P$ & $w_1$ \\
$(11,3,7)$ & $u$ & \\
$(12,3,6)$ & $a$ or $\alpha$ & $\alpha$ \\
$(14,4,8)$ & $d$ & $d_0$ \\
$(15,3,8)$ & $n$ or $\nu$ & $\beta$ \\
$(17,4,10)$ & $e$ & $e_0$ \\
$(20,4,12)$ & $g$ & $g$ \\
$(25,5,13)$ & $\Delta h_1$ & $\gamma$ \\
$(32,7,17)$ & $\Delta c$ & $\delta$ \\
$(35,7,19)$ & $\Delta u$ & \\
$(48,8,24)$ & $\Delta^2$ & $w_2$ \\
\end{longtable}

\subsection{The $\mathbb C$-motivic Adams-Novikov spectral sequence for $\mmf$}
\label{sec: ANSS}
	
The $E_2$-page of the classical Adams-Novikov spectral sequence for $\tmf$ is given by $\Ext^{**}_{{BP}_{*}{BP}}({BP}_{*},{BP}_{*} \tmf)$, where $BP$ denotes the Brown-Peterson spectrum.
Analogously to the classical Adams-Novikov spectral sequence, one can construct a motivic Adams-Novikov spectral sequence by 
re\-solv\-ing with respect to the motivic Brown-Peterson spectrum.
We abbreviate the motivic Adams-Novikov spectral sequence by mANss.
We grade the mANss $E_2$-page in the form $(s,f,w)$, where
$s$ is the topological stem, $f$ is the Adams-Novikov filtration, and $w$ is the motivic weight.

The mANss is easy to describe in classical terms. The motivic $E_2$-page can be obtained from its classical analogue by first assigning a third degree, called the weight, to be half of the total degree for each class, then adjoining a polynomial generator $\tau$ of degree $(0,0,-1)$ (see, e.g. \cite{HKO11}\cite{Isa19}). 
More explicitly, a classical element $x$ in degree $(s,f)$ corresponds to a family of elements $\{\tau^n x | n\geq 0\}$ in the mANss, where the motivic element $x$ has degree $\left( s, f, \frac{s+f}{2} \right)$.

The $E_2$-page of the mANss consists entirely of algebraic information,
which we take as given.
For our purposes, the best way to compute this 
$E_2$-page is by the algebraic Novikov spectral sequence, which
is worked out in detail in \cite{Baer}.

\begin{rem}
The $E_2$-page of the classical Adams-Novikov spectral sequence for $\tmf$ is the cohomology of a version of the elliptic curve Hopf algebroid (\cite{Rezk02}\cite{Bau08}). By the change-of-rings theorem \cite[Theorem 15.3]{Rezk02}, this is the same as the 
cohomology of the Hopf algebroid $(BP_*tmf, BP_*BP \otimes_{BP_*}BP_*tmf)$. See \cite[Proposition ~15.7 and Section ~20]{Rezk02} for more details.
We do not rely on this perspective.
\end{rem}

\subsection{Notation for the motivic Adams-Novikov spectral sequence}

\Cref{generators} lists the multiplicative
generators for the mANss $E_2$-page for $\mmf$.  These generators
are the starting point of our computation.

\begin{longtable}{ll}
\caption{Generators of the motivic Adams-Novikov $E_2$-page for $\mmf$
\label{generators} } \\
\toprule
$(s,f,w)$ & generator \\
\midrule \endfirsthead
\bottomrule \endfoot
$(0, 0, -1)$   & $\tau$ \\
$(1, 1, 1)$   & $h_1$\\
$(3, 1, 2)$   & $h_2$\\
$(5, 1, 3)$   & $h_1v_1^2$ \\
$(8, 0, 4)$   & $P$\\
$(8, 2, 5)$   & $c$\\
$(12, 0, 6)$  & $4a$\\
$(14, 2,8)$   & $d$\\
$(20, 4, 12)$  & $g$ \\
$(24, 0, 12)$ & $\Delta$\\
\end{longtable}

One must be slightly careful with the definitions of some of these
elements because they belong to cyclic groups of order greater than
$2$.  In these cases, there is more than one possible generator.
Specifically, this issue arises for the elements
$h_2$, $P$, $4a$, $g$, and $\Delta$.
For $P$, $4a$, and $g$, we simply choose arbitrary generators.

\begin{rem}
\label{h2-choice}
$(3, 1, 2)$
The choice of $h_2$ makes little practical difference
to us, as long as it is a generator of the mANss $E_2$-page in
degree $(3,1,2)$.  
For definiteness, we take $h_2$ to represent the homotopy element
$\nu$, assuming an a priori definition of $\nu$ (for example,
by appealing to the homotopy of the sphere spectrum or by
appealing to a geometric construction of $\nu$ involving quaternionic multiplication).
\end{rem}

The choice of $\Delta$ also makes little practical difference.
We choose $\Delta$ in such a way to make our formulas easier
to write.  See \cref{Delta-defn} and \cref{nu-defn} for 
more details.  Note that the choice of $\Delta$ depends
on a previous choice of $h_2$.

\begin{rem}
$(12, 0, 6)$
The notation $4a$ does not appear to be natural and deserves some explanation.
There are two closely related reasons why we find this notation to be
convenient.  First, the element $4a$ is detected 
in the algebraic Novikov spectral sequence \cite{Baer} by an element
$h_0^2 a$.  Second, the element $2 \cdot 4a$ turns out to be a 
permanent cycle that detects an element in $\pi_{12,6} \mmf$.
This same homotopy element is detected by $h_0^3 a$ in
the Adams spectral sequence for $\mmf$.
\end{rem}

The element $g$ is a permanent cycle and therefore
represents a homotopy class $\kappabar$.
Multiplication by $g$ provides regular structure to the mANss for $\mmf$.
We typically sort elements into families that are related by $g$ multiplication.
In other words, when we consider a particular element $x$, we also
typically consider the elements $x g^k$ for all $k \geq 0$ at the same time.

Taken together, \cref{fig:E2v1,fig:E2d7} depict the $E_2$-page of the mANss for $\mmf$ graphically. 
The careful reader should
superimpose these figures in order to obtain a full
picture of the mANss.
\Cref{fig:E2v1} depicts a regular 
$v_1$-periodic
pattern in the $E_2$-page, to be discussed in detail in
\Cref{sec:bo}.
\Cref{fig:E2d7} depicts the remaining classes.

\subsection{Comparison between the mANss and the mAss}
\label{sec:ASSANSS}

\begin{defn}
Let $a$ be a permanent cycle in the mANss for $\mmf$, and let
$b$ be a permanent cycle in the mAss for $\mmf$.
The elements $a$ and $b$ \textbf{correspond}
if there exists a non-zero element in $\pi_{*,*} \mmf$ that is detected
by $a$ in the mANss for $\mmf$ and is detected by $b$
in the mAss for $\mmf$.
\end{defn}

\begin{rem}
\label{rem:higherfiltration}
Beware that a permanent cycle may detect more than one element in
$\pi_{*,*} \mmf$, depending on the presence of permanent cycles in
higher filtration.  We ask only that the cosets detected by $a$
and $b$ intersect; they need not coincide.
We give an explicit example.

The element $P$ of the mANss $E_\infty$-page detects two elements 
of $\pi_{8,4} \mmf$ because of the presence of $\tau c$ in higher
filtration.
On the other hand, the element $P$ of the mAss $E_\infty$-page
detects infinitely many elements (which differ only by a 
$2$-adic unit factor) because of the presence of
$P h_0^k$ in higher filtration for $k \geq 1$.
This is an example of a corresponding pair of elements that do
not detect precisely the same coset of homotopy elements.
\end{rem}

\begin{rem}
It is possible that a single element of the mANss corresponds to two
different elements of the mAss.  For example, the element
$P$ of the mANss detects two elements 
of $\pi_{8,4} \mmf$ because of the presence of $\tau c$ in higher
filtration.
These two homotopy elements are detected by $\tau c$ and by $P$
in the mAss.  Consequently, the mANss element $P$ corresponds to
the mAss element $P$, and it also corresponds to the mAss element $\tau c$.
Fortunately, this kind of complication never arises for us
in practice.  For example, none of the correspondences
listed in \cref{table:ASSANSS} exhibit this type of behavior.
\end{rem}

\begin{rem}
The element $2$ of the mANss $E_\infty$-page detects a single
element in homotopy since there are no elements in higher filtration.  On the other hand, the element $h_0$ of the mAss
$E_\infty$-page detects infinitely many elements in homotopy,
all of which differ by a $2$-adic unit factor, because
of the presence of $h_0^k$ in higher filtration.
Consequently, while $2$ and $h_0$ are a corresponding pair,
they do not detect the same sets of homotopy elements.
Rather, the homotopy elements detected by $2$ form a subset
of the homotopy elements detected by $h_0$.

Among the corresponding pairs listed in \cref{table:ASSANSS}, the same phenomenon occurs for $h_2$, $g$, $\Delta h_1$, and $4 \Delta^2$.  In all of these cases, the homotopy elements detected
by the mANss $E_\infty$-page element form a subset of the
homotopy elements detected by the mAss $E_\infty$-page element.
\end{rem}

Multiplicative structure respects corresponding pairs.
The following proposition establishes
this principle precisely.

\begin{prop}
\label{prop:correspond-product}
Let $a$ and $a'$ be elements of the mANss $E_\infty$-page, and let
$b$ and $b'$ be elements of the mAss $E_\infty$-page.
If $a$ corresponds to $a'$, $b$ corresponds to $b'$, 
and $a b$ and $a' b'$ are non-zero; then
$a b$ corresponds to $a' b'$.
\end{prop}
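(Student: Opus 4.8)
The plan is to chase homotopy elements through the two spectral sequences using only the definition of ``corresponds'' together with basic properties of detection in a (motivic) filtered spectral sequence. Since $a$ corresponds to $a'$, there is a non-zero $x \in \pi_{*,*}\mmf$ detected by $a$ in the mANss and by $a'$ in the mAss; similarly there is a non-zero $y \in \pi_{*,*}\mmf$ detected by $b$ in the mANss and by $b'$ in the mAss. The obvious candidate element witnessing the desired correspondence is the product $xy \in \pi_{*,*}\mmf$. First I would show that $xy$ is detected by $ab$ in the mANss: detection means $x$ and $y$ have Adams--Novikov filtrations equal to the filtrations of $a$ and $b$ respectively, and the leading terms of $x$ and $y$ are $a$ and $b$. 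By multiplicativity of the mANss filtration, $xy$ has filtration at least $f(a)+f(b)$, and its leading term, if the filtration is exactly $f(a)+f(b)$, is $ab$ (computed on the $E_\infty$-page). Since we are given that $ab \neq 0$ on the mANss $E_\infty$-page, the filtration cannot jump, so $xy$ is detected by $ab$; in particular $xy \neq 0$. Symmetrically, $xy$ is detected by $a'b'$ in the mAss, again using that $a'b' \neq 0$ on the mAss $E_\infty$-page. Then $xy$ is a non-zero homotopy element detected by $ab$ in the mANss and by $a'b'$ in the mAss, which is exactly the statement that $ab$ corresponds to $a'b'$.

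The one genuine subtlety — and the step I expect to be the main obstacle — is the claim that the filtration of a product does not jump whenever the product of leading terms is non-zero on the $E_\infty$-page. In a general multiplicative spectral sequence, the product of two permanent cycles representing $x$ and $y$ lies in filtration $\geq f(a)+f(b)$, and its image in $E_\infty^{f(a)+f(b)}$ is $ab$; if $ab \neq 0$ there, then the filtration is exactly $f(a)+f(b)$ and $ab$ detects $xy$. This is a standard fact, but I would state it carefully because it is the crux: it is precisely the non-vanishing hypothesis on $ab$ and $a'b'$ that rules out the pathological possibility that $xy$ slips into higher filtration (or is even zero). For the motivic setting, the weight grading is multiplicative and causes no trouble: if $x$ has weight $w$ and $y$ has weight $w'$ then $xy$ has weight $w+w'$, matching the weight of $ab$ and of $a'b'$, so the gradings are consistent throughout.

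A minor bookkeeping point, worth a sentence in the write-up, is that a permanent cycle may detect a whole coset of homotopy elements (see \cref{rem:higherfiltration}), so ``$x$ is detected by $a$'' should be read as ``$x$ lies in the coset detected by $a$.'' This does not affect the argument: we pick any such $x$ and any such $y$, form $xy$, and observe it is detected by $ab$ and by $a'b'$ — we make no claim that the cosets detected by $ab$ in the two spectral sequences coincide, only that they have non-empty intersection, which is all the definition of correspondence requires. No Toda brackets, Massey products, or differentials enter; the proof is purely formal, relying only on the multiplicativity of the two spectral sequences and the given non-vanishing of $ab$ and $a'b'$ on the respective $E_\infty$-pages.
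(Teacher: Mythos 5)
Your proposal is correct and follows essentially the same route as the paper: take witnesses $\alpha$ and $\beta$ for the two correspondences and observe that $\alpha\beta$ is detected by $ab$ and by $a'b'$. The paper's proof is a one-line version of this; your careful justification of why the non-vanishing of $ab$ and $a'b'$ on the $E_\infty$-pages prevents a filtration jump is exactly the standard multiplicativity fact the paper leaves implicit.
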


\begin{proof}
Let $a$ and $a'$ detect a homotopy element $\alpha$, and
let $b$ and $b'$ detect a homotopy element $\beta$.
Then $a b$ and $a' b'$ detect the product $\alpha \beta$.
\end{proof}

\begin{rem}
\label{rem:Thom-reduction}
The motivic Thom reduction map $BP \rightarrow H\F_2$
induces a map from the mANss for $\mmf$ to the mAss for $\mmf$.
This map detects some corresponding pairs but not all of them.
Namely, it detects the pairs involving $h_1$, $h_2$, and $g$.
These are the elements for which there is no filtration shift
between the mANss and the mAss.
\end{rem}

\subsection{Homotopy elements}
\label{subsctn:homotopy-elements}

\cref{htpy-name} lists some notation that we use
for elements in the homotopy of $\mmf$.
We use the same symbols as in \cite{BR21} for our motivic versions.
Beware that some of our homotopy elements may not be exactly
compatible under Betti realization with the ones in \cite{BR21}.
We discuss the details
of these ambiguities in the following paragraphs.

We define elements in homotopy by specifying the elements in the mANss $E_\infty$-page that detect them.
In some cases, it is already easy to see that these detecting
elements survive to the $E_\infty$-page.
For example, there are no possible targets for differentials on
$h_1$ and $h_2$; nor can they be hit by differentials.
Beware that we do not yet know that some of 
these detecting elements actually survive to the $E_\infty$-page.
This will only become apparent after our analysis of Adams-Novikov
differentials.

In some cases, there are  $E_\infty$-page elements in higher filtration.
When this occurs, the specified element in the $E_\infty$-page
detects more than one element in homotopy. 
For example, the element $\tau h_1^3$ lies in filtration higher than
the filtration of $h_2$.  Therefore, $h_2$ detects two distinct
elements in homotopy.
In \cref{htpy-name}, this ambiguity occurs only for $\nu$,
$\kappa_4$, and the elements of the form $\nu_k$.

The choice of $\nu$ is of little practical signficance to us.
For definiteness, we may use an a priori definition of $\nu$,
as discussed in \cref{h2-choice}.
The choices of $\nu_k$ will be discussed later in
\cref{defn:nu_i}.  The choice of $\kappa_4$ is immaterial
for our purposes, so it can be an arbitrary
generator of $\pi_{110,56}$.

\begin{longtable}{lll}
\caption{Some elememts of  $\pi_{*,*} \mmf$ \label{htpy-name} } \\
\toprule
$(s,w)$ & name & detected by \\
\midrule \endhead
\bottomrule \endfoot
$(1,1)$ & $\eta$ & $h_1$ \\
$(3,2)$ & $\nu$ & $h_2$ \\
$(8,5)$ & $\epsilon$ & $c$ \\
$(14,8)$ & $\kappa$ & $d$ \\
$(20,12)$ & $\bar\kappa$ & $g$ \\
$(25,13)$ & $\eta_1$ & $\Delta h_1$ \\
$(27,14)$ & $\nu_1$ & $2\Delta h_2$ \\
$(51,26)$ & $\nu_2$ & $\Delta^2 h_2$ \\
$(96,48)$ & $D_4$ & $2 \Delta^4$ \\
$(99,50)$ & $\nu_4$ & $\Delta^4 h_2$ \\
$(110,56)$ & $\kappa_4$ & $\Delta^4 d$ \\
$(123,62)$ & $\nu_5$ & $2\Delta^5 h_2$ \\
$(147,74)$ & $\nu_6$ & $\Delta^6 h_2$ \\
$(192,96)$ & $M$ & $\Delta^8$ \\
\end{longtable}

\begin{rem}
$(20,4,12)$
Bruner and Rognes choose $\kappabar$ by reference to the
unit map $S \rightarrow \tmf$, together with a prior
choice of $\kappabar$ in $\pi_{20} S$.
For our purposes, we only need that $\kappabar$ is detected by
$g$ in the mANss $E_\infty$-page, so we may choose $\kappabar$
to be compatible with the one in \cite{BR21}.

There is a slight complication with $\kappabar$.
In \cite{Isa19} and \cite{IWX20}, the symbol $\kappabar$ is used
for an element of $\pi_{20,11} S^{0,0}$ that is detected by
$\tau g$ in the motivic Adams spectral sequence.
The point is that $g$ does not survive the May spectral sequence,
so it does not exist in the motivic Adams spectral sequence.

Here, we use $\kappabar$ for an element of $\pi_{20,12} \mmf$.
This element is detected by $g$ in the Adams spectral sequence for
$\mmf$.  The unit map $S^{0,0} \to \mmf$ takes
$\kappabar$ to $\tau \kappabar$.
\end{rem}

\begin{rem}
\label{edge-homomorphism}
Bruner and Rognes refer to the ``edge homomorphism" in order to specify
certain elements in $\pi_* \tmf$.  From the perspective of the
Adams-Novikov spectral sequence, this edge homomorphism takes a particularly
convenient form that can be easily described as a surjection followed
by an injection. 
The surjection takes $\pi_{*} \tmf$ onto its quotient
by elements that are detected in strictly positive Adams-Novikov filtration.
In other words, the surjection maps $\pi_{*} \tmf$ onto the
Adams-Novikov $E_\infty$-page in filtration $0$.  Then the injection is the inclusion
of the Adams-Novikov $E_\infty$-page into the Adams-Novikov $E_2$-page in filtration $0$.
In other words, the edge homomorphism detects the homotopy elements
that are detected in Adams-Novikov filtration $0$.
This description of the edge homomorphism applies equally well in the
setting of $\pi_{*,*} \mmf$ and the motivic Adams-Novikov spectral sequence.

The edge homomorphism depends on the choice of $\Delta$ 
(see \cref{Delta-defn}).  
Beware that our choice of $\Delta$ does not guarantee that our
edge homomorphism is identical to the one discussed in \cite{BR21}.
Consequently, our definitions of the 
homotopy elements $D_4$ and $M$
in \cref{htpy-name}
may not be the same as 
\cite[Definition 9.22]{BR21}. 
All possible choices of $\Delta$ differ by multiples of $2$,
so $\Delta^k$ is well-defined up to multiples of $2^k$.
Therefore, our choices of 
$D_4$ and $M$ agree with the Bruner-Rognes 
definitions up to multiples of $16$ and $256$ 
respectively.
\end{rem}

\subsection{$v_1$-periodicity}
\label{sec:bo}

Part of the mANss for $\mmf$ reflects $v_1$-periodic homotopy.
The pattern of differentials in this part is similar to the 
Adams-Novikov differentials for $\ko$ (see \cite[page~31]{Bau08}).
We consider this part separately and omit them from computations of higher differentials.
Beware that we are not employing an intrinsic definition of $v_1$-periodic
homotopy.  Rather, we are simply observing some specific structure
in the mANss for $\mmf$.

In the mANss $E_2$-page, 
consider elements of the form
$\tau^a h_1^b P^m (4 a)^\epsilon \Delta^n$, 
where $\epsilon$ equals $0$ or $1$ and $m + \epsilon > 0$.
We refer to these elements as the $v_1$-periodic classes.

Note that $1$ and $\Delta^n$ (as well as their $\tau$ multiples and $h_1$ multiples) are excluded from this family of elements. 
The knowledgeable reader may observe that
these powers of $\Delta$ satisfy an intrinsic definition of $v_1$-periodicity.
Our family is constructed for its practical convenience, not for its 
intrinsic properties.
The $v_1$-periodic elements, as we have defined them, only interact with
each other through the Adams-Novikov differentials.  However, the powers
of $\Delta$ support Adams-Novikov differentials that take values outside
of the $v_1$-periodic family.  Consequently, we consider them in conjunction
with the non-$v_1$-periodic elements.

\cref{fig:E2v1,fig:E4v1} display the $v_1$-periodic portions
of the mANss $E_2$-pages and $E_\infty$-pages respectively.
Our other charts exclude the $v_1$-periodic family.

\subsection{The spectrum $\mmf/\tau$}

Consider the cofiber sequence
\begin{equation}
\label{eqn:cofta}
\Sigma^{0,-1}\mmf\xrightarrow{\tau} \mmf \xrightarrow{i} \mmf/\tau \xrightarrow{q} \Sigma^{1,-1}\mmf
\end{equation}
of $\mmf$-modules.  The spectrum
$\mmf/\ta$ is a 2-cell $\mmf$-module, in the sense that it is built from
two copies of $\mmf$.  We refer to $i$ as inclusion of the bottom cell,
and we refer to $q$ as projection to the top cell.

The mANss for $\mmf/\ta$ has a particularly simple algebraic form.
The $E_2$-page is isomorphic to the $E_2$-page of the classical
Adams-Novikov spectral sequence for $\tmf$, except that it has a third degree.  However, this additional degree 
carries no extra information since it equals half of the total degree, i.e., the sum of the stem and the Adams-Novikov filtration.

Moreover, the mANss for $\mmf/\ta$ collapses.  There are no differentials, so the
$E_\infty$-page equals the $E_2$-page.  Even better, there are no possible
hidden extensions for degree reasons.  Consequently, the homotopy of
$\mmf/\ta$ is isomorphic to the classical Adams-Novikov $E_2$-page for $\tmf$.
Therefore, we take the homotopy of $\mmf/\tau$ as given since 
it is entirely algebraic information.
The results discussed in this paragraph are $\tmf$ versions of the results
in \cite[Section 6.2]{Isa19}, which are stated for the sphere spectrum.

We use
the notation of \cref{generators} in order to describe homotopy
elements in $\pi_{*,*} \mmf/\tau$.
On the other hand, we need to be more careful about notation
for elements in $\pi_{*,*} \mmf$.  We can specify elements
in $\pi_{*,*} \mmf$ by giving detecting elements in the 
mANss $E_\infty$-page, but this only specifies homotopy elements
up to higher filtration.  See \cref{subsctn:homotopy-elements} for more discussion of choices of elements in $\pi_{*,*} \mmf$.

The mAss for $\mmf/\tau$ is isomorphic
to the algebraic Novikov spectral sequence, for which we have complete 
information \cite{Baer}.  This is a $\tmf$ version of the results
in \cite{BWX21}, which are stated for the sphere spectrum.

\subsection{Inclusion and projection}
\label{subsctn:inclusion-projection}

We discuss the inclusion $i$ and the projection $q$ from \cref{eqn:cofta}
in more detail.  
Many of these ideas first appeared in \cite[Chapter 5]{Isa19} in more
primitive forms.

We already observed that both $i$ and $q$ are $\mmf$-module maps.
Note that the inclusion $i$ is a ring map, but the projection
$q$ is not.  They 
induce maps of motivic Adams-Novikov spectral
sequences.  These spectral sequence maps are in fact module maps 
over the mANss for $\mmf$.  Similarly, the induced 
maps of homotopy groups are $\pi_{*,*}\mmf$-module maps.

We describe the inclusion $i: \mmf \rightarrow \mmf/\tau$
of the bottom cell in computational terms.
If $\alpha$ is a homotopy element that is not a multiple of $\tau$,
then
$i(\alpha)$ is an element of the mANss $E_2$-page that 
detects $\alpha$.
On the other hand, if $\alpha$ is a multiple of $\tau$,
then $i(\alpha)$ is zero.
This fact is closely related to the observation that the
motivic Adams-Novikov spectral sequence is the same as the 
$\tau$-Bockstein spectral sequence.

\cref{htpy-name} gives a number of values of $i$.
For example, we have $i(\eta) = h_1$.  In fact,
we have defined the elements in the middle column of the table
to have the appropriate values under $i$.

For later use, we describe the computational implication that
$q: \mmf/\tau \rightarrow \Sigma^{1,-1} \mmf$ is an $\mmf$-module map.
Let $\alpha$ be an element of $\pi_{*,*} \mmf$, and let
$x$ be an element of $\pi_{*,*} \mmf/\tau$.
The object $\mmf/\tau$ is a right $\mmf$-module, and
\[
x \cdot \alpha = x \cdot i(\alpha),
\]
where the dot on the left side represents the module action and the
dot on the right side represents the multiplication of the ring
spectrum $\mmf/\tau$.
Then we have that
\begin{equation}
\label{q-module-map}
q(x) \cdot \alpha = q(x \cdot \alpha) = q(x \cdot i(\alpha)),
\end{equation}
where the dot on the left represents multiplication in $\mmf$;
the dot in the center represents the $\mmf$-module action
on $\mmf/\tau$; and the dot on the right represents multiplication
in $\mmf/\tau$.

We need a precise statement about the values of $q$. 
Our desired statement has essentially the same content as
\cite[Theorem 9.19(1c)]{BHS19},
which we reformulate into a form that is more convenient for us.

\begin{prop}
\label{topcell}
Let $x$ be an element of the mANss $E_2$-page that is not divisible by $\tau$, 
and suppose that there is a non-zero motivic Adams-Novikov differential
$d_{2r+1}(x) = \tau^r y$. 
If we consider $x$ as an element of $\pi_{*,*} \mmf/\tau$, then
the element $q(x)$ 
of $\pi_{*,*} \mmf$ is detected by $-\tau^{r-1} y$
in the mANss $E_\infty$-page.
\end{prop}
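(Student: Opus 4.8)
The plan is to track the element $x$ through the algebraic $\tau$-Bockstein machinery implicit in \cref{eqn:cofta}. The key observation is that the mANss for $\mmf$ can be identified with the $\tau$-Bockstein spectral sequence associated to the cofiber sequence $\Sigma^{0,-1}\mmf \xrightarrow{\tau}\mmf\xrightarrow{i}\mmf/\tau\xrightarrow{q}\Sigma^{1,-1}\mmf$, since $\pi_{*,*}\mmf/\tau$ is the classical Adams-Novikov $E_2$-page and carries no differentials. Under this identification, the statement that $d_{2r+1}(x)=\tau^r y$ in the mANss is precisely the statement that, in $\pi_{*,*}\mmf/\tau$, the class $x$ lifts to a class in $\pi_{*,*}\mmf$ modulo $\tau^r$ but not modulo $\tau^{r+1}$; equivalently, $x \in \pi_{*,*}\mmf/\tau$ is not in the image of $i$, but $\tau^{r-1}\cdot($ the class $-y)$ (interpreted appropriately) records the obstruction.

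First I would make the connecting-map bookkeeping precise. Since $q\colon \mmf/\tau\to\Sigma^{1,-1}\mmf$ is the connecting map, for any $x\in\pi_{*,*}\mmf/\tau$ the element $q(x)\in\pi_{*,*}\mmf$ is, by definition of the cofiber sequence, the image of $x$ under the boundary homomorphism; and $\tau\cdot q(x)=0$ because $q$ followed by multiplication by $\tau$ (i.e.\ the composite $\mmf/\tau\xrightarrow{q}\Sigma^{1,-1}\mmf\xrightarrow{\tau}\Sigma^{1,0}\mmf$) is null, being two consecutive maps in the cofiber sequence up to a shift. So $q(x)$ lies in the $\tau$-torsion of $\pi_{*,*}\mmf$, which is exactly the part of homotopy that $q$ can detect, consistent with the discussion in \cref{subsctn:inclusion-projection}.

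Next I would run the standard Bockstein/staircase argument. The differential $d_{2r+1}(x)=\tau^r y$ in the mANss means: in the $E_1=E_2$-page $\pi_{*,*}\mmf/\tau$ (with its $\tau$-adic filtration placeholder), one can write a chain $x = x_0$, and successively correct $\tau\cdot x$, $\tau^2\cdot x,\dots$ until $\tau^{r}\cdot($ something detecting $x)$ fails to lift, the failure being measured by $y$. Concretely, lifting $x$ along $i$ would require $q(x)=0$; the first nonzero value is $q(\tau^{?}x)$-type expression. Chasing the octahedral/braid diagram comparing $\mmf \xrightarrow{\tau^r}\mmf$ and $\mmf\xrightarrow{\tau}\mmf$, one identifies $q(x)$ as being detected in mANss filtration by $-\tau^{r-1}y$: the single power drop from $\tau^r$ to $\tau^{r-1}$ comes from the shift $\Sigma^{1,-1}$ in the target of $q$ (the top cell sits in weight one lower), and the sign $-1$ is the usual sign in the boundary map of a cofiber sequence, exactly as in \cite[Theorem 9.19(1c)]{BHS19}. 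I would either cite that theorem directly and translate notation, or reprove it in this special case via the braid diagram.

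The main obstacle will be the precise bookkeeping of the filtration shift and the sign — i.e.\ verifying that the answer is $-\tau^{r-1}y$ and not $\tau^{r}y$ or $-\tau^{r}y$. This requires being careful about (i) the difference between the mANss filtration on $\pi_{*,*}\mmf$ and the $\tau$-adic filtration on $\pi_{*,*}\mmf/\tau$, (ii) the degree shift $\Sigma^{1,-1}$ appearing in $q$, which accounts for exactly one factor of $\tau$, and (iii) the orientation conventions in the cofiber sequence that produce the sign. Since \cite{BHS19} already isolates essentially this computation for the sphere, the cleanest route is to invoke it and check that the hypotheses transfer verbatim to $\mmf$ in place of $S$; the module structure over the mANss for $\mmf$, noted earlier in \cref{subsctn:inclusion-projection}, guarantees that no extra $\mmf$-specific phenomena intervene.
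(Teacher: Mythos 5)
Your proposal is correct and takes essentially the same route as the paper: both reduce the statement to \cite[Theorem 9.19]{BHS19} and a chase of the braid of cofiber sequences relating $\mmf \xrightarrow{\tau} \mmf$, $\mmf \xrightarrow{\tau^r} \mmf$, and $\mmf/\tau^{r+1}$, with the lift of $x$ to $\pi_{*,*}\mmf/\tau^r$ and the Bockstein value $-y$ supplied by that theorem. The only caveat is that your heuristic attributing the drop from $\tau^r$ to $\tau^{r-1}$ to the shift $\Sigma^{1,-1}$ is not quite the mechanism (in the paper's diagram the factor $\tau^{r-1}$ arises from the vertical comparison map between the top cells of $\mmf/\tau^r$ and $\mmf/\tau$), but since you defer the precise bookkeeping to \cite{BHS19} this does not affect the argument.
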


\begin{proof}
The proof is a chase of the right side of the diagram
\[
\xymatrix{
\mmf/\tau \ar[r]^-{\tau^r} & \mmf/\tau^{r+1} \ar[r] & \mmf/\tau^r \ar[r]^{\beta} & \mmf/\tau \\
\mmf \ar[u]^i \ar[d]_{\tau^{r-1}} \ar[r]^{\tau^r} & \mmf \ar[u] \ar[r] \ar[d]_{=} &
\mmf/\tau^r \ar[u]^{=} \ar[r] \ar[d] & \mmf \ar[u]_i \ar[d]^{\tau^{r-1}} \\
\mmf \ar[r]_{\tau} & \mmf \ar[r]_-i & \mmf/\tau \ar[r]_q & \mmf,
}
\]
in which the rows are cofiber sequences.
We start with the element $x$ in $\pi_{*,*} \mmf/\tau$ in the bottom row.
This element lifts to $\mmf/\tau^r$ in the middle row by
\cite[Theorem 9.19]{BHS19} because $x$ survives to the $E_{2r+1}$-page.
The map $\beta$ is the ``Bockstein" mentioned in
\cite[Theorem 9.19]{BHS19}, so we have that $\beta(x)$ equals 
$-y$ in the upper right corner of the diagram.
Then $-y$ lifts to an element of $\pi_{*,*} \mmf$ in the middle row that is detected by $-y$.  Finally, multiply by $\tau^{r-1}$ to obtain
$q(x)$.
\end{proof}

\begin{rem}
\label{rem:topcell}
\cref{topcell} requires that $x$ supports a non-zero Adams-Novikov differential.  On the other hand, suppose that $x$ is a permanent cycle.
Then $x$ is in the image of $i$, and $q(x) = 0$ since 
the composition $q i$ is zero.
\end{rem}

\subsection{Hidden extensions}	
\label{subsec:hidden}

	We briefly review the notion of hidden extensions in spectral sequences.
We adopt the following definition of hidden extensions.

\begin{defn}{\cite[Definition ~4.1.2]{Isa19}}
\label{def:hiddenext}
Let $\alpha$ be an element in the target of a multiplicative spectral sequence, and suppose that $\alpha$ is detected by an element
$a$ in the $E_\infty$-page of the spectral sequence. A hidden extension by
$\alpha$ is a pair of elements $b$ and $c$ of the $E_\infty$-page such that:
\begin{enumerate}
  \item the product $a\cdot b$ equals zero in the $E_\infty$-page.
  \item the element $b$ detects an element $\beta$ in the target such that $c$ detects the product $\alpha \cdot \beta$.
  \item if there exists an element $\beta'$ of the target that is detected by $b'$ such that $\alpha\cdot \beta'$ is detected by $c$, then the filtration of $b'$ is less than or equal to the filtration of $b$.
\end{enumerate}	
\end{defn}

We will use projection $q$ to simplify our analysis of hidden extensions.
We shall show that two different products in $\pi_{*,*} \mmf$
are the image of the same
element in $\pi_{*,*} \mmf/\tau$.  Therefore, they are equal.

\begin{method*}\label{method:ctau}\rm
Suppose that $\alpha$ is not divisible by $\tau$,
so $i(\alpha) = a$, where $a$ is an element of the mANss that detects $\alpha$.
Consider a possible
hidden $\alpha$ extension from $b$ to $c$ in the mANss for $\mmf$.
If $b$ and $c$ detect classes $\beta$ and $\gamma$ that are annihilated by $\tau$,
then $\beta$ and $\gamma$ are in the image of projection $q$ to the top cell.  Let
$\overline{b}$ and $\overline{c}$ be their pre-images in
$\pi_{*,*}(\mmf/\ta)$.  Since this latter object is algebraic and completely
known, we can determine whether $\overline{b}$ and $\overline{c}$
are related by an extension by mere inspection.

\cref{q-module-map} shows that
\[
q( \overline{b} \cdot a ) = q(\overline{b} \cdot i(\alpha) ) =
q(\overline{b}) \cdot \alpha = \beta \cdot \alpha,
\]
where the first two dots represent multiplication in $\mmf/\tau$,
while the last two dots represent multiplication in $\mmf$.
If $\overline{b} \cdot a$ equals $\overline{c}$, then
$\beta \cdot \alpha$ equals $q(\overline{c}) = \gamma$,
and there is a hidden $\alpha$ extension from $b$ to $c$. 

On the other hand, if $\overline{b} \cdot a$ equals zero,
then 
$\beta \cdot \alpha$ equals zero, and there is not a hidden
$\alpha$ extension from $b$ to $c$.
\end{method*}

In practice, \cref{method:ctau} is very effective for determining hidden
extensions.  The main restriction is that it only applies to extensions
between classes that are annihilated by $\tau$.

\begin{example}
\label{exmp:hidextn}	
$(54, 2, 28)$
We illustrate \cref{method:ctau} with a concrete example of 
the hidden $2$ extension from $\De^2 h_2^2$ to $\tau^4 d g^2$ 
in the 54-stem.
In this example, we assume some knowledge
of the relevant Adams-Novikov differentials (see \cref{sec:diff}).
Consequently, one should view this example as a deduction of a
hidden extension from previously determined differentials.

First, multiply by $\tau g$.  If we establish a hidden $2$
extension from $\tau \De^2 h_2^2 g$ to $\tau^5 d g^3$ in the 74-stem, 
then we can immediately conclude the desired extension in the 54-stem.
This step already requires motivic technology, since
both $\De^2 h_2^2 g$ and $d g^3$ are hit by classical Adams-Novikov
differentials.

The key point is that the two elements under consideration in the 74-stem
are non-zero but annihilated by $\tau$.
They are annihilated by $\tau$ because of the differentials
$d_5(\De^3h_2)=\tau^2\De^2h_2^2 g$ and $d_{13}(2\De^3h_2)=\tau^6dg^3$, to
be proved later in Propositions \ref{prop:d5_D} and \ref{prop:d13-2D^3h2}.

The elements $\tau\De^2h_2^2 g$ and $\tau^5dg^3$ 
represent classes in $\pi_{74,39} \mmf$ that are annihilated by $\tau$.
Therefore, these elements lie in the image of
$q: \pi_{75,38} \mmf/\ta \rightarrow \pi_{74,39}\mmf$.

By \cref{topcell}, the preimages in $\pi_{75,38} \mmf/\ta$
are $\De^3 h_2$ and $2 \De^3 h_2$ respectively.
These two elements are connected by a $2$ extension.
Therefore, their images under $q$ are also connected by
a $2$ extension.
\end{example}

\subsection{Toda brackets}\label{subsec:toda}
For background on Massey products and Toda brackets,
including statements of the May convergence theorem and the Moss convergence theorem, we refer readers to \cite{Toda62}, \cite{May69}, \cite{Mos70} and also \cite{Isa19}, \cite{BK21}.

Massey products in the $E_2$-page of an Adams or Adams-Novikov spectral
sequence are algebraic information since they are part of the structure of Ext groups.
Some Toda brackets in homotopy can be deduced directly
from these Massey products using the Moss convergence theorem.
In order to apply this theorem, one must establish the
absence of crossing differentials.  
Whenever we apply the Moss convergence theorem, 
there will be no possible crossing differentials.
In other words, the crossing differentials condition
is satisfied for algebraic reasons.
Thus, the Toda brackets that we use 
are algebraic in the sense that they can be deduced
directly from the algebraic structure of Ext. 

\begin{rem}
In general, Massey products and Toda brackets are defined as sets, not elements. An equality of the form $\langle \alpha, \beta, \gamma \rangle = \delta$ means that
\begin{enumerate}
    \item $\delta$ is contained in the bracket;
    \item the bracket has zero indeterminacy.
\end{enumerate}
\end{rem}

The following lemma gives an explicit example of an
algebraic deduction of a Toda bracket.  See 
\cref{htpy-name} for an explanation of the notation.

\begin{lem}\label{lemma:todaepsilon}
$(8,3,5)$
The Toda bracket $\langle \nu, \eta, \nu \rangle$ 
in $\pi_{8,5}\mmf$ is detected by $c$ and has no indeterminacy.
\end{lem}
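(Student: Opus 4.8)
The plan is to deduce the Toda bracket $\langle \nu, \eta, \nu \rangle$ from the corresponding Massey product in the mANss $E_2$-page via the Moss convergence theorem. First I would record the relevant algebraic fact: in the mANss $E_2$-page for $\mmf$, there is a Massey product $\langle h_2, h_1, h_2 \rangle$ containing $c$, with zero indeterminacy. This is part of the known algebraic structure of $\Ext$ (compare the discussion in the introduction, item (4) of the ``algebraic philosophy'' list). The relevant defining relations are $h_1 h_2 = 0$ in bidegree $(4,2)$, so that the bracket is defined; one checks from the explicit description of the $E_2$-page that the bracket is nonzero and lands in the group generated by $c$ in degree $(8,2,5)$, and that the indeterminacy $h_2 \cdot \pi_{*,*} + \pi_{*,*} \cdot h_2$ in that degree is zero because there is no suitable element in bidegree $(5,1,3)$ that multiplies into $c$ (the only class there is $h_1 v_1^2$, and $h_2 \cdot h_1 v_1^2 = 0$).

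Next I would apply the Moss convergence theorem to promote this Massey product to the Toda bracket $\langle \nu, \eta, \nu \rangle$ in $\pi_{8,5}\mmf$. Since $\nu$ is detected by $h_2$ and $\eta$ by $h_1$, and $\eta \nu = 0$ in $\pi_{4,3}\mmf$ (which follows from $h_1 h_2 = 0$ on the $E_2$-page together with the absence of higher-filtration classes in that degree), the Toda bracket is defined. To invoke Moss's theorem I need the absence of crossing differentials into the relevant degrees; as the paper emphasizes in \cref{subsec:toda}, this condition holds for algebraic reasons here — there are simply no differentials in the low-degree range near the $8$-stem that could cross. Moss's theorem then gives that $\langle \nu, \eta, \nu \rangle$ is detected by an element of the Massey product $\langle h_2, h_1, h_2 \rangle$, hence by $c$.

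Finally I would argue that the Toda bracket has no indeterminacy. The indeterminacy of $\langle \nu, \eta, \nu \rangle$ is $\nu \cdot \pi_{5,3}\mmf + \pi_{5,3}\mmf \cdot \nu$. The group $\pi_{5,3}\mmf$ is detected by $h_1 v_1^2$ (part of the $v_1$-periodic family) and has no higher-filtration contributions, and $\nu$ times this class is zero — this can be seen either on the $E_2$-page, where $h_2 \cdot h_1 v_1^2 = 0$, or by noting the target group $\pi_{8,5}\mmf$ in the appropriate filtration range. Since both Massey product indeterminacy and the corresponding homotopy indeterminacy vanish, the bracket is the single element detected by $c$. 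The main obstacle, such as it is, will be carefully pinning down the two indeterminacy computations and verifying that $\langle h_2, h_1, h_2 \rangle$ genuinely contains $c$ (rather than $0$) in the $E_2$-page; this is a finite check in the explicitly known $\Ext$, and it is exactly the kind of algebraic input the paper takes as given.
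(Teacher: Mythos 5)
Your proposal is correct and follows essentially the same route as the paper: establish the Massey product $c = \langle h_2, h_1, h_2 \rangle$ with trivial indeterminacy using $h_1 h_2 = 0$ and the vanishing of $h_2 \cdot h_1 v_1^2$, verify the absence of crossing differentials, check the Toda bracket is defined with zero indeterminacy, and apply the Moss convergence theorem. The only minor imprecision is your description of $\pi_{5,3}\mmf$ as ``detected by $h_1 v_1^2$'': that class supports the differential $d_3(h_1 v_1^2) = \tau h_1^4$ and does not survive, so $\pi_{5,3}\mmf$ is in fact zero and the indeterminacy vanishes for degree reasons, as the paper states.
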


\begin{proof}
The proof follows several steps:
\begin{enumerate}
\item Establish the Massey product $c=\langle h_2, h_1, h_2\rangle$ in the $E_2$-page of the mANss.
\item Check that there are no crossing differentials.
\item Check that the Toda bracket $\langle \nu, \eta, \nu \rangle$ is well-defined and that it has no indeterminacy.
\item Apply the Moss convergence theorem to the Massey product and deduce the desired Toda bracket.
\end{enumerate}

For step (1), we check the following statements:
\begin{enumerate}[label=(\alph*)]
\item 
The Massey product is well-defined because
of the relation $h_1h_2=0$ in the $E_2$-page of the mANss for $\mmf$ (see \cref{fig:E2d7}).
\item The element $c$ is contained in the Massey product \cite[Equation (7.3)]{Bau08} \cite{Baer}.
\item The indeterminacy is trivial by inspection.
In more detail, the indeterminacy equals $h_2 \cdot E_2^{5,1,3}$.
The only non-zero element of $E_2^{5,1,3}$ is $h_1 v_1^2$,
and $h_2 \cdot h_1 v_1^2 = 0$.  This last relation 
holds already in the $E_2$-page of the motivic algebraic Novikov
spectral sequence \cite{Baer}.
\end{enumerate}

For step (2), we need to check for crossing differentials for the relation
$h_1 h_2$ in degree $(4,2,3)$.  We are looking for non-zero
Adams-Novikov differentials
in degrees $(5,f,3)$, where $f < 1$.  There are no possible sources for
such differentials (see \cref{fig:E2d7}).

For step (3), we check that the Toda bracket is well-defined 
because $\eta \nu$ is zero in $\pi_{4,3 }\mmf$ for degree reasons.
The indeterminacy equals $\nu \cdot \pi_{5,3}\mmf$, which is zero for degree reasons.

For step (4), we apply the Moss convergence theorem.  The theorem implies 
that there exists an element in $\langle h_2, h_1, h_2 \rangle$ that is a permanent cycle and that detects an element in $\langle \nu, \eta, \nu \rangle$. Since there are no indeterminacies for both the Massey product and the Toda bracket, the permanent cycle must be $c$.
\end{proof}

\section{Differentials}	\label{sec:diff}
In this section, we compute all differentials in the mANss for $\mmf$,
proving hidden extensions and Toda brackets only as needed along the way. 
Our results are presented in logical order, so each proof only depends on
earlier results.  We return to a more exhaustive study
of hidden extensions later in \cref{Sec:ext}.

\begin{thm}
\cref{diff} lists all of the non-zero differentials on
all of the indecomposable elements of each mANss $E_r$-page.
\end{thm}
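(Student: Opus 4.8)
The plan is to work through the indecomposable generators systematically, page by page, using the tools assembled in \cref{sec: background}: comparison with the mAss via the Thom reduction map, the Leibniz rule, Bruner's theorem on algebraic squaring operations, the projection $q\colon \mmf/\tau \to \Sigma^{1,-1}\mmf$ of \cref{method:ctau}, and the Moss convergence theorem. Throughout we take the $E_2$-page of the mANss, with its full ring structure, as given.

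First I would enumerate the indecomposable generators of the $E_2$-page, which are listed in \cref{generators}: $\tau$, $h_1$, $h_2$, $h_1 v_1^2$, $P$, $c$, $4a$, $d$, $g$, and $\Delta$. Among these, $\tau$, $h_1$, $h_2$, $c$, $d$, and $g$ are permanent cycles — for $\tau$, $h_1$, and $h_2$ because there are no possible targets or sources for degree reasons, for $g$ because it detects $\kappabar$, and for $c$ and $d$ because they detect $\epsilon$ and $\kappa$. The generators $h_1 v_1^2$, $P$, and $4a$ belong to the $v_1$-periodic family, whose differentials mirror those of the Adams--Novikov spectral sequence for $\ko$; this family interacts only with itself, so it can be dispatched once and for all in a self-contained way (\cref{fig:E2v1,fig:E4v1}). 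That leaves the differentials on $\Delta$, together with all the new indecomposables that $\Delta$ and its powers generate on later pages.

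The heart of the argument is the sequence of differentials on $2^j \Delta^k$. I would establish $d_5(\Delta) = \tau^2 h_2 g$ by comparison with the mAss, in which $\tau^2 h_2 g$ already vanishes on the $E_2$-page (\cref{prop:d5_D}); the Leibniz rule $d_5(\Delta^{i+1}) = (i+1)\Delta^i d_5(\Delta)$ then propagates this to all odd powers and to $\Delta^2$. For $\Delta^4$ the Leibniz rule only gives $d_5(\Delta^4) = 4\tau^2\Delta^3 h_2 g = 0$, so $\Delta^4$ survives to $E_7$; the differential $d_7(\Delta^4) = \tau^3 \Delta^3 h_1^3 g$ (\cref{prop:d7}) is obtained by the detour through the mAss described in the introduction — translate the mANss differentials on $\Delta$ and $\Delta^2$ into the mAss differential $d_2(w_2)$, apply Bruner's theorem to obtain the mAss differential $d_3(w_2^2)$, and translate back. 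The remaining higher differentials $d_9$, $d_{11}$, $d_{13}, \dots$ on the various $2^j \Delta^k$ then follow from the Leibniz rule, further comparison with the mAss, and the $q$-projection method of \cref{method:ctau}; these are carried out in the propositions of \cref{sec:diff} in order of increasing length.

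At each page $E_r$ with $r \geq 3$ I would then identify the newly indecomposable elements — products such as $\Delta h_1$, $\Delta h_2$, $\Delta^2 h_2$, $\Delta^4 h_2$, $\Delta^4 d$, and so on, which become indecomposable precisely when one of their factors dies — and verify that each is either a permanent cycle (often detecting a named homotopy class from \cref{htpy-name}) or supports one of the differentials recorded in \cref{diff}. Completeness in the opposite direction, that \emph{no} differential is omitted, follows from the sparseness of the $E_2$-page together with the knowledge of the $E_\infty$-page forced by comparison with the mAss and the algebraic Novikov spectral sequence for $\tmf$: once all the listed differentials are in place, every remaining class either has no room to support a differential or is already accounted for. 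I expect the main obstacle to be $d_7(\Delta^4)$ — the one differential that genuinely cannot be seen inside the mANss alone and that forces the excursion into the Adams spectral sequence and Bruner's theorem — and, secondarily, the bookkeeping needed to track which products have become indecomposable on each page and to rule out stray differentials among them.
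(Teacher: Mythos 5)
Your plan follows the paper's proof essentially verbatim: the paper's argument for this theorem is precisely a pointer to the chain of propositions in \cref{sec:diff} that establish $d_5(\Delta)$ by comparison with the mAss, propagate it by the Leibniz rule, obtain $d_7(\Delta^4)$ via Bruner's theorem on the Adams side, treat the $v_1$-periodic family separately, and handle the higher differentials and vanishing cases one by one. Two small corrections: $c$ and $d$ are permanent cycles for degree reasons (through the 22-stem the $E_4$-page already equals the $E_\infty$-page), not ``because they detect $\epsilon$ and $\kappa$'' --- the paper defines those homotopy classes by their detecting elements, so that reasoning would be circular; and the completeness step is not purely a matter of sparseness, since $d_9(\Delta^4 c)=0$ and $d_9(\Delta^5 c)=0$ require the separate observation that their would-be targets are already hit by other $d_9$ differentials (\cref{d9-D^4c}).
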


\begin{proof}
The differentials are proved in the various propositions later in this section.
The last column of \cref{diff} indicates the specific proposition that proves
each differential.

Some indecomposables do not support differentials.  In most cases, this 
follows for degree reasons, i.e., because there are no possible targets.
\cref{d9-D^4c} handles two slightly more difficult cases.
\end{proof}

All differentials follow from straightforward
applications of the Leibniz rule to the ones listed in \cref{diff}.

\subsection{$d_3$ differentials}	\label{subsec:d3}

\begin{prop}
\label{prop:d3}
$(5, 1, 3)$
$d_3(h_1v_1^2)=\tau h_1^4$.
\end{prop}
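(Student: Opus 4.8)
The plan is to deduce this $d_3$ from the motivic Adams spectral sequence, in the spirit of item~(1) of the introduction. The algebraic input is the relation $\tau h_1^4 = 0$ in the mAss $E_2$-page for $\mmf$, i.e.\ in the cohomology of motivic $A(2)$: this follows from $\tau h_1^3 = h_0^2 h_2$ and $h_1 h_2 = 0$, since then $\tau h_1^4 = h_1 \cdot (\tau h_1^3) = h_0^2 (h_1 h_2) = 0$, and it can also be read directly off the charts of \cite{Isa09}. Moreover, the only class of the mAss $E_2$-page in stem $4$ is $h_1^4$, which lies in weight $4$, so the mAss $E_2$-page is zero in stem $4$, weight $3$. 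Since the mAss converges to $\pi_{*,*}\mmf$, this gives $\pi_{4,3}\mmf = 0$.

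First I would translate this into the mANss for $\mmf$. The class $\tau h_1^4$ in degree $(4,4,3)$ of the mANss $E_2$-page --- the weight-$3$ member of the classical family $h_1^4$ --- is non-zero and generates $E_2^{4,4,3} \cong \mathbb{Z}/2$; as the mANss has no $d_2$ for parity reasons, it also generates $E_3^{4,4,3}$. If $\tau h_1^4$ survived to $E_\infty$ it would detect a non-zero element of $\pi_{4,3}\mmf = 0$, which is impossible; and it cannot support a differential, since stem $3$ contains nothing in high enough filtration to be a target. Hence $\tau h_1^4$ must be hit. The only non-zero group of the mANss $E_2$-page in stem $5$ and weight $3$ is $E_2^{5,1,3}$, generated by $h_1 v_1^2$, in filtration $1$; since a $d_r$ into $(4,4,3)$ has source in filtration $4-r$, this forces $r = 3$, and therefore $d_3(h_1 v_1^2) = \tau h_1^4$.

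The one point that needs a little care is the motivic weight bookkeeping. Classically this is the differential $d_3(h_1 v_1^2) = h_1^4$ --- the base case of the $v_1$-periodic family of $d_3$'s, analogous to the Adams--Novikov spectral sequence for $\ko$ --- but over $\C$ the target picks up a factor of $\tau$ because $h_1 v_1^2$ has weight $3$ while $h_1^4$ has weight $4$. All of the facts used above are part of the algebraic structure of the two $E_2$-pages, which we take as given, so the argument is entirely algebraic.
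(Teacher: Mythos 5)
Your argument is correct and is essentially the paper's own proof: both use the vanishing of $\tau h_1^4$ in the mAss $E_2$-page (equivalently, $\pi_{4,3}\mmf = 0$) to force $\tau h_1^4$ to be hit in the mANss, and then observe that $d_3(h_1 v_1^2)$ is the only possible source. The paper packages the first step as the statement that $h_1^4$ corresponds to itself across the two spectral sequences and is annihilated by $\tau$ in the mAss, but this is the same computation you carry out via the weight bookkeeping in stem $4$.
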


\begin{proof}
In the mAss $E_2$-page, $h_1^4$ is a non-zero element that is annihilated by $\tau$.
By inspection, 
$h_1^4$ corresponds to the element of the same name in the mANss.
Therefore, $\tau h_1^4$ must be hit by an Adams-Novikov differential,
and there is only one possibility.
\end{proof}

\begin{prop}
\label{prop:d3bopattern}
$(12, 0, 6)$
$d_3(4a) = \ta P h_1^3$.
\end{prop}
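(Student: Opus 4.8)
The statement $d_3(4a) = \tau P h_1^3$ is a differential in the $v_1$-periodic portion of the mANss, so the natural approach is to deduce it by comparison with the mAss for $\mmf$, exactly as was done for the previous two propositions. First I would identify what must happen to $\tau P h_1^3$ in the mANss. In the mAss $E_2$-page, the element $P h_1^3$ is non-zero but annihilated by $\tau$ (this is part of the $v_1$-periodic pattern in $\Ext_{A(2)}$, visible in \cref{fig:E2v1}). By inspection of degrees, $P h_1^3$ in the mAss corresponds to the element of the same name in the mANss $E_2$-page. Since $P h_1^3$ supports no nonzero product with $\tau$ in the mAss, the class $\tau P h_1^3$ in the mANss $E_\infty$-page must vanish, hence $\tau P h_1^3$ must be hit by some Adams-Novikov differential.

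The second step is to check that the only possible source for a differential hitting $\tau P h_1^3$ is $4a$, with the differential necessarily a $d_3$. One inspects the mANss $E_2$-page (\cref{fig:E2v1}) in the stem and filtration just above $\tau P h_1^3$: in the $11$-stem, filtration and weight bookkeeping shows that $4a$ in degree $(12,0,6)$ is the unique class that can support a differential landing on $\tau P h_1^3$ in degree $(11, 3, 7)$, and the filtration jump of $3$ forces $r = 3$. Since $\tau P h_1^3$ is nonzero on the $E_2$-page and, by the first step, must die, and since $4a$ is the only available source, we conclude $d_3(4a) = \tau P h_1^3$ up to a unit; the precise coefficient is pinned down by the structure of the $v_1$-periodic family (the $\ko$-pattern of \cite[page~31]{Bau08}), which determines the constant.

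**Main obstacle.** The one subtle point is the coefficient and the choice of generator: $4a$ generates a cyclic group of order larger than $2$ in degree $(12,0,6)$ (this is precisely why we use the notation $4a$), so a priori the differential could be $d_3(4a) = \tau P h_1^3$ only after an appropriate choice of the generator $4a$, or there could be an issue of whether $4a$ itself, versus $2 \cdot 4a$, supports the differential. I would resolve this by comparing with the algebraic Novikov spectral sequence of \cite{Baer}, where $4a$ is detected by $h_0^2 a$: this shows $2\cdot 4a$ is not in the image of any differential and is a permanent cycle detecting a class in $\pi_{12,6}\mmf$, while $4a$ does support the $d_3$. Thus the differential is exactly $d_3(4a) = \tau P h_1^3$ with our chosen generator, and no nontrivial coefficient appears. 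This also matches the $v_1$-periodic $\ko$-pattern, completing the argument.
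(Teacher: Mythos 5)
Your plan is to rerun the proof of \cref{prop:d3} with $P h_1^3$ in place of $h_1^4$, but the premise on which everything rests fails: $P h_1^3$ is \emph{not} annihilated by $\tau$ in the mAss $E_2$-page. The motivic relation $h_0^2 h_2 = \tau h_1^3$ (the motivic form of the classical relation $h_0^2 h_2 = h_1^3$) gives $\tau P h_1^3 = h_0^2 h_2 P$, and this class is non-zero in the cohomology of motivic $A(2)$. Indeed it has to be: the homotopy class detected by $2 \cdot 4a$ is detected by $h_0^3 a$ in the mAss (see the remark explaining the notation $4a$), so $a$, $h_0 a$, $h_0^2 a$ must die in the mAss, and $h_2 P$, $h_0 h_2 P$, $h_0^2 h_2 P$ in the $11$-stem are precisely the classes positioned to receive those $d_2$'s. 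Thus $\tau P h_1^3$ does eventually vanish, but only by virtue of the topological Adams differential $d_2(h_0^2 a) = h_0^2 h_2 P$ --- exactly the kind of input the paper forbids itself (no prior knowledge of Adams differentials), and which in this program one wants to deduce \emph{from} the Adams--Novikov computation rather than feed into it. The contrast with \cref{prop:d3} is the whole point: $\tau h_1^4 = h_0^2 h_2 \cdot h_1 = 0$ because $h_0 h_1 = 0$, so the mAss comparison is available for $h_1^4$; one step along the $P$-tower it is not. (A smaller issue: you cite \cref{fig:E2v1} for a statement about the mAss $E_2$-page, but that figure is a chart of the mANss $E_2$-page; the paper contains no mAss charts.)

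The statement is of course still true, and the correct route is to push the comparison one $h_1$ higher, into the $12$-stem, where $\tau P h_1^4 = P h_0^2 h_2 h_1 = 0$. This is in effect what the paper does, packaged as a Leibniz-rule computation carried out entirely inside the mANss: $d_3(P) = 0$ for degree reasons, so \cref{prop:d3} gives $d_3(P \cdot h_1 v_1^2) = \tau P h_1^4 \neq 0$; the $E_2$-page relation $P \cdot h_1 v_1^2 = h_1 \cdot 4a$ then shows that $h_1 \cdot d_3(4a) \neq 0$, and $\tau P h_1^3$ is the only available value. Your closing discussion of the coefficient is then moot: once $d_3(h_1 \cdot 4a) \neq 0$ is known, the target $\tau P h_1^3$ has order $2$, so there is no unit ambiguity left to resolve.
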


\begin{proof}
For degree reasons, $d_3(P) = 0$. Thus \Cref{prop:d3} implies that
$d_3(P \cdot h_1v_1^2) = \ta P h_1^4.$ 
We have the relation $P \cdot h_1 v_1^2 = h_1 \cdot 4 a$ in the Adams-Novikov $E_2$-page. Note that this relation arises from a hidden
$h_1$ extension from $h_0^2 a$ to $P \overline{h_1^4}$ in the algebraic Novikov spectral sequence \cite{Baer}.
Therefore, $4a$ must also support a $d_3$ differential, and there 
is only one possibility.
\end{proof}

The Leibniz rule, combined with \cref{prop:d3} and \cref{prop:d3bopattern},
implies some additional $d_3$ differentials.
By inspection, the other multiplicative generators do not support
$d_3$ differentials. 

\begin{rem}
All of the $d_3$ differentials are $h_1$-periodic, in the sense that they
can be computed in the localization of the mANss $E_2$-page in which
$h_1$ is inverted.  This localized spectral sequence computes the
homotopy of the $\eta$-periodic spectrum $\mmf[\eta^{-1}]$.
See \cite[Section 6.1]{GI15} for a related discussion.
\end{rem}

\subsection{Corresponding pairs}
Earlier in \cref{sec:ASSANSS}, we discussed the notion of elements from the mANss
and from the mAss that correspond.  Having computed the $d_3$ differentials,
we are now in a position to establish a number of corresponding
pairs that will be used in later arguments.
	
\begin{thm}
\label{thm:ASSANSS}
\cref{table:ASSANSS} lists some pairs of elements that correspond.
\end{thm}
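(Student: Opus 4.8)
The plan is to assemble the correspondences in \cref{table:ASSANSS} from three kinds of input, treating the entries in roughly increasing order of difficulty. The easiest are the entries with no filtration shift between the mANss and the mAss, namely the pairs built multiplicatively from $h_1$, $h_2$, and $g$. Here one uses the motivic Thom reduction map of \cref{rem:Thom-reduction}: it is a map of multiplicative spectral sequences that sends the mANss classes $h_1$, $h_2$, $g$ to the mAss classes of the same name, and all three are permanent cycles in both spectral sequences, detecting $\eta$, $\nu$, and $\kappabar$. Hence each of these is a corresponding pair, and then products among them, as well as products with any corresponding pairs established later, also correspond by \cref{prop:correspond-product}, as long as the relevant products are non-zero in both $E_\infty$-pages, which one reads off the charts.

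The entries that do involve a filtration shift are handled by comparison with $\mmf/\tau$. Suppose $\alpha$ in $\pi_{*,*} \mmf$ is detected by a mANss class $x$ and is not divisible by $\tau$; then $i(\alpha)$ equals $x$, up to higher filtration, viewed as an element of $\pi_{*,*}(\mmf/\tau)$, which is the completely known classical Adams-Novikov $E_2$-page for $\tmf$. The mAss for $\mmf/\tau$ is the algebraic Novikov spectral sequence, whose $E_2$-page is the mAss $E_2$-page for $\mmf$ and whose $E_\infty$-page is $\pi_{*,*}(\mmf/\tau)$; it too is completely known \cite{Baer}. So $x$, regarded as a class in $\pi_{*,*}(\mmf/\tau)$, is detected in this spectral sequence by a definite class $y$ of the mAss $E_2$-page for $\mmf$. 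Since $i$ is a ring map, it induces a map from the mAss for $\mmf$ to the mAss for $\mmf/\tau$, compatibly with the map $i$ on homotopy; therefore $\alpha$ is detected in the mAss for $\mmf$ by a class mapping to $y$, and in each entry of the table this determines the mAss detecting class and yields the pair $x \leftrightarrow y$. For instance, $2 \leftrightarrow h_0$ uses only that $2 \neq 0$ in $\pi_{0,0} \mmf$, and $\Delta h_1 \leftrightarrow \Delta h_1$ follows once one knows that $\Delta h_1$ survives in both spectral sequences.

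A few entries, for example the one involving $4 \Delta^2$, concern a mANss class that is not itself a permanent cycle, and for these we first multiply by a suitable product of powers of $\tau$, $2$, and $g$ in order to reach a permanent cycle. The $d_3$ differentials of \cref{subsec:d3}, together with the Leibniz rule, are precisely what is needed here to decide which such multiples survive to $E_\infty$ and which class detects them; once we are at a permanent cycle, we argue as in the previous paragraph. The one caveat, discussed in \cref{edge-homomorphism}, is that the choice of $\Delta$, and hence of the edge homomorphism, is only well-defined up to multiples of $2$, so the correspondences for $4 \Delta^2$ and related elements must be read with the resulting ambiguity in the detecting class.

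The main obstacle is not a single hard step but the case-by-case bookkeeping: for each entry one must confirm that the proposed mANss and mAss detecting classes genuinely share a common homotopy element, which comes down to checking non-vanishing of products on both $E_\infty$-pages and identifying detecting classes in the algebraic Novikov spectral sequence. Crucially, none of this uses any information from the homotopy groups of the sphere spectrum, consistent with the algebraic philosophy of the paper.
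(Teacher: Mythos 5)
Your second paragraph is, in substance, the paper's own proof: push a homotopy class $\alpha$ detected by the mANss class $x$ into $\pi_{*,*}\mmf/\tau$ along $i$, read off its detecting class $y$ in the algebraic Novikov spectral sequence (identified with the mAss for $\mmf/\tau$), and use the induced map of motivic Adams spectral sequences to bound the Adams filtration of $\alpha$ above by the filtration of $y$; inspection of the few classes in that range then pins down the corresponding mAss element. Handling $h_1$, $h_2$, $g$ by Thom reduction is a harmless variant that the paper itself records in \cref{rem:Thom-reduction}. One small imprecision: the conclusion of the comparison is only that $\alpha$ is detected in filtration at most that of $y$, not that its detecting class maps to $y$ (the detecting class could a priori lie in lower filtration and die in the $E_\infty$-page of the mAss for $\mmf/\tau$); in every entry the subsequent inspection is what does the real work, so you should say that explicitly.

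The genuine gap is your third paragraph, i.e.\ the entries $4\Delta^2$ and $\Delta^4 d$. First, $4\Delta^2$ \emph{is} a permanent cycle --- that is precisely what must be shown --- and showing it requires $d_5(\Delta^2)=2\tau^2\Delta h_2 g$, hence $d_5(\Delta)$ from \cref{prop:d5_D}; for $\Delta^4 d$ one needs all differentials of length at most $11$. The $d_3$ differentials and the Leibniz rule are nowhere near sufficient, which is why the paper defers these two entries to \cref{lem:4D^2-correspond} and \cref{D^4d-correspond}, after the relevant differentials are available. Your proposed remedy --- multiply by powers of $\tau$, $2$, and $g$ until you reach a permanent cycle --- does not establish a correspondence for $4\Delta^2$ itself, since a correspondence is a statement about the element and cannot be divided back down. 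Second, even once $4\Delta^2$ is known to survive, the filtration bound coming from the algebraic Novikov class $\Delta^2 h_0^2$ (filtration $10$) leaves three candidates, $\Delta^2$, $\Delta^2 h_0$, and $\Delta^2 h_0^2$; the paper rules out the first two by noting that they map to classes detecting $\Delta^2$ and $2\Delta^2$, not $4\Delta^2$, in $\pi_{*,*}\mmf/\tau$. Your phrase ``this determines the mAss detecting class'' skips this step, and the appeal to the edge-homomorphism ambiguity of \cref{edge-homomorphism} is a red herring: no such ambiguity enters the proof of any entry of \cref{table:ASSANSS}.
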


\begin{longtable}{llll}
\caption{Some corresponding elements in the motivic Adams and motivic Adams-Novikov spectral sequences
\label{table:ASSANSS} 
} \\
\toprule
mANss degree & mANss element & mAss element & mAss degree  \\
\midrule \endhead
$(0,0, 0)$ & $2$ & $h_0$ & $(0,1,0)$ \\
$(1,1,1)$ & $h_1$ & $h_1$ & $(1,1,1)$ \\
$(3,1, 2)$ & $h_2$ & $h_2$ & $(3,1,2)$ \\
$(14,2, 8)$ & $d$ & $d$ & $(14,4,8)$ \\
$(20,4,12)$&  $g$ & $g$ & $(20,4,12)$ \\
$(25,1,13)$&  $\Delta h_1$ & $\Delta h_1$ & $(25,5,13)$ \\
$(27,1,14)$ & $2 \Delta h_2$  & $an$ & $(27, 6, 14)$ \\
$(48,0,24)$ & $4 \Delta^2$ & $\Delta^2 h_0^2$ & $(48,10,24)$ \\
$(110,2,56)$ & $\Delta^4 d$ & $\Delta^4 d$ & $(110, 20, 56)$ \\
\bottomrule
\end{longtable}

\begin{proof}
We discuss the correspondence between $2 \Delta h_2$ and $a n$ in detail.
Most of the other corresponding pairs are established with essentially the same argument.
Some slightly more difficult cases are established later in 
Lemmas 
\ref{lem:4D^2-correspond} and \ref{D^4d-correspond}.

For degree reasons, the element $2 \Delta h_2$ of the mANss for $\mmf$ 
 cannot 
support an Adams-Novikov differential, nor can it be hit by an
Adams-Novikov differential.
(Beware that $\Delta h_2$ does support a differential.)
Therefore, $2 \Delta h_2$ detects some element $\alpha$ in $\pi_{27,14} \mmf$.

The inclusion $i: \mmf \rightarrow \mmf/\tau$ induces a map
\begin{equation}
\label{eq:inclusion-Ass}
\xymatrix{
E_2(\mmf) \ar[r] \ar@{=>}[d] & E_2(\mmf/\tau) \ar@{=>}[d] \\
\pi_{*,*} \mmf \ar[r] & \pi_{*,*} \mmf/\tau
}
\end{equation}
of motivic Adams spectral sequences.  
The spectral sequence on the right is identified with
the algebraic Novikov spectral sequence that converges to the
classical Adams-Novikov $E_2$-page for $\tmf$ \cite{BWX21}.

The element $\alpha$ in the lower left corner
maps to $2 \Delta h_2$ in the lower right corner.
This latter element is detected by $a n$ in filtration
$6$ in the upper right corner \cite{Baer}.
Therefore, $\alpha$ is detected in the upper left corner in filtration at most $6$.
The only possible value is $a n$.
\end{proof}

\begin{rem}
Previous knowledge of the $d_3$ differentials is required in order to
conclude that $2 \Delta h_2$ (and other elements as well) does not
support an Adams-Novikov differential.  For example,
it is conceivable that $d_{25} (2 \Delta h_2) = \tau^{12} h_1^{26}$.
However, we already know that $\tau^{12} h_1^{26}$ is hit by the differential
$d_3( \tau^{11} h_1^{22} \cdot h_1 v_1^2)$.
\end{rem}

\subsection{$d_5$ differentials}
\label{subsctn:d5}

Having determined all $d_3$ differentials, one can mechanically compute
the $E_4$-page.  Through the 22-stem, no additional differentials
are possible for degree reasons, so the $E_4$-page equals the $E_\infty$-page
in that range.

\begin{prop}
\label{prop:d5_D}
$(24, 0, 12)$
There exists a generator $\Delta$ of 
the mANss $E_2$-page in degree $(24,0,12)$ such that
$d_5(\Delta)=\tau^2 h_2 g.$
\end{prop}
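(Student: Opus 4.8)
The plan is to prove this differential by comparison to the motivic Adams spectral sequence for $\mmf$, exploiting the fact that the target class $\tau^2 h_2 g$ is already easier to handle there. First I would observe that the element $\Delta$ in degree $(24,0,12)$ is a permanent cycle or not — but in fact the strategy is reversed: I want to show that the putative target $\tau^2 h_2 g$ of degree $(23,5,12)$ (one stem below, filtration $5$) must be killed in the mANss, and that the only class capable of killing it is $\Delta$ (up to adjusting the choice of generator, which is why the statement says ``there exists a generator $\Delta$''). The key input is the structure of the mAss $E_2$-page: by inspection of the cohomology of $\C$-motivic $A(2)$ (as recorded in \cite{Isa09}), the product $\tau^2 h_2 g$ is \emph{zero} in the mAss $E_2$-page. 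Equivalently, there is a relation in $\Ext_{A(2)}$ forcing $\tau^2 h_2 g = 0$ at the level of the Adams $E_2$-page, even though $h_2 g$ itself is nonzero.

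The main steps, in order: (1) Identify the degree of $\tau^2 h_2 g$ in the mANss as $(23,5,12)$, and note via the charts in \cref{fig:E2v1,fig:E2d7} that it is a nonzero element of the mANss $E_2$-page (it is a non-$v_1$-periodic class surviving to $E_4$ since no $d_3$ can hit or originate from it for degree/filtration reasons). (2) Use the correspondence machinery of \cref{sec:ASSANSS}: since $h_2$ and $g$ correspond to the mAss elements of the same name (see \cref{table:ASSANSS}), and $\tau$-multiplication is compatible, a class detected by $\tau^2 h_2 g$ in the mANss would have to be detected by $\tau^2 h_2 g$ — or something in higher mAss filtration — in the mAss. (3) Check the relevant mAss group in stem $23$, weight $12$: because $\tau^2 h_2 g$ vanishes in the mAss $E_2$-page and there is nothing in strictly higher Adams filtration in that degree that could detect a nonzero homotopy class mapping appropriately, conclude that $\tau^2 h_2 g$ cannot be a permanent cycle in the mANss surviving to $E_\infty$. (4) Since $\tau^2 h_2 g$ is not hit by any $d_3$ (no source exists in stem $24$, filtration $2$, weight $12$ among the $E_4$ classes — check the charts), it must be hit by a $d_5$; the only possible source in stem $24$, filtration $0$, weight $12$ is a generator of the group containing $\Delta$. (5) Finally, because the group in degree $(24,0,12)$ may be cyclic of order $>2$, we choose $\Delta$ to be the specific generator mapping to $\tau^2 h_2 g$ under $d_5$ (rather than, say, $2\Delta$ plus a class in the kernel); this is exactly the freedom flagged in \cref{h2-choice} and the discussion preceding \cref{Delta-defn}.

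The hard part will be step (4): verifying that $\tau^2 h_2 g$ genuinely cannot be hit by a $d_3$ and that no \emph{other} $d_5$ (say from a class in the same stem but a different part of the $E_4$-page) could account for it, so that the differential is forced onto $\Delta$. This requires a careful reading of the mANss $E_4$-page in stems $23$--$24$, weight $12$, including confirming that all $v_1$-periodic contributions in that range have been accounted for separately (per \cref{sec:bo}) and that the remaining non-$v_1$-periodic classes leave $\Delta$ (or its associate $2\Delta$, ruled out by the order of the target class $\tau^2 h_2 g$, which one checks is $2$-torsion) as the unique possibility. Once the bookkeeping confirms uniqueness, the conclusion $d_5(\Delta) = \tau^2 h_2 g$ follows immediately, and — as the introduction emphasizes — this gives an entirely algebraic proof, since the vanishing $\tau^2 h_2 g = 0$ in the mAss $E_2$-page and the nonvanishing of $\tau^2 h_2 g$ in the mANss $E_2$-page are both purely $\Ext$-theoretic facts.
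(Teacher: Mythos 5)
Your overall strategy is the same as the paper's: use the vanishing of $\tau^2 h_2 g$ in the mAss $E_2$-page together with the correspondence between the mANss element $h_2 g$ and the mAss element $h_2 g$ (via \cref{table:ASSANSS} and \cref{prop:correspond-product}) to force $\tau^2 h_2 g$ to be hit by an Adams-Novikov differential, and then observe that the only possible source is a generator in degree $(24,0,12)$. However, there is a genuine gap in your step (3). You assert that ``there is nothing in strictly higher Adams filtration in that degree that could detect a nonzero homotopy class,'' but this is exactly the point that requires an argument, and as stated it is not true: the $23$-stem of the mAss $E_2$-page does contain classes above filtration $5$ in the relevant weight (for instance $\tau^3 h_1^3 g$ in degree $(23,7,12)$, which reflects the fact that $4\nu\bar\kappa = \tau\eta^3\bar\kappa$ is nonzero --- compare the hidden $2$ extension of \cref{lemma:h0ext3} and \cref{rem:h0ext3}). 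So the vanishing of $\tau^2 h_2 g$ in the mAss $E_2$-page does not by itself imply that the homotopy class detected by $h_2 g$ is annihilated by $\tau^2$: a priori $\tau h_2 g$ could support a \emph{hidden} $\tau$ extension into that higher filtration. The paper closes this loophole by noting that $\overline{\tau h_2 g}$ lives in $\pi_{*,*}\mmf/\tau$ and maps to $\tau h_2 g$ under projection to the top cell, so $\tau h_2 g$ detects homotopy classes annihilated by $\tau$; some such step is indispensable before you can conclude that $\tau^2 h_2 g$ must die in the mANss.

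Two smaller remarks. First, your step (4) worry about other $d_5$ sources is a non-issue: the only class in degree $(24,0,12)$ of the $E_4$-page is the $\mathbb{Z}[\tau]$ generated by $\Delta$, so linearity of $d_5$ forces the differential onto an odd multiple of the generator. Second, your parenthetical that the target ``is $2$-torsion'' is not quite right --- $\tau h_2 g$ has order $4$ (see \cref{rem:q(Delta)-detect}) --- but the relevant fact, which the paper uses, is only that $\tau^2 h_2 g$ is not divisible by $2$, so the source of the differential must be a generator; your conclusion survives this correction.
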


\begin{proof}
The mAss element $h_2 g$ is annihilated by $\tau^2$ in the $E_2$-page.
Moreover, $\tau h_2 g$ does not support a hidden $\tau$ extension
in the mAss because of the presence of $\overline{\tau h_2 g}$ in
the homotopy of $\mmf/\tau$.  More precisely, projection to the top cell
takes $\overline{\tau h_2 g}$ to $\tau h_2 g$, so $\tau h_2 g$ must detect
homotopy elements that are annihilated by $\tau$.

The mANss element $h_2 g$ corresponds to the mAss element $h_2 g$
because of \cref{table:ASSANSS} and \cref{prop:correspond-product}.
Therefore, $\tau^2 h_2 g$ must be hit by an Adams-Novikov differential.
The only possibility is a $d_5$ differential whose source is in degree
$(24,0,12)$.  Since $\tau^2 h_2 g$ is not a multiple of $2$,
the source of the differential must be a generator.
\end{proof}

\begin{rem}
\label{Delta-defn}
$(24,0,12)$
\cref{prop:d5_D} does not uniquely specify $\Delta$.
Since $4 \tau^2 h_2 g$ is zero in the mANss $E_2$-page,
$\Delta$ is only well-defined up to multiples of $4$.
Later in \cref{nu-defn} we will make a further refinement in the
definition of $\Delta$.
Also note that the choice of $\Delta$ depends
on a previous choice of $h_2$, as in \cref{h2-choice}.
\end{rem}

The Leibniz rule, together with \cref{prop:d5_D}, implies additional
$d_5$ differentials.  The other multiplicative generators of the
$E_5$-page do not support differentials.

Of particular note is the differential 
\[
d_5(\Delta^2) = 2 \Delta d_5(\Delta) = 2 \tau^2 \Delta h_2 g.
\]
This easy computation is an Adams-Novikov version of Bruner's theorem
on the interaction between Adams differentials and
algebraic squaring operations \cite{BMM86} \cite{Bruner84}.
However, its corresponding Adams differential
$d_2(\Delta^2) = \tau^2 a n g$ is not as easy to obtain by direct
analysis of the Adams spectral sequence \cite{BR21}.
The difficulty is that $\Delta^2$ is not the value of an algebraic
squaring operation since $\Delta$ is not present in the Adams $E_2$-page.
By ``postponing" the differential that hits $\tau^2 h_2 g$ from
algebra to topology, we obtain an easier argument for the differential
on $\Delta^2$.

\begin{lem}
\label{lem:4D^2-correspond}
$(48, 0, 24)$
The element $4 \Delta^2$ of the mANss for $\mmf$ corresponds to
$\Delta^2 h_0^2$ in the mAss for $\mmf$.
\end{lem}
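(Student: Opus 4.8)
The plan is to imitate the argument for the correspondence $2\Delta h_2 \leftrightarrow an$ given in the proof of \cref{thm:ASSANSS}: use the inclusion $i\colon \mmf \to \mmf/\tau$, the induced commuting square \eqref{eq:inclusion-Ass} of motivic Adams spectral sequences, and the identification of the mAss for $\mmf/\tau$ with the algebraic Novikov spectral sequence that converges to the classical Adams--Novikov $E_2$-page for $\tmf$. The extra work, and the main obstacle, is that $4\Delta^2$ is not obviously a permanent cycle in the mANss for $\mmf$; unlike $2\Delta h_2$, this does not follow purely for degree reasons.

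First I would show that $4\Delta^2$ survives to the mANss $E_\infty$-page. It cannot be hit, since it has Adams--Novikov filtration $0$. By the Leibniz rule and \cref{prop:d5_D}, $d_5(\Delta^2) = 2\tau^2 \Delta h_2 g$, so
\[
d_5(4\Delta^2) = 8 \tau^2 \Delta h_2 g = 2\Delta \cdot (4 \tau^2 h_2 g) = 0
\]
by \cref{Delta-defn}; the same computation gives $d_5(2\Delta^2) = 0$, so both $2\Delta^2$ and $4\Delta^2$ survive to $E_7$. For $r \ge 7$, a differential on $4\Delta^2$ lands in $E_r^{47,r,24}$, which by the weight constraint is a $\tau$-power multiple of a subquotient of stem $47$ of the classical Adams--Novikov $E_2$-page for $\tmf$. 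For $r \ge 9$ these groups vanish, since the high-filtration classes in stem $47$ are the $h_1$-power towers, which are killed by $d_3$ differentials supported on $h_1 v_1^2$-multiples. For $r = 7$, the group $E_7^{47,7,24}$ is spanned by $\tau$-power multiples of $\Delta h_1^3 g$ and of the $v_1$-periodic classes $P^5 h_1^7$ and $P^2 \Delta h_1^7$, all of which are annihilated by $2$; since $4\Delta^2 = 2 \cdot 2\Delta^2$ on the $E_7$-page, we get $d_7(4\Delta^2) = 2\, d_7(2\Delta^2) = 0$. Hence $4\Delta^2$ is a permanent cycle and detects a nonzero element $\alpha \in \pi_{48,24}\mmf$, which after adjusting within its coset we may take to be not divisible by $\tau$, so that $i(\alpha) = 4\Delta^2$ in $\pi_{48,24}\mmf/\tau$.

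Then I would run the comparison exactly as for $2\Delta h_2$. In $\pi_{48,24}\mmf/\tau$, which is the classical Adams--Novikov $E_2$-page for $\tmf$, the element $4\Delta^2$ is detected by $h_0^2 \Delta^2$ in the algebraic Novikov spectral sequence, just as $4a$ is detected there by $h_0^2 a$. Chasing \eqref{eq:inclusion-Ass}, the element $\alpha$ is detected in the mAss for $\mmf$ by a class of filtration at most $10$ that maps to $h_0^2 \Delta^2$ under $i$. Since stem $48$, weight $24$ of the mAss $E_\infty$-page for $\mmf$ contains no class of filtration below $8$, and its only classes of filtration at most $10$ are $\Delta^2$, $\Delta^2 h_0$, and $\Delta^2 h_0^2$, of which only $\Delta^2 h_0^2$ maps to $h_0^2 \Delta^2$, the element $\alpha$ must be detected by $\Delta^2 h_0^2$ in the mAss. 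Therefore $4\Delta^2$ and $\Delta^2 h_0^2$ both detect $\alpha$, i.e.\ they correspond. Everything after the permanent-cycle verification is a direct transcription of the $2\Delta h_2$ argument; the genuine content is the control of the higher mANss differentials on $4\Delta^2$ carried out in the second paragraph.
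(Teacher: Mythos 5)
Your proposal is correct and follows essentially the same route as the paper: establish that $4\Delta^2$ is a permanent cycle using $d_5(\Delta^2)=2\tau^2\Delta h_2 g$, then chase the square of Adams spectral sequences induced by $i\colon \mmf\to\mmf/\tau$ to pin down the detecting class among $\Delta^2$, $\Delta^2 h_0$, and $\Delta^2 h_0^2$ via the algebraic Novikov filtration of $4\Delta^2$. The only difference is that you spell out the vanishing of $d_r(4\Delta^2)$ for $r\geq 7$ in detail (correctly, and without circularity, since $d_7(4\Delta^2)=2\,d_7(2\Delta^2)$ lands in a group killed by $2$), where the paper compresses this step into ``for degree reasons.''
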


\begin{proof}
Having established that $d_5(\Delta^2) = 2 \tau^2 \Delta h_2 g$ 
as a consequence of the Leibniz rule and \cref{prop:d5_D},
we conclude that $4 \Delta^2$ does not support an Adams-Novikov
differential for degree reasons.  (Beware that
$2 \Delta^2$ does support a differential, but we do not need to know
that already.)
Note that $4 \Delta^2$ is detected in the algebraic Novikov spectral
sequence by $\Delta^2 h_0^2$, which has filtration $10$.
Using the argument in the proof of \cref{thm:ASSANSS}, we conclude that
$4 \Delta^2$ corresponds to an element in the mAss with filtration at most
$10$.  However, there are three possibilities: $\Delta^2$, $\Delta^2 h_0$,
and $\Delta^2 h_0^2$.

The top horizontal map of Diagram (\ref{eq:inclusion-Ass}) takes
$\Delta^2$ and $\Delta^2 h_0$ to elements of the same name.
These elements detect $\Delta^2$ and $2 \Delta^2$ in the
Adams-Novikov $E_2$-page.  This means that $4 \Delta^2$
cannot correspond to $\Delta^2$ or $\Delta^2 h_0$.
\end{proof}

\subsection{$d_7$ differentials}
\label{subsec:d7}

The main goal of this section is to establish some $d_7$ differentials in 
\cref{prop:d7} and \cref{prop:d7two}.  In order to obtain these
differentials, we will need some hidden extensions and some later differentials.
We establish these other results first, in order to preserve strict
logical order.
	
\begin{lem}\label{lemma:h0ext3}
$(3, 1, 2)$
There is a hidden $2$ extension from
$2h_2$ to $\ta h_1^3.$ 
\end{lem}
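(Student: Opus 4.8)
The statement to prove is that there is a hidden $2$ extension from $2h_2$ to $\tau h_1^3$ in the mANss for $\mmf$, in degree $(3,1,2)$. The plan is to use \cref{method:ctau}, since $\tau h_1^3$ is the sort of $\tau$-torsion class that this method is designed to detect. First I would identify the relevant $\tau$-torsion. The class $h_1^3$ has the property that $\tau^2 h_1^3$ is hit by the $d_3$ differential $d_3(h_1 v_1^2) = \tau h_1^4$ after multiplication — more precisely, $d_3(h_1^2 \cdot h_1 v_1^2) = \tau h_1^5$, so one should instead work with an appropriate $g$- or $\tau$-multiple. Actually the cleanest route: the class $h_1 v_1^2$ supports $d_3(h_1 v_1^2) = \tau h_1^4$ by \cref{prop:d3}, so $\tau h_1^4$ dies, but $\tau h_1^3$ itself survives to $E_\infty$ (it is $h_1$ times the surviving $\tau h_1^2$), and it is $\tau$-torsion because $\tau^2 h_1^3 = \tau \cdot (\tau h_1^3)$ — I would need to check that $\tau^2 h_1^3$ is indeed hit, which follows since $\tau^2 h_1^3 = \tau h_1^2 \cdot \tau h_1$ is in the image of... hmm, this requires care. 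The safe statement is that $\tau h_1^3$ detects a homotopy class annihilated by $\tau$, which I verify by the projection-to-top-cell argument below.

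The main step is as follows. Both $2h_2$ and $\tau h_1^3$ are classes in $\pi_{3,2}\mmf$. The class detected by $\tau h_1^3$ is annihilated by $\tau$ (one checks there is no $\tau^2 h_1^3$ surviving, or equivalently that $\tau$ times it lands in filtration where everything is hit). Hence by \cref{method:ctau} it lies in the image of $q\colon \pi_{4,1}\mmf/\tau \to \pi_{3,2}\mmf$, with preimage $\overline{\tau h_1^3}$ computed via \cref{topcell} from whatever differential kills $\tau^2 h_1^3$. Meanwhile I would exhibit $2h_2$ as $q$ of an appropriate element of $\pi_{*,*}\mmf/\tau$: since $h_2$ supports $d_5(\Delta h_2 \cdot \tfrac{\Delta^{k-1}}{\ldots})$... no — more directly, $2h_2$ itself may not be in the image of $q$, so instead I would follow the strategy used in \cref{exmp:hidextn}: multiply the whole extension by a suitable power of $\tau$ and $g$ to move both source and target into the $\tau$-torsion range, apply \cref{topcell} to compute both preimages in the algebraic object $\pi_{*,*}\mmf/\tau$, observe that the preimages are connected by a $2$ extension there (which is a purely algebraic fact about the classical Adams–Novikov $E_2$-page for $\tmf$, where $2 \cdot h_2 = h_1^3$ — this is a standard relation coming from the elliptic curve Hopf algebroid), and conclude by \cref{prop:correspond-product} / naturality of $q$ that the original extension holds. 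Concretely: after multiplying by $\tau g$ we look at $2 h_2 g$ and $\tau^2 h_1^3 g$ in the $23$-stem, where $\tau^2 h_2 g$ is killed by $d_5(\Delta)$ (\cref{prop:d5_D}), so both classes are $\tau$-torsion; apply $q$ and read off the relation.

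The hard part will be bookkeeping the exact power of $\tau$ and $g$ needed so that both $2h_2$ (times that power) and $\tau h_1^3$ (times that power) become genuinely $\tau$-torsion classes detected in the right filtration, and then correctly invoking \cref{topcell} to name the two preimages in $\pi_{*,*}\mmf/\tau$. Once the preimages are named, the $2$ extension between them in $\mmf/\tau$ is immediate by inspection of the classical Adams–Novikov $E_2$-page for $\tmf$ (the relation $2h_2 = h_1^3$ there), and the indeterminacy/filtration conditions of \cref{def:hiddenext} are checked by noting there are no classes of intermediate filtration in degree $(3,1,2)$. I would also double-check that the $2$ extension does not come from a non-hidden source, i.e.\ that $2 \cdot h_2 = 0$ already in the mANss $E_\infty$-page (true: $2h_2$ is a nonzero $E_\infty$-class, and $\tau h_1^3$ lies strictly above it in filtration, so the extension is genuinely hidden).
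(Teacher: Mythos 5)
Your approach has a genuine circularity problem that the paper explicitly warns about. To run \cref{method:ctau} in the $23$-stem you must know that $\tau^2 h_1^3 g$ detects a class annihilated by $\tau$, i.e.\ that $\tau^3 h_1^3 g$ is hit by a differential. The only differential that can hit it is $d_7(4\Delta)=\tau^3 h_1^3 g$ (\cref{prop:d7}), and the paper's proof of that differential uses precisely the hidden $2$ extension from $2h_2 g$ to $\tau h_1^3 g$ --- that is, the $g$-multiple of the statement you are trying to prove (see \cref{rem:h0ext3} and the remark following \cref{prop:d7}, which says verbatim that ``an independent proof of \cref{lemma:h0ext3} is necessary in order to avoid a circular argument''). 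Your fallback of working directly in the $3$-stem also fails: $\pi_{3,2}\mmf\cong\Z/8$ is $\tau$-torsion-free, and $\tau h_1^3$ there detects $4\nu=\tau\eta^3\neq 0$, which is not annihilated by $\tau$, so \cref{method:ctau} does not apply. A smaller but real error: there is no relation $2h_2=h_1^3$ in $\pi_{*,*}\mmf/\tau$ (the classical Adams--Novikov $E_2$-page is a bigraded ring and these elements lie in filtrations $1$ and $3$); the preimages one would actually want are $2\Delta$ and $4\Delta$, related by an honest multiplication by $2$, but naming $4\Delta$ as the preimage of $\tau^2 h_1^3 g$ again requires $d_7(4\Delta)$.

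The paper's proof goes a different, non-circular route: by \cref{table:ASSANSS} and \cref{prop:correspond-product}, the mANss element $2h_2$ corresponds to the mAss element $h_0 h_2$, and $h_0^2 h_2\neq 0$ is a non-hidden, purely algebraic fact in the Adams $E_2$-page. Hence the homotopy class $2\nu$ is not killed by $2$, so $2h_2$ must support a hidden $2$ extension in the mANss, and $\tau h_1^3$ is the only possible target. This is exactly the kind of situation the paper highlights where the Adams and Adams--Novikov $E_2$-pages carry complementary information; you need the Adams side here because the Adams--Novikov side cannot bootstrap itself.
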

\begin{proof}
According to \cref{table:ASSANSS} and \cref{prop:correspond-product},
the mANss element $2 h_2$ corresponds to the mAss element $h_0 h_2$.
The element $h_0 h_2$ supports an $h_0$ extension in the mAss $E_2$-page, 
so $2 h_2$ must
support a $2$ extension in the mANss.  There is only one possible target
for this extension.
\end{proof}

\begin{rem}
\label{rem:h0ext3}
The hidden extension of \cref{lemma:h0ext3} is the first in an
infinite family of similar hidden extensions from the elements
$2 h_2 g^k$ to the elements $\tau h_1^3 g^k$.
For $k \geq 1$, these extensions are ``exotic'' in the sense that they
do not occur classically, since both $2 h_2 g^k$ and $h_1^3 g^k$ are the
targets of classical Adams-Novikov differentials.
\end{rem}

\begin{lem} \label{lem:2-Dh0h2}
$(27, 1, 14)$
There is a hidden $2$ extension from $2 \Delta h_2$
to $\tau \Delta h_1^3$.
\end{lem}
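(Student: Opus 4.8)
The plan is to mimic the proof of \cref{lemma:h0ext3}, transporting the hidden $2$ extension on $2h_2$ up by a power of $\Delta^2$ (or rather by using the corresponding pair for $4\Delta^2$ together with multiplicativity of correspondences). First I would recall from \cref{thm:ASSANSS} that $2\Delta h_2$ corresponds to the mAss element $an$, and from \cref{lem:4D^2-correspond} that $4\Delta^2$ corresponds to $\Delta^2 h_0^2$. The idea is that the mAss carries a visible $h_0$ extension in the relevant stem, which then forces the hidden $2$ extension in the mANss.

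Concretely, in the mAss $E_2$-page (equivalently the cohomology of $\C$-motivic $A(2)$), I would exhibit an $h_0$ extension whose source detects the same homotopy class as $2\Delta h_2$ does. The natural candidate is an $h_0$ extension from $an$ in degree $(27,6,14)$ to some element in degree $(27,7,14)$; by inspection of \cite{Isa09} the target should be $\tau \Delta h_1^3$-detecting in the mAss, i.e. the element corresponding to $\tau\Delta h_1^3$ in the mANss. (More precisely, $\tau\Delta h_1^3$ in the mANss corresponds to a suitable $h_1^3$-multiple of $\Delta h_1$-type class, namely the mAss class in filtration $8$ in degree $(28,8,15)$ — wait, degrees: $2\Delta h_2$ sits in $(27,1,14)$ and $\tau \Delta h_1^3$ in $(27,4,14)$, so on the mAss side we are looking at $an$ in $(27,6,14)$ and its $h_0$-product in $(27,7,14)$.) Once the mAss $h_0$ extension is in hand, the correspondence of \cref{thm:ASSANSS} together with \cref{prop:correspond-product} forces a $2$ extension in the mANss: the class $\alpha$ detected by $2\Delta h_2$ satisfies $2\alpha \neq 0$, and $2\alpha$ is detected by whatever mANss element corresponds to the mAss target, which by degree considerations must be $\tau\Delta h_1^3$ (there being a unique candidate in degree $(27,4,14)$). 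I would also note that $2\cdot 2\Delta h_2 = 0$ already in the mANss $E_2$-page (since $4h_2 = 0$ in the mANss for $\mmf$), so the $2$ extension is genuinely hidden.

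An alternative, and probably cleaner, route is to multiply the extension of \cref{lemma:h0ext3} by $4\Delta^2$ and exploit \cref{prop:correspond-product}: the pair $(2h_2, h_0 h_2)$ is a corresponding pair realizing a hidden $2$ extension to $(\tau h_1^3, \cdot)$, and multiplying by the corresponding pair $(4\Delta^2, \Delta^2 h_0^2)$ should produce a hidden $2$ extension from $8\Delta^2 h_2$ to $4\tau \Delta^2 h_1^3$ — but that lands in the wrong stem ($99$, not $27$), so this is really the template for later lemmas, not this one. For the $27$-stem I would instead stay with the direct correspondence argument above. The main obstacle I anticipate is verifying the input $h_0$ extension in the mAss $E_2$-page: one must check, by inspection of the ring structure of the cohomology of $\C$-motivic $A(2)$ as computed in \cite{Isa09} (or equivalently the algebraic Novikov computation of \cite{Baer}), that $h_0 \cdot an$ is the nonzero class in degree $(27,7,14)$ and that this class is exactly the one corresponding to $\tau\Delta h_1^3$, so that there is a unique possible target $\tau\Delta h_1^3$ for the mANss extension and no competing higher-filtration class spoils the deduction. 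This is a finite check in explicitly known $\Ext$ groups, so it is routine but must be done carefully; everything else follows formally from the correspondence machinery of \cref{sec:ASSANSS}.
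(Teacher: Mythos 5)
Your main argument is essentially the paper's proof: the paper likewise uses the correspondence of $2\Delta h_2$ with $an$ from \cref{table:ASSANSS} and the mAss $E_2$-page relation $h_0 \cdot an = \tau\Delta h_1^3$ to force the hidden $2$ extension, identifying the target via the corresponding pair $\Delta h_1\cdot h_1^2 \leftrightarrow \Delta h_1^3$ rather than by a pure degree count. The only blemishes are cosmetic: $\tau\Delta h_1^3$ sits in mANss degree $(27,3,14)$, not $(27,4,14)$, and the $h_0\cdot an$ relation is exactly the kind of $\Ext_{A(2)}$ input the paper takes as given, so your anticipated ``obstacle'' is just the routine inspection you describe.
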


\begin{proof}
We already observed in \cref{table:ASSANSS} 
that $2 \Delta h_2$ and $\Delta h_1 \cdot h_1^2$
correspond to $a n$ and $\Delta h_1^3$ in the mAss.
In the mAss $E_2$-page, we have the relation $h_0 \cdot a n = \tau \Delta h_1^3$.
Therefore, there must be a hidden $2$ extension between the corresponding
Adams-Novikov elements.
\end{proof}

\begin{prop}
\label{prop:d7}
\mbox{}
\begin{enumerate}
\item 
$(24, 0, 12)$
$d_7(4\Delta)=\tau^3 h_1^3 g$.
\item
$(48, 0, 24)$
$d_7(2\Delta^2)=\tau^3 \Delta h_1^3 g$.
\end{enumerate}
\end{prop}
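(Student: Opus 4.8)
The plan is to derive both differentials from $d_5(\Delta) = \tau^2 h_2 g$ of \cref{prop:d5_D}, combining the Leibniz rule, the hidden $2$-extensions of \cref{lemma:h0ext3} and \cref{lem:2-Dh0h2}, and the top-cell machinery of \cref{topcell}; the two parts run in perfect parallel. First I would record that $4\Delta$ and $2\Delta^2$ survive to the $E_7$-page. The Leibniz rule together with \cref{prop:d5_D} gives $d_5(4\Delta) = 4\tau^2 h_2 g$ and $d_5(2\Delta^2) = 4\tau^2\Delta h_2 g$, and both of these already vanish in the mANss $E_2$-page: by inspection of the $E_2$-page \cite{Baer}, the elements $h_2$ and $\Delta h_2$ have order $4$, so $4 h_2 g = 4\Delta h_2 g = 0$. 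Since $4\Delta$ and $2\Delta^2$ lie in filtration $0$ they cannot be hit, and by inspection of the $d_3$ differentials they support no $d_3$; hence they survive to $E_7$.

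Next I would apply \cref{topcell} to $\Delta$ and to $\Delta^2$, neither of which is divisible by $\tau$. From $d_5(\Delta) = \tau^2 h_2 g$ we learn that $q(\Delta) \in \pi_{23,13}\mmf$ is detected by $\tau h_2 g$, and from $d_5(\Delta^2) = 2\tau^2\Delta h_2 g$ (Leibniz) that $q(\Delta^2) \in \pi_{47,25}\mmf$ is detected by $2\tau\Delta h_2 g$. Now I would multiply by $2$. Multiplying the hidden $2$-extension of \cref{lemma:h0ext3} (resp.\ \cref{lem:2-Dh0h2}) by the permanent cycle $\tau g$ yields a hidden $2$-extension from $2\tau h_2 g$ to $\tau^2 h_1^3 g$ (resp.\ from $2\tau\Delta h_2 g$ to $\tau^2\Delta h_1^3 g$); this extension propagates because one checks that $2\tau h_2 g$, $2\tau\Delta h_2 g$, $\tau^2 h_1^3 g$, and $\tau^2\Delta h_1^3 g$ all remain nonzero on the $E_\infty$-page. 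Since $q$ is additive, it follows that $q(4\Delta) = 4 q(\Delta)$ is detected by $\tau^2 h_1^3 g$ and $q(2\Delta^2) = 2 q(\Delta^2)$ is detected by $\tau^2\Delta h_1^3 g$; in particular both are nonzero, and each has filtration $7$.

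Finally, since $q(4\Delta)$ and $q(2\Delta^2)$ are nonzero, \cref{rem:topcell} shows that $4\Delta$ and $2\Delta^2$ are not permanent cycles, so each supports some differential $d_{2r+1}$. By \cref{topcell} the image under $q$ of such a filtration-$0$ source has filtration $2r+1$; since $q(4\Delta)$ and $q(2\Delta^2)$ have filtration $7$, we get $r = 3$, so the differentials are $d_7$'s, and comparing targets via \cref{topcell} (signs are immaterial at the prime $2$) forces $d_7(4\Delta) = \tau^3 h_1^3 g$ and $d_7(2\Delta^2) = \tau^3\Delta h_1^3 g$. The main obstacle is the middle step: establishing that $q(4\Delta)$ and $q(2\Delta^2)$ are detected precisely in filtration $7$. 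This rests on \cref{lemma:h0ext3} and \cref{lem:2-Dh0h2} together with the handful of facts about the purely algebraic $E_2$-page and the already-computed $d_3$, $d_5$ differentials cited above ($4h_2 = 4\Delta h_2 = 0$ in the $E_2$-page, and the survival to $E_\infty$ of the relevant $\tau$-power multiples of $h_2 g$ and $h_1^3 g$). These are routine chart inspections, but they are where the content lies; the remainder of the argument is formal.
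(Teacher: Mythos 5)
Your proof is correct, and it uses exactly the same three inputs as the paper's --- $d_5(\Delta)=\tau^2 h_2 g$ from \cref{prop:d5_D}, the Leibniz rule, and the hidden $2$ extensions of \cref{lemma:h0ext3} and \cref{lem:2-Dh0h2} pushed up by $g$-multiplication --- but it runs the logic in the dual direction. The paper works on the target side: $2\tau^2 h_2 g$ is hit by $d_5(2\Delta)$, so by the hidden $2$ extension from $2h_2 g$ to $\tau h_1^3 g$ the class $\tau^3 h_1^3 g$ must also be hit, and $4\Delta$ is the only possible source (likewise for $2\Delta^2$). You work on the source side: you compute $q(4\Delta)=4q(\Delta)$ using \cref{topcell} together with the hidden extensions, find that it is nonzero and detected in filtration $7$ by $\tau^2 h_1^3 g$, and then read the length and target of the differential back off of \cref{topcell} and \cref{rem:topcell}. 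This is essentially the observation the paper records only in hindsight in the remark following \cref{prop:d7}, namely that these hidden $2$ extensions are instances of \cref{method:ctau} relating $2\Delta$ and $4\Delta$ in $\pi_{*,*}\mmf/\tau$; you have turned that remark into the proof, and, like the paper, you avoid circularity because \cref{lemma:h0ext3} and \cref{lem:2-Dh0h2} are established independently via the Adams spectral sequence. The price is the handful of extra verifications you correctly flag: the survival of $2\tau h_2 g$, $2\tau\Delta h_2 g$, $\tau^2 h_1^3 g$, and $\tau^2\Delta h_1^3 g$ to $E_\infty$ (these do follow by degree reasons from the $d_3$ and $d_5$ differentials alone, so there is no hidden dependence on the $d_7$ being proved), plus the observation that the ambiguity in ``an element detected by $2\tau h_2 g$'' lies in filtration at least $7$ and is killed by $2$, so that $4q(\Delta)$ really is detected by $\tau^2 h_1^3 g$. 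The paper's target-side phrasing buys a two-line proof, since ``only one possible source'' is a single chart inspection; yours buys a template that generalizes directly to the later $q(\Delta^{k+1})$ computations in \cref{sec:nu_k}.
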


\begin{proof}
\cref{prop:d5_D} says that $\tau^2 h_2 g$ is hit by an Adams-Novikov differential, so $2 \tau^2 h_2 g$ is also hit by an Adams-Novikov differential.
\cref{rem:h0ext3} says that there is a hidden $2$ extension from
$2 h_2 g$ to $\tau h_1^3 g$.  Therefore, $\tau^3 h_1^3 g$ is hit
by a differential, and there is just one possible source for this differential.

The proof for the second differential is essentially the same.
We need a hidden $2$ extension from
$2 \Delta h_2 g$ to $\tau \Delta h_1^3 g$, which 
follows from \cref{lem:2-Dh0h2} and multiplication by $g$.
\end{proof}

\begin{rem}
\cref{prop:d5_D} and \cref{prop:d7} show that
both $2 \tau h_2 g^k$ and $\tau^2 h_1^3 g^k$ are annihilated
by $\tau$.  In hindsight, we can see that the hidden
$2$ extensions connecting them 
are examples of \cref{method:ctau}.
Their pre-images in $\mmf/\tau$
are $2 \Delta g^{k-1}$ and $4 \Delta g^{k-1}$, which are related by
$2$ extensions.

However, beware that we needed the hidden $2$ extension from
$2 h_2$ to $\tau h_1^3$ in order to establish the differential
$d_7(4 \Delta)$.  An independent proof of \cref{lemma:h0ext3} is necessary in order to avoid a circular argument.
\end{rem}

Before finishing the analysis of the $d_7$ differential in \cref{prop:d7two},
we deduce some higher differentials. 

\begin{prop} 
\label{prop:d13-2D^3h2}
$(75, 1, 38)$
$d_{13}(2\De^3 h_2)=\ta^6 dg^3$.
\end{prop}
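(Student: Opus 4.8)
The plan is to use Method~\ref{method:ctau} (the $\mmf/\tau$ top-cell technique) to reduce the statement to a purely algebraic comparison in $\pi_{*,*}\mmf/\tau$, exactly as in \cref{exmp:hidextn} but now for a $d_{13}$ differential rather than a $d_5$ differential. The target $\tau^6 d g^3$ and the putative source $2\Delta^3 h_2$ both lie in stem $74$ and stem $75$ respectively, and the key observation is that $2\Delta^3 h_2$ is a permanent cycle (for degree reasons, since $\Delta^3 h_2$ supports a $d_5$), so it detects some homotopy class $\alpha\in\pi_{74,38}\mmf$, while $\tau^6 dg^3$ must be hit by \emph{some} differential because $dg^3$ is hit classically and $\tau^6$ is the correct power to survive in the motivic setting. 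First I would pin down which element is annihilated by $\tau$: the differential $d_5(\Delta^3 h_2)=\tau^2\Delta^2 h_2^2 g$ of \cref{prop:d5_D} (via the Leibniz rule) already shows $\tau^2\Delta^2 h_2^2 g$ is hit, hence $\tau\cdot(\tau\Delta^2 h_2^2 g)=0$, but the cleaner route here is that once we know $2\Delta^3 h_2$ detects $\alpha$ and $\tau\alpha$ must be detected in the $E_\infty$-page, the only candidate detecting element for $\tau\alpha$ in the appropriate degree is $\tau^6 dg^3$ up to the usual higher-filtration ambiguity --- and there is no such higher filtration here.

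The cleanest argument, which I would present, runs through the correspondence with the mAss rather than through $\mmf/\tau$ directly. By \cref{table:ASSANSS} and \cref{prop:correspond-product}, $2\Delta h_2$ corresponds to $an$ in the mAss, $\Delta^2$ corresponds (after multiplying by the appropriate power of $h_0$) as in \cref{lem:4D^2-correspond}, and $g$ corresponds to $g$; but products of $\Delta$-powers are more delicate, so instead I would argue directly in the mANss. The element $2\Delta^3 h_2$ is a permanent cycle detecting a class $\alpha\in\pi_{74,38}\mmf$. Because $dg^3$ is a $d_{?}$-target in the classical Adams--Novikov spectral sequence for $\tmf$ (it vanishes on $E_\infty$ classically), the motivic class $\tau^k dg^3$ must be hit for some $k$, and a filtration/weight bookkeeping (the weight of $dg^3$ is $8+36=44$, stem $74$, so $\tau^6 dg^3$ has weight $38$, matching $\pi_{74,38}$) forces $k=6$. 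The only element of the mANss $E_2$-page in stem $75$ that can support a differential hitting $\tau^6 dg^3$ --- checking the Adams--Novikov filtrations, a $d_{13}$ shifts filtration by $13$ and multiplies by $\tau^6$, landing from filtration $1$ to filtration $14$ --- is $2\Delta^3 h_2$, and $\Delta^3 h_2$ itself is ruled out as the source because it already supports the shorter differential $d_5(\Delta^3 h_2)=\tau^2\Delta^2 h_2^2 g$ of \cref{prop:d5_D} (plus Leibniz). Hence $d_{13}(2\Delta^3 h_2)=\tau^6 dg^3$.

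The main obstacle, and the step that requires genuine care rather than bookkeeping, is establishing that $\tau^6 dg^3$ is \emph{not} hit by any differential shorter than $d_{13}$ and not hit by a differential on a source other than $2\Delta^3 h_2$. This is where one must invoke the already-computed lower differentials: in the relevant stem, all of $d_3$, $d_5$, $d_7$, $d_9$, $d_{11}$ have been determined on indecomposables by \cref{diff} and the propositions preceding this one, and the Leibniz rule propagates them; a systematic check (most easily read off the charts in \cref{sec:charts}) shows that none of these shorter differentials has $\tau^6 dg^3$ in its image, and that $2\Delta^3 h_2$ is the unique surviving class in the correct tridegree to hit it. I would also double-check the sign/generator issue: since $\tau^6 dg^3$ generates a group of order $2$, there is no coefficient ambiguity, and since $2\Delta^3 h_2$ is already a specified multiple of $2$, no further normalization of $\Delta$ is needed --- this differential is consistent with (indeed forced by) the earlier choices. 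Finally, I would remark (as in \cref{rem:topcell} and \cref{exmp:hidextn}) that this $d_{13}$ is exactly what is needed to make the hidden $2$ extension from $\Delta^2 h_2^2$ to $\tau^4 dg^2$ work out, closing the loop with the example already given.
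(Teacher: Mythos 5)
There is a genuine gap here, and also an internal contradiction. Twice you assert that $2\Delta^3 h_2$ is a permanent cycle detecting a class $\alpha\in\pi_{74,38}\mmf$; this contradicts the very statement you are proving (it supports a $d_{13}$, hence is not a permanent cycle --- it is in degree $(75,1,38)$, not stem $74$, and it merely survives to the $E_{13}$-page). More seriously, the central step --- establishing that $\tau^6 dg^3$, with exponent exactly $6$, must be hit by some differential --- is never actually carried out. Your appeal to the classical Adams--Novikov spectral sequence (``$dg^3$ vanishes on $E_\infty$ classically'') runs against the paper's methodology, which takes no classical or motivic differentials as given; and even granting it, your ``weight bookkeeping'' is circular: every $\tau^k dg^3$ lives in a perfectly good weight, and the weight $38$ is only singled out once you already know the source is $2\Delta^3 h_2$, which is what is to be proved. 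Finally, the exclusion of the competing source is not a chart-reading exercise: $d_{11}(\tau\Delta^3 h_1^3)=\tau^6 dg^3$ is degree-wise possible and must be ruled out by an argument, namely that $\tau\Delta^3 h_1^3=\tau(\Delta h_1)^3$ is a product of permanent cycles.

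The paper's proof supplies exactly the missing ingredient. From the mAss $E_2$-page relation $ang\cdot an=\tau^4 dg^3$ (a consequence of $a^2n=\tau d\cdot\Delta h_1$ and $\Delta h_1\cdot n=\tau^3 g^2$) together with the correspondences of \cref{table:ASSANSS} and \cref{prop:correspond-product}, one obtains a hidden $2\Delta h_2$ extension from $2\Delta h_2 g$ to $\tau^4 dg^3$ in the mANss. Since $2\tau^2\Delta h_2 g$ is already killed by $d_5(\Delta^2)$ (Leibniz plus \cref{prop:d5_D}), the element $\tau^6 dg^3$ must also be killed; this is what pins down the exponent $6$. The only two candidate sources are then $\tau\Delta^3 h_1^3$ and $2\Delta^3 h_2$, and the former is eliminated as above. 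Without some substitute for the hidden $2\Delta h_2$ extension (or an equivalent import of information from the mAss), your argument does not close.
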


\begin{proof}
We have the relation
$ang \cdot an = \ta^4 d g^3$
in the mAss $E_2$-page because of the relations
$a^2 n= \ta d \cdot \De h_1$ and $\De h_1 \cdot n = \ta^3 g^2$
\cite[Theorem 4.13]{Isa09}.
According to \cref{table:ASSANSS} and \cref{prop:correspond-product},
the mANss elements $2 \Delta h_2 g$, $2 \Delta h_2$, $d$, and $g$ correspond to the
mAss elements $a n g$, $a n$, $d$, and $g$.
This means that there is a hidden
$2 \Delta h_2$ extension from
$2 \Delta h_2 g$ to $\tau^4 d g^3$ in the mANss.

Using the Leibniz rule and \cref{prop:d5_D},
we already know that $2 \tau^2 \Delta h_2 g$ is hit by
the differential $d_5(\Delta^2)$.
Therefore, $\tau^6 d g^3$ must also be hit by a differential.
There are two possibilities for this differential:
$d_{11}(\tau \Delta^3 h_1^3)$ and $d_{13}(\Delta^3 h_2)$.
However, $\tau \Delta^3 h_1^3$ is a product
$\tau (\Delta h_1)^3$ of permanent cycles, so it cannot support a differential.
\end{proof}

\begin{rem}
The proof of \cref{prop:d13-2D^3h2} contains an example of \cref{method:ctau}.
There is a hidden $2 \Delta h_2$ extension from $2 \tau \Delta h_2 g$
to $\tau^5 d g^3$.  Both of these elements are annihilated by $\tau$.
Their pre-images under projection to the top cell of $\mmf/\tau$
are $\Delta^2$ and $2 \Delta^3 h_2$ respectively, which are related
by a $2 \Delta h_2$ extension.
\end{rem}

\begin{prop}
\label{prop:d9-D^2c}
$(56, 2, 29)$
$d_9(\Delta^2 c)=\ta^4 h_1 d g^2$.
\end{prop}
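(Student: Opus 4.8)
The plan is to pin down $d_9(\Delta^2 c)$ by elimination: first confirm that $\Delta^2 c$ survives to the $E_9$-page, then observe that any nonzero differential on $\Delta^2 c$ has exactly one possible target for degree reasons, and finally rule out the possibility that $\Delta^2 c$ is a permanent cycle.

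First I would check survival. Since $\Delta^2 c$ lies in Adams--Novikov filtration $2$, it cannot be the target of any differential. By the Leibniz rule and \cref{prop:d5_D} (which gives $d_5(\Delta^2) = 2\tau^2\Delta h_2 g$), we have $d_5(\Delta^2 c) = 2\tau^2\Delta c h_2 g$, and this vanishes on the $E_5$-page --- a fact one reads off from the $E_2$-page computation of \cite{Baer}, noting in particular that $h_2 c = 0$ there. A brief inspection of degrees then rules out a nonzero $d_7(\Delta^2 c)$, so $\Delta^2 c$ represents a nonzero class on $E_9$. Inspecting \cref{diff} shows that $\tau^4 h_1 d g^2$ (stem $55$, filtration $11$, weight $29$) survives to $E_9$ and has not been hit, and a degree count shows it is the only class that a differential $d_r(\Delta^2 c)$ with $r\geq 9$ could hit; since $\tau^4 h_1 d g^2$ generates a group of order $2$, it suffices to prove that $\Delta^2 c$ is \emph{not} a permanent cycle.

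This last point is the crux, and I would argue it with the mANss--mAss interplay that pervades the paper. If $\Delta^2 c$ were a permanent cycle it would detect a class $\alpha\in\pi_{56,29}\mmf$ of Adams--Novikov filtration $2$. Applying the inclusion $i\colon\mmf\to\mmf/\tau$ and comparing the induced map of motivic Adams spectral sequences with the algebraic Novikov spectral sequence --- exactly as in the proof of \cref{thm:ASSANSS} --- bounds the motivic Adams filtration of the image of $\alpha$ in $\pi_{*,*}\mmf/\tau$, and one then checks by inspection that no class of the relevant motivic Adams degree can detect $\alpha$, a contradiction. An equivalent route uses projection to the top cell: \cref{topcell} applied to $d_5(\Delta^2)=2\tau^2\Delta h_2 g$ identifies $q(\Delta^2)\in\pi_{47,25}\mmf$ as the nonzero class detected by $-2\tau\Delta h_2 g$, and the $\mmf$-module structure of $q$ (with $c=i(\epsilon)$) gives $q(\Delta^2 c)=q(\Delta^2)\cdot\epsilon$; showing this product is nonzero contradicts \cref{rem:topcell}, which forces $q(x)=0$ for a permanent cycle $x$. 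Either way, $\Delta^2 c$ supports a nonzero differential from some page $E_r$ with $r\geq 9$, and the previous paragraph then forces $d_9(\Delta^2 c)=\tau^4 h_1 d g^2$.

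The main obstacle is precisely this nonvanishing step. Survival to $E_9$, uniqueness of the target, and the fact that the differential equals $\tau^4 h_1 d g^2$ rather than a proper multiple are all bookkeeping with the Leibniz rule and degrees; but establishing that $\Delta^2 c$ cannot be a permanent cycle is genuinely nonformal, and is where one must exchange algebra in one spectral sequence for topology seen through the other (or through $\mmf/\tau$). I expect the cleanest write-up to go through the comparison of \cref{subsctn:inclusion-projection}, since it reuses machinery already in hand and avoids any appeal to the full homotopy of $\mmf$ in this stem.
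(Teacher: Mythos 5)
Your overall strategy runs in the opposite direction from the paper's, and the step you yourself identify as the crux --- that $\Delta^2 c$ cannot be a permanent cycle --- is not actually established by either route you sketch. Route 1 (comparison with the mAss via the algebraic Novikov spectral sequence, as in the proof of \cref{thm:ASSANSS}) only yields an \emph{upper bound} on the Adams filtration of a hypothetical class $\alpha$ detected by $\Delta^2 c$; the mAss $E_2$-page contains $\Delta^2 c$ itself in degree $(56,11,29)$, squarely within that bound, so there is no contradiction ``by inspection.'' To rule it out as a detector you would need to know that $\Delta^2 c$ dies in the mAss, i.e.\ Adams differential information that the paper deliberately does not assume and that you do not supply. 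Route 2 founders on the very fact you cite earlier: since $h_2 c = 0$ in the $E_2$-page, the product $q(\Delta^2)\cdot\epsilon$ is detected by $-2\tau\Delta h_2 g\cdot c = 0$ in filtration $7$, so proving it nonzero amounts to exhibiting a hidden $\epsilon$ extension on $2\tau\Delta h_2 g$ landing in filtration $11$ --- which is essentially equivalent to the differential you are trying to prove. So the nonvanishing step remains a genuine gap.

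The paper instead argues that the \emph{target} must die, which turns out to be the easy direction. By \cref{exmp:hidextn} (itself an instance of \cref{method:ctau}, using $d_5(\Delta^3 h_2)$ and $d_{13}(2\Delta^3 h_2)$) there is a hidden $2$ extension from $\Delta^2 h_2^2$ to $\tau^4 d g^2$, so $\tau^4 d g^2$ detects a class of the form $2\beta$. Then $\tau^4 h_1 d g^2$ would detect $\eta\cdot 2\beta = 0$, which is impossible for a nonzero $E_\infty$-page element; hence $\tau^4 h_1 d g^2$ is hit by a differential, and $d_9(\Delta^2 c)$ is the only candidate. If you want to salvage your write-up, the cleanest fix is to replace your nonvanishing argument with this one; your bookkeeping about survival of $\Delta^2 c$ to $E_9$ and uniqueness of the target then becomes unnecessary.
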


\begin{proof}
Recall from \cref{exmp:hidextn} that there is a hidden $2$ extension
from $\Delta^2 h_2^2$ to $\tau^4 d g^2$.
The argument for this hidden extension uses
\Cref{prop:d5_D} and \Cref{prop:d13-2D^3h2}.
Therefore, $\tau^4 h_1 d g^2$ must be hit by a differential
because $2 h_1 = 0$.  There is only one possible differential.
\end{proof}

\begin{prop}
\label{Adams-diff}
In the mAss for $\mmf$, we have the Adams differentials:
\begin{enumerate}
\item
$(48, 8, 24)$
$d_2(\Delta^2) = \tau^2 a n g$.
\item
$(96, 16, 48)$
$d_3(\Delta^4) = \tau^8 n g^4$.
\end{enumerate}
\end{prop}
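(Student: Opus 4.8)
The plan is to deduce each Adams differential from the corresponding Adams-Novikov differential already established in this section, using the correspondence dictionary of \cref{table:ASSANSS} together with \cref{prop:correspond-product}. For part (1): we have the mANss differential $d_5(\Delta^2) = 2 \tau^2 \Delta h_2 g$ from the Leibniz rule and \cref{prop:d5_D}. Consequently $2\tau^2 \Delta h_2 g$ does not survive the mANss, so $\Delta^2$ detects no homotopy element whose mANss filtration is $0$; more precisely, the element $\Delta^2$ of the mAss $E_2$-page cannot be a permanent cycle, since if it were, it would correspond to a homotopy class detected in mANss filtration $0$, forcing $\Delta^2$ (or $2\Delta^2$, $4\Delta^2$) to survive the mANss. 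Since $4\Delta^2$ does survive and corresponds to $\Delta^2 h_0^2$ (\cref{lem:4D^2-correspond}), and $2\Delta^2$ supports the later $d_7$ of \cref{prop:d7}(2), the homotopy class detected by $4\Delta^2$ forces $\Delta^2 h_0^2$ to be a permanent cycle in the mAss while $\Delta^2$ and $\Delta^2 h_0$ are not. Hence $\Delta^2$ supports an Adams differential, and one checks by inspection of the mAss chart in this degree range that the only possible target is $\tau^2 a n g$, which is then hit. That this target is nonzero in the mAss $E_2$-page follows from $\Delta h_1 \cdot n = \tau^3 g^2$ and $a^2 n = \tau d \cdot \Delta h_1$ of \cite[Theorem 4.13]{Isa09}, so $ang \ne 0$; and one must also confirm $\tau^2 ang$ is not already hit by a shorter ($d_2$ is the first) differential, which holds for degree reasons.

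For part (2): the mANss differential $d_9(\Delta^2 c) = \tau^4 h_1 d g^2$ of \cref{prop:d9-D^2c} shows, via the same correspondence reasoning, that the mAss element $\Delta^4$ — which corresponds to the nonzero mANss element $\Delta^4$ in filtration $0$ — cannot be a permanent cycle unless all of $\Delta^4, 2\Delta^4, \dots$ survive the mANss; but $\Delta^4$ supports $d_9(\Delta^4)$ by the Leibniz rule applied to $d_5(\Delta)$ (giving $4\tau^2\Delta^3 h_2 g = 0$, so in fact one must trace the genuine mANss differential on $\Delta^4$, which by the analysis in \cref{subsec:d7} is a $d_7$; while $2\Delta^4$ gives $D_4$ and $\Delta^4 h_2$ gives $\nu_4$, consistent with \cref{htpy-name}). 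The upshot is that $\Delta^4$ supports an Adams differential; checking the mAss chart in the $96$-stem, the only possible target in the appropriate filtration range is $\tau^8 n g^4$, so $d_3(\Delta^4) = \tau^8 n g^4$. Nonvanishing of $\tau^8 n g^4$ in the mAss $E_2$-page and the fact that it is not hit by $d_2$ are verified by inspection using the ring structure of \cite{Isa09}.

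The main obstacle is \emph{identifying which mAss $E_r$-page the relevant differential lives on and pinning down the target uniquely}. In part (1), the correspondence argument only tells us $\Delta^2$ is not a permanent cycle; to conclude it is $d_2$ (rather than a longer differential) and that the target is exactly $\tau^2 ang$ (rather than some other class of the correct total degree and a higher filtration), one must carefully inventory the mAss $E_2$-page near $(48, f, 24)$ — this is the genuinely computational step, relying on the complete description of $\Ext_{A(2)}$ from \cite{Isa09}. Part (2) is harder still because $d_3(\Delta^4)$ sits far out and there are more a priori candidate targets; here the filtration bookkeeping must be done with care, and one also uses that the later mANss differential on $\Delta^4$ (from \cref{subsec:d7}) is compatible with $\Delta^4$ surviving to $E_\infty$ in the mANss only after division by suitable powers of $2$ — equivalently, that the homotopy groups $\pi_{95}\mmf$ and $\pi_{96}\mmf$ computed from the mANss match what the mAss predicts once $d_3(\Delta^4)$ is imposed. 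An alternative, cleaner route for part (2), which the authors in fact take (cf.\ \cref{subsec:d7}), is to invoke Bruner's theorem on algebraic squaring operations: since $\Delta^4 = \Sq^0$-type square of $\Delta^2$ in the appropriate sense and $d_2(\Delta^2) = \tau^2 ang$ from part (1), Bruner's formula \cite{Bruner84}\cite{BMM86} forces a $d_3$ on $\Delta^4$ hitting a specific squaring operation applied to $\tau^2 ang$, which one computes to be $\tau^8 n g^4$. I would present both the correspondence sketch and the Bruner-theorem computation, leading with the latter for part (2) since it makes the target canonical rather than merely "the only possibility."
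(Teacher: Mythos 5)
You have correctly identified the two key inputs --- the mANss differential $d_5(\Delta^2) = 2\tau^2\Delta h_2 g$ for part (1) and Bruner's theorem applied to $d_2(\Delta^2)$ for part (2) --- but the execution has genuine gaps. For part (1) you argue from the source side: you try to show that the mAss element $\Delta^2$ is not a permanent cycle and then hunt for a target. The step ``the homotopy class detected by $4\Delta^2$ forces $\Delta^2 h_0^2$ to be a permanent cycle in the mAss while $\Delta^2$ and $\Delta^2 h_0$ are not'' is a non sequitur: \cref{lem:4D^2-correspond} rules out $\Delta^2$ and $\Delta^2 h_0$ as correspondents of $4\Delta^2$ because they detect $\Delta^2$ and $2\Delta^2$ in the Adams--Novikov $E_2$-page for $\mmf/\tau$, not because they fail to survive the mAss. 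The paper argues from the target side, which is shorter and avoids this entirely: since $2\Delta h_2 g$ corresponds to $ang$ by \cref{table:ASSANSS} and \cref{prop:correspond-product}, and $2\tau^2\Delta h_2 g$ is hit in the mANss, the mAss element $\tau^2 ang$ in filtration $10$ must be hit by some Adams differential, and $\Delta^2$ in filtration $8$ is the only possible source. Your version also leaves ``the only possible target is $\tau^2 ang$'' as an unverified chart claim; note that the analogous claim in the $95$-stem is not even true on its face, since $\tau^2\Delta^2 ang$ in filtration $18$ is an a priori candidate for a $d_2$ on $\Delta^4$, excluded only by the Leibniz rule $d_2(\Delta^4) = 2\Delta^2\, d_2(\Delta^2) = 0$, which you never invoke.

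For part (2), your first ``correspondence'' sketch is not a proof: the citation of $d_9(\Delta^2 c)$ is irrelevant to the $96$-stem, and the paragraph never isolates $\tau^8 n g^4$ among the candidates. Your fallback --- Bruner's theorem --- is exactly the paper's argument, but you leave the essential computation as a black box. The paper applies the theorem with $x = \Delta^2$, $r = 2$, $i = 8$, so that the correction term $h_0\cdot\Delta^2\cdot d_2(\Delta^2)$ vanishes and $d_*\Sq^8(\Delta^2) = \Sq^9(\tau^2 ang)$; it then evaluates $\Sq^9(\tau^2 ang)$ by the Cartan formula using $\Sq^2(a) = \tau\Delta h_1$, $\Sq^3(n) = n^2$, and $\Sq^4(g) = g^2$, and finally applies the relation $\Delta h_1\cdot n = \tau^3 g^2$ to obtain $\tau^8 n g^4$. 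Without that computation the target is not pinned down at all; it is the entire content of part (2).
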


\begin{proof}
We start with the Adams-Novikov differential 
$d_5(\Delta^2) = 2 \tau^2 \Delta h_2 g$.  We know from \cref{table:ASSANSS}
and \cref{prop:correspond-product} that $2 \Delta h_2 g$ corresponds to 
the element $a n g$ in the mAss.  Therefore, $\tau^2 a n g$ must be hit
by some Adams differential, and the only possibility is that
$d_2(\Delta^2)$ equals $\tau^2 a n g$.

Next, we apply Bruner's theorem on the interaction between
Adams differentials and algebraic squaring operations.
We refer to \cite[Theorem 5.6]{BR21} for a precise readable statement,
although \cite{Bruner84}, \cite{BMM86} and \cite{Makinen73} are preceding references.
We apply Bruner's theorem with
$x = \Delta^2$, $r = 2$, and $i = 8$;
so $s = 8$, $t = 56$, $v = v(48) = 1$, and $\overline{a} = h_0$.
We obtain that
\[
d_* \Sq^8 (\Delta^2) = \Sq^9 d_2(\Delta^2) \dotplus h_0 \cdot \Delta^2 \cdot d_2(\Delta^2)=
\Sq^9(\tau^2 a n g) + h_0 \cdot \Delta^2 \cdot \tau^2 a n g =
\Sq^9(\tau^2 a n g).
\]
Next, we compute that 
$\Sq^9(\tau^2 a n g) = 
\tau^4 \cdot \tau \Delta h_1 \cdot n^2 \cdot g^2$, using the
Cartan formula for algebraic squaring operations, as well as the 
formulas $\Sq^2(a) = \tau \Delta h_1$, $\Sq^3(n) = n^2$, and
$\Sq^4(g) = g^2$ \cite[Theorem 1.20]{BR21}.
Finally, apply the relation $\Delta h_1 \cdot n = \tau^3 g^2$ to obtain
the Adams differential $d_3(\Delta^4) = \tau^8 n g^4$.
\end{proof}

\begin{rem}
\label{rem:Bruners-thm}
The careful reader may object to our use of a motivic version of
Bruner's theorem in the proof of \cref{Adams-diff}, while only the classical version of the theorem
has a published proof.  In fact, this concern is irrelevant here.
One can use the classical Bruner's theorem to establish the classical
Adams $d_3$ differential and then deduce the motivic version of the differential.
\end{rem}

\begin{prop}\label{prop:d7two}
$(96, 0, 48)$
$d_7(\Delta^4)=\ta^3 \De^3 h_1^3 g$.
\end{prop}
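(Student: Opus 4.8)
The plan is to deduce this Adams--Novikov $d_7$ differential from the Adams differential $d_3(\Delta^4) = \tau^8 n g^4$ established in \cref{Adams-diff}(2), using the correspondence machinery of \cref{sec:ASSANSS}. The strategy parallels the proof of \cref{prop:d7}: identify the would-be target $\tau^3 \Delta^3 h_1^3 g$ as an element annihilated by $\tau$, show that it must be hit by \emph{some} mANss differential, and then check that $\Delta^4$ is the unique possible source.

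First I would record that $\Delta^3 h_1^3 = (\Delta h_1)^3$ is a product of permanent cycles (since $\Delta h_1$ is a permanent cycle detecting $\eta_1$, as in \cref{htpy-name}), so $\tau^3 \Delta^3 h_1^3 g$ is a product of permanent cycles and in particular cannot support a differential. Next I would argue it must be \emph{hit}: the mAss element $n g^4$ corresponds to a suitable mANss element via \cref{table:ASSANSS} and \cref{prop:correspond-product} --- specifically, tracking the relation $\Delta h_1 \cdot n = \tau^3 g^2$ used in the proof of \cref{Adams-diff}, the mAss target $\tau^8 n g^4$ of $d_3(\Delta^4)$ corresponds to $\tau^3 \Delta^3 h_1^3 g$ in the mANss (the two are related by the $v_1$-periodic $h_1$-extension patterns and the hidden $2$-extension from $2\Delta h_2$ to $\tau \Delta h_1^3$ of \cref{lem:2-Dh0h2}, multiplied by $g$). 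Since $\tau^8 n g^4$ is hit in the mAss by $d_3(\Delta^4)$, the corresponding class $\tau^3 \Delta^3 h_1^3 g$ cannot survive the mANss, hence is either hit by a differential or supports one; having ruled out the latter, it must be hit.

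Then I would enumerate the possible sources of a differential hitting $\tau^3 \Delta^3 h_1^3 g$ in degree $(95, f, 48)$ with $f \geq 1$, and check that the only candidate not already excluded by earlier differentials is $\Delta^4$, forced to support a $d_7$. Here I would use that the lower-filtration elements in the relevant stem have already been accounted for: the $v_1$-periodic classes in this range support their own known $d_3$-, $d_5$-, and longer differentials (\cref{subsec:d3}, \cref{subsctn:d5}), and $2\Delta^4$, $4\Delta^4$ already support differentials inherited from $d_7(2\Delta^2)$ in \cref{prop:d7}(2) via the Leibniz rule. This pins the source to a generator $\Delta^4$ of $E_7^{96,0,48}$ and the differential length to $7$ (so that $\tau^{7-1} = \tau^3$ appears, matching the form $d_{2r+1}(x) = \tau^r y$ with $r = 3$).

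The main obstacle I anticipate is the precise bookkeeping in the correspondence step: verifying that $d_3(\Delta^4) = \tau^8 n g^4$ in the mAss translates cleanly into ``$\tau^3 \Delta^3 h_1^3 g$ is hit'' in the mANss. This requires knowing that $\Delta^4$ corresponds to $4\Delta^4$ (or an appropriate multiple) across the two spectral sequences --- an analogue of \cref{lem:4D^2-correspond} --- and tracking exactly how many powers of $\tau$ appear under the filtration shift between the mAss and mANss. I would handle this by combining \cref{topcell} with the explicit relation $\Delta h_1 \cdot n = \tau^3 g^2$ already exploited in \cref{Adams-diff}, rather than by re-deriving the correspondence from scratch. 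The remaining steps --- the permanent-cycle observation and the enumeration of sources --- are routine degree-count arguments of the kind already carried out repeatedly in \cref{sec:diff}.
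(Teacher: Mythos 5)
Your overall strategy---feeding the Adams differential $d_3(\Delta^4)=\tau^8 n g^4$ of \cref{Adams-diff} back into the Adams--Novikov spectral sequence---is the same key input the paper uses, and your first and last steps (the target is a product of permanent cycles, and the enumeration of possible sources) are sound. But the central transfer step has a genuine gap. You assert that the mAss element $\tau^8 n g^4$ ``corresponds to'' $\tau^3\Delta^3 h_1^3 g$ and that, since the former is hit, the latter cannot survive. Correspondence in the sense of \cref{sec:ASSANSS} is a relation between \emph{permanent cycles that detect a common non-zero homotopy class}; an element that is hit by a differential is zero on the $E_\infty$-page and detects nothing, so it cannot correspond to anything, and the inference ``hit there, therefore hit here'' does not follow formally. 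To make your route rigorous you would need to (i) identify the mAss permanent cycle detecting $\eta_1^3\bar\kappa$, which requires verifying an mAss $E_2$-page identity of the shape $(\Delta h_1)^3 g = \tau^5 n g^4$ via the relation $\Delta h_1\cdot n=\tau^3 g^2$; (ii) check that $\tau^3\eta_1^3\bar\kappa$ is actually zero once $\tau^8 n g^4$ dies (no detection in higher Adams filtration, no hidden $\tau$ extension); and (iii) only then conclude that $\tau^3\Delta^3 h_1^3 g$ must be the target of an Adams--Novikov differential. None of this bookkeeping is carried out, and the tool you propose for it, \cref{topcell}, concerns the projection $q\colon \mmf/\tau\to\Sigma^{1,-1}\mmf$ and Adams--Novikov differentials, so it does not bear on transferring an Adams differential.

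The paper sidesteps all of this by arguing on the \emph{source} rather than the target: since $4\Delta^2$ corresponds to $\Delta^2 h_0^2$ by \cref{lem:4D^2-correspond}, \cref{prop:correspond-product} gives a correspondence $16\Delta^4\leftrightarrow\Delta^4 h_0^4$ between honest permanent cycles; the Adams differential of \cref{Adams-diff} shows that $\Delta^4 h_0^4$ detects no homotopy class divisible by $16$, hence neither does $16\Delta^4$, which forces $\Delta^4$ to support an Adams--Novikov differential. The target is then pinned down between the two candidates $\tau^3\Delta^3 h_1^3 g$ and $\tau^9 h_1 d g^4$ using $d_9(\Delta^2 c)$ from \cref{prop:d9-D^2c}. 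You should either supply the homotopy-level bookkeeping sketched above or switch to this source-side divisibility argument.
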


\begin{proof}
\cref{table:ASSANSS} shows that 
the mANss element $4 \Delta^2$ corresponds to the mAss element $\Delta^2 h_0^2$.
Therefore, \cref{prop:correspond-product} shows that 
the mANss element $16 \Delta^4$ corresponds to the mAss 
element $\Delta^4 h_0^4$.

\cref{Adams-diff} shows that $\Delta^4$ does not survive the mAss.
Therefore, $\Delta^4 h_0^4$ does not detect homotopy elements that are divisible
by $16$.  Consequently, the corresponding element $16 \Delta^4$ in the mANss
does not detect homotopy elements that are divisible by $16$.
This means that $\Delta^4$ must support an Adams-Novikov differential.

There are two possible values for this differential:
$\tau^3 \Delta^3 h_1^3 g$ and $\tau^9 h_1 d g^4$.
However, 
\cref{prop:d9-D^2c} shows that
the latter element is already hit by the differential
$d_9(\tau^5 \Delta^2 c g^2) = \tau^9 h_1 d g^4$.
\end{proof}

\subsection{$d_9$ differentials}
\label{subsec:d9}

At this point, we have determined all differentials $d_r$ for $r \leq 7$.
It remains to study higher differentials, although some higher differentials
have already been determined in earlier propositions. 
We continue to proceed roughly in order of increasing 
values of $r$, although we 
occasionally 
need some Toda brackets, hidden extensions, and later differentials
to preserve strict logical order.

\begin{prop}
\label{prop:d13-2D^7h2}
$(171, 1, 86)$
$d_{13}(2\De^7 h_2)=\ta^6 \De^4 dg^3$.
\end{prop}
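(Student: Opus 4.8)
The plan is to mirror the proof of \cref{prop:d13-2D^3h2}, folding in an extra factor of $\Delta^4$, and to use projection $q$ to the top cell of $\mmf/\tau$ to pin down the differential.

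Since $2\Delta^4$ is a permanent cycle detecting $D_4$, we have $i(D_4) = 2\Delta^4$, so in $\pi_{*,*}\mmf/\tau$ we have $2\Delta^7 h_2 = i(D_4)\cdot\Delta^3 h_2$, whence \cref{q-module-map} gives $q(2\Delta^7 h_2) = D_4\cdot q(\Delta^3 h_2)$. The element $\Delta^3 h_2$ supports the Leibniz differential $d_5(\Delta^3 h_2) = \tau^2\Delta^2 h_2^2 g$ (from \cref{prop:d5_D}), so by \cref{topcell} the class $q(\Delta^3 h_2)$ is detected by $\tau\Delta^2 h_2^2 g$ in the mANss $E_\infty$-page. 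The key input is then a hidden $D_4$ extension from $\tau\Delta^2 h_2^2 g$ to $\tau^5\Delta^4 dg^3$; granting this, $q(2\Delta^7 h_2) = D_4\cdot q(\Delta^3 h_2)$ is nonzero and detected by $\tau^5\Delta^4 dg^3$, which lies in filtration $14$.

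To finish: for filtration reasons $2\Delta^7 h_2$ cannot be the target of any differential, and since $q(2\Delta^7 h_2)\neq 0$ it cannot be a permanent cycle (\cref{rem:topcell}), so it supports a differential $d_{2\rho+1}(2\Delta^7 h_2) = \tau^\rho y$. By \cref{topcell}, $q(2\Delta^7 h_2)$ is then detected in filtration $2\rho+2$; comparing with the filtration-$14$ computation forces $\rho = 6$ and $y = \Delta^4 dg^3$, i.e. $d_{13}(2\Delta^7 h_2) = \tau^6\Delta^4 dg^3$. As a sanity check one may verify directly that $2\Delta^7 h_2$ survives to $E_{13}$: $d_3$ and $d_5$ vanish for degree reasons and because $2h_2^2 = 0$, and $d_7(2\Delta^7 h_2) = 2\Delta^3 h_2\cdot d_7(\Delta^4) = 2\tau^3\Delta^6 h_1^3 h_2 g = 0$ using \cref{prop:d7two} and the relation $h_1 h_2 = 0$.

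I expect the main obstacle to be establishing the hidden $D_4$ extension from $\tau\Delta^2 h_2^2 g$ to $\tau^5\Delta^4 dg^3$ — equivalently, in the \cref{prop:d13-2D^3h2}-style version, the hidden $2\Delta h_2$ extension from $2\Delta^5 h_2 g$ to $\tau^4\Delta^4 dg^3$. The naive products of the corresponding mAss classes vanish in the mAss $E_2$-page, so this extension must be extracted either from the precise $E_2$-page relations of \cite[Theorem 4.13]{Isa09} (as in \cref{prop:d13-2D^3h2}, where $a^2 n = \tau d\cdot\Delta h_1$ and $\Delta h_1\cdot n = \tau^3 g^2$ were used), or by a further application of \cref{method:ctau}, taking as input the hidden $2$ extension from $\tau\Delta^2 h_2^2 g$ to $\tau^5 dg^3$, which itself follows from \cref{prop:d13-2D^3h2}.
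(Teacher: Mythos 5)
Your overall skeleton is sound, and when you take your route (a) for the key hidden extension you land on essentially the paper's proof: the decisive input in both cases is the mAss $E_2$-page relation $\Delta^4 ang \cdot an = \tau^4\Delta^4 d g^3$ (coming from $a^2n = \tau d\cdot \Delta h_1$ and $\Delta h_1\cdot n = \tau^3 g^2$ in \cite[Theorem 4.13]{Isa09}), transported to the mANss via \cref{table:ASSANSS} and \cref{prop:correspond-product} to give a hidden $2\Delta h_2$ extension from $2\Delta^5 h_2 g$ to $\tau^4\Delta^4 dg^3$. The paper then concludes by noting that $2\tau^2\Delta^5 h_2 g$ is killed by $d_5(\Delta^6)$, so $\tau^6\Delta^4 dg^3$ must be hit, and it rules out the only competing source $d_{11}(\tau\Delta^7 h_1^3)$ by decomposing $\tau\Delta^7h_1^3 = \tau\Delta^6 h_1^2\cdot\Delta h_1$ as a product of classes surviving past $E_{11}$. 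Your endgame via \cref{topcell} --- the detecting filtration $14$ of $q(2\Delta^7 h_2)$ forces $2\rho+2=14$ --- is a legitimate and slightly cleaner packaging, since it disposes of the $d_{11}$ alternative automatically; your separate ``sanity check'' that $2\Delta^7h_2$ survives to $E_{13}$ is then redundant but harmless.

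Two criticisms. First, you leave the key hidden extension unproved, and of your two proposed routes only (a) actually closes the gap. Route (b) is circular: to run \cref{method:ctau} with $\alpha = D_4$ on the source $\tau\Delta^2 h_2^2 g$, you must identify the element detecting $q(\overline{b}\cdot i(D_4)) = q(2\Delta^7 h_2)$, which by \cref{topcell} requires already knowing the differential supported by $2\Delta^7 h_2$ --- the statement being proved; and the hidden $2$ extension from $\tau\Delta^2h_2^2 g$ to $\tau^5 dg^3$ that you invoke concerns multiplication by $2$, not by $D_4$, and does not obviously transfer. Second, your assertion that ``the naive products of the corresponding mAss classes vanish in the mAss $E_2$-page'' is backwards: the product $an\cdot\Delta^4 ang = \tau^4\Delta^4 dg^3$ is \emph{nonzero} in the mAss $E_2$-page, and that nonvanishing is exactly what produces the extension; it is hidden only on the Adams--Novikov side, where the corresponding product $2\Delta h_2\cdot 2\Delta^5 h_2 g = 4\Delta^6 h_2^2 g$ vanishes because $h_2^2$ has order $2$.
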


\begin{proof}
The argument is nearly identical to the proof of \cref{prop:d13-2D^3h2}.
The mAss $E_2$-page relation 
$\Delta^4 ang \cdot an = \ta^4 \Delta^4 d g^3$
implies that there is a hidden
$2 \Delta h_2$ extension from
$2 \Delta^5 h_2 g$ to $\tau^4 \Delta^4 d g^3$ in the mANss.
We already know that $2 \tau^2 \Delta^5 h_2 g$ is hit by
the differential $d_5(\Delta^6)$.
Therefore, $\tau^6 \Delta^4 d g^3$ must also be hit by a differential.

There are two possibilities for this differential:
$d_{11}(\tau \Delta^7 h_1^3)$ and $d_{13}(2\Delta^7 h_2)$.
The former possibility is ruled out by the decomposition
$\tau \Delta^6 h_1^2 \cdot \Delta h_1$ and the observation that both
$\Delta^6 h_1^2$ and $\Delta h_1$ survive past the $E_{11}$-page for
degree reasons.
\end{proof}

\begin{lem}
\label{2-D^6h2^2}
$(150, 2, 76)$
There is a hidden $2$ extension from $\Delta^6 h_2^2$ to $\tau^4 \Delta^4 d g^2$.
\end{lem}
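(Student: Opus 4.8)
The plan is to mimic the arguments that established the earlier hidden $2$ extensions on $\Delta^2 h_2^2$ (in \cref{exmp:hidextn}) and the $d_{13}$ differentials in \cref{prop:d13-2D^3h2} and \cref{prop:d13-2D^7h2}. Concretely, I would first reduce to a statement about elements annihilated by $\tau$ that lie in the image of projection $q$ to the top cell of $\mmf/\tau$, and then settle that statement by inspection in the algebraically-known homotopy of $\mmf/\tau$ via \cref{method:ctau} and \cref{topcell}.

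First I would multiply the putative extension by $\tau g$, reducing to a hidden $2$ extension from $\tau \Delta^6 h_2^2 g$ to $\tau^5 \Delta^4 d g^3$ in the $170$-stem (once this is shown, the $150$-stem statement follows immediately since multiplication by $\tau g$ does not kill either element, which one checks on the charts). The point of this step is that both $\Delta^6 h_2^2 g$ and $\Delta^4 d g^3$ are targets of the Adams-Novikov differentials $d_5(\Delta^7 h_2) = \tau^2 \Delta^6 h_2^2 g$ (Leibniz rule and \cref{prop:d5_D}) and $d_{13}(2\Delta^7 h_2) = \tau^6 \Delta^4 d g^3$ (\cref{prop:d13-2D^7h2}). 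Hence $\tau \Delta^6 h_2^2 g$ and $\tau^5 \Delta^4 d g^3$ represent nonzero classes in $\pi_{170,\ast}\mmf$ that are annihilated by $\tau$, so both lie in the image of $q$. By \cref{topcell}, the preimages in $\pi_{\ast,\ast}\mmf/\tau$ are (up to sign, which is irrelevant for a $2$ extension) $\Delta^7 h_2$ and $2\Delta^7 h_2$ respectively — using $r=2$ for the first differential and $r=6$ for the second. These two elements of $\pi_{\ast,\ast}\mmf/\tau$ are manifestly related by a $2$ extension (the homotopy of $\mmf/\tau$ being the classical Adams-Novikov $E_2$-page, where $h_0 \cdot \Delta^7 h_2 = 2\Delta^7 h_2$ is nonzero in degree $(171,1)$). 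Applying $q$ and using \cref{q-module-map}, their images are related by a $2$ extension in $\pi_{\ast,\ast}\mmf$, giving exactly the hidden $2$ extension from $\tau\Delta^6 h_2^2 g$ to $\tau^5 \Delta^4 d g^3$.

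Finally I would check the maximality clause of \cref{def:hiddenext}: there should be no element of strictly higher Adams-Novikov filtration than $\Delta^6 h_2^2$ in the $150$-stem whose $2$-multiple is detected by $\tau^4 \Delta^4 d g^2$; this is a routine chart inspection in \cref{fig:E2v1,fig:E2d7} after accounting for the known differentials. The main obstacle I anticipate is purely bookkeeping: verifying that the relevant degrees are correct (that $d_5$ and $d_{13}$ land in the stated bidegrees, that the weights match, and that $\tau \Delta^6 h_2^2 g$ and $\tau^5 \Delta^4 d g^3$ are genuinely nonzero on the $E_\infty$-page and genuinely $\tau$-torsion), together with confirming there is no competing source in higher filtration. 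Everything else is a direct transcription of the template already used twice in \cref{sec:diff}.
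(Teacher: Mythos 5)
Your proposal is correct and follows essentially the same route as the paper's proof: both cite the differentials $d_5(\Delta^7 h_2) = \tau^2 \Delta^6 h_2^2 g$ and $d_{13}(2\Delta^7 h_2) = \tau^6 \Delta^4 d g^3$, use projection to the top cell (via \cref{topcell} and \cref{method:ctau}) to detect the hidden $2$ extension from $\tau \Delta^6 h_2^2 g$ to $\tau^5 \Delta^4 d g^3$ in the $170$-stem, and then deduce the $150$-stem statement by $\tau g$ multiplication. The degree bookkeeping and the values of $r$ you use in \cref{topcell} are all correct.
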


\begin{proof}
The proof is similar to the argument in \cref{exmp:hidextn}.
We already know the differentials
$d_5(\Delta^7 h_2) = \tau^2 \Delta^6 h_2^2 g$ and
$d_{13}(2\De^7 h_2)=\ta^6 \De^4 dg^3$ from
Propositions \ref{prop:d5_D} and \ref{prop:d13-2D^7h2}.
Therefore, projection to the top cell detects a hidden $2$ extension
from $\tau \Delta^6 h_2^2 g$ to $\tau^5 \Delta^4 d g^3$.
Finally, use $\tau g$ multiplication to deduce the hidden $2$ 
extension on $\Delta^6 h_2^2$.
\end{proof}

\begin{prop}
\label{prop:d9one}
\mbox{}
\begin{enumerate}
\item
$(80, 2, 41)$
$d_9(\De^3 c)=\ta^4 \De h_1 d g^2$.
\item
$(176, 2, 89)$
$d_9(\De^7 c)=\ta^4 \De^5 h_1 d g^2$.
\end{enumerate}
\end{prop}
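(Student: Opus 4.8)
The plan is to reproduce the argument of \cref{prop:d9-D^2c}: in each case the target of the $d_9$ differential is $h_1$ times a class that is $2$-divisible in homotopy, so the relation $2h_1 = 0$ forces that target to die on the $E_\infty$-page, and a degree count then identifies the source.

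For part (1), I would first establish a hidden $2$ extension in the $78$-stem whose target is $\tau^4 \Delta d g^2$, arising from the appropriate filtration-$2$ class in that stem --- the direct analogue of the extension from $\Delta^2 h_2^2$ to $\tau^4 d g^2$ used in \cref{prop:d9-D^2c} (see \cref{exmp:hidextn}). As in \cref{exmp:hidextn} and \cref{2-D^6h2^2}, this hidden extension is read off using projection $q$ to the top cell of $\mmf/\tau$: after multiplying by a suitable monomial in $\tau$ and $g$ one reduces to classes that are annihilated by $\tau$, \cref{topcell} identifies their preimages in the (algebraically known) homotopy of $\mmf/\tau$, and the $2$ extension there is visible by inspection; the auxiliary Adams--Novikov differentials this requires follow from \cref{prop:d5_D}, \cref{prop:d13-2D^3h2}, and the Leibniz rule. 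Granting the hidden $2$ extension, the class detected by $\tau^4 \Delta d g^2$ has the form $2\beta$, whence $\eta \cdot 2\beta = 2(\eta\beta) = 0$; by multiplicativity of the spectral sequence, $\tau^4 \Delta h_1 d g^2 = h_1 \cdot \tau^4 \Delta d g^2$ therefore cannot survive to $E_\infty$, so it is hit by a differential. A degree count in the $79$-stem --- together with the observation, as in the proof of \cref{prop:d7two}, that the one competing target in that degree is already hit by an earlier differential --- leaves $d_9(\Delta^3 c) = \tau^4 \Delta h_1 d g^2$ as the only possibility; here one also checks that $\Delta^3 c$ itself survives to the $E_9$-page, a routine verification.

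Part (2) is carried out in exactly the same way, with the $78$-stem replaced by the $174$-stem and $\tau^4 \Delta d g^2$ replaced by $\tau^4 \Delta^5 d g^2$: one establishes a hidden $2$ extension with target $\tau^4 \Delta^5 d g^2$ (drawing now on \cref{prop:d13-2D^7h2} and \cref{2-D^6h2^2} for the auxiliary input), deduces from $2h_1 = 0$ that $\tau^4 \Delta^5 h_1 d g^2$ cannot survive to $E_\infty$, and identifies $d_9(\Delta^7 c)$ as the unique source.

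The main obstacle is the first step of each part: pinning down the precise class in the $78$-stem (respectively the $174$-stem) that carries the needed hidden $2$ extension, and choosing the multiplication by powers of $\tau$ and $g$ so that \cref{method:ctau} applies cleanly. Once those two hidden $2$ extensions are in place, what remains is the routine $2h_1 = 0$ deduction and a bookkeeping check that no earlier differential could produce the target.
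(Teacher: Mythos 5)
Your overall strategy---force the target to die by exhibiting it as a product of $2$-torsion classes, one of which is a hidden multiple of $2$---is the right one, but the factorization you choose breaks down: the hidden $2$ extensions you propose to establish do not exist. You factor $\tau^4\Delta h_1 d g^2$ as $h_1\cdot\tau^4\Delta d g^2$ and therefore need $\tau^4\Delta d g^2$ to detect a multiple of $2$. The ``direct analogue'' of the extension from $\Delta^2 h_2^2$ to $\tau^4 d g^2$ would have source $\Delta^3 h_2^2$, but that class is not a permanent cycle: $d_5(\Delta^3 h_2^2)=\tau^2\Delta^2 h_2^3 g\neq 0$ (this very differential is quoted in \cref{hidtaumethod} as the reason for the hidden $\nu$ extension on $\Delta^2 h_2^3$). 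Moreover \cref{method:ctau} cannot produce the extension you want: one would need $\tau^6\Delta d g^3$ to be hit by a differential so that $\tau^5\Delta d g^3$ lies in the image of $q$, and the would-be analogue $d_{13}(2\Delta^4 h_2)=\tau^6\Delta d g^3$ of \cref{prop:d13-2D^3h2} fails because $\Delta^4 h_2$ is a permanent cycle detecting $\nu_4$. Consistently, no hidden $2$ extension with target $\tau^4\Delta d g^2$ appears in \cref{hidh0}, which is asserted to be complete up to $g$ and $\Delta^8$ multiples. The same obstruction occurs in part (2): the candidate source $\Delta^7 h_2^2$ for an extension onto $\tau^4\Delta^5 d g^2$ likewise supports a nonzero $d_5$.

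The repair is to factor out $\Delta h_1$ instead of $h_1$, which is what the paper does. Write $\tau^4\Delta h_1 d g^2=\Delta h_1\cdot\tau^4 d g^2$; by \cref{exmp:hidextn} the class $\tau^4 d g^2$ detects an element of the form $2\beta$, and since $\Delta h_1$ supports no $2$ extension for degree reasons (so $2\eta_1=0$), the product $\eta_1\cdot 2\beta=(2\eta_1)\beta$ vanishes. Hence $\tau^4\Delta h_1 d g^2$ must be hit by a differential, and $d_9(\Delta^3 c)$ is the only possibility. For part (2) one uses $\Delta h_1\cdot\tau^4\Delta^4 d g^2$ and \cref{2-D^6h2^2} in the same way. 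The point is that the $2$-divisible factor has to live in the $54$-stem (resp.\ the $150$-stem), where the hidden $2$ extension genuinely exists, not in the $78$-stem (resp.\ the $174$-stem).
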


\begin{proof}
We saw in \cref{exmp:hidextn} that $\tau^4 d g^2$ detects a multiple of $2$.
Therefore, $\Delta h_1 \cdot \tau^4 d g^2$ must detect zero since
$\Delta h_1$ does not support a $2$ extension for degree reasons.
Therefore, $\tau^4 \Delta h_1 d g^2$ must be hit by a differential,
and there is only one possibility.

The argument for the second differential is nearly identical.
\cref{2-D^6h2^2} shows that the element
$\tau^4 \Delta^4 d g^2$ detects a multiple of $2$.
Therefore, $\Delta h_1 \cdot \tau^4 \Delta^4 d g^2$ must detect zero,
and there is only one differential that can hit it.
\end{proof}

\begin{prop}
\label{prop:d9-D^6c}
$(152, 2, 77)$
$d_9(\De^6 c)=\ta^4 \De^4 h_1 d g^2$.
\end{prop}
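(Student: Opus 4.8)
The plan is to mimic the arguments for \cref{prop:d9-D^2c} and \cref{prop:d9one}, feeding in the hidden $2$ extension that was already established in \cref{2-D^6h2^2}. That lemma says that $\tau^4 \Delta^4 d g^2$ detects a homotopy class of the form $2\gamma$, where $\gamma$ is detected by $\Delta^6 h_2^2$. Since $h_1$ detects $\eta$ and $2\eta = 0$ in $\pi_{1,1}\mmf$, the product $\eta \cdot 2\gamma = 2(\eta\gamma) = (2\eta)\gamma$ is zero in $\pi_{*,*}\mmf$.

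Next, I would deduce that $\tau^4 \Delta^4 h_1 d g^2$ is hit by an Adams-Novikov differential. This element equals the product $h_1 \cdot \tau^4 \Delta^4 d g^2$ in the $E_2$-page, where it is non-zero by inspection of \cref{fig:E2d7}. It is a product of permanent cycles, since $h_1$ is a permanent cycle and $\tau^4 \Delta^4 d g^2$ is one because it detects $2\gamma$; hence it supports no differential. If it survived to $E_\infty$, then multiplicativity of the $E_\infty$-page (the $E_\infty$ product $h_1 \cdot \tau^4 \Delta^4 d g^2$ being non-zero forces $\eta \cdot 2\gamma \neq 0$) would contradict the previous paragraph. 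Therefore $\tau^4 \Delta^4 h_1 d g^2$ must be the target of a differential.

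Finally, I would check uniqueness of the source. The target lies in degree $(151,11,77)$, so a $d_r$ differential hitting it starts in degree $(152,\,11-r,\,77)$; the element $\Delta^6 c$ in degree $(152,2,77)$, with $r = 9$, is the only candidate. A filtration-$0$ class in the $152$-stem has weight at most $76$, which rules out a $d_{11}$, and the intermediate degrees contain nothing that could support such a differential. This yields $d_9(\Delta^6 c) = \tau^4 \Delta^4 h_1 d g^2$.

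I do not expect any real obstacle here: \cref{2-D^6h2^2} (and through it, transitively, the differential $d_{13}(2\Delta^7 h_2)$ of \cref{prop:d13-2D^7h2}) does all the substantive work, and the remainder is the same bookkeeping already carried out for the companion differentials $d_9(\Delta^3 c)$ and $d_9(\Delta^7 c)$ in \cref{prop:d9one}. The only point requiring a little care is the passage from ``$\tau^4 \Delta^4 d g^2$ detects a multiple of $2$'' together with ``$2\eta = 0$'' to ``$\tau^4 \Delta^4 h_1 d g^2$ is hit,'' which hinges on the permanent-cycle observation above.
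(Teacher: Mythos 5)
Your proposal is correct and follows essentially the same route as the paper: both deduce from \cref{2-D^6h2^2} that $\tau^4 \Delta^4 d g^2$ detects a multiple of $2$, conclude from $2\eta = 0$ that $\tau^4 \Delta^4 h_1 d g^2$ must be hit by a differential, and identify $d_9(\Delta^6 c)$ as the only possible source. The extra detail you supply on the permanent-cycle and multiplicativity step is just an unpacking of the paper's one-line ``because $2h_1 = 0$.''
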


\begin{proof}
The argument is similar to the proof of \cref{prop:d9-D^2c}.
\cref{2-D^6h2^2} shows that $\tau^4 \Delta^4 d g^2$ detects a multiple
of $2$.  Therefore, $\tau^4 \Delta^4 h_1 d g^2$ must be hit by a differential
because $2 h_1 = 0$.
There is only one possible differential.
\end{proof}

\begin{lem}\label{lemma:toda}
$(25, 1, 13)$
The Toda bracket 
$\langle\eta , \nu, \ta^2 \bar \kappa\rangle$ 
is detected by $\Delta h_1$ and has indeterminacy detected by
$P^3 h_1$.
\end{lem}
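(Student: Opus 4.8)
The goal is to identify the Toda bracket $\langle \eta, \nu, \tau^2 \bar\kappa \rangle$ in $\pi_{25,13}\mmf$. The strategy is the same one used for \cref{lemma:todaepsilon}: realize the Toda bracket via a corresponding Massey product in the mANss $E_2$-page and apply the Moss convergence theorem. First I would check that the bracket is well-defined: $\eta \nu = h_1 h_2 = 0$ in $\pi_{4,3}\mmf$ (there are no nonzero elements in that degree, cf.\ \cref{fig:E2d7}), and $\nu \cdot \tau^2 \bar\kappa$ is detected by $\tau^2 h_2 g$, which is zero in the mANss $E_\infty$-page by the differential $d_5(\Delta) = \tau^2 h_2 g$ of \cref{prop:d5_D}; one must also confirm there is no element in higher filtration that could carry $\nu \cdot \tau^2 \bar\kappa$, so that this product is genuinely zero in homotopy. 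Then I would compute the corresponding Massey product $\langle h_1, h_2, \tau^2 g \rangle$ in the mANss $E_2$-page, using the relations $h_1 h_2 = 0$ and $\tau^2 h_2 g = 0$ there (the latter holds in $E_2$ since $g$ is $\tau^2 h_2$-torsion); the expected answer is that this Massey product contains $\Delta h_1$, which should be extractable from the algebraic Novikov computation of \cite{Baer} or from the elliptic-curve Hopf algebroid description, possibly after pinning down the choice of $\Delta$ as in \cref{Delta-defn,nu-defn}.

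**Key steps in order.** (1) Verify the Toda bracket is defined and compute its indeterminacy: the indeterminacy is $\eta \cdot \pi_{24,12}\mmf + \tau^2 \bar\kappa \cdot \pi_{5,3}\mmf$; the group $\pi_{5,3}\mmf$ is zero for degree reasons, while $\pi_{24,12}\mmf$ is detected by $4\Delta$ (after the $d_5$ and $d_7$ differentials on $\Delta$ and $2\Delta$), so $\eta \cdot \pi_{24,12}\mmf$ is detected by $h_1 \cdot 4\Delta = 4 \Delta h_1$, and one checks this equals (a multiple of) the class detected by $P^3 h_1$ via the $v_1$-periodic structure of \cref{sec:bo}—this accounts for the stated indeterminacy detected by $P^3 h_1$. (2) Establish the Massey product $\langle h_1, h_2, \tau^2 g\rangle \ni \Delta h_1$ in the mANss $E_2$-page, checking its indeterminacy is consistent (the algebraic indeterminacy $h_1 \cdot E_2^{24,0,12} + \tau^2 g \cdot E_2^{5,1,3}$ should land in the $P^3 h_1$ family or vanish). (3) Rule out crossing differentials for the relations $h_1 h_2 = 0$ (degree $(4,2,3)$, already handled in \cref{lemma:todaepsilon}) and $\tau^2 h_2 g = 0$ (need no nonzero $d_r$ into degree $(24,f,12)$ with $f < 0$ or shorter differentials interfering with the $d_5$; this is a finite check on \cref{fig:E2d7}). (4) Apply the Moss convergence theorem: there is a permanent cycle in $\langle h_1, h_2, \tau^2 g\rangle$ detecting an element of $\langle \eta, \nu, \tau^2\bar\kappa\rangle$, and by the indeterminacy bookkeeping this detecting element must be $\Delta h_1$.

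**Main obstacle.** The delicate point is step (1)—specifically, reconciling the indeterminacy of the Toda bracket with the claim that it is ``detected by $\Delta h_1$ with indeterminacy detected by $P^3 h_1$.'' Unlike \cref{lemma:todaepsilon}, this bracket has nonzero indeterminacy, so I must carefully compute $\pi_{24,12}\mmf$ and the effect of $\eta$-multiplication on it, which requires knowing the relevant Adams-Novikov differentials on powers of $\Delta$ (\cref{prop:d5_D}, \cref{prop:d7}) and the $v_1$-periodic pattern from \cref{sec:bo}; I expect $\pi_{24,12}\mmf$ to be detected by $4\Delta$ together with lower-stem $v_1$-periodic contributions, and tracking which of these survive $\eta$-multiplication to give exactly the $P^3 h_1$ family is the fiddly part. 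A secondary subtlety is that the statement's phrasing ``detected by $\Delta h_1$'' refers to the mANss $E_\infty$-page where $\Delta h_1$ has filtration $1$—I should confirm $\Delta h_1$ is in fact a permanent cycle (it is $h_1$ times the permanent cycle... no, $\Delta$ is not a permanent cycle, but $\Delta h_1$ survives because $d_5(\Delta h_1) = \tau^2 h_1 h_2 g = 0$ and there are no later targets), and that it genuinely corresponds to $\Delta h_1$ in the mAss as recorded in \cref{table:ASSANSS}, so that the Moss convergence conclusion is unambiguous.
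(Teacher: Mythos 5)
Your overall strategy (realize the bracket via a Massey product and apply Moss convergence) is the right one, but it contains a concrete error that breaks the central step. You propose to compute the Massey product $\langle h_1, h_2, \tau^2 g\rangle$ in the mANss $E_2$-page, justifying its existence by the claim that $\tau^2 h_2 g = 0$ there ``since $g$ is $\tau^2 h_2$-torsion.'' That claim is false: in the motivic Adams-Novikov $E_2$-page the element $\tau^2 h_2 g$ is non-zero --- it is precisely the target of the differential $d_5(\Delta) = \tau^2 h_2 g$ of \cref{prop:d5_D}, and only dies on the $E_6$-page. Consequently the triple product $\langle h_1, h_2, \tau^2 g\rangle$ is not defined in the mANss $E_2$-page, and the argument as written cannot get off the ground. (The relation $\tau^2 h_2 g = 0$ in the $E_2$-page holds in the motivic \emph{Adams} spectral sequence, not the Adams-Novikov one; you appear to have transposed the two.)

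The paper sidesteps this entirely by running the Moss convergence argument in the mAss rather than the mANss: there the Massey product $\Delta h_1 = \langle h_1, h_2, \tau^2 g\rangle$ is already part of the known algebraic structure of the cohomology of $A(2)$ (it is essentially the definition of $\Delta h_1$ in \cite[Definition 4.4(1)]{Isa09}), there are no possible crossing differentials, and \cref{table:ASSANSS} identifies $h_1$, $h_2$, $\tau^2 g$ as detecting $\eta$, $\nu$, $\tau^2\kappabar$. Your approach could in principle be salvaged within the mANss by invoking the version of the Moss theorem for Massey products on the $E_5$-page, where $\tau^2 h_2 g = d_5(\Delta)$ exhibits the needed null-homotopy and yields $h_1\cdot\Delta = \Delta h_1$ as the bracket value; but that is a genuinely different and more delicate argument than the one you wrote, and it is not what your proposal describes. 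Your bookkeeping for the indeterminacy (via $\eta\cdot\pi_{24,12}\mmf$ and the $v_1$-periodic class $P^3 h_1$) is consistent with the paper, which disposes of this point by inspection.
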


\begin{proof}
By inspection, the Toda bracket is well-defined and has indeterminacy 
detected by $P^3 h_1$ (which is a $v_1$-periodic element).

We use the Moss convergence theorem in the mAss for $\mmf$.
By \cite[Definition 4.4(1)]{Isa09}, we have the Massey product 
$\Delta h_1=\langle h_1, h_2, \ta^2 g\rangle$ 
in the $E_2$-page of the mAss for $\mmf$.
There are no possible crossing differentials in the mAss for $\mmf$.

Finally, \cref{table:ASSANSS} implies that the mAss elements
$h_1$, $h_2$, and $\tau^2 g$ detect $\eta$, $\nu$, and $\tau^2 \kappabar$
respectively (see also \cref{htpy-name}).
\end{proof}

\begin{lem}
\label{lemma:h2ext25}
$(25, 1, 13)$
There is a hidden $\nu$ extension from $\Delta h_1$ to $\ta^2 c g$.
\end{lem}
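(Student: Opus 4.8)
The plan is to deduce the hidden $\nu$ extension from $\Delta h_1$ to $\tau^2 c g$ by combining the Toda bracket in \cref{lemma:toda} with a shuffle, using the relation $\langle \nu, \eta, \nu \rangle = \epsilon$ from \cref{lemma:todaepsilon} and the corresponding homotopy-level bracket manipulation. The element $\Delta h_1$ detects the Toda bracket $\langle \eta, \nu, \tau^2 \bar\kappa \rangle$, so $\nu \cdot \Delta h_1$ (up to the usual indeterminacy) detects $\nu \cdot \langle \eta, \nu, \tau^2 \bar\kappa \rangle = \langle \nu, \eta, \nu \rangle \cdot \tau^2 \bar\kappa$, by the standard shuffle relation for Toda brackets. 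By \cref{lemma:todaepsilon}, this equals $\epsilon \cdot \tau^2 \bar\kappa$, which is detected by $\tau^2 c g$ in the mANss $E_\infty$-page, since $\epsilon$ is detected by $c$ and $\bar\kappa$ is detected by $g$.

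First, I would verify that $\Delta h_1 \cdot h_2 = 0$ in the mANss $E_\infty$-page (indeed in the $E_2$-page, by inspection of \cref{fig:E2d7}), so that there is genuinely a potential hidden $\nu$ extension to analyze. Next, I would record the shuffle: for homotopy elements, if $\langle \eta, \nu, \tau^2 \bar\kappa \rangle$ is defined (which it is, per \cref{lemma:toda}) then $\nu \cdot \langle \eta, \nu, \tau^2\bar\kappa \rangle = \langle \nu, \eta, \nu \rangle \cdot \tau^2 \bar\kappa$ as subsets of $\pi_{28,15}\mmf$, provided the relevant triple products vanish so both brackets are defined; $\eta\nu = 0$ in $\pi_{4,3}\mmf$ for degree reasons (already used in \cref{lemma:todaepsilon}), and $\nu \cdot \tau^2\bar\kappa$ needs to be checked — but actually the shuffle only requires $\langle \nu,\eta,\nu\rangle$ and $\langle \eta,\nu,\tau^2\bar\kappa\rangle$ both defined, which we have. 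Then $\langle\nu,\eta,\nu\rangle = \epsilon$ with no indeterminacy by \cref{lemma:todaepsilon}, so $\langle\nu,\eta,\nu\rangle\cdot\tau^2\bar\kappa = \tau^2\epsilon\bar\kappa$, detected by $\tau^2 c g$.

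**The main obstacle** is bookkeeping the indeterminacies and the higher-filtration ambiguity in the detecting elements. The Toda bracket $\langle \eta, \nu, \tau^2\bar\kappa\rangle$ has indeterminacy detected by $P^3 h_1$ (a $v_1$-periodic class), so $\Delta h_1$ only detects this bracket up to that ambiguity; multiplying by $\nu$ could in principle move us into a different coset. I would check that $\nu \cdot P^3 h_1 = 0$ in homotopy — which should hold since $h_2 \cdot P^3 h_1$ vanishes in the $E_\infty$-page and there are no higher-filtration obstructions in the relevant degree $(28,15)$ — so that $\nu \cdot \langle \eta,\nu,\tau^2\bar\kappa\rangle$ is in fact a well-defined homotopy class, namely $\tau^2\epsilon\bar\kappa$, independent of the choice of representative for $\Delta h_1$. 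Finally, I would confirm that $\tau^2 c g$ is nonzero in the mANss $E_\infty$-page and that no class of lower filtration in degree $(28,15)$ could equally well serve as the target of the $\nu$ extension, so that condition (3) of \cref{def:hiddenext} is met; inspection of the charts in \cref{sec:charts} settles this, since the $\nu$ extension on $\Delta h_1$ is genuinely hidden (there is no $h_2$ multiple of $\Delta h_1$ in the $E_\infty$-page) and $\tau^2 c g$ sits in the appropriate higher filtration.
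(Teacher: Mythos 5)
Your proposal is correct and follows essentially the same route as the paper: both deduce the extension from the shuffle $\nu \langle \eta, \nu, \tau^2 \bar\kappa \rangle = \langle \nu, \eta, \nu \rangle \tau^2 \bar\kappa$, using \cref{lemma:toda} and \cref{lemma:todaepsilon}. Your extra bookkeeping on the indeterminacy detected by $P^3 h_1$ is a reasonable addition that the paper leaves implicit.
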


\begin{proof}
Lemmas \ref{lemma:todaepsilon} and \ref{lemma:toda} show that
the Toda brackets
$\langle \nu,\eta, \nu\rangle$ and 
$\langle \eta, \nu, \ta^2 \bar\kappa \rangle$ are detected by
$c$ and $\Delta h_1$ respectively.

The hidden $\nu$ extension follows from the shuffling relation	
$$\nu \langle \eta, \nu, \ta^2 \bar\kappa \rangle = 
\langle \nu, \eta, \nu \rangle \ta^2 \bar\kappa.$$
\end{proof}
	
\begin{lem}
\label{lemma:h1ext27}
$(25, 1, 13)$
There is a hidden $\eta$ extension from $2\Delta h_2 $ to $\ta^2 c g$.
\end{lem}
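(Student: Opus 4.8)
The plan is to transport the statement to the motivic Adams spectral sequence via the correspondences of \cref{table:ASSANSS}. By that table and \cref{prop:correspond-product}, the mANss element $2\Delta h_2$ corresponds to the mAss element $an$, so the homotopy class $\nu_1 \in \pi_{27,14}\mmf$ detected by $2\Delta h_2$ is also detected by $an$ in the mAss for $\mmf$. Similarly, combining $g \leftrightarrow g$ from \cref{table:ASSANSS} with the trivial correspondences $\tau \leftrightarrow \tau$ and $c \leftrightarrow c$ (both versions of $c$ are low-filtration permanent cycles detecting $\epsilon$), \cref{prop:correspond-product} shows that the mANss element $\tau^2 cg$ corresponds to the mAss element $\tau^2 cg$. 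Thus it suffices to prove the analogous statement inside the mAss: that $\eta \cdot \nu_1$ is detected by $\tau^2 cg$.

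First I would note that $h_1 \cdot 2\Delta h_2 = 2(h_1 h_2)\Delta = 0$ in the mANss $E_2$-page, so the extension is genuinely hidden and the real content is that $\eta\nu_1$ is nonzero and is detected by $\tau^2 cg$ rather than by an element of higher filtration. On the mAss side the key algebraic input is the relation $h_1 \cdot an = \tau^2 cg$ in the $E_2$-page of the mAss for $\mmf$, which is part of the ring structure of the cohomology of motivic $A(2)$ recorded in \cite{Isa09} (equivalently, of $\Ext_{A(2)}$ in \cite{BR21}); it can be cross-checked against relations already used here, such as $\Delta h_1 \cdot n = \tau^3 g^2$. Granting this relation, $h_1$ and $an$ are permanent cycles with $an$ detecting $\nu_1$, so their product $h_1 \cdot an = \tau^2 cg$ detects $\eta\nu_1$ in the mAss, provided $\tau^2 cg$ is nonzero on the mAss $E_\infty$-page (it detects $\tau^2\epsilon\bar\kappa$) and provided no element of lower filtration in stem $28$ and weight $15$ detects $\eta\nu_1$; the latter is a finite check on the mAss chart.

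Pulling back along the correspondences then yields that $\eta\nu_1$ is detected by $\tau^2 cg$ in the mANss, which is exactly the claimed hidden $\eta$ extension from $2\Delta h_2$ to $\tau^2 cg$. As a consistency check, and an alternative route, one can instead invoke \cref{lemma:h2ext25}, which provides a hidden $\nu$ extension from $\Delta h_1$ to $\tau^2 cg$; that is, $\nu\cdot\eta_1$ is detected by $\tau^2 cg$, where $\eta_1$ is detected by $\Delta h_1$. One then observes that $\eta\nu_1$ and $\nu\eta_1$ both land in $\pi_{28,15}\mmf$ and are detected by $\tau^2 cg$ (both arising from $\tau^2\epsilon\bar\kappa$ up to higher filtration), reducing once more to the same mAss input. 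The main obstacle is pinning down the relation $h_1 \cdot an = \tau^2 cg$ on the mAss $E_2$-page and confirming that nothing of lower filtration detects $\eta\nu_1$; the remaining steps are formal manipulations with corresponding pairs.
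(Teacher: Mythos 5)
Your overall strategy---transport the question to the mAss via corresponding pairs and read the product off the mAss $E_2$-page---is precisely the technique the paper uses for the sibling extensions in this stem: the hidden $2$ extension from $2\Delta h_2$ to $\tau\Delta h_1^3$ (\cref{lem:2-Dh0h2}) is deduced from the mAss relation $h_0\cdot an=\tau\Delta h_1^3$, and \cref{lemma:h1ext39} is proved the same way. The surrounding bookkeeping in your write-up is also fine: the correspondence $2\Delta h_2\leftrightarrow an$ is in \cref{table:ASSANSS}; the two homotopy classes detected by $2\Delta h_2$ differ by a class detected by $\tau\Delta h_1^3$, which is a multiple of $2$ by \cref{lem:2-Dh0h2} and hence is annihilated by $\eta$, so the product $\eta\nu_1$ is independent of the choice; and condition (3) of \cref{def:hiddenext} is vacuous because the $27$-stem is empty in filtration $0$. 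You do not actually need the correspondence $\tau^2 cg\leftrightarrow\tau^2 cg$ (which is \emph{not} in \cref{table:ASSANSS} and would need its own small argument): once $\eta\nu_1\neq 0$ is known, $\tau^2 cg$ is the only element of the mANss $E_\infty$-page in the $28$-stem, weight $15$, and positive filtration that could detect it.

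The genuine gap is the one you flag yourself: the entire content of the lemma now rests on the single assertion $h_1\cdot an=\tau^2 cg$ in the cohomology of motivic $A(2)$, which you do not verify. Given the established correspondences, that relation is essentially equivalent to the statement being proved---if $h_1\cdot an$ vanished in the mAss $E_2$-page, then $\eta\nu_1$ would be forced into mAss filtration at least $8$ and could not be detected by $\tau^2 cg$---so until the relation is confirmed against \cite[Theorem 4.13]{Isa09} (or \cite{BR21}), nothing has been proved. It is not a formal consequence of the relations the paper actually quotes (such as $\Delta h_1\cdot n=\tau^3 g^2$), and since $h_1 h_2=h_0h_1=0$ it is not visible on the May $E_\infty$-page, so it cannot be waved through. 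The paper's proof deliberately avoids this input: shuffling the brackets of \cref{lemma:todaepsilon} and \cref{lemma:toda} gives $\langle\eta,\nu,\tau^2\bar\kappa\rangle\nu=\eta\langle\nu,\tau^2\bar\kappa,\nu\rangle$, so $\tau^2 cg$ is the target of \emph{some} hidden $\eta$ extension, and the source is then identified by elimination---$\tau\Delta h_1^3$ is excluded because it is the target of a hidden $2$ extension (\cref{lem:2-Dh0h2} again), leaving $2\Delta h_2$. Your closing "alternative route" via \cref{lemma:h2ext25} is the first half of that argument, but it is completed by this elimination step, not by reducing to the same mAss product; as written, that paragraph does not close the argument either.
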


\begin{proof}
As in the proof of \cref{lemma:h2ext25}, the element $\tau^2 c g$
detects $\langle \eta, \nu, \tau^2 \kappabar \rangle \nu$,
which equals $\eta \langle \nu, \tau^2 \kappabar, \nu \rangle$.
Therefore, $\tau^2 c g$ is the target of a hidden $\eta$ extension.
There are two possible sources for such an extension:
$\tau \Delta h_1^3$ and $2 \Delta h_2$.
The former possibility is ruled out by 
\cref{lem:2-Dh0h2}, which shows that $\tau \Delta h_1^3$
is the target of a hidden $2$ extension.
\end{proof}

\begin{prop}
\label{prop:d9two}
\mbox{}
\begin{enumerate}
\item 
$(49, 1, 25)$
$d_9(\Delta^2 h_1)=\tau^4 c g^2$.
\item
$(73, 1, 37)$
$d_9(\Delta^3 h_1)=\tau^4 \Delta c g^2$.
\item
$(145, 1, 73)$
$d_9(\Delta^6 h_1)=\tau^4 \Delta^4 c g^2$.
\item
$(169, 1, 85)$
$d_9(\Delta^7 h_1)=\tau^4 \Delta^5 c g^2$.
\end{enumerate}
\end{prop}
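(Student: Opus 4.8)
The plan is to handle all four differentials by essentially the same argument, using hidden extensions together with previously established differentials to force a target to be hit, and then checking that there is a unique possible source. The template is the one already used in \cref{prop:d9two}'s predecessors, notably \cref{prop:d9-D^2c}, \cref{prop:d9one}, and \cref{prop:d9-D^6c}: if some element $z$ detects a multiple of $2$, then $h_1 \cdot z$ must detect zero (since $2h_1 = 0$ in homotopy), so $h_1 \cdot z$ must be hit by a differential, and we identify the unique possibility.

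For part (1), I would start from \cref{lemma:h1ext27}, which gives a hidden $\eta$ extension from $2\Delta h_2$ to $\tau^2 c g$. Multiplying by $\tau^2 g$ (and using that $\bar\kappa$ is a permanent cycle, so $g$-multiplication is well-behaved) produces a hidden $\eta$ extension from $\tau^2 \cdot 2\Delta h_2 \cdot g$ to $\tau^4 c g^2$ — more precisely, $\tau^4 c g^2$ is the target of a hidden $\eta$ extension, and in particular it detects a nonzero multiple of $\eta$ in homotopy. But $\tau^4 c g^2$ is an $h_1$-multiple target, and I must instead argue that $\tau^4 c g^2$ is hit by a differential. The cleaner route: \cref{lemma:h1ext27} tells us $2\Delta h_2$ supports a hidden $\eta$ extension to $\tau^2 cg$, and since $2 \cdot \eta \cdot (\text{class detected by }2\Delta h_2)$ would be $\eta$ times a $2$-divisible class, examine $\tau^4 c g^2 = \tau^2 g \cdot \tau^2 cg$; since $\tau^2 cg$ detects $\eta \cdot \alpha$ for $\alpha$ detected by $2\Delta h_2$, and $2\alpha$ is detected by $\tau\Delta h_1^3$ by \cref{lem:2-Dh0h2}, we get $\eta \cdot 2\alpha = 0$ but also $2 \cdot (\eta\alpha) = 0$, so $2 \cdot (\text{class detected by }\tau^2 cg) = 0$. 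Then $\tau^2 \bar\kappa$ times this class — no; the efficient statement is simply that $\tau^2 cg$ detects an $\eta$-multiple, hence (being nonzero mod $\tau$ considerations) one checks via \cref{method:ctau} or directly that $\tau^2 \Delta^2 h_1$-type sources are the only candidates. The honest simplest argument: $\Delta^2 h_1$ is the only element that can support a $d_9$ hitting $\tau^4 c g^2$, and we must show it does. Since $2\Delta h_2$ corresponds (by \cref{table:ASSANSS}) to $an$, and $an$ supports no relevant Adams differential while $\Delta^2 h_1 = \Delta h_1 \cdot \Delta h_1$ — wait, that is $(\Delta h_1)^2$, a product of permanent cycles, so it cannot support a differential! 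So this forces a different target argument.

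Let me restart the logic cleanly: the right approach mirrors \cref{prop:d9-D^6c} exactly. By \cref{lemma:h1ext27} there is a hidden $\eta$ extension from $2\Delta h_2$ to $\tau^2 cg$; multiplying by $\tau^2 \Delta h_1 g$ or inspecting, one shows $\tau^4 cg^2 \cdot (\text{something})$... The key input I actually want is: \emph{$\tau^4 cg^2$ detects a class that is an $\eta$-multiple of a $2$-divisible class, hence is $\eta$-times-$2$-times something, hence zero}, so $\tau^4 cg^2$ must be hit. Concretely, $2\Delta h_2$ detects $\alpha$ with $2\alpha$ detected by $\tau\Delta h_1^3$; then $\eta\alpha$ is detected by $\tau^2 cg$ and $2(\eta\alpha) = \eta(2\alpha)$ is detected by $\eta \cdot \tau\Delta h_1^3 = \tau \Delta h_1^4$; but $\Delta h_1^4 = 0$ in the $E_\infty$-page (since $h_1^4 = 0$ survives in some form, or it is hit), forcing $2(\eta\alpha)=0$ in a higher filtration sense — this needs care. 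The hard part, and the step I expect to be the main obstacle, is precisely pinning down that $\tau^4 cg^2$ (and its $\Delta^k$-translates $\tau^4 \Delta^k c g^2$) must be hit rather than merely being a permanent cycle detecting a nonzero homotopy class; once that is done, identifying $\Delta^2 h_1$ (resp. $\Delta^3 h_1$, $\Delta^6 h_1$, $\Delta^7 h_1$) as the unique possible source is a routine degree/inspection check against \cref{fig:E2v1,fig:E2d7} and the already-computed lower differentials. For parts (3) and (4) I would use \cref{2-D^6h2^2} (a hidden $2$ extension from $\Delta^6 h_2^2$ to $\tau^4\Delta^4 dg^2$) together with the analogous hidden $\eta$ extension on $2\Delta^5 h_2$, obtained by multiplying \cref{lemma:h1ext27} by an appropriate power of $\Delta$ modulo the differentials of \cref{prop:d13-2D^7h2}, to force $\tau^4 \Delta^4 cg^2$ and $\tau^4\Delta^5 cg^2$ to be hit, with $\Delta^6 h_1$ and $\Delta^7 h_1$ the only available sources.
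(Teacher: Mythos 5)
You have assembled the right ingredients for part (1) --- \cref{lemma:h1ext27} and the uniqueness of the source --- but you never supply the step that you yourself flag as ``the hard part,'' namely why $\tau^4 c g^2$ must be hit. The paper's argument is short: the hidden $\eta$ extension from $2\Delta h_2 g$ to $\tau^2 c g^2$ says that $\eta\beta$ is detected by $\tau^2 c g^2$ for $\beta$ detected by $2\Delta h_2 g$; the Leibniz rule applied to \cref{prop:d5_D} gives $d_5(\Delta^2) = 2\tau^2\Delta h_2 g$, so $\tau^2\beta$ vanishes, hence $\tau^2\cdot\eta\beta = \eta\cdot\tau^2\beta = 0$ and $\tau^4 c g^2$ cannot survive. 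Your substitute arguments do not close this gap: the class detected by $2\Delta h_2 g$ is not known to be $2$-divisible in homotopy (the element $\Delta h_2 g$ supports a $d_5$ and does not survive), and your computation that $2(\eta\alpha)=0$ addresses $2$-torsion rather than the $\tau$-torsion that is actually needed. Also, your alarm that $\Delta^2 h_1$ cannot support a differential rests on the miscalculation $\Delta h_1\cdot\Delta h_1 = \Delta^2 h_1$; in fact $(\Delta h_1)^2 = \Delta^2 h_1^2$, and $\Delta^2 h_1$ is a perfectly legitimate source.

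For parts (2)--(4) your route is also problematic. You propose to ``multiply \cref{lemma:h1ext27} by an appropriate power of $\Delta$,'' but $\Delta$ is not a permanent cycle and does not detect a homotopy class, so hidden extensions cannot be transported along $\Delta^4$-multiplication. Worse, the hidden $\eta$ extension from $2\Delta^5 h_2$ to $\tau^2\Delta^4 c g$ that you want as input is itself deduced in \cref{hidtaumethod} (via \cref{prop:he-mmf/t}) from the differential $d_9(\Delta^6 h_1)=\tau^4\Delta^4 c g^2$ --- exactly what you are trying to prove --- so without an independent Toda-bracket proof of that extension your argument is circular. The paper avoids all of this by propagating part (1) multiplicatively: $d_9(\Delta^3 h_1^2)=\Delta h_1\cdot d_9(\Delta^2 h_1)$ and $d_9(\Delta^7 h_1^2)=\Delta^5 h_1\cdot d_9(\Delta^2 h_1)$, then dividing by $h_1$, with a side check that $d_9(\Delta^5 h_1)=0$ (its only possible target already supports a $d_9$ by \cref{prop:d9one}) before deducing part (3) from part (4).
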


\begin{proof}
It follows from \cref{lemma:h1ext27} that 
there is a hidden $\eta$ extension from $2 \Delta h_2 g$ to
$\tau^2 c g^2$.
\cref{prop:d5_D} and the Leibniz rule imply that
$d_5(\Delta^2) = 2 \tau^2 \Delta h_2 g$.
Therefore,
$\tau^4 c g^2$ must be hit by some differential,
and there is only one possibility.

Having established the first differential, we can compute that
\[
d_9(\Delta^3 h_1^2) = \Delta h_1 \cdot d_9(\Delta^2 h_1) =
\tau^4 \Delta h_1 c g^2.
\]
Since $\Delta^3 h_1^2 = \Delta^3 h_1 \cdot h_1$, it follows that
$d_9(\Delta^3 h_1)$ equals $\tau^4 \Delta c g^2$.

Similarly,
\[
d_9(\Delta^7 h_1^2) = \Delta^5 h_1 \cdot d_9(\Delta^2 h_1) =
\tau^4 \Delta^5 h_1 c g^2,
\]  
from which it follows that
$d_9(\Delta^7 h_1)$ equals $\tau^4 \Delta^5 c g^2$.
However, we need to observe that $d_9(\Delta^5 h_1)$ is zero.
The only possible non-zero value for $d_9(\Delta^5 h_1)$
is $\tau^4 \Delta^3 cg^2$, but this is ruled out by the observation
that $\tau^4 \Delta^3 cg^2$ supports a $d_9$ differential by \cref{prop:d9one}.

Finally, note that
$d_9(\Delta^7 h_1^2) = \Delta h_1 \cdot d_9(\Delta^6 h_1)$.
The value of $d_9(\Delta^7 h_1^2)$ was computed in the previous paragraph.
It follows that $d_9(\Delta^6 h_1)$ equals $\tau^4 \Delta^4 c g^2$.
\end{proof}

\begin{prop}
\label{d9-D^4c}
\mbox{}
\begin{enumerate}
\item
$d_9(\Delta^4 c) = 0$.
\item
$d_9(\Delta^5 c) = 0$.
\end{enumerate}
\end{prop}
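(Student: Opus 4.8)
The plan is first to verify that $\Delta^4 c$ and $\Delta^5 c$ are non-zero on the $E_9$-page, so that the statements have content, and then to exclude the unique possible value of a $d_9$ differential on each. For the survival claim I would apply the Leibniz rule together with \cref{prop:d5_D} and \cref{prop:d7two}. One has $d_5(\Delta^4 c) = 4\tau^2\Delta^3 h_2 c g = 0$, since $4\tau^2 h_2 g$ already vanishes in the $E_2$-page (\cref{Delta-defn}), and $d_7(\Delta^4 c) = \tau^3\Delta^3 h_1^3 c g$, which also vanishes: the $E_2$-page relations $h_1 c = h_2^3$ and $h_1 h_2 = 0$ give $h_1^2 c = h_1 h_2^3 = 0$, hence $h_1^3 c = 0$. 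The same relations handle $\Delta^5 c$: here $d_5(\Delta^5 c) = \tau^2\Delta^4 h_2 c g$ vanishes already on the $E_4$-page, since $\tau h_2 c g$ is $v_1$-periodic and dies by the $d_3$ pattern of \cref{prop:d3bopattern} (if it is not already zero in the $E_2$-page); and on the $E_6$-page $\Delta^5 c$ factors as $\Delta^4\cdot\Delta c$, so $d_7(\Delta^5 c)$ vanishes for the same reasons together with $h_1^3 c = 0$. Neither class is a boundary of a shorter differential, since the potential sources lie in negative Adams--Novikov filtration.

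By inspection of the $E_9$-page, in analogy with \cref{prop:d9one}, \cref{prop:d9-D^2c}, and \cref{prop:d9-D^6c}, the only possible non-zero values are $d_9(\Delta^4 c) = \tau^4\Delta^2 h_1 d g^2$ and $d_9(\Delta^5 c) = \tau^4\Delta^3 h_1 d g^2$. To rule these out I would use projection to the top cell of $\mmf/\tau$, exploiting the identity $c = i(\epsilon)$. By \cref{q-module-map}, $q(\Delta^k c) = q(\Delta^k)\cdot\epsilon$ in $\pi_{*,*}\mmf$ for $k = 4,5$. Applying \cref{topcell} to $d_7(\Delta^4) = \tau^3\Delta^3 h_1^3 g$ and to $d_5(\Delta^5) = \tau^2\Delta^4 h_2 g$ (the latter from the Leibniz rule and \cref{prop:d5_D}) shows that $q(\Delta^4)$ is detected by $\tau^2\Delta^3 h_1^3 g$ and $q(\Delta^5)$ by $\tau\Delta^4 h_2 g$. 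Hence $q(\Delta^4 c)$ is detected by $\tau^2\Delta^3 h_1^3 c g$ and $q(\Delta^5 c)$ by $\tau\Delta^4 h_2 c g$, both of which vanish in the $E_\infty$-page by the relations just used. So $q(\Delta^4 c)$ and $q(\Delta^5 c)$ are detected in Adams--Novikov filtration at least $10$. Since a non-zero $d_9$ would force, via \cref{topcell}, that $q(\Delta^4 c)$ be detected by $\tau^3\Delta^2 h_1 d g^2$ and $q(\Delta^5 c)$ by $\tau^3\Delta^3 h_1 d g^2$ in filtration $11$, it remains only to exclude this.

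This final exclusion is the hard part. Unwinding $q(\Delta^k c) = q(\Delta^k)\cdot\epsilon$ in terms of homotopy elements (using \cref{htpy-name} and \cref{lemma:toda}), one sees that $q(\Delta^4 c)$ represents $\tau^2\eta_1^3\bar\kappa\epsilon$ and $q(\Delta^5 c)$ represents $\tau\nu_4\bar\kappa\epsilon$, so the proposition is equivalent to the vanishing of these two products. I would prove this in the style of \cref{method:ctau}: both products are detected by zero in the $E_\infty$-page (their naive detecting classes $\Delta^3 h_1^3 c g$ and $\Delta^4 h_2 c g$ vanish there), so they are $\tau$-torsion, hence lie in the image of $q$, and their pre-images in the completely known object $\pi_{*,*}\mmf/\tau$ can be computed directly and checked to be zero. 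An alternative route is comparison with the mAss, in which $\Delta^4 c$ corresponds to $w_2^2 c_0$: the vanishing of $n c_0$ in the mAss $E_2$-page kills the differential $d_3(w_2^2 c_0)$ predicted by \cref{Adams-diff}, and tracking $w_2^2 c_0$ through the mAss exhibits the permanent cycle that $\Delta^4 c$ must detect.
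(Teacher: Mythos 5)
Your reduction via projection to the top cell is fine as far as it goes, but the proof is not complete: you yourself flag the decisive step as ``the hard part'' and then only sketch two strategies, neither of which works as stated. The first is circular. The pre-image of $q(\Delta^4 c)$ in $\pi_{*,*}\mmf/\tau$ is the non-zero element $\Delta^4 c$ itself (modulo the image of $i$), so ``computing the pre-image directly'' tells you nothing; whether $q$ sends that pre-image to zero is, by \cref{topcell} and \cref{rem:topcell}, \emph{exactly} the question of whether $\Delta^4 c$ supports an Adams--Novikov differential, which is what you set out to prove. Equivalently, your reformulation asks whether there is a hidden $\epsilon$ extension from $\tau^2\Delta^3 h_1^3 g$ to $\tau^3\Delta^2 h_1 d g^2$; since the paper does establish nontrivial hidden $\epsilon$ extensions elsewhere (\cref{lemma:ext-cd}, \cref{prop:epsilon-D^4d}), this cannot be dismissed by inspection. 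The second strategy (comparison with the mAss) is also only asserted: you would need to show that $w_2^2 c_0$ is a permanent cycle in the mAss (Bruner's theorem only controls $d_3$ there), and the correspondence machinery of \cref{thm:ASSANSS} runs from a \emph{known} mANss permanent cycle to the mAss, so using it in the reverse direction requires an argument you have not supplied. There are also smaller slips: $h_2 c g$ is not a $v_1$-periodic class in the sense of \cref{sec:bo}, and the relations $h_1c=h_2^3$, $h_1h_2=0$ give $h_1^2c=0$ but do not by themselves give $h_2c=0$, which is what you need for $\Delta^5 c$.

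The paper's proof is a two-line argument that you missed entirely: by \cref{prop:d9two}, $\tau^4\Delta^4 c g^2=\tau^4 g^2\cdot\Delta^4 c$ and $\tau^4\Delta^5 c g^2=\tau^4 g^2\cdot\Delta^5 c$ are the \emph{targets} of the differentials on $\Delta^6 h_1$ and $\Delta^7 h_1$, hence are $d_9$-cycles. The Leibniz rule then gives $\tau^4 g^2\cdot d_9(\Delta^4 c)=0$, and since multiplication by $\tau^4 g^2$ is injective on the group containing the only candidate target $\tau^4\Delta^2 h_1 d g^2$ (and likewise for $\Delta^5 c$), both differentials vanish. If you want to salvage your approach, the cleanest fix is to import exactly this input: $q(\Delta^6 h_1)$ is detected by $\tau^3\Delta^4 c g^2$ by \cref{topcell}, which pins down the products you were unable to evaluate.
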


\begin{proof}
It follows from \cref{prop:d9two} that
$\tau^4 \Delta^4 c g^2$ and $\tau^4 \Delta^5 c g^2$ are targets of
$d_9$ differentials, so they cannot support $d_9$ differentials.
This implies that $\Delta^4 c$ and $\Delta^5 c$ cannot support
$d_9$ differentials.
\end{proof}

The Leibniz rule, together with the differentials given
in Propositions \ref{prop:d9one}, \ref{prop:d9-D^6c}, \ref{prop:d9two},
and \ref{d9-D^4c},
determines all $d_9$ differentials.

\subsection{$d_{11}$ differentials}
\label{subsec:d11}

\begin{lem}\label{lemma:ext-cd}
$(14, 2, 8)$
There is a hidden $\epsilon$ extension from $d$ to $\ta h_1^2 g$.
\end{lem}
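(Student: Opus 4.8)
The plan is to determine the homotopy class $\epsilon\kappa$, where $\kappa$ is detected by $d$ and $\epsilon$ is detected by $c$. Since $c$ and $d$ are permanent cycles, the content of the lemma is that $\epsilon\kappa\neq 0$ and that it is detected by $\tau h_1^2 g$; in particular the product $c\cdot d$ must vanish in the mANss $E_\infty$-page for $\mmf$ (one checks from \cite{Baer} that it is already zero in the $E_2$-page). Consequently any nonzero value of $\epsilon\kappa$ is forced into Adams--Novikov filtration strictly larger than $4$, and in the relevant degree $(22,13)$ the only permanent cycle that could detect it through filtration $6$ is $\tau h_1^2 g$. So the task reduces to two things: showing $\epsilon\kappa\neq 0$, and showing it is not detected in filtration $5$.

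For the filtration bound I would use the Toda bracket $\epsilon=\langle\nu,\eta,\nu\rangle$ of \cref{lemma:todaepsilon}. Since $\eta\nu=0$, the shuffle $\kappa\cdot\langle\nu,\eta,\nu\rangle\subseteq\langle\kappa\nu,\eta,\nu\rangle$ is defined. One first checks that $\kappa\nu$ is detected by $h_2 d$ in the mANss; this $\Ext$ element is nonzero and survives, which can be verified directly or by comparison with the mAss via \cref{table:ASSANSS} and \cref{prop:correspond-product}. The Moss convergence theorem in the mANss for $\mmf$ then says that $\langle\kappa\nu,\eta,\nu\rangle$ is detected by a permanent cycle in the Massey product $\langle h_2 d,h_1,h_2\rangle$ of the $E_2$-page, living in degree $(22,4,13)$. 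Because $\langle h_2,h_1,h_2\rangle=c$ with zero indeterminacy (proof of \cref{lemma:todaepsilon}), the shuffle $d\langle h_2,h_1,h_2\rangle\subseteq\langle h_2 d,h_1,h_2\rangle$ shows this Massey product contains $cd=0$; inspecting \cite{Baer} one sees it in fact vanishes through filtration $5$, which pushes $\epsilon\kappa$ into filtration at least $6$, hence onto $\tau h_1^2 g$ provided $\epsilon\kappa\neq 0$.

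The main obstacle, and the one genuinely non-algebraic input, is the nonvanishing of $\epsilon\kappa$. My plan here is to multiply by a suitable power of $g$ (and $\tau$) so that $\kappa$, $\epsilon$, and their product all become annihilated by $\tau$, hence lie in the image of the projection $q$ to the top cell, and then apply \cref{method:ctau}: one identifies the preimages in $\pi_{*,*}\mmf/\tau$ using \cref{topcell} and the differentials of \cref{sec:diff}, and observes that the corresponding product is visibly nonzero in the purely algebraic ring $\pi_{*,*}\mmf/\tau$; pulling back by $q$ then shows $\epsilon\kappa g^k\neq 0$, hence $\epsilon\kappa\neq 0$. Alternatively, one can argue entirely inside the mAss, where $\epsilon\kappa$ is detected by the element $cd$ in filtration $7$: a direct inspection of the mAss $E_\infty$-page for $\mmf$ shows $cd\neq 0$, giving $\epsilon\kappa\neq 0$, after which translating back through \cref{table:ASSANSS} recovers the detecting element $\tau h_1^2 g$ in the mANss. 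I expect the cleanest write-up to combine the Toda-bracket determination of the filtration with whichever of these two nonvanishing arguments is shorter in the relevant stem.
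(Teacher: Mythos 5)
Your reduction of the lemma to the single claim $\epsilon\kappa\neq 0$ is sound, and in fact simpler than you make it: since $cd=0$ in the $E_2$-page and the mANss $E_2$-page for $\mmf$ is concentrated in even total degree $s+f$ (the weight is $\tfrac{s+f}{2}$), degree $(22,5,13)$ is empty, so $\tau h_1^2 g$ is the only candidate detecting class above filtration $4$ and the entire Toda-bracket detour through $\langle\kappa\nu,\eta,\nu\rangle$ is unnecessary. The genuine gap is in the non-vanishing step, exactly where you place the ``main obstacle.'' Your primary plan --- \cref{method:ctau} after multiplying by powers of $\tau$ and $g$ --- cannot work for this extension, because the $\tau$-towers on source and target are mismatched. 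The source $\tau^j d g^k$ first becomes annihilated by $\tau$ at $j=5$ (for $k\geq 3$, via $d_{13}(2\Delta^3 h_2 g^{k-3})=\tau^6 d g^{k}$, \cref{prop:d13-2D^3h2}), but the corresponding target $\tau^{j+1}h_1^2 g^{k+1}=\tau^6 h_1^2 g^{k+1}$ is already zero, since $\tau^5 h_1^2 g^{m}$ is hit by $d_{11}(\Delta^2 h_1 d g^{m-3})$ (\cref{prop:d11}). So the only products accessible to the top-cell method are $\epsilon\cdot\tau^5\kappa\kappabar^{k}$, which land in a trivial group and are genuinely zero; the method can only confirm this useless vanishing, never the non-vanishing of $\epsilon\kappa$. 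This is precisely why this extension is absent from \cref{hidtaumethod}. Your fallback via the mAss is closer to viable but rests on three unverified claims: that $\epsilon$ is detected by $c$ in the mAss (not in \cref{table:ASSANSS}, though provable by the argument of \cref{thm:ASSANSS}), that $cd\neq 0$ in the mAss $E_2$-page, and that $cd$ survives to the mAss $E_\infty$-page --- the last of which requires ruling out incoming Adams differentials, which the paper deliberately does not take as known.

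The paper's proof takes an entirely different and purely algebraic route: multiply by $\eta$. The $E_2$-page relations $h_1 c=h_2^3$ and $h_2^2 d=4g$ give $\eta\epsilon\kappa=\nu^3\kappa=4\nu\kappabar$, and the hidden $2$ extension of \cref{lemma:h0ext3} identifies $4\nu\kappabar$ with $\tau\eta^3\kappabar$, which is detected by the nonzero class $\tau h_1^3 g$. Since $\eta\cdot(\epsilon\kappa)\neq 0$, the product $\epsilon\kappa$ is nonzero and must be detected by $\tau h_1^2 g$. You should replace your non-vanishing argument with one of this kind, or else fully justify the three mAss claims above.
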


\begin{proof}
We will show that there is a hidden $\epsilon$ extension
from $h_1 d$ to $\tau h_1^3 g$.  The desired extension follows 
immediately.

The relation $h_1 c = h_2^3$ in the mANss $E_2$-page implies that
$\eta \epsilon$ equals $\nu^3$.  
Also, the relation $h_2^2 d = 4 g$ implies that $\nu^2 \kappa = 4 \kappabar$.
Then
\[
\eta \epsilon \kappa = \nu^3 \kappa = 4 \nu \kappabar =
\tau \eta^3 \kappabar.
\]
The last equality uses the hidden $2$ extension
from $2 h_2$ to $\tau h_1^3$, as shown in \cref{lemma:h0ext3}.
\end{proof}

\begin{lem}
\label{lemma:h2ext39}
$(39, 3, 21)$
There is a hidden $\nu$ extension from $\Delta h_1 d $ to $\ta^3 h_1^2 g^2.$
\end{lem}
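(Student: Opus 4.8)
The plan is to establish the hidden $\nu$ extension from $\Delta h_1 d$ to $\tau^3 h_1^2 g^2$ by combining the hidden $\nu$ extension from $\Delta h_1$ to $\tau^2 c g$ (\cref{lemma:h2ext25}) with the hidden $\epsilon$ extension from $d$ to $\tau h_1^2 g$ (\cref{lemma:ext-cd}), multiplying through appropriately. Concretely, $\Delta h_1 d$ is a product, and one expects that the hidden $\nu$ extension on $\Delta h_1$ propagates: if $\Delta h_1$ detects a homotopy element $\eta_1$ with $\nu \eta_1$ detected by $\tau^2 c g$, and $d$ detects $\kappa$, then $\nu \cdot (\eta_1 \kappa)$ should be detected by $\tau^2 c g \cdot d$ — but one must check what this product is in the $E_\infty$-page and whether it is nonzero, since $\tau^2 c d g$ may well be zero or live in too high a filtration. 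The more promising route is to use the $\epsilon$ extension on $d$: since $\nu \eta_1$ is detected by $\tau^2 c g$, which detects (a multiple related to) $\tau^2 \epsilon \kappabar$, we can write $\nu \cdot \eta_1 \kappa = \tau^2 \epsilon \kappabar \cdot \kappa$, and then apply the hidden $\epsilon$ extension from $d$ (i.e.\ $\epsilon \kappa$ detected by $\tau h_1^2 g$) to rewrite $\epsilon \kappa \cdot \tau^2 \kappabar = \tau^2 \kappabar \cdot (\epsilon \kappa)$, which is then detected by $\tau^3 h_1^2 g^2$ (using that $\tau h_1^2 g \cdot g = \tau h_1^2 g^2$ up to the $\tau$ and that $\kappabar$ is detected by $g$).

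First I would make precise which homotopy elements are in play: let $\eta_1 \in \pi_{25,13}$ be detected by $\Delta h_1$ (as in \cref{htpy-name}), let $\kappa$ be detected by $d$, and let $\kappabar$ be detected by $g$. Then $\eta_1 \kappa \in \pi_{39,21}$ is detected by $\Delta h_1 d$ (one must check this product is nonzero in $E_\infty$ and that $\Delta h_1 d$ genuinely detects it rather than something in higher filtration — this needs a glance at the charts in the $39$-stem). Next I would compute $\nu \cdot \eta_1 \kappa$. By \cref{lemma:h2ext25}, $\nu \eta_1$ is detected by $\tau^2 c g$; since $c$ detects $\epsilon$ and $g$ detects $\kappabar$, $\tau^2 c g$ detects $\tau^2 \epsilon \kappabar$ (modulo higher filtration — here I should check the $45$-stem). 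Hence $\nu \cdot \eta_1 \kappa = (\nu \eta_1) \kappa = \tau^2 \epsilon \kappabar \kappa$ up to indeterminacy. Then I invoke \cref{lemma:ext-cd}: the hidden $\epsilon$ extension from $d$ to $\tau h_1^2 g$ says $\epsilon \kappa$ is detected by $\tau h_1^2 g$, so $\tau^2 \epsilon \kappabar \kappa = \tau^2 \kappabar \cdot (\epsilon \kappa)$ is detected by $\tau^2 \kappabar \cdot \tau h_1^2 g$, i.e.\ by $\tau^3 h_1^2 g^2$ (using that $\kappabar$ is detected by $g$ and tracking the $\tau$ powers carefully). This exhibits the required hidden $\nu$ extension.

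A cleaner, more robust variant — and the one I would actually write down to avoid juggling which higher-filtration ambiguities matter — is to work via Toda brackets and shuffling, mirroring the proof of \cref{lemma:h2ext25}. Since $\Delta h_1 d$ detects $\eta_1 \kappa$ and (as in \cref{lemma:h2ext25}) $\eta_1$ can be taken inside the Toda bracket $\langle \eta, \nu, \tau^2 \kappabar \rangle$, one gets $\nu \cdot \eta_1 \kappa \in \nu \langle \eta, \nu, \tau^2 \kappabar \rangle \kappa = \langle \nu, \eta, \nu \rangle \tau^2 \kappabar \kappa$ by the shuffle, and $\langle \nu, \eta, \nu \rangle = \epsilon$ is detected by $c$ (\cref{lemma:todaepsilon}). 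So $\nu \cdot \eta_1 \kappa = \tau^2 \epsilon \kappabar \kappa$, and then \cref{lemma:ext-cd} gives that this is detected by $\tau^3 h_1^2 g^2$. Finally one must verify the target is genuinely nonzero in the $E_\infty$-page and is the correct detecting element — i.e.\ that there is no higher-filtration class in the $39$-stem absorbing the extension and no differential having killed $\tau^3 h_1^2 g^2$ by the relevant page.

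The main obstacle I anticipate is bookkeeping around higher filtration and the exact powers of $\tau$: the elements $\Delta h_1 d$, $\tau^2 c g$, and $\tau^3 h_1^2 g^2$ each potentially detect more than one homotopy class, so I must confirm (from the charts in \cref{sec:charts}) that the relevant cosets intersect as claimed and that $\tau^3 h_1^2 g^2$ survives to $E_\infty$ and is not itself hit by an earlier differential. I also need to double-check the $\tau$-exponent arithmetic: $\tau^2 c g$ times a class detected by $g$, combined with the $\epsilon$-extension target $\tau h_1^2 g$, should give exactly $\tau^3 h_1^2 g^2$ and not $\tau^2 h_1^2 g^2$ or $\tau^4 h_1^2 g^2$, which requires care about whether the products $c \cdot d$ and $h_1^2 g \cdot g$ carry extra $\tau$'s in the $E_2$-page. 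Once these degree checks are in place, the argument is a routine shuffle-and-multiply, entirely algebraic given the earlier lemmas.
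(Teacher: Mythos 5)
Your main line of argument is exactly the paper's proof: write $\Delta h_1 d$ as detecting $\eta_1\kappa$, use \cref{lemma:h2ext25} to get $\nu\eta_1\kappa = \tau^2\epsilon\kappabar\kappa$, then use \cref{lemma:ext-cd} to identify this with $\tau^3\eta^2\kappabar^2$, detected by $\tau^3 h_1^2 g^2$. The Toda-bracket variant you sketch merely re-derives \cref{lemma:h2ext25} in passing and is not needed; the approach is correct and matches the paper.
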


\begin{proof}
The element $\Delta h_1 d$ detects the product
$\eta_1 \cdot \kappa$.
\cref{lemma:h2ext25} implies that $\nu \cdot \eta_1 \cdot \kappa$ equals
$\tau^2 \epsilon \kappa \kappabar$. 
\cref{lemma:ext-cd} implies that this last product equals
$\tau^3 \eta^2 \kappabar^2$, which is detected by $\tau^3 h_1^2 g^2$.
\end{proof}

\begin{prop}
\label{prop:d11}
\mbox{}
\begin{enumerate}
\item 
$(62, 2, 32)$
$d_{11}(\De^2 d)=\ta^5 h_1 g^3$.
\item 
$(158, 2, 80)$
$d_{11}(\De^6 d)=\ta^5 \De^4 h_1 g^3$.
\end{enumerate}
\end{prop}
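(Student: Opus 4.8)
The plan is to mimic the proof of \cref{prop:d9-D^2c} and its relatives, using a hidden $2$ extension to force a target into the image of some differential, then rule out all but one source. First I would establish the needed hidden $2$ extensions: in the $62$-stem, a hidden $2$ extension from $\Delta^2 h_2^2$ to $\tau^4 d g^2 \cdot \tau = \tau^5 d g^2$—wait, more precisely I want the statement that $\tau^5 h_1 g^3$ is the target of a differential. The cleanest route is to use \cref{lemma:h2ext39}, which gives a hidden $\nu$ extension from $\Delta h_1 d$ to $\tau^3 h_1^2 g^2$. Multiplying by an appropriate power of $\tau$ and $g$, together with the relation $h_2^2 d = 4g$ (equivalently $\nu^2\kappa = 4\kappabar$) and the hidden $2$ extension from $2h_2$ to $\tau h_1^3$ of \cref{lemma:h0ext3}, should produce the fact that $\tau^5 h_1 g^3$ detects $\tau^5 \eta \kappabar^3$ or some such class which must vanish, hence is hit.

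Alternatively—and I suspect this is the intended argument—one proceeds just as in \cref{prop:d9-D^2c}: we use the hidden $\nu$ extension from $\Delta h_1 d$ to $\tau^3 h_1^2 g^2$ (\cref{lemma:h2ext39}), multiply by $\tau^2 g$ to get a hidden $\nu$ extension landing on $\tau^5 h_1^2 g^3$, observe that the source $\tau^2 \Delta h_1 d g$ is hit by an earlier differential (some $d_9$ coming from \cref{prop:d9one}, since $d_9(\Delta^3 c) = \tau^4 \Delta h_1 d g^2$ and multiplying appropriately), so $\tau^5 h_1^2 g^3$ detects zero, hence $\tau^5 h_1 g^3 \cdot h_1$ is zero in $E_\infty$; since $2 h_1 = 0$ would not directly apply, instead the vanishing of $h_1 \cdot \tau^5 h_1 g^3$ forces $\tau^5 h_1 g^3$ itself to be hit by a differential, because $\tau^5 h_1^3 g^3$ would otherwise survive but must be zero. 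Then I would check the degree $(62,2,32)$: the only element that can support a $d_{11}$ differential hitting $\tau^5 h_1 g^3$ in the appropriate $(61, f, 31)$ degree is $\Delta^2 d$, after all lower-$r$ differentials have been accounted for; all other candidate sources are either permanent cycles for decomposition reasons (e.g.\ products of $\Delta h_1$ with permanent cycles) or already known to support shorter differentials.

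For part (2), the argument is nearly identical, using \cref{2-D^6h2^2} (the hidden $2$ extension from $\Delta^6 h_2^2$ to $\tau^4 \Delta^4 d g^2$) in place of the $54$-stem extension, together with the higher differentials in \cref{prop:d13-2D^7h2} and \cref{prop:d9-D^6c}. One multiplies by $\tau g$ or $\tau^2 g$ as needed to land on $\tau^5 \Delta^4 h_1 g^3$, shows this element is annihilated appropriately, concludes it must be hit, and then identifies $\Delta^6 d$ in degree $(158,2,80)$ as the unique possible source, ruling out decomposable alternatives like $\tau^j \Delta^k h_1^i g^m$ by noting they factor through permanent cycles.

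The main obstacle will be the bookkeeping in ruling out alternative sources for the $d_{11}$ differential: in the relevant stems there may be several elements in filtrations below that of $\Delta^2 d$ (respectively $\Delta^6 d$) on the $E_{11}$-page, and one must verify that each is either a product of permanent cycles (hence cannot support a differential) or has already been shown to support a shorter differential. This is exactly the kind of exhaustive-but-routine check that the charts in \cref{sec:charts} are designed to make transparent; I would lean on those charts together with the Leibniz rule to confirm that $\tau^5 h_1 g^3$ (respectively $\tau^5 \Delta^4 h_1 g^3$) has a unique preimage under $d_{11}$. A secondary subtlety is making sure the hidden-extension inputs (\cref{lemma:h2ext39}, \cref{2-D^6h2^2}) are genuinely available at this stage of the logical ordering—but since both were proved earlier in the manuscript using only $d_5$, $d_7$, and $d_{13}$ differentials on elements of the form $2\Delta^k h_2$, there is no circularity.
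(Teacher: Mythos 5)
Your overall architecture matches the paper's: force $\tau^5 h_1^2 g^3$ to be hit by a differential, observe that the only possible source is $\Delta^2 h_1 d = h_1 \cdot \Delta^2 d$, and divide by $h_1$ via the Leibniz rule; and you correctly identify \cref{lemma:h2ext39} as the key input. But the step where you argue that $\tau^5 h_1^2 g^3$ detects zero does not work as written. You claim that the source $\tau^2 \Delta h_1 d g$ of the ($\tau^2 g$-multiplied) hidden $\nu$ extension ``is hit by an earlier differential,'' citing $d_9(\Delta^3 c) = \tau^4 \Delta h_1 d g^2$ ``and multiplying appropriately.'' That differential lives in the $79$/$80$-stem; it says nothing about the $59$-stem element $\tau^2 \Delta h_1 d g$, and one cannot divide a differential by $g$ or by $\tau$. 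In fact the argument does not need the class $\tau^2 \kappabar \cdot \eta_1 \kappa$ to vanish at all. The correct mechanism is a regrouping: by \cref{lemma:h2ext39}, $\nu \eta_1 \kappa = \tau^3 \eta^2 \kappabar^2$, so
\[
\tau^5 \eta^2 \kappabar^3 \;=\; \tau^2 \kappabar \cdot \nu \eta_1 \kappa \;=\; (\tau^2 \nu \kappabar)(\eta_1 \kappa) \;=\; 0,
\]
where $\tau^2 \nu \kappabar = 0$ because $\tau^2 h_2 g$ is killed by $d_5(\Delta)$ (\cref{prop:d5_D}). Your first paragraph gestures at ``some such class which must vanish'' but never supplies this factorization, and without it the argument has no engine. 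Your fallback step --- ``the vanishing of $h_1 \cdot \tau^5 h_1 g^3$ forces $\tau^5 h_1 g^3$ itself to be hit, because $\tau^5 h_1^3 g^3$ would otherwise survive'' --- is also not a valid deduction; the correct route is to identify $\Delta^2 h_1 d$ as the unique possible source of the differential hitting $\tau^5 h_1^2 g^3$ and then apply the Leibniz rule to $\Delta^2 h_1 d = h_1 \cdot \Delta^2 d$.

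For part (2), redoing the whole argument in the $158$-stem is unnecessary and risks circularity: the stem-$135$ hidden $\nu$ extension you would want as the analogue of \cref{lemma:h2ext39} is \cref{prop:h2ext135}, which is only proved later (using $\kappa_4$ and the $\epsilon$ extension on $\Delta^4 d$). The paper instead simply multiplies part (1) by the permanent cycle $\Delta^4 h_1$ to obtain $d_{11}(\Delta^6 h_1 d) = \tau^5 \Delta^4 h_1^2 g^3$ and then divides by $h_1$.
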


\begin{proof}
The element $\tau^5 h_1^2 g^3$ detects $\tau^5 \eta^2 \kappabar^3$.
\cref{lemma:h2ext39} implies that 
$\tau^5 \eta^2 \kappabar^3$ equals 
$\tau^2 \nu \kappabar \cdot \eta_1 \cdot \kappa$.
Because of \cref{prop:d5_D}, we know that $\tau^2 \nu \kappabar$ is zero.
Therefore, $\tau^5 h_1^2 g^3$ is hit by some differential.
The only possibility is that $d_{11}(\Delta^2 h_1 d) = \tau^5 h_1^2 g^3$.
It follows that $d_{11}(\Delta^2 d) = \tau^5 h_1 g^3$.

For the second formula, multiply by the permanent cycle $\Delta^4 h_1$ to 
see that
$d_{11}(\Delta^6 h_1 d)$ equals $\tau^5 \Delta^4 h_1^2 g^3$.
It follows that 
$d_{11}(\Delta^6 d)$ equals $\tau^5 \Delta^4 h_1 g^3$.
\end{proof}

\subsection{$d_{13}$ differentials}
\label{subsec:d13}

We have already established some $d_{13}$ differentials in
Propo\-si\-tions \ref{prop:d13-2D^3h2} and \ref{prop:d13-2D^7h2} because we needed 
those results in order to compute shorter differentials.
We now finish the computation of the $d_{13}$ differentials.

\begin{lem}
\label{D^4d-correspond}
$(110, 2, 56)$
The element $\Delta^4 d$ of the mANss for $\mmf$ corresponds to the
element of the same name in the mAss for $\mmf$.
\end{lem}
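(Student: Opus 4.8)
The plan is to argue as in \cref{lem:4D^2-correspond}. There are two steps. First, one must show that $\Delta^4 d$ is a permanent cycle in the mANss for $\mmf$, so that it detects some class $\alpha$ in $\pi_{110,56}\mmf$. Second, one identifies $\Delta^4 d$ as a detecting element for $\alpha$ in the mAss for $\mmf$.

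For the first step, note that $\Delta^4 d$ has Adams-Novikov filtration $2$, so it cannot be the target of any differential. To see that it supports no differential, use the Leibniz rule: since $d_3(\Delta^4) = 0$ and $d_5(\Delta^4) = 4\tau^2\Delta^3 h_2 g = 0$ in the $E_2$-page, the element $\Delta^4$ survives to $E_7$, where \cref{prop:d7two} gives $d_7(\Delta^4) = \tau^3\Delta^3 h_1^3 g$. Hence $\Delta^4 d$ also survives to $E_7$ and
\[
d_7(\Delta^4 d) = d_7(\Delta^4) \cdot d = \tau^3 \Delta^3 h_1^3 g \cdot d = 0
\]
because $h_1^2 d = 0$ in the mANss $E_2$-page. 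That $\Delta^4 d$ supports no differential $d_r$ with $r \ge 9$ then follows by inspection of the chart in this degree, using the differentials already established.

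For the second step, repeat the argument from the proof of \cref{thm:ASSANSS}. Since $\Delta^4 d$ is not divisible by $\tau$, the inclusion $i\colon \mmf \to \mmf/\tau$ carries $\alpha$ to the class $\Delta^4 d$ of $\pi_{*,*}\mmf/\tau$, which is the classical Adams-Novikov $E_2$-page for $\tmf$. In the algebraic Novikov spectral sequence --- that is, the mAss for $\mmf/\tau$ --- this class is detected by $\Delta^4 d$ in filtration $20$. Chasing Diagram~(\ref{eq:inclusion-Ass}), the class $\alpha$ is detected in the mAss for $\mmf$ by an element of filtration at most $20$. To finish, one lists the elements of the mAss $E_\infty$-page for $\mmf$ in degree $(110,56)$ of filtration strictly less than $20$ and checks, as in \cref{lem:4D^2-correspond}, that the top horizontal map of Diagram~(\ref{eq:inclusion-Ass}) sends each of them to a class of the classical Adams-Novikov $E_2$-page different from $\Delta^4 d$; none of them can then detect $\alpha$, so $\alpha$ is detected by $\Delta^4 d$.

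The conceptual content here is slight; the work lies in the bookkeeping. I expect the main obstacle to be the two chart-level verifications --- that $\Delta^4 d$ supports no higher mANss differential, and the enumeration and elimination of the low-filtration mAss classes in stem $110$ --- neither of which reduces to a uniform principle. Should $\Delta^4 d$ turn out to be the only class of the mAss $E_\infty$-page in degree $(110,56)$ of filtration at most $20$, the elimination step is vacuous and the lemma follows at once from the filtration bound.
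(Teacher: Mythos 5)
Your proposal is correct and follows essentially the same route as the paper: show that $\Delta^4 d$ is a permanent cycle (the paper does this by noting that, after all differentials of length at most $11$ have been analyzed, no possible targets remain), then use the filtration-$20$ bound coming from the algebraic Novikov spectral sequence via Diagram~(\ref{eq:inclusion-Ass}). The paper confirms your closing guess: there is only one element of the mAss in that degree with filtration at most $20$, so the elimination step is vacuous.
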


\begin{proof}
We have already analyzed all possible 
Adams-Novikov differentials of length $11$ or less, 
and there are no other possible
values for a differential on $\Delta^4 d$.
Therefore, $\Delta^4 d$ is a permanent cycle in the
mANss for $\mmf$.

Now the argument given in the proof of \cref{thm:ASSANSS} applies.
The mANss element $\Delta^4 d$ is detected in filtration $20$ in the
Adams $E_2$-page for $\mmf/\tau$.  Therefore, $\Delta^4 d$ corresponds
to an element of the mAss with Adams filtration at most $20$.
There is only one possible element in the mAss with sufficiently low filtration.
\end{proof}
\begin{lem}
\label{lemma:h1ext39}
\mbox{}
\begin{enumerate}
\item
$(39, 3, 21)$
There is a hidden $\eta$ extension from $\Delta h_1 d$ to $2 \ta^2 g^2$.
\item
$(135, 3, 69)$
There is a hidden $\eta$ extension from $\Delta^5 h_1 d$ to 
$2 \ta^2 \Delta^4 g^2$.
\end{enumerate}
\end{lem}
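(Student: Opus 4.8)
The two parts assert hidden $\eta$ extensions on the classes detected by $\Delta h_1 d$ and by $\Delta^5 h_1 d$. The plan is to identify the relevant homotopy products, reduce their determination to a single coefficient, and then fix that coefficient by passing to $\mmf/\tau$. I would treat part (1) in detail; part (2) runs along parallel lines, with every input replaced by its $\Delta^4$-translate.

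First I would check that the proposed extensions are genuinely hidden and that the stated targets are the only candidates. The class $\eta_1\kappa$ detected by $\Delta h_1 d$ (as used in the proof of \cref{lemma:h2ext39}) satisfies $h_1 \cdot \Delta h_1 d = \Delta h_1^2 d = 0$ already in the mANss $E_2$-page, because of the relation $h_1^2 d = 0$; hence any $\eta$ multiplication on it is hidden. An inspection of the mANss $E_\infty$-page in the $40$-stem then shows that $2\tau^2 g^2$ is the only element of filtration above that of $h_1 \cdot \Delta h_1 d$ which could detect $\eta \cdot \eta_1\kappa$, and that it is automatically of maximal such filtration, so \cref{def:hiddenext}(3) is satisfied once we know that $\eta \cdot \eta_1\kappa$ is non-zero and detected there. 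Entirely analogously, in the $136$-stem the class $\kappa_4\eta_1$ detected by $\Delta^5 h_1 d = \Delta^4 d \cdot \Delta h_1$ (see \cref{D^4d-correspond}) admits $2\tau^2\Delta^4 g^2$ as its only candidate target.

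The heart of the matter is to evaluate $\eta \cdot \eta_1\kappa$. Using the Toda bracket $\eta_1 \in \langle \eta, \nu, \tau^2\bar\kappa \rangle$ from \cref{lemma:toda}, together with the homotopy relations $\nu^3 = \eta\epsilon$ and $\nu^2\kappa = 4\bar\kappa$, the hidden $2$ extension $2\nu = \tau\eta^3$ of \cref{lemma:h0ext3}, the hidden $\epsilon$ extension $\epsilon\kappa = \tau\eta^2\bar\kappa$ of \cref{lemma:ext-cd}, and the hidden $\nu$ extension $\nu \cdot \eta_1\kappa = \tau^3\eta^2\bar\kappa^2$ of \cref{lemma:h2ext39}, one can shuffle the $\eta$ and $\kappa$ factors through the bracket and reduce the value of $\eta \cdot \eta_1\kappa$ to an a priori undetermined multiple of $\tau^2\bar\kappa^2$. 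The main obstacle is to pin that multiple down to exactly $2$, i.e., to rule out both $0$ and $4\tau^2\bar\kappa^2$; the shuffling identities and the Leibniz-style relations above do not settle this by themselves. Here I would invoke \cref{method:ctau}: after multiplying the proposed extension by a monomial $\tau^a g^b$ with $b$ large enough to enter the $\bar\kappa$-periodic part of $\mmf$, whose homotopy is entirely annihilated by $\tau^{11}$, both translated classes represent $\tau$-torsion homotopy elements. One then computes their preimages under projection $q$ to the top cell of $\mmf/\tau$ by means of \cref{topcell}, applied to the $\tau^a g^b$-translates of the Adams-Novikov differentials already established, and observes by inspection of the completely known, purely algebraic homotopy of $\mmf/\tau$ — which is the classical Adams-Novikov $E_2$-page for $\tmf$ — that the two preimages are connected by an $\eta$ extension. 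Applying $q$ and then cancelling the regular element $\tau^a g^b$ recovers the hidden $\eta$ extension from $\Delta h_1 d$ to $2\tau^2 g^2$; the delicate bookkeeping is in choosing $\tau^a g^b$ so that \cref{topcell} applies, and a possible alternative is to compare with the corresponding $\eta$ extension in the mAss for $\mmf$, which is hidden but not difficult. Finally, part (2) is obtained by repeating this argument verbatim with $\kappa_4\eta_1$ in place of $\eta_1\kappa$, using the $\Delta^4$-shifted differentials and hidden extensions recorded in \cref{prop:d13-2D^7h2}, \cref{2-D^6h2^2}, \cref{prop:d9one}, \cref{prop:d9-D^6c}, \cref{prop:d9two}, and \cref{prop:d11}, so that the argument lands at $2\tau^2\Delta^4 g^2$.
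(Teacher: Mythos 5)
Your central mechanism does not close up: it is circular within the logical structure of this computation. To apply \cref{method:ctau} you must first know that the translated target $2\tau^2 g^2\cdot\tau^3 g^2 = 2\tau^5 g^4$ detects a class annihilated by $\tau$, equivalently that $2\tau^6 g^4$ is hit by an Adams--Novikov differential. The only differential that can hit it is $d_{13}(\Delta^3 h_1 c)=2\tau^6 g^4$ (\cref{prop:d13two}), and that differential is itself \emph{deduced from} the hidden $\eta$ extension of \cref{lemma:h1ext39} together with \cref{prop:d9one}. Said differently: \cref{topcell} identifies $q(\Delta^3 h_1 c)=q(\Delta^3 c)\cdot\eta$ with $\eta\cdot\eta_1\kappa\cdot\tau^3\bar\kappa^2$, so the method converts the question ``is the $\eta$ extension nonzero?'' into the question ``does $\Delta^3 h_1 c$ support a differential?'' --- it cannot answer both simultaneously without an independent input. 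The same circularity occurs in part (2) with $d_{13}(\Delta^7 h_1 c)=2\tau^6\Delta^4 g^4$; the differentials you do list for that case ($d_{13}(2\Delta^7 h_2)$, etc.) land on $\tau^6\Delta^4 d g^3$ and its relatives, not on $2\tau^6\Delta^4 g^4$, so they do not make the target $\tau$-torsion. Your Toda-bracket shuffling paragraph, as you concede, also does not fix the coefficient, so no part of the proposal actually pins down the extension.

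The needed independent input is the one you mention only in passing, and you mischaracterize it. The paper's proof works entirely in the motivic Adams spectral sequence: by \cref{table:ASSANSS} the mANss elements $\Delta h_1$ and $d$ correspond to the mAss elements of the same name, and the product $\Delta h_1\cdot h_1 d=\Delta h_1^2 d$ is already non-zero in the motivic Adams $E_2$-page (it is \emph{not} hidden there --- only the classical Adams product vanishes in $E_2$) and survives to $E_\infty$ for degree reasons. Hence $\eta\cdot\eta_1\kappa\neq 0$, forcing a hidden $\eta$ extension in the mANss; among the candidates $\tau^2 g^2$, $2\tau^2 g^2$, $3\tau^2 g^2$ only the middle one is compatible with $2\eta=0$. (Your assertion that $2\tau^2 g^2$ is ``the only element'' that could detect the product skips this last step.) Part (2) is the $\Delta^4$-translate of the same argument. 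If you want to salvage your write-up, promote that comparison with the mAss from an aside to the main argument and drop the $\mmf/\tau$ step, which here proves nothing that is not already assumed.
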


\begin{proof}
\cref{table:ASSANSS} shows that 
the elements $\Delta h_1$ and $d$ in the mANss for $\mmf$ correspond
to elements of the same name in the mAss for $\mmf$.
The product $\Delta h_1 \cdot h_1 d$ is non-zero in the mAss $E_2$-page
and also in the mAss $E_\infty$-page because there are no possible
differentials that could hit it. (Note that this product is non-zero
in the motivic context, but the corresponding classical product is zero
in the $E_2$-page of the Adams spectral sequence for $\tmf$.)

Therefore, $\Delta h_1 d$ must support a hidden $\eta$ extension in
the mANss for $\mmf$.  There are three possible targets for this extension:
$\tau^2 g^2$, $2 \tau^2 g^2$, and $3 \tau^2 g^2$.  The first and last possibilities are ruled out by the relation $2 \eta = 0$.

The argument for the second extension is nearly identical.
\cref{table:ASSANSS} and \cref{prop:correspond-product} imply
that the mANss element $\Delta^5 h_1 d$ corresponds to the
mAss element $\Delta^4 \cdot \Delta h_1 \cdot d$.
The product $\Delta^4 \cdot \Delta h_1 \cdot h_1 d$ 
is non-zero in the mAss $E_\infty$-page, so
$\Delta^5 h_1 d$ must support a hidden $\eta$ extension in the
mANss.  The only possible target for this extension
is $2 \tau^2 \Delta^4 g^2$.
\end{proof}
	
\begin{prop}
\label{prop:d13two}
\mbox{}
\begin{enumerate}
\item 
$(81, 3, 42)$
$d_{13}(\Delta^3 h_1 c) =  2 \tau^6 g^4$.
\item
$(177, 3, 90)$
$d_{13}(\Delta^7 h_1 c) =  2 \tau^6  \Delta^4 g^4$.
\end{enumerate}
\end{prop}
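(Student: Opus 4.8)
The plan is to run the standard ``the target must be hit'' argument for both differentials, bootstrapping from the $d_{13}$ differentials of \cref{prop:d13-2D^3h2} and \cref{prop:d13-2D^7h2}. First I would verify that $\Delta^3 h_1 c$ and $\Delta^7 h_1 c$ survive to the $E_{13}$-page. Writing $h_1 c = h_2^3$, the Leibniz rule applied to $d_5(\Delta)=\tau^2 h_2 g$ gives $d_5(\Delta^3 h_1 c)=\tau^2\Delta^2 h_2^4 g$, which vanishes since $h_2^4=(h_1 h_2)c=0$ in the $E_2$-page; and $d_9(\Delta^3 h_1 c)=h_1\cdot d_9(\Delta^3 c)=\tau^4\Delta h_1^2 d g^2=0$ by the relation $h_1^2 d=0$. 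The remaining checks ($d_7$, $d_{11}$, and the analogous statements for $\Delta^7 h_1 c$) go the same way or follow from inspection of the charts in \cref{sec:charts}, noting that neither element can be hit for filtration reasons.

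Next comes the core step: showing that $2\tau^6 g^4$, which is nonzero on $E_{13}$, cannot be a permanent cycle. Multiplying $d_{13}(2\Delta^3 h_2)=\tau^6 dg^3$ by the permanent cycle $h_2^2$ and using the $E_2$-page relation $h_2^2 d=4g$ connects $4\tau^6 g^4$ to the image of $d_{13}$; combining this with the hidden $\eta$ extension $\Delta h_1 d\to 2\tau^2 g^2$ of \cref{lemma:h1ext39}(1) and the differential $d_9(\Delta^3 c)=\tau^4\Delta h_1 dg^2$ forces the homotopy class that $2\tau^6 g^4$ would otherwise detect to vanish. Hence $2\tau^6 g^4$ must be hit, and since $\Delta^3 h_1 c$ is the only element of the $E_{13}$-page in degree $(81,3,42)$ that can serve as its source, we conclude $d_{13}(\Delta^3 h_1 c)=2\tau^6 g^4$. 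Part (2) is the verbatim $\Delta^4$-translate, replacing \cref{prop:d13-2D^3h2} by \cref{prop:d13-2D^7h2} and \cref{lemma:h1ext39}(1) by \cref{lemma:h1ext39}(2), so that $d_{13}(\Delta^7 h_1 c)=2\tau^6\Delta^4 g^4$.

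The main obstacle is the filtration bookkeeping at the end. Because $2\Delta^3 h_1 c$ is already zero in the $E_2$-page (since $2h_1=0$), the Leibniz manipulation by itself yields only $4\tau^6 g^4=0$ on $E_{14}$, not the value of the differential, so one must rule out a possible filtration-$16$ correction term and pin the $d_{13}$-value to exactly $2\tau^6 g^4$ rather than $0$. I expect this to be settled either by multiplying by a suitable power of $\tau$ and $g$ so as to land among $\tau$-torsion classes and applying the detection method of \cref{method:ctau} with projection to the top cell of $\mmf/\tau$, or by comparison with the motivic Adams spectral sequence, where the relevant $2$-divisibility is controlled by $h_0 g^4=0$.
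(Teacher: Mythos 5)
Your core step is exactly the paper's argument: the hidden $\eta$ extension of \cref{lemma:h1ext39}, pushed up by $g^2$, exhibits $2\tau^6 g^4$ (resp.\ $2\tau^6\Delta^4 g^4$) as detecting a class that must vanish because $\tau^4\Delta h_1 d g^2$ (resp.\ $\tau^4\Delta^5 h_1 d g^2$) is killed by the $d_9$ of \cref{prop:d9one}, and uniqueness of the possible source then forces the stated $d_{13}$. The extra material — the survival checks, the $h_2^2$-multiplication showing $4\tau^6 g^4$ dies, and the closing worry about pinning the value — is harmless but redundant, since the source is an order-two class and the only candidate in its degree.
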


\begin{proof}
\cref{lemma:h1ext39} implies that there is a hidden $\eta$ extension 
from $\Delta h_1 d g^2$ to $2 \tau^2 g^4$.
\cref{prop:d9one} shows that $\tau^4 \Delta h_1 d g^2$ is hit by a differential.
Therefore, $2 \tau^6 g^4$ must also be hit by a differential.
There is only one possible source for this differential.

The proof for the second formula is similar.
There is a hidden $\eta$ extension from
$\Delta^5 h_1 d g^2$ to $2 \tau^2 \Delta^4 g^4$.
Since $\tau^4 \Delta^5 h_1 d g^2$ is hit by a differential,
$2 \tau^6 \Delta^4 g^4$ must also be hit by a differential.
\end{proof}

\subsection{$d_{23}$ differentials}
\label{subsec:d23}

\begin{lem}
\label{Dh1-tD^3h1^3}
$(75, 3, 38)$
There is a hidden $\eta_1$ extension from
$\tau \Delta^3 h_1^3$ to $\tau^9 g^5$.
\end{lem}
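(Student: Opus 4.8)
The plan is to identify an appropriate product decomposition of the source $\tau\Delta^3 h_1^3$ that exhibits a multiple of $\eta_1$, and then transport a known nontrivial product from the mAss $E_2$-page. First I would write $\tau\Delta^3 h_1^3 = \tau(\Delta h_1)(\Delta h_1)(\Delta h_1)$ as a product of three copies of the permanent cycle $\Delta h_1$, which detects $\eta_1$. So the element $\tau\Delta^3 h_1^3$ detects $\tau \eta_1^3$ in $\pi_{*,*}\mmf$. Then a hidden $\eta_1$ extension from $\tau\Delta^3 h_1^3$ to $\tau^9 g^5$ is equivalent to the statement that $\eta_1 \cdot \tau\eta_1^3 = \tau\eta_1^4$ is detected by $\tau^9 g^5$.

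Next I would work in the mAss, where $\Delta h_1$ corresponds to $\Delta h_1$ by \cref{table:ASSANSS}. The key algebraic input is a relation in the mAss $E_2$-page expressing $(\Delta h_1)^4$ (or a suitable $\tau$-multiple) in terms of $g$ and $\tau$; concretely one expects $\tau^{?}(\Delta h_1)^4 = \tau^{9-?} g^5$ up to the correct powers of $\tau$ dictated by the weights. The weights work out: $\Delta h_1$ has weight $13$, so $(\Delta h_1)^4$ has weight $52$ and stem $100$; meanwhile $\tau^9 g^5$ has stem $100$ and weight $5\cdot 12 - 9 = 51$, so the relation should read $\tau(\Delta h_1)^4 = \tau^9 g^5$ in the mAss $E_2$-page (weight $52$ on the right after the $\tau$ correction — let me recompute: $g^5$ has weight $60$, times $\tau^9$ gives weight $51$; $(\Delta h_1)^4$ has weight $52$; so the relation is $(\Delta h_1)^4 = \tau^{?}\cdots$). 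I would pin down the precise relation from \cite[Theorem 4.13]{Isa09} or the analogous May spectral sequence computation, using the known relations among $\Delta h_1$, $g$, and $\tau$.

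Once the relation $\tau(\Delta h_1)^4 = \tau^9 g^5$ (or the correctly-normalized version) is established in the mAss $E_2$-page, I would check that this product is nonzero in the mAss $E_\infty$-page — i.e., that $\tau^9 g^5$ is not hit by any Adams differential and that there are no crossing issues — noting as in \cref{lemma:h1ext39} that such $\eta$- or $\eta_1$-type extensions are forced once the corresponding $E_2$ product is nonzero in the mAss. Since $\tau(\Delta h_1)^4$ is a product of permanent cycles, it is itself a permanent cycle in the mAss, so it survives and detects $\tau\eta_1^4$. Transporting back via the correspondence, $\tau\eta_1^4$ is detected by $\tau^9 g^5$ in the mANss, which is precisely the claimed hidden $\eta_1$ extension (it is hidden because $\eta_1 \cdot \tau\Delta^3 h_1^3 = 0$ in the mANss $E_2$-page, the product landing in a higher-filtration region, or being zero there).

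The main obstacle I anticipate is twofold: first, correctly normalizing the powers of $\tau$ in the relation $\tau\Delta^3 h_1^3 \cdot \Delta h_1 \leftrightarrow \tau^9 g^5$ — the weight bookkeeping must be done carefully, and one must verify the relation actually holds in the mAss (or mANss) $E_2$-page rather than merely being plausible by degree count. Second, one must rule out alternative targets for the hidden extension: there could in principle be other $E_\infty$ classes in stem $100$, weight $51$, in filtration above that of $\tau^9 g^5$, and one must check (likely by inspection of the charts in \cref{sec:charts}) that $\tau^9 g^5$ is the unique candidate, or that the filtration-maximality condition in \cref{def:hiddenext} selects it. Given the $v_1$-periodic family lives in this range, some care is needed to separate the $v_1$-periodic classes from the relevant non-$v_1$-periodic class $\tau^9 g^5$.
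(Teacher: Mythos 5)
Your proposal is correct and follows essentially the same route as the paper: decompose $\tau\Delta^3 h_1^3$ as $\tau(\Delta h_1)^3$ detecting $\tau\eta_1^3$, invoke the mAss $E_2$-page relation $\tau(\Delta h_1)^4 = \tau^9 g^5$ from \cite[Theorem 4.13]{Isa09}, and transport it through the correspondence of \cref{table:ASSANSS}. Your $\tau$-power bookkeeping, despite the mid-stream hesitation, lands on the correct normalization (both sides have weight $51$), so the argument goes through as in the paper.
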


\begin{proof}
According to \cref{table:ASSANSS}, the mANss elements $\Delta h_1$ and
$g$ correspond to elements of the same name in the mAss.
In the mAss $E_2$-page, the relations given in \cite[Theorem 4.13]{Isa09}
imply that $\tau (\Delta h_1)^4 = \tau^9 g^5$.
Therefore, in the mANss, $\tau^9 g^5$ detects the product 
$\tau \eta_1^4$.
On the other hand, $\tau \Delta^3 h_1^3$ detects the product
$\tau \eta_1^3$ in the mANss. 
\end{proof}

\begin{rem}
$(75,3,39)$
Beware that $\Delta^3 h_1^3$ does not support a hidden $\eta_1$ extension.
Rather, it supports a non-hidden extension since $\Delta^4 h_1^4$ is non-zero.
However, $\Delta^4 h_1^4$ is annihilated by $\tau$, which allows for
the hidden extension on $\tau \Delta^3 h_1^3$.
\end{rem}

\begin{prop} 
\label{prop:d23-D^5h1}
$(121, 1, 61)$
$d_{23}(\De^5 h_1)=\ta^{11}g^6$.
\end{prop}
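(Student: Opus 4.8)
The plan is to deduce the differential from the top-cell map $q\colon \mmf/\tau \to \Sigma^{1,-1}\mmf$, using that the differential on $\Delta^4$ is already known. The first step is to compute $q(\Delta^5 h_1)$. In $\pi_{*,*}\mmf/\tau$ we have $\Delta^5 h_1 = \Delta^4 \cdot (\Delta h_1) = \Delta^4 \cdot i(\eta_1)$, since $i(\eta_1) = \Delta h_1$, and because $q$ is an $\mmf$-module map this gives $q(\Delta^5 h_1) = q(\Delta^4) \cdot \eta_1$. By \cref{prop:d7two} we have $d_7(\Delta^4) = \tau^3 \Delta^3 h_1^3 g$, so \cref{topcell} shows that $q(\Delta^4) \in \pi_{95,49}\mmf$ is detected by $\tau^2 \Delta^3 h_1^3 g$; equivalently $q(\Delta^4)$ equals $\tau^2 \eta_1^3 \kappabar$ modulo higher filtration.

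The second step is to identify $q(\Delta^4)\cdot \eta_1$. Multiplying the hidden $\eta_1$ extension of \cref{Dh1-tD^3h1^3}, from $\tau \Delta^3 h_1^3$ to $\tau^9 g^5$, by the permanent cycle $\tau g$ yields a hidden $\eta_1$ extension from $\tau^2 \Delta^3 h_1^3 g$ to $\tau^{10} g^6$. Combined with the routine observation that the higher-filtration ambiguity in $q(\Delta^4)$ contributes nothing after multiplication by $\eta_1$, this shows that $q(\Delta^5 h_1)$ is the non-zero element of $\pi_{120,62}\mmf$ detected by $\tau^{10} g^6$ in the mANss $E_\infty$-page.

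The third step finishes the argument. Since the mANss is a $\tau$-Bockstein spectral sequence, the target of a $d_{2r+1}$ is divisible by $\tau^r$, so $\Delta^5 h_1$, which is not divisible by $\tau$, cannot be hit by a differential; and by \cref{rem:topcell} it cannot be a permanent cycle, since then $q$ would annihilate it. Hence $\Delta^5 h_1$ supports a non-zero differential $d_{2r+1}(\Delta^5 h_1) = \tau^r y$ with $y$ not divisible by $\tau$, and by \cref{topcell} the class $q(\Delta^5 h_1)$ is detected by $-\tau^{r-1}y$. Comparing with the second step, and using that $\tau^{10} g^6$ is the unique non-zero element of the mANss $E_\infty$-page in its degree, we get $\tau^{r-1}y = \tau^{10}g^6$, so $r = 11$ and $y = g^6$; that is, $d_{23}(\Delta^5 h_1) = \tau^{11} g^6$, and this is the only differential on $\Delta^5 h_1$.

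I expect the second step to be the main obstacle: pinning down that $q(\Delta^4)\cdot\eta_1$ lands exactly in $\tau^{10}g^6$ is a jump from filtration $7$ to filtration $24$, and it rests entirely on \cref{Dh1-tD^3h1^3}. The content there is that the non-hidden extension $\Delta^3 h_1^3 \cdot (\Delta h_1) = \Delta^4 h_1^4$ becomes a hidden $\eta_1$ extension onto $\tau^9 g^5$ after multiplication by $\tau$, because $\Delta^4 h_1^4$ is $\tau$-power torsion in homotopy; one then transports this along $\tau g$ into the degree of $q(\Delta^5 h_1)$. The remaining bookkeeping — that $\tau^{10} g^6$ survives the mANss and is the unique $E_\infty$-class in its degree, and that the higher-filtration terms may be discarded — is routine but must be checked against the charts.
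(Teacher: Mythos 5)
Your argument is correct and rests on exactly the same two inputs as the paper's proof --- the hidden $\eta_1$ extension of \cref{Dh1-tD^3h1^3} and the differential $d_7(\Delta^4)$ from \cref{prop:d7two} --- but it routes them differently. The paper multiplies the hidden extension by $g$, observes that $\tau^3\Delta^3 h_1^3 g$ dies by $d_7(\Delta^4)$ so that the element detected by $\tau\Delta^3 h_1^3 g$ is annihilated by $\tau^2$, concludes that $\tau^{11}g^6$ detects zero and must therefore be hit, and finishes by noting there is only one possible source. You instead compute the source: $q(\Delta^5 h_1)=q(\Delta^4)\cdot\eta_1$ is pinned down by \cref{topcell} together with the hidden extension, and then \cref{topcell} is run in reverse to read off the differential. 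What your version buys is that it determines the length \emph{and} target of the differential simultaneously from the filtration in which $q(\Delta^5 h_1)$ is detected, rather than from a separate "only one possible source" inspection. What it costs is two extra chart checks that you correctly flag but should not understate: (i) the higher-filtration ambiguity in $q(\Delta^4)$ must genuinely contribute nothing detected by $\tau^{10}g^6$ after multiplying by $\eta_1$, since a cancellation there would void the whole computation; and (ii) in the last step, the alternative that $\Delta^5 h_1$ supports a shorter differential $d_{2r+1}(\Delta^5 h_1)=\tau^r y$ whose associated class $\tau^{r-1}y$ is already zero on the $E_\infty$-page (so that $q(\Delta^5 h_1)$ is merely detected in filtration greater than $2r+2$, per \cref{rem:q(Delta)-detect}) must be excluded by inspecting the $E_2$-page in degrees $(120,f,61)$ with $f<24$. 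Both checks succeed, and they are of the same nature as the inspection the paper's own proof relies on, so the proposal stands.
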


\begin{proof}
The hidden extension of \cref{Dh1-tD^3h1^3} implies that
there is a hidden $\eta_1$ extension from
$\tau \Delta^3 h_1^3 g$ to $\tau^9 g^6$.
We already know that $\tau^3 \Delta^3 h_1^3 g$ is zero because
of the differential $d_7(\Delta^4)$ from \cref{prop:d7two}.
Therefore, $\tau^{11} g^6$ must be the value of some differential,
and there is only one possibility.
\end{proof}

\section{Hidden extensions}
\label{Sec:ext}

In \cref{sec:diff}, we established several hidden extensions in the
mANss for $\mmf$ as steps towards computing differentials.
In this section, we finish the analysis of all hidden extensions
by $2$, $\eta$, and $\nu$.
Our work does not completely determine the ring structure of
$\pi_{*,*} \mmf$ because there exist hidden extensions by other elements.
Up to one minor uncertainty, the entire ring structure 
of $\pi_* \tmf$ is determined in \cite{BR21}.

\begin{thm}
Up to multiples of $g$ and $\Delta^8$,
Tables \ref{hidh0}, \ref{hidh1} and \ref{hidh2} list all 
hidden extensions by $2$, $\eta$, and $\nu$ in the 
mANss for $\mmf$.
\end{thm}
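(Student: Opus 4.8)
The plan is to verify the three tables by a systematic pass over the $E_\infty$-page, with \cref{method:ctau} as the primary engine. There are two things to check: that every extension listed in Tables \ref{hidh0}, \ref{hidh1}, and \ref{hidh2} is genuine, and that no hidden extension by $2$, $\eta$, or $\nu$ is omitted.

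First I would record the soundness of the periodicity reduction implicit in the phrase ``up to multiples of $g$ and $\Delta^8$''. Since $g$ and $\Delta^8$ are permanent cycles detecting $\kappabar$ and $M$, if $b$ detects $\beta$ and $c$ detects $\alpha\beta$, then for any monomial $m$ in $g$ and $\Delta^8$ the element $bm$ detects the product of $\beta$ with the corresponding monomial in $\kappabar$ and $M$, and $cm$ detects $\alpha$ times that product. Hence, whenever $a\cdot bm$ vanishes on the $E_\infty$-page, $cm$ is non-zero, and the minimal-filtration clause of \cref{def:hiddenext} holds, there is a hidden $\alpha$ extension from $bm$ to $cm$; these side conditions are read off mechanically from the charts. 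This reduces the remaining work to a fundamental domain for the $g$-$\Delta^8$ action.

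Next, for completeness, I would traverse this fundamental domain stem by stem. For each surviving class $b$ and each $\alpha \in \{2, \eta, \nu\}$ with $a\cdot b = 0$ on $E_\infty$, there is a short list of candidate targets $c$ in the correct degree and filtration window. The majority of these are resolved at once by \cref{method:ctau}: after multiplying $b$ and a candidate $c$ by suitable powers of $\tau$ and $g$ so that both become $\tau$-torsion --- the differentials of \cref{sec:diff} tell us exactly when this occurs --- we lift along $q$ using \cref{topcell} and read the answer off the completely known, purely algebraic homotopy of $\mmf/\tau$. The same comparison simultaneously rules out spurious extensions, since $\overline{b}\cdot a = 0$ in $\pi_{*,*}\mmf/\tau$ forces $\alpha\beta = 0$ in $\pi_{*,*}\mmf$. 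Because the analysis of $\mmf$ involves no crossing extensions in the sense of \cite[Section 2.1]{IWX20}, this passage to $\mmf/\tau$ never loses information.

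The cases that escape \cref{method:ctau} are the main obstacle, and each must be handled individually following the patterns already established in \cref{sec:diff}. They fall into three types: (i) extensions read off from the mAss, where the corresponding product is non-hidden or lies in sufficiently low filtration, via \cref{table:ASSANSS} and \cref{prop:correspond-product} (as in \cref{lemma:h0ext3,lem:2-Dh0h2,lemma:h1ext39}); (ii) extensions produced by Toda-bracket shuffles from brackets such as $\langle\nu,\eta,\nu\rangle$, detected by $c$, and $\langle\eta,\nu,\tau^2\kappabar\rangle$, detected by $\Delta h_1$, whose Massey-product inputs are algebraic and whose crossing-differential hypotheses hold for degree reasons (as in \cref{lemma:h2ext25,lemma:h1ext27,lemma:ext-cd,lemma:h2ext39}); and (iii) a handful of genuinely delicate extensions, above all the hidden $2$ extension in the $110$-stem of \cref{prop:h0ext110}, which relies on information about the homotopy of $\mmf/\tau^2$. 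I expect type (iii), together with the care needed to be certain the fundamental-domain enumeration overlooks no candidate target, to be where essentially all the difficulty lies; the bulk of the entries in the three tables will follow from \cref{method:ctau} with little more than routine inspection of $\pi_{*,*}\mmf/\tau$.
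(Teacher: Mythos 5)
Your proposal is correct and follows essentially the same route as the paper: the listed extensions are verified case by case with \cref{method:ctau} as the workhorse, the exceptional cases are handled exactly by the three mechanisms you name (mAss correspondence, Toda-bracket shuffles, and the $\mmf/\tau^2$ argument for the $110$-stem), and completeness is checked by ruling out the remaining candidate targets, mostly again via \cref{method:ctau} with a few ad hoc degree arguments. The paper's own proof is just the organizational shell of this plan, deferring to the tables and the individual propositions for the details you describe.
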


\begin{proof}
Some of the non-zero hidden extensions are established in the previous
results because we needed them to compute Adams-Novikov differentials.
The remaining non-zero hidden extensions are proved in the
following results.
The last columns of the tables indicate the specific
proofs for each extension.

There are some possible hidden extensions
that turn out not to occur.  Most of these possibilities can be
ruled out using \cref{method:ctau}.  For example, consider the
possible hidden $\eta$ extension from $\tau \Delta h_1^3$
to $\tau^2 c g$.  Because of multiplication by $\tau g$,
we may instead consider the possible hidden $\eta$ extension
from $\tau^2 \Delta h_1^3 g$ to $\tau^3 c g^2$.
These last two elements are annihilated by $\tau$, so they
are in the image of projection to the top cell.
By inspection, there is no $\eta$ extension in the homotopy
of $\mmf/\tau$ in the appropriate degree.

A few miscellaneous cases remain, but their proofs are straightforward.  For example,
\begin{itemize}
\item
$(65,3,34)$ there is no hidden $2$ extension from 
$\Delta^2 h_2 d$ to $\tau^3 \Delta h_1 g^2$ because
the latter element supports an $h_1$ extension.
\item
$(24, 0, 12)$ there is no hidden $\nu$ extension from
$8 \Delta$ to $\tau \Delta h_1^3$ because the first 
element is annihilated by $g$ while the second element is not.
\end{itemize}
\end{proof}

\begin{prop}
\label{prop:he-mmf/t}
\Cref{hidtaumethod} lists some hidden extensions in the 
mANss for $\mmf$.
\end{prop}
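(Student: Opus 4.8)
The plan is to establish each entry of \cref{hidtaumethod} by a uniform application of \cref{method:ctau}, reducing every case to an inspection of the completely understood homotopy of $\mmf/\tau$. Given a prospective hidden extension by $\alpha \in \{2, \eta, \nu\}$ from $b$ to $c$, the first step is to multiply both $b$ and $c$ by a suitable monomial $\tau^j g^k$ so that the resulting classes $\tau^j g^k \beta$ and $\tau^j g^k \gamma$ become annihilated by $\tau$. In every case this is arranged by the Adams-Novikov differentials of \cref{sec:diff}: the classes $\tau^{j+1} g^k b$ and $\tau^{j+1} g^k c$ are hit by one of the differentials $d_5$, $d_7$, $d_9$, $d_{11}$, or $d_{13}$ on an appropriate power of $\Delta$ (times $h_2$, $c$, or $d$). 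One must also check that this multiplication does not destroy the extension, i.e., that a hidden $\alpha$ extension from $\tau^j g^k b$ to $\tau^j g^k c$ forces the original one; this is immediate whenever $\tau^j g^k c$ is nonzero on $E_\infty$ and there is no interfering class of higher filtration.

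Once source and target are $\tau$-torsion, \cref{topcell} identifies their preimages $\overline{b}, \overline{c} \in \pi_{*,*}(\mmf/\tau)$ explicitly up to sign: if $d_{2r+1}(x) = \tau^r y$ then $q(x)$ is detected by $-\tau^{r-1} y$, and the classes $x$ that occur are monomials in the generators of \cref{generators}. Since $\pi_{*,*}(\mmf/\tau)$ is isomorphic to the classical Adams-Novikov $E_2$-page for $\tmf$, which is known algebraically \cite{Baer} \cite{Rezk02}, the product $\overline{b} \cdot i(\alpha)$ can be read off by inspection. Then \cref{q-module-map} gives $q(\overline{b}) \cdot \alpha = q(\overline{b} \cdot i(\alpha))$, so $\overline{b} \cdot i(\alpha) = \overline{c}$ yields $\beta \cdot \alpha = \gamma$ and establishes the extension, while $\overline{b} \cdot i(\alpha) = 0$ shows that no such extension occurs.

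The work is therefore routine but bookkeeping-heavy, and the main obstacle is organisational rather than conceptual: for each row one must select the precise power of $\tau g$ that makes both ends $\tau$-torsion --- which depends on the differential pattern and sometimes requires a large enough $g$-power --- and then carry out the relevant product in $\pi_{*,*}(\mmf/\tau)$, i.e. in $\Ext$ over the elliptic Hopf algebroid, using the relations among $h_1$, $h_2$, $c$, $d$, $g$, and $\Delta$. A few entries for which the plain $\tau g$-multiplication is insufficient, or for which the product in $\mmf/\tau$ is itself delicate, are the ones cross-referenced in \cref{hidtaumethod} to dedicated lemmas elsewhere in \cref{Sec:ext}; for those we simply cite the separate arguments.
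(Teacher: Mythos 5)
Your proposal is correct and follows essentially the same route as the paper: the paper's proof likewise invokes \cref{method:ctau} for every row, using the two differentials recorded in the table to make source and target $\tau$-torsion after multiplying by a suitable power of $\tau g$, then reading off the extension between the preimages in $\pi_{*,*}(\mmf/\tau)$ by inspection. The only slight inaccuracy is your closing remark about entries of \cref{hidtaumethod} being cross-referenced to separate lemmas --- those delicate cases live in Tables \ref{hidh0}--\ref{hidh2}, while every row of \cref{hidtaumethod} is handled by this uniform argument.
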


\begin{longtable}{llllll}
\caption{Some hidden extensions deduced from \cref{method:ctau}
\label{hidtaumethod} 
} \\
\toprule
$(s,f,w)$ & source & type & target & reason  \\
\midrule \endfirsthead
\caption[]{Some hidden extensions deduced from \cref{method:ctau}} \\
\toprule
$(s,f,w)$ & source& type & target & reason \\
\midrule \endhead
\bottomrule \endfoot
$(51,1,26)$ & $2\Delta^2 h_2$ & $2$ &  $\tau \Delta^2 h_1^3$ &
$d_5(2\Delta^3)=2\tau^2 \Delta^2 h_2 g$ & $d_{7}(4\Delta^3 )=\tau^3 \Delta^2 h_1^3 g$\\
$(54,2,28)$ & $\Delta^2 h_2^2$ & $2$ & $\tau^4 d g^2$ & $d_5(\Delta^3 h_2) = \tau^2 \Delta^2 h_2^2 g$ & $d_{13}(2\Delta^3 h_2) = \tau^6 d g^3$ \\
$(99,1,50)$ &  $2\Delta^4 h_2$ & $2$ &  $\tau \Delta^4 h_1^3$ &
$d_5(2\Delta^5)=2\tau^2 \Delta^4 h_2 g$ & $d_{7}(4\Delta^5 )=\tau^3 \Delta^4 h_1^3 g$\\
$(123,1,62)$ & $2 \Delta^5 h_2$ & $2$ & $\tau \Delta^5 h_1^3$ & $d_5(\Delta^6) = 2 \tau^2 \Delta^5 h_2 g$ & $d_7(2 \Delta^6) = \tau^3 \Delta^5 h_1^3 g$ \\
$(147,1,74)$ &   $2\Delta^6 h_2$ & $2$ &   $\tau \Delta^6 h_1^3$ &
$d_5(2\Delta^7)=2\tau^2 \Delta^6 h_2 g$ & $d_{7}(4\Delta^7 )=\tau^3 \Delta^6 h_1^3 g$\\
$(51,1,26)$ & $\Delta^2 h_2$ & $\eta$ & $\tau^2\Delta c g$ &
$d_5(\Delta^3)=\tau^2 \Delta^2 h_2 g$ & $d_9(\Delta^3 h_1)=\tau^4 \Delta c g^2$ \\
$(99,1,50)$ & $\Delta^4 h_2$ & $\eta$ & $\tau^9 g^5$ & $d_5(\Delta^5) = \tau^2 \Delta^4 h_2 g$ & $d_{23}(\Delta^5 h_1) = \tau^{11} g^6$ \\
$(123,1,62)$ & $2\Delta^5 h_2$ & $\eta$ & $\tau^2 \Delta^4 c g$ &
$d_5(\Delta^6)=2 \tau^2 \Delta^5 h_2 g$ & $d_9(\Delta^6 h_1)=\tau^4 \Delta^4 c g^2$ \\
$(124, 6, 63)$ & $\tau^2 \Delta^4 c g$ & $\eta$ & $\tau^9 \Delta h_1 g^5$ &
$d_9(\Delta^6 h_1)=\tau^4 \Delta^4 c g^2$ & $d_{23}(\Delta^6 h_1^2)=\tau^{11} \Delta h_1 g^6$ \\
$(129,3,66)$ & $\Delta^5 h_1 c$ & $\eta$ & $\tau^7\Delta^2 h_1^2 g^4$ &
$d_9(\Delta^7 h_1^2)\mathord{=}\tau^4 \Delta^5 h_1 c g^2$ & $d_{23}(\Delta^7h_1^3)=\tau^{11}\Delta^2 h_1^2 g^6$ \\
$(147,1,74)$ & $\Delta^6 h_2$ & $\eta$ & $\tau^2 \Delta^5 c g$ &
$d_5(\Delta^7)=\tau^2 \Delta^6 h_2 g$ & $d_9(\Delta^7 h_1)=\tau^4 \Delta^5 c g^2$\\
$(161,3,82)$ & $\Delta^6 h_2 d$ & $\eta$ & $\tau^3 \Delta^5 h_1^2 g^2$ &
$d_5(\Delta^7 d)=\tau^2 \Delta^6 h_2 d g$ & $d_{11}(\Delta^7 h_1 d)\mathord{=}\tau^5 \Delta^5 h_1^2 g^3$\\
$(0,0,0)$ & $4$ & $\nu$ &  $\tau h_1^3$ & $d_5(\Delta h_2 d) = 4 \tau^2 g^2$ &
$d_7(4 \Delta g) = \tau^3 h_1^3 g^2$ \\
$(48,0,24)$ & $4\Delta^2 $ & $\nu$ &  $\tau \Delta^2 h_1^3$ & $d_5(\Delta^3 h_2 d) = 4 \tau^2 \Delta^2 g^2$ & $d_7(4 \Delta^3 g) = \tau^3 \Delta^2 h_1^3 g^2$ \\
$(51,1,26)$ &$2\Delta^2 h_2 $& $\nu$ &  $\tau^4 d g^2$ &
$d_5(2\Delta^3 )=2\tau^2 \Delta^2 h_2 g$ & $d_{13}(2\Delta^3 h_2)= \tau^6 d g^3$\\
$(57,3,30)$ & $\Delta^2 h_2^3$ & $\nu$ & $2 \tau^4 g^3$ &
$d_5(\Delta^3 h_2^2)=\tau^2 \Delta^2 h_2^3 g$ & $d_{13}(\Delta^3 h_2^3)=2 \tau^6 g^4$\\
$(96,0,48)$ &  $4\Delta^4$ & $\nu$ &  $\tau \Delta^4 h_1^3$ & $d_5(\Delta^5 h_2 d) = 4 \tau^2 \Delta^4 g^2$ & $d_7(4 \Delta^5 g) = \tau^3 \Delta^4 h_1^3 g^2$ \\
$(144,0,72)$ &  $4\Delta^6$ & $\nu$ &  $\tau \Delta^6 h_1^3$ & $d_5(\Delta^7 h_2 d) = 4 \tau^2 \Delta^6 g^2$ & $d_7(4 \Delta^7 g) = \tau^3 \Delta^6 h_1^3 g^2$ \\
$(147,1,74)$ &$2\Delta^6 h_2$ & $\nu$ &  $\tau^4 \Delta^4 d g^2$ &
$d_5(2\Delta^7)=2\tau^2 \Delta^6 h_2 g$ & $d_{13}(2\Delta^7 h_2)=\tau^6  \Delta^4 d g^3$\\
$(153,3,78)$ & $\Delta^6 h_2^3$ & $\nu$ & $2 \tau^4 \Delta^4 g^3$ &
$d_5(\Delta^7 h_2^2)=\tau^2 \Delta^6 h_2^3 g$ & $d_{13}(\Delta^7 h_2^3) = 2 \tau^6 \Delta^4 g^4$\\
\end{longtable}

\begin{proof}
All of these extensions follow from \cref{method:ctau}, using
the differentials in the last two columns of \Cref{hidtaumethod}.
To illustrate, we discuss the first extension in the table.
In order to obtain the extension from $2 \Delta^2 h_2$ to $\tau \Delta^2 h_1^3$,
we can establish a hidden $2$ extension from $2 \tau \Delta^2 h_2 g$ to 
$\tau^2 \Delta^2 h_1^3 g$.
Then the desired extension follows immediately.

The elements $2 \tau \Delta^2 h_2 g$ and $\tau^2 \Delta^2 h_1^3 g$ are annihilated by $\tau$ in the $E_\infty$-page of the mANss for $\mmf$.
Therefore, they detect elements in $\pi_{71,37} \mmf$ that are in the image
of $\pi_{72,36} \mmf/\tau$ under projection to the top cell.
By inspection, these preimages are $2 \Delta^3$ and $4 \Delta^3$.
These latter elements are connected by a $2$ extension, so their images
are also connected by a $2$ extension.

The other extensions have essentially the same proof.
First multiply by an appropriate power of $g$.  
Then pull back to $\pi_{*,*} \mmf/\tau$,
where the extension is visible by inspection.
\end{proof}

\begin{rem}
\label{eta-t^2D^4cg}
$(124,6,63)$
The hidden $\eta$ extension from $\tau^2 \Delta^4 c g$ to
$\tau^9 \Delta h_1 g^5$ in \cref{hidtaumethod} deserves further 
discussion.
Note that $\Delta^4 c g$ and $\tau \Delta^4 c g$ support $\eta$
extensions that are not hidden.  However,
$\tau^2 \Delta^4 h_1 c g$ is zero, so $\tau^2 \Delta^4 c g$
can support a hidden $\eta$ extension.
This explains why the $E_\infty$-page chart in \cref{fig:E-infty}
shows both an $h_1$ extension and a hidden $\eta$ extension on
the element $\Delta^4 c g$ in the 124-stem.

The subtleties of this situation are illuminated by consideration 
of homotopy elements.
Let $\alpha$ be an element of $\pi_{124,65} \mmf$ that is detected by
$\Delta^4 c g$.  
The element $\tau^2 \alpha$ is detected by $\tau^2 \Delta^4 c g$.
The hidden $\eta$ extension implies that
$\tau^2 \eta \alpha$ is detected by $\tau^9 \Delta h_1 g^5$.

Now let $\beta$ be an element in $\pi_{122,64}$ that is detected
by $\Delta^4 h_2^2 g$.  Note that $\tau^2 \beta$ must be zero because 
$\tau^2 \Delta^2 h_2^2 g$ is zero and because there are no $E_\infty$-page
elements in higher filtration.
Then $\nu \beta$ is detected by $h_2 \cdot \Delta^4 h_2^2 g$, which equals
$\Delta^4 h_1 c g$.

Both $\eta \alpha$ and $\nu \beta$ are detected by the same
element of the $E_\infty$-page, but they are not equal.
The first product is not annihilated by $\tau^2$, while the latter
product is annihilated by $\tau^2$.
In fact, the difference between $\eta \alpha$ and $\nu \beta$
is detected by $\tau^7 \Delta h_1 g^5$.
This phenomenon corresponds to the classical relation
$\nu^2 \nu_4 = \eta \epsilon_4 + \eta_1 \kappabar^4$
\cite[Proposition 9.17]{BR21}.
\end{rem}

\begin{rem}
\label{rem:eta-D^2h2d}
$(65,3,34)$
The chart in \cite{Bau08} shows a hidden $\eta$ extension from
$\Delta^2 h_2 d$ to $\Delta h_1^2 g^2$ in the 66-stem.  According to \cref{def:hiddenext}, this
is not a hidden extension because of the presence of $\Delta h_1 g^2$ 
in higher filtration.

Nevertheless, there is a relevant point here about multiplicative structure.
Because of the presence of $\tau^3 \Delta h_1 g^2$ in higher filtration,
the element $\Delta^2 h_2 d$ detects two homotopy elements.
One of these elements is annihilated by $\eta$, and one is not.
The product $\nu_2 \kappa$ is one of the two homotopy elements that
are detected by $\Delta^2 h_2 d$.  In fact, $\nu_2 \kappa$
is the homotopy element that is not annihilated by $\eta$.
This follows from the hidden $\eta$ extension from 
$\Delta^2 h_2$ to $\tau^2 \Delta c g$ and the hidden
$\kappa$ extension from $\Delta c g$ to $\tau \Delta h_1^2 g^2$.

\end{rem}

\begin{prop}\label{prop:h0ext110}
$(110,2,56)$
There is a hidden $2$ extension from $\De^4 d$ to $\ta^6 \De^2 h_1^2 g^3$.
\end{prop}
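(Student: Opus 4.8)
The plan is to work with the homotopy element that $\Delta^4 d$ detects. By \cref{D^4d-correspond}, $\Delta^4 d$ is a permanent cycle in the mANss, so it detects an element $\kappa_4 \in \pi_{110,56}\mmf$, and it corresponds to the mAss element of the same name. By inspection of the $E_\infty$-chart (\cref{fig:E-infty}), $2 \cdot \Delta^4 d$ vanishes in the mANss $E_\infty$-page, so any $2$ extension on $\Delta^4 d$ is hidden with target in filtration at least $3$; the task is therefore to show that $2\kappa_4 \neq 0$ and that the only class that can detect it is $\tau^6 \Delta^2 h_1^2 g^3$ in filtration $14$.

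First I would establish $2\kappa_4 \neq 0$ by comparison with the Adams spectral sequence. Since $\Delta^4 d$ corresponds to the mAss element $\Delta^4 d$, the element $\kappa_4$ is also detected by $\Delta^4 d$ in the mAss for $\mmf$. Bruner and Rognes prove a hidden $2$ extension on $\kappa_4$ in the Adams spectral sequence for $\tmf$ in \cite[Theorem 9.8(110)]{BR21}; because the Betti realization of $\mmf$ is $\tmf$ and carries $\kappa_4$ to the classical $\kappa_4$, this forces $2\kappa_4 \neq 0$ in $\pi_{110,56}\mmf$ as well.

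Next I would determine where $2\kappa_4$ is detected by eliminating the other classes in stem $110$ and weight $56$ on the mANss $E_\infty$-page. For most of these, \cref{method:ctau} applies after multiplication by a suitable power of $\tau g$: the relevant classes become $\tau$-torsion, and one checks by inspection in the algebraically known homotopy of $\mmf/\tau$ that the preimages under projection to the top cell are not joined by a $2$ extension. The classes that survive this filter -- in particular $\tau^6\Delta^2 h_1^2 g^3$ -- are then analyzed using the homotopy of $\mmf/\tau^2$, whose Adams-Novikov spectral sequence carries only $d_3$ differentials and is therefore entirely algebraic: the ring map $i_2 \colon \mmf \to \mmf/\tau^2$ pins down the value of $i_2(2\kappa_4)$ and its $\tau$-adic behaviour precisely enough to rule out the remaining intermediate-filtration possibilities, leaving $\tau^6\Delta^2 h_1^2 g^3$ as the only option. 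Combined with $2\kappa_4 \neq 0$, this yields the hidden extension from $\Delta^4 d$ to $\tau^6 \Delta^2 h_1^2 g^3$.

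The main obstacle is exactly the step that forces us out of pure algebra: the target of the extension lies $12$ filtrations above the source and carries a factor of $\tau^6$, so no power of $g$ converts the pair $(\Delta^4 d,\ \tau^6 \Delta^2 h_1^2 g^3)$ into a pair of $\tau$-torsion classes with visibly related preimages in $\pi_{*,*}\mmf/\tau$. Thus \cref{method:ctau} cannot produce the positive extension directly, and the nonvanishing of $2\kappa_4$ must be imported either from the Adams spectral sequence for $\tmf$ or from the (still somewhat topological) input of $\pi_{*,*}\mmf/\tau^2$.
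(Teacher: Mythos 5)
There is a genuine gap at the decisive step, namely the identification of the target. Your strategy is to first import $2\kappa_4\neq 0$ from the Adams-side hidden extension in \cite[Theorem 9.8(110)]{BR21} and then locate the detecting class by elimination, finishing with the inclusion $i_2\colon \mmf\to\mmf/\tau^2$ of the bottom cell. But $i_2$ is exactly the wrong map for this purpose: the candidate target $\tau^6\Delta^2h_1^2g^3$ is divisible by a high power of $\tau$, so its image in the $E_\infty$-page for $\mmf/\tau^2$ is zero, and $i_2(2\kappa_4)$ can tell you nothing about whether $2\kappa_4$ is detected there. The bottom-cell inclusion discards precisely the $\tau$-power-torsion information that the extension lives in. The tool that works is the \emph{projection to the top cell} of $\mmf/\tau^2$. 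Concretely, the paper multiplies by $\tau^4 g^3$ so that both $\tau^4\Delta^4 d g^3$ and $\tau^{10}\Delta^2 h_1^2 g^6$ detect classes annihilated by $\tau^2$ (using $d_{13}(2\Delta^7h_2)=\tau^6\Delta^4 d g^3$ from \cref{prop:d13-2D^7h2} and $d_{23}(\Delta^7h_1^3)=\tau^{11}\Delta^2h_1^2g^6$ from \cref{prop:d23-D^5h1}); these classes therefore lift along $q\colon\pi_{171,86}\mmf/\tau^2\to\pi_{170,88}\mmf$ to $2\Delta^7h_2$ and $\tau\Delta^7h_1^3$, which are joined by a hidden $2$ extension in $\mmf/\tau^2$ obtained by multiplying \cref{lemma:h0ext3} by the permanent cycle $\Delta^7$ (permanent because $\tau^2\Delta^6h_2g$ already vanishes in the $E_2$-page for $\mmf/\tau^2$). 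Pushing forward along $q$ gives the extension directly, with no need to prove $2\kappa_4\neq 0$ in advance or to enumerate and exclude the eleven intermediate filtrations. As written, your elimination step is not carried out, and the sentence claiming that $i_2$ ``pins down'' the remaining possibilities is an assertion rather than an argument.

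A secondary point: importing the nonvanishing of $2\kappa_4$ from the Adams spectral sequence computation in \cite{BR21} is not logically circular, but it runs against the methodology of this paper, which explicitly avoids relying on prior knowledge of Adams differentials or hidden extensions and instead treats this extension as one of the few places where genuinely non-algebraic input ($\pi_{*,*}\mmf/\tau^2$) is required. If you do want to follow your route, you would need to import from \cite{BR21} not just the nonvanishing but also the Adams-side \emph{target} of the hidden $2$ extension and then run a correspondence argument in the style of \cref{table:ASSANSS}; nonvanishing alone does not determine the Adams--Novikov filtration of $2\kappa_4$.
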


\begin{proof}
The proof is a variation on \cref{method:ctau}, in which we use the long
exact sequence
\begin{center}
\begin{tikzcd}[row sep=0.1em]
& \pi_{*,*}\mmf\arrow[r] & \pi_{*,*}\mmf/\tau^2 \arrow[r] &
\pi_{*-1,*+2}\mmf\arrow[r,"\tau^2"]&\pi_{*-1,*}\mmf\\
\end{tikzcd}
\end{center}
induced by the cofiber sequence
\begin{center}
\begin{tikzcd}[row sep=0.1em]
 & \mmf\arrow[r] & \mmf/\tau^2\arrow[r] &
\Sigma^{1,-2}\mmf \arrow[r,"\tau^2"] & \Sigma^{1,0} \mmf .\\
\end{tikzcd}
\end{center}

We will show that there is a hidden $2$ extension
from $\tau^4 \Delta^4 d g^3$ to $\tau^{10} \Delta^2 h_1^2 g^6$.
The desired $2$ extension follows immediately by multiplication
by $\tau^4 g^3$.

Recall from \cref{prop:d13-2D^7h2} that there is a differential
$d_{13}(2 \Delta^7 h_2) = \tau^6 \Delta^4 d g^3$.  Also,
it follows from \cref{prop:d23-D^5h1} that there is a differential
$d_{23}(\Delta^7 h_1^3) = \tau^{11} \Delta^2 h_1^2 g^6$.

Therefore, $\tau^4 \Delta^4 d g^3$ and $\tau^{10} \Delta^2 h_1^2 g^6$
detect elements in $\pi_{170,88} \mmf$ that are annihilated by $\tau^2$.
Hence they have preimages in $\pi_{171,86} \mmf/\tau^2$ under
projection to the top cell.
By inspection, these preimages are $2 \Delta^7 h_2$ and $\tau \Delta^7 h_1^3$.

In the mANss for $\mmf$,
there is a differential $d_5(\Delta^7) = \tau^2 \Delta^6 h_2 g$.
However, in the mANss for $\mmf/\tau^2$,
the element $\tau^2 \Delta^6 h_2 g$ is already zero in the $E_2$-page.
Therefore, $\Delta^7$ is a permanent cycle in the
mANss for $\mmf/\tau^2$.

Recall the hidden $2$ extension 
from $2 h_2$ to $\tau h_1^3$ established in \cref{lemma:h0ext3}.
Multiplication by $\Delta^7$ gives a 
hidden $2$ extension in the mANss $E_\infty$-page for $\mmf/\tau^2$
from $2 \Delta^7 h_2$ to $\tau \Delta^7 h_1^3$.

Finally, apply projection to the top cell to obtain the hidden
$2$ extension from $\tau^4 \Delta^4 d g^3$ to $\tau^{10} \Delta^2 h_1^2 g^6$.
\end{proof}

\begin{prop}
\label{nu-D^2h1^2}
$(50, 2, 26)$ There is a hidden $\nu$ extension
from $\Delta^2 h_1^2$ to $\tau^2 \Delta h_1 c g$.
\end{prop}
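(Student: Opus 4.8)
The plan is to identify $\Delta^2 h_1^2$ with the square $(\Delta h_1)^2$, so that it detects $\eta_1^2$, and then to compute $\nu \cdot \eta_1^2$ by combining the hidden $\nu$ extension on $\Delta h_1$ from \cref{lemma:h2ext25} with routine multiplicative bookkeeping on the $E_\infty$-page of the mANss.

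First I would dispose of the preliminaries. The element $\Delta h_1$ detects $\eta_1$ (see \cref{htpy-name}), and $\Delta^2 h_1^2 = (\Delta h_1)^2$ is a permanent cycle that is non-zero on the $E_\infty$-page, as follows from the differential computations of \cref{sec:diff}; hence $\Delta^2 h_1^2$ detects $\eta_1^2$. Moreover, any $\nu$ extension supported by $\Delta^2 h_1^2$ is automatically hidden, since $h_2 \cdot \Delta^2 h_1^2 = \Delta^2 h_1 \cdot (h_1 h_2)$ is already zero in the $E_2$-page because of the relation $h_1 h_2 = 0$.

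Next I would invoke \cref{lemma:h2ext25}, which provides a hidden $\nu$ extension from $\Delta h_1$ to $\tau^2 c g$; thus $\nu \eta_1$ is detected by $\tau^2 c g$. Therefore $\nu \eta_1^2 = \eta_1 \cdot (\nu \eta_1)$, and on the $E_\infty$-page the product of the detecting elements $\Delta h_1$ and $\tau^2 c g$ equals $\tau^2 \Delta h_1 c g$. As soon as this product is non-zero on the $E_\infty$-page, it detects the homotopy element $\eta_1 \cdot \nu \eta_1 = \nu \eta_1^2$, which is exactly the asserted hidden $\nu$ extension from $\Delta^2 h_1^2$ to $\tau^2 \Delta h_1 c g$.

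The one step that requires genuine care — and which I expect to be the main obstacle — is verifying that $\tau^2 \Delta h_1 c g$ actually survives to the $E_\infty$-page. The differentials whose target lies in that degree have sources in the $54$-stem, and for degree reasons the only one not instantly excluded is $d_5$ on $\Delta^2 h_2^2$; but $d_5(\Delta^2 h_2^2) = d_5(\Delta^2) \cdot h_2^2 = 2 \tau^2 \Delta h_2^3 g$, which vanishes because $h_2^3 = h_1 c$ and $2 h_1 = 0$. Hence $\tau^2 \Delta h_1 c g$ is not hit, and the argument is complete. An alternative would be a Toda-bracket shuffle built on the bracket $\eta_1 = \langle \eta, \nu, \tau^2 \kappabar \rangle$ from \cref{lemma:toda}, but the computation above is more direct.
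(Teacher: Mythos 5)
Your argument is correct and takes essentially the same route as the paper, whose entire proof is to multiply the hidden $\nu$ extension from $\Delta h_1$ to $\tau^2 c g$ of \cref{lemma:h2ext25} by $\Delta h_1$, i.e.\ by $\eta_1$. You have simply made explicit the routine verifications (that $\Delta^2 h_1^2 = (\Delta h_1)^2$ detects $\eta_1^2$, that the extension is genuinely hidden because $h_1 h_2 = 0$, and that $\tau^2 \Delta h_1 c g$ survives to the $E_\infty$-page) which the paper leaves implicit.
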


\begin{proof}
This follows from $\Delta h_1$ multiplication on the
hidden extension from
$\Delta h_1$ to $\tau^2 c g$ established in \cref{lemma:h2ext25}.
\end{proof}

The next several lemmas establish some Toda brackets that
we will use to deduce further hidden extensions.
All of these Toda brackets are deduced from algebraic information,
i.e., from Massey products in the mANss $E_2$-page.

\begin{lem}\label{lemma:tb-Dc}
$(32,2,17)$
The Toda bracket $\langle  \nu^2, 2,\eta_1\rangle$
is detected by $\Delta c$ and has no indeterminacy.
\end{lem}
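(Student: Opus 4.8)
The plan is to realize the Toda bracket $\langle \nu^2, 2, \eta_1 \rangle$ from a Massey product in the $E_2$-page of the mANss, applying the Moss convergence theorem exactly as in \cref{lemma:todaepsilon} and \cref{lemma:toda}. The algebraic input is the Massey product $\Delta c = \langle h_2^2, 2, \Delta h_1 \rangle$ in the mANss $E_2$-page for $\mmf$. Working in the mAss instead would be possible, but less convenient: there the relevant Massey product $\langle h_2^2, h_0, \Delta h_1 \rangle$ lands on $\tau^2 \Delta c$ rather than $\Delta c$, for weight reasons.

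First I would establish the Massey product. It is well-defined because $2 h_2^2 = 0$ and $2 \Delta h_1 = 0$ in the mANss $E_2$-page; the second relation is immediate from $\Delta h_1 = \Delta \cdot h_1$ and $2 h_1 = 0$, and both can be read off from the $E_2$-page computation of \cite{Baer}. The containment $\Delta c \in \langle h_2^2, 2, \Delta h_1 \rangle$ is likewise a purely algebraic fact about $\Ext$ that is visible in the algebraic Novikov spectral sequence of \cite{Baer}. The indeterminacy of the Massey product equals $h_2^2 \cdot E_2^{26,0,13} + E_2^{7,1,4} \cdot \Delta h_1$, and both $E_2^{26,0,13}$ and $E_2^{7,1,4}$ vanish (see \cref{fig:E2d7}); since $E_2^{32,2,17}$ is generated by $\Delta c$, the Massey product is exactly $\{ \Delta c \}$ and has no indeterminacy.

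Next I would verify the remaining hypotheses of the Moss convergence theorem in the mANss for $\mmf$. The classes $h_2^2$, $2$, and $\Delta h_1$ are permanent cycles detecting $\nu^2$, $2$, and $\eta_1$ respectively (\cref{table:ASSANSS} and \cref{htpy-name}); since the relations $2 h_2^2 = 0$ and $2 \Delta h_1 = 0$ already hold on the $E_2$-page, there are no crossing differentials. On the homotopy side the Toda bracket $\langle \nu^2, 2, \eta_1 \rangle$ is well-defined because $2 \nu^2 = 0$ and $2 \eta_1 = 0$. Its indeterminacy $\nu^2 \cdot \pi_{26,13} \mmf + \pi_{7,4} \mmf \cdot \eta_1$ vanishes: $\pi_{7,4} \mmf = 0$, and every class of $\pi_{26,13} \mmf$ is a multiple of $\eta$, hence annihilated by $\nu^2$ since $\eta \nu = 0$.

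Finally, the Moss convergence theorem produces a permanent cycle in $\langle h_2^2, 2, \Delta h_1 \rangle$ detecting an element of $\langle \nu^2, 2, \eta_1 \rangle$; since the Massey product is $\{ \Delta c \}$ and the Toda bracket has no indeterminacy, the bracket is detected by $\Delta c$, as claimed. I expect the one genuinely non-formal ingredient to be the identification $\Delta c \in \langle h_2^2, 2, \Delta h_1 \rangle$ in the $E_2$-page; everything else is degree bookkeeping together with facts already established in the excerpt.
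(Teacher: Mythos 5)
Your proposal is correct and follows essentially the same route as the paper: establish the Massey product $\Delta c = \langle h_2^2, 2, \Delta h_1 \rangle$ in the mANss $E_2$-page, check the Moss hypotheses and the vanishing of both indeterminacies, and conclude. The only difference is that the paper sources the Massey product more carefully --- it starts from $c = \langle h_2^2, h_0, h_1 \rangle$ in the motivic algebraic Novikov $E_2$-page, transfers this to $c = \langle h_2^2, 2, h_1 \rangle$ in the mANss $E_2$-page via the May convergence theorem, and then multiplies by $\Delta$ (checking no indeterminacy) --- whereas you assert the shifted bracket directly; your version is acceptable given the paper's convention of taking the algebraic structure of $\Ext$ as given, but the May-convergence step is the honest justification for the containment you flag as the non-formal ingredient.
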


\begin{proof}
We have the Massey product $c=\langle h_2^2, h_0, h_1 \rangle$ in the motivic algebraic Novikov $E_{2}$-page \cite{Baer}. The May convergence theorem \cite{May69} \cite[Theorem 4.16]{BK21} implies that 
$c=\langle h_2^2, 2, h_1 \rangle$ in the mANss $E_2$-page.
Multiply by $\Delta$ to obtain
\[
\De c= \langle h_2^2, 2, h_1 \rangle \Delta = 
\langle h_2^2, 2, \De h_1 \rangle.
\]
The second equality holds because there is no indeterminacy by inspection.

There are no crossing differentials, so the Moss convergence theorem \cite[Theorem 1.2]{Mos70} \cite[Theorem 4.16]{BK21} implies that
$\Delta c$ detects the Toda bracket.
By inspection, the bracket has no indeterminacy.
\end{proof}

\begin{lem}\label{lemma:tb-D^5c}
$(128,2,65)$
The Toda bracket $\left\langle  \nu_2^2, 2,\eta_1\right\rangle$
is detected by $\Delta^5 c$ and has no indeterminacy.
\end{lem}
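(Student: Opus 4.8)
The plan is to follow the proof of \cref{lemma:tb-Dc} with only cosmetic changes, multiplying through by an additional factor of $\Delta^4$. From that proof we already have the Massey product $c = \langle h_2^2, 2, h_1\rangle$ in the mANss $E_2$-page for $\mmf$. Since $2 h_2^2 = 0$ and $2 \Delta h_1 = 0$, the bracket $\langle \Delta^4 h_2^2, 2, \Delta h_1\rangle$ is defined, and the standard behaviour of Massey products under multiplication (namely $x\langle a,b,c\rangle \subseteq \langle xa,b,c\rangle$ and $\langle a,b,c\rangle y \subseteq \langle a,b,cy\rangle$) gives
\[
\Delta^5 c = \Delta^4 \cdot \Delta \cdot c \in \Delta^4 \cdot \Delta \cdot \langle h_2^2, 2, h_1 \rangle \subseteq \langle \Delta^4 h_2^2,\, 2,\, \Delta h_1 \rangle
\]
in the $E_2$-page. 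A degree count places this bracket in degree $(128,2,65)$, and, as in \cref{lemma:tb-Dc}, one checks by inspection that its indeterminacy is trivial, so that $\langle \Delta^4 h_2^2, 2, \Delta h_1\rangle = \Delta^5 c$.

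Next I would verify the hypotheses of the Moss convergence theorem \cite{Mos70} in the mANss for $\mmf$. The three factors are permanent cycles detecting $\nu_2^2$, $2$, and $\eta_1$: for $2$ and $\Delta h_1$ this is immediate (see \cref{htpy-name} and \cref{table:ASSANSS}), and $\Delta^4 h_2^2 = (\Delta^2 h_2)^2$ is a permanent cycle because $\Delta^2 h_2$ is one. If one prefers an explicit check, $d_5(\Delta^4 h_2^2) = 4\tau^2 \Delta^3 h_2^3 g$ vanishes since $4\tau^2 \Delta^3 h_2 g$ already vanishes in the $E_2$-page, and $d_7(\Delta^4 h_2^2) = h_2^2 \cdot \tau^3 \Delta^3 h_1^3 g = 0$ by \cref{prop:d7two} and the relation $h_1 h_2 = 0$, while longer differentials are excluded by the analysis of \cref{sec:diff}. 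One also checks, exactly as in \cref{lemma:tb-Dc}, that there are no crossing differentials associated with the relations $2 \Delta^4 h_2^2 = 0$ and $2 \Delta h_1 = 0$. The Moss convergence theorem then shows that $\Delta^5 c$ detects the Toda bracket $\langle \nu_2^2, 2, \eta_1\rangle$.

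Finally, I would confirm that the Toda bracket has trivial indeterminacy, i.e.\ that $\nu_2^2 \cdot \pi_{26,13}\mmf = 0$ and $\pi_{103,52}\mmf \cdot \eta_1 = 0$ inside $\pi_{128,65}\mmf$; both vanish by inspection of the $E_\infty$-chart in the relevant range. The main obstacle here is not a hard computation but the bookkeeping around the detecting element $\Delta^4 h_2^2$ of $\nu_2^2$: since it has filtration $2$ it cannot be the target of a differential, and since it is a square it is a permanent cycle, so one only needs its nonvanishing in the $E_2$-page together with the absence of crossing differentials in this comparatively high-dimensional stem — both of which follow from the data in \cite{Baer} and the completed differential analysis of \cref{sec:diff}, but must be invoked in the correct logical order so that the argument remains non-circular.
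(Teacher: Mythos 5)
Your proposal is correct and follows essentially the same route as the paper: start from the Massey product $c=\langle h_2^2,2,h_1\rangle$ in the mANss $E_2$-page (inherited from the proof of \cref{lemma:tb-Dc}), multiply by $\Delta^4$ and $\Delta$ to identify $\Delta^5 c$ with $\langle \Delta^4 h_2^2,2,\Delta h_1\rangle$ with no indeterminacy, and then apply the Moss convergence theorem after ruling out crossing differentials. The extra verifications you supply (that $\Delta^4 h_2^2=(\Delta^2 h_2)^2$ is a permanent cycle detecting $\nu_2^2$, and the explicit $d_5$ and $d_7$ checks) are sound but are left implicit in the paper's version.
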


\begin{proof}
As in the proof of \cref{lemma:tb-D^5c}, we have
the Massey product
$c=\langle h_2^2, 2, h_1 \rangle$ in the mANss $E_2$-page.
Multiply by $\Delta^5$ to obtain
\[
\De^5 c= \De^4 \langle h_2^2, 2, h_1 \rangle \Delta = 
\langle \De^4 h_2^2, 2, \De h_1 \rangle.
\]
The second equality holds because there is no indeterminacy by inspection.

There are no crossing differentials, so the Moss convergence theorem \cite[Theorem 1.2]{Mos70} \cite[Theorem 4.16]{BK21} implies that
$\Delta^5 c$ detects the Toda bracket.
By inspection, the bracket has no indeterminacy.
\end{proof}

\begin{lem}
\label{lemma:tb-t2h1dg}
$(35, 7, 21)$
The Toda bracket
$\langle \nu^2, 2, \epsilon \bar \kappa\rangle$
is detected by $h_1 d g$ and has no indeterminacy.
\end{lem}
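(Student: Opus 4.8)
The plan is to follow the template of \cref{lemma:tb-Dc,lemma:tb-D^5c,lemma:todaepsilon}: exhibit a Massey product in the $E_2$-page of the mANss whose outer terms detect $\nu^2$, $2$, and $\epsilon \bar\kappa$; check that there are no crossing differentials; apply the Moss convergence theorem; and finally verify that the Toda bracket is well-defined with no indeterminacy.

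First I would establish the Massey product $h_1 d = \langle h_2^2, 2, c \rangle$ in the mANss $E_2$-page for $\mmf$. Exactly as in the proof of \cref{lemma:tb-Dc}, this follows from the corresponding Massey product $h_1 d = \langle h_2^2, h_0, c \rangle$ in the motivic algebraic Novikov $E_2$-page \cite{Baer} together with the May convergence theorem; the relations $2 h_2^2 = 0$ and $2 c = 0$ in the $E_2$-page make the bracket defined, and its indeterminacy $h_2^2 \cdot E_2^{9,1,5} + E_2^{7,1,4} \cdot c$ vanishes by inspection (for instance $h_2^2 \cdot h_1 P = 0$ because $h_1 h_2 = 0$, and $E_2^{7,1,4} = 0$). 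Multiplying by the permanent cycle $g$ then gives
\[
h_1 d g = \langle h_2^2, 2, c \rangle \, g = \langle h_2^2, 2, c g \rangle
\]
in the mANss $E_2$-page, the second equality holding because the indeterminacy of $\langle h_2^2, 2, c g \rangle$ again vanishes by inspection.

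Next I would apply the Moss convergence theorem in the mANss for $\mmf$. The elements $h_2^2$, $2$, and $cg$ are permanent cycles detecting $\nu^2$, $2$, and $\epsilon \bar\kappa$ respectively, and there are no possible crossing differentials in the relevant degrees, so the crossing-differential hypotheses hold for algebraic reasons as everywhere else in the paper. Hence $h_1 d g$ is a permanent cycle that detects an element of $\langle \nu^2, 2, \epsilon \bar\kappa \rangle$. It remains to check that this Toda bracket is well-defined and has no indeterminacy. It is well-defined because $2 \nu^2 = 0$ (there is nothing above $h_2^2$ in stem $6$, and $2 h_2^2 = 0$ already in the $E_2$-page) and $2 \epsilon \bar\kappa = 0$ (because $2 \epsilon = 0$). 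Its indeterminacy is $\nu^2 \cdot \pi_{29,17} \mmf + \pi_{7,4} \mmf \cdot \epsilon \bar\kappa$, which vanishes because $\pi_{7,*} \mmf = 0$ and because $\nu^2$ annihilates $\pi_{29,17} \mmf$ (for example $\nu^2 \cdot \eta \kappa^2 = 4 \eta \bar\kappa \kappa = 0$, using the relation $\nu^2 \kappa = 4 \bar\kappa$ from the proof of \cref{lemma:ext-cd} together with $2 \eta = 0$). Therefore $h_1 d g$ detects the entire Toda bracket.

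I expect the main obstacle to be the bookkeeping around the $E_2$-page Massey product: extracting $h_1 d = \langle h_2^2, 2, c \rangle$ from the algebraic Novikov computation, and confirming that it and its $g$-multiple $\langle h_2^2, 2, cg \rangle$ genuinely have zero indeterminacy, so that these are honest equalities rather than mere containments. The crossing-differential check and the Toda-bracket indeterminacy check are routine chart inspections, parallel to the preceding lemmas.
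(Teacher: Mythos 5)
Your proposal is correct and follows essentially the same route as the paper: the paper likewise obtains the Massey product $h_1 d g = \langle h_2^2, 2, cg\rangle$ in the mANss $E_2$-page from the algebraic Novikov bracket $\langle h_2^2, h_0, cg\rangle$ via the May convergence theorem, and then applies the Moss convergence theorem. The only (harmless) difference is that the paper cites the $35$-stem Massey product directly from \cite{Baer} rather than first establishing $h_1 d = \langle h_2^2, 2, c\rangle$ in the $15$-stem and multiplying by $g$, and your extra checks of well-definedness and indeterminacy are subsumed in the paper's ``by inspection.''
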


\begin{proof}
We have the Massey product 
$h_1 d g = \langle h_2^2, h_0, c g\rangle$ in the motivic algebraic Novikov $E_{2}$-page \cite{Baer}.  
The May convergence theorem \cite{May69} \cite[Theorem 4.16]{BK21}
implies that
$h_1 d g = \langle h_2^2, 2, c g\rangle$
in the mANss $E_2$-page.

There are no crossing differentials, so the Moss convergence theorem \cite[Theorem 1.2]{Mos70} \cite[Theorem 4.16]{BK21} implies that
$h_1 d g$ detects the Toda bracket.
By inspection, the bracket has no indeterminacy.
\end{proof}

\begin{lem}
\label{lemma:tb-t2D4h1dg}
$(131, 7, 69)$
The Toda bracket
$\left\langle \nu_2^2, 2, \epsilon \bar \kappa \right\rangle$
is detected by $\Delta^4 h_1 d g$ and has no indeterminacy.
\end{lem}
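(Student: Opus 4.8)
The plan is to follow the proof of \cref{lemma:tb-t2h1dg} essentially verbatim, inserting a factor of $\Delta^4$. Recall from that proof the Massey product $h_1 d g = \langle h_2^2, 2, cg \rangle$ in the mANss $E_2$-page, obtained from the algebraic Novikov Massey product $h_1 d g = \langle h_2^2, h_0, cg \rangle$ \cite{Baer} via the May convergence theorem. First I would multiply this relation by $\Delta^4$ to get
\[
\Delta^4 h_1 d g = \Delta^4 \langle h_2^2, 2, cg \rangle = \langle \Delta^4 h_2^2, 2, cg \rangle ,
\]
where the second equality holds because the bracket on the right has no indeterminacy, as one checks by inspecting the mANss $E_2$-page in the relevant degrees.

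Next I would apply the Moss convergence theorem in the mANss for $\mmf$. As in the other Toda bracket arguments of this section, there are no possible crossing differentials for degree reasons, so the hypotheses hold for purely algebraic reasons. Since $\nu_2$ is detected by $\Delta^2 h_2$ (see \cref{htpy-name}), the square $\nu_2^2$ is detected by $\Delta^4 h_2^2$, and $\epsilon \bar\kappa$ is detected by $cg$. The Moss convergence theorem therefore shows that $\langle \Delta^4 h_2^2, 2, cg \rangle$ detects the Toda bracket $\langle \nu_2^2, 2, \epsilon\bar\kappa \rangle$, so this Toda bracket is detected by $\Delta^4 h_1 d g$. Finally I would note that the bracket is well-defined because $2\nu_2^2 = 0$ and $2\,\epsilon\bar\kappa = 0$, and that it has no indeterminacy, since both $\nu_2^2 \cdot \pi_{*,*}\mmf$ and $\pi_{*,*}\mmf \cdot \epsilon\bar\kappa$ vanish in the relevant degrees.

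The only point deserving any care is the claim that $\Delta^4 h_2^2$ is a permanent cycle that actually detects $\nu_2^2$, rather than $\nu_2^2$ living in strictly higher filtration. This follows from the differentials already established in \cref{sec:diff}: we have $d_5(\Delta^4 h_2^2) = 0$ because $4\tau^2 h_2 g$ is already zero in the $E_2$-page (\cref{Delta-defn}), and $d_7(\Delta^4) \cdot h_2^2$ is killed by the relation $h_1 h_2 = 0$; the remaining possibilities are excluded by the differentials tabulated in \cref{diff}. I expect this bookkeeping, rather than any conceptual difficulty, to be the only real work in the proof, exactly as in the companion computation \cref{lemma:tb-t2h1dg}.
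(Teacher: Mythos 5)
Your proposal follows the paper's proof essentially verbatim: both start from the Massey product $h_1 d g = \langle h_2^2, 2, c g\rangle$ in the mANss $E_2$-page (imported from the algebraic Novikov $E_2$-page via the May convergence theorem in \cref{lemma:tb-t2h1dg}), multiply by $\Delta^4$ using the absence of indeterminacy, and then apply the Moss convergence theorem with no crossing differentials. The extra bookkeeping you flag at the end --- that $\Delta^4 h_2^2$ is a permanent cycle detecting $\nu_2^2$ and that the bracket is well-defined --- is left implicit (``by inspection'') in the paper but is correctly handled in your argument.
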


\begin{proof}
As in the proof of \cref{lemma:tb-t2h1dg}, we have the
Massey product $h_1 d g = \langle h_2^2, 2, c g\rangle$ in the 
mANss $E_2$-page.
Multiply by $\Delta^4$ to obtain 
$$\Delta^4 h_1 d g = 
\Delta^4\langle h_2^2, h_0, c g \rangle =
\langle \Delta^4h_2^2, h_0, c g \rangle.$$
The second equality holds because there is no indeterminacy by inspection.

There are no crossing differentials, so the Moss convergence theorem
\cite[Theorem 1.2]{Mos70} \cite[Theorem 4.16]{BK21} implies that 
$\Delta^4 h_1 d g$ detects the Toda bracket.
By inspection, the bracket has no indeterminacy.
\end{proof}

\begin{prop}
\label{prop:h2ext32}
There are hidden $\nu$ extensions:
\begin{enumerate}
\item
$(32, 2, 17)$
from $\Delta c$ to $\tau^2 h_1 d g$.
\item
$(128, 2, 65)$
from  $\Delta^5 c$ to $\tau^2 \Delta^4 h_1 d g$.
\end{enumerate}
\end{prop}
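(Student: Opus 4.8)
The plan is to deduce both extensions from the Toda brackets in Lemmas \ref{lemma:tb-Dc}, \ref{lemma:tb-D^5c}, \ref{lemma:tb-t2h1dg}, and \ref{lemma:tb-t2D4h1dg}, together with the hidden $\nu$ extension on $\Delta h_1$ from \cref{lemma:h2ext25}. The two cases are formally identical, so I will describe the first one; the second is obtained by systematically replacing $\nu^2$ by $\nu_2^2$ and citing Lemmas \ref{lemma:tb-D^5c} and \ref{lemma:tb-t2D4h1dg} in place of Lemmas \ref{lemma:tb-Dc} and \ref{lemma:tb-t2h1dg}.

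First I would fix an element $\alpha \in \pi_{32,17}\mmf$ detected by $\Delta c$; by \cref{lemma:tb-Dc} we may take $\alpha \in \langle \nu^2, 2, \eta_1 \rangle$. Next, \cref{lemma:h2ext25} says that $\nu \eta_1$ is detected by $\tau^2 c g$, hence $\nu\eta_1 = \tau^2 \epsilon \bar\kappa$ as honest homotopy elements (there is nothing in $\pi_{28,15}\mmf$ in higher filtration that could distinguish the two). Using commutativity of $\pi_{*,*}\mmf$ and the standard shuffle $\langle a,b,c\rangle \cdot d \subseteq \langle a,b,cd\rangle$, I then compute
\[
\nu \cdot \langle \nu^2, 2, \eta_1 \rangle \subseteq \langle \nu^2, 2, \nu \eta_1 \rangle = \langle \nu^2, 2, \tau^2 \epsilon \bar\kappa \rangle \supseteq \tau^2 \langle \nu^2, 2, \epsilon \bar\kappa \rangle.
\]
By \cref{lemma:tb-t2h1dg} the bracket $\langle \nu^2, 2, \epsilon \bar\kappa\rangle$ has no indeterminacy and is detected by $h_1 d g$, so $\tau^2 \langle \nu^2, 2, \epsilon\bar\kappa\rangle$ is the single element detected by $\tau^2 h_1 d g$. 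Thus $\nu\alpha$ lies in the coset $\langle \nu^2, 2, \tau^2 \epsilon\bar\kappa\rangle$, which contains an element detected by $\tau^2 h_1 d g$. A short check of the indeterminacy of this last bracket — the relevant contributions are $\nu^2 \cdot \pi_{29,\ast}\mmf$ and $\tau^2 \epsilon\bar\kappa \cdot \pi_{3,2}\mmf$, which vanish in filtration at most that of $\tau^2 h_1 d g$ — shows that $\nu\alpha$ is itself detected by $\tau^2 h_1 d g$, which is the asserted hidden $\nu$ extension.

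The only step I expect to require genuine care is the identification $\nu\eta_1 = \tau^2 \epsilon\bar\kappa$ on the nose (as opposed to merely an equality of detecting classes), since the juggling formula is otherwise purely formal; this is exactly the point where the argument would fail if $\pi_{28,15}\mmf$ (respectively the relevant group in the $128$-stem argument) carried unexpected classes in higher filtration. The remaining verifications — that $\tau^2 h_1 d g$ and $\tau^2 \Delta^4 h_1 d g$ survive to the $E_\infty$-page and that the bracket indeterminacies are concentrated above the relevant filtrations — are routine inspections of the charts. Note that this argument also makes transparent why the extension targets carry the particular powers of $\tau$ that they do: they are forced by the shuffle together with the $E_2$-page Massey products $c = \langle h_2^2, 2, h_1\rangle$ and $h_1 d g = \langle h_2^2, 2, c g\rangle$.
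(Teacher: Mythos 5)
Your proposal is correct and follows essentially the same route as the paper: both arguments shuffle $\nu$ into the bracket $\langle \nu^2, 2, \eta_1\rangle$ detected by $\Delta c$ (resp.\ $\langle \nu_2^2, 2, \eta_1\rangle$ detected by $\Delta^5 c$), use the hidden $\nu$ extension of \cref{lemma:h2ext25} to rewrite the third entry as $\tau^2\epsilon\bar\kappa$, and conclude via \cref{lemma:tb-t2h1dg} (resp.\ \cref{lemma:tb-t2D4h1dg}). Your extra care about the on-the-nose identity $\nu\eta_1 = \tau^2\epsilon\bar\kappa$ and the indeterminacy of the final bracket only makes explicit what the paper leaves to inspection.
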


\begin{proof}
Recall from \Cref{lemma:tb-Dc} that the Toda bracket 
$\langle \nu^2, 2,\eta_1 \rangle$ is detected by $\Delta c$.
We have
$$\langle\nu^2 , 2, \eta_1\rangle\nu  = \langle \nu^2, 2, \nu \cdot \eta_1 \rangle = \langle \nu^2, 2, \tau^2 \epsilon \bar \kappa\rangle.$$
The first equality holds because there is no indeterminacy by inspection.
The second equality follows from the 
hidden $\nu$ extension of \Cref{lemma:h2ext25}.
\Cref{lemma:tb-t2h1dg} implies that $\tau^2 h_1 d g$
detects the last Toda bracket.

The proof for the second hidden extension is nearly identical.
Consider the equalities
$$\langle \nu_2^2 , 2, \eta_1\rangle\nu  = 
\langle \nu_2^2, 2, \nu \cdot \eta_1 \rangle = 
\langle \nu_2^2, 2, \tau^2 \epsilon \bar \kappa\rangle,$$
and use 
\Cref{lemma:tb-D^5c} and \Cref{lemma:tb-t2D4h1dg}.
\end{proof}

\begin{prop}
\label{prop:h2e-D6h13}
There are hidden $\nu$ extensions:
\begin{enumerate}
\item 
$(97, 1, 49)$
from $\Delta^4 h_1$ to  $\tau^9 g^5$.
\item 
$(122, 2, 62)$
from $\Delta^5 h_1^2$ to $ \tau^9 \Delta h_1 g^5$.
\item
$(147, 3, 75)$
from $\Delta^6 h_1^3$ to $\tau^9\Delta^2 h_1^2 g^5$.
\end{enumerate}
\end{prop}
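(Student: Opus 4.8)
The plan is to establish part (1) first and then obtain parts (2) and (3) by multiplying by powers of $\eta_1$, the element detected by $\Delta h_1$ (see \cref{htpy-name}).

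For part (1) I would exploit the hidden $\eta$ extension from $\Delta^4 h_2$ to $\tau^9 g^5$ already recorded in \cref{hidtaumethod}, together with the analogy between $\Delta^4 h_1$ and $\Delta^4 h_2$ that parallels the $28$-stem situation: there both $\nu\eta_1$ (via the hidden $\nu$ extension on $\Delta h_1$ of \cref{lemma:h2ext25}) and $\eta\nu_1$ (via the hidden $\eta$ extension on $2\Delta h_2$ of \cref{lemma:h1ext27}) are detected by $\tau^2 cg$. The same should hold one ``$\Delta^3$-period'' higher, with $\tau^9 g^5$ playing the role of $\tau^2 cg$. Concretely I would either identify a Toda bracket (equivalently, a Massey product in the mANss $E_2$-page) detecting $\Delta^4 h_1$ and shuffle $\nu$ through it, or — more robustly — apply a variant of \cref{method:ctau}: after multiplying the proposed extension by a suitable power of $\tau g$, both source and target become annihilated by $\tau$ (the source because $d_5$ kills the appropriate power of $\Delta$, the target because of $d_{23}(\Delta^5 h_1) = \tau^{11} g^6$ from \cref{prop:d23-D^5h1} and its $g$-multiples), so the extension can be read off from the algebraic object $\pi_{*,*}\mmf/\tau$, where it appears as an $h_2$ extension between the preimages under projection to the top cell. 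I expect this last step — bookkeeping which power of $\tau$ forces everything to be $\tau$-torsion and then pinning down the relevant preimages in $\pi_{*,*}\mmf/\tau$ — to be the main obstacle.

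Given part (1), parts (2) and (3) follow by multiplying by $\eta_1$ and $\eta_1^2$. If $\beta$ is a homotopy class detected by $\Delta^4 h_1$ with $\nu\beta$ detected by $\tau^9 g^5$, then $\eta_1\beta$ is detected by $\Delta h_1 \cdot \Delta^4 h_1 = \Delta^5 h_1^2$ and $\nu(\eta_1\beta) = \eta_1(\nu\beta)$ is detected by $\Delta h_1 \cdot \tau^9 g^5 = \tau^9 \Delta h_1 g^5$; likewise $\eta_1^2\beta$ is detected by $\Delta^6 h_1^3$ with $\nu(\eta_1^2\beta)$ detected by $\tau^9 \Delta^2 h_1^2 g^5$. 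These extensions are genuinely hidden in the sense of \cref{def:hiddenext} because $h_2 \cdot \Delta^5 h_1^2 = \Delta^5 h_1^2 h_2 = 0$ and $h_2 \cdot \Delta^6 h_1^3 = 0$ already on the $E_2$-page, using $h_1 h_2 = 0$. The remaining routine points — that $\Delta h_1 \cdot \Delta^4 h_1$ and $\Delta^2 h_1^2 \cdot \Delta^4 h_1$ survive to $E_\infty$, and that $\tau^9 \Delta h_1 g^5$ and $\tau^9 \Delta^2 h_1^2 g^5$ are non-zero there — are all visible by inspection of the charts in \cref{sec:charts}.
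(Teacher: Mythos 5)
There is a genuine gap: the whole weight of the proposition rests on establishing one of the three extensions from scratch, and your proposal does not actually do this. Your reduction of (2) and (3) to (1) by $\eta_1$-multiplication is sound in principle, but for (1) you offer only an analogy with the $28$-stem plus two sketched strategies, and the one you call ``more robust'' fails on the numerology. To apply \cref{method:ctau} you need a power of $\tau g$ making \emph{both} source and target non-zero and annihilated by $\tau$. On the target side this forces exactly one extra power of $\tau$: $g^{5+k}$ ($k\geq 1$) is $\tau^{11}$-torsion via $d_{23}(\Delta^5 h_1 g^{k-1})$, so the target must be $\tau^{10}g^{5+k}$. On the source side, your stated mechanism --- ``$d_5$ kills the appropriate power of $\Delta$'' --- is wrong: $d_5(\Delta^5)=\tau^2\Delta^4 h_2 g$ produces $\tau$-torsion on $h_2$-multiples, not on $\Delta^4 h_1 g^k$. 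The only differential that makes $\Delta^4 h_1 g^k$ into $\tau$-torsion is $d_{11}(\Delta^6 d\, g^{k-3})=\tau^5\Delta^4 h_1 g^k$, which requires $k\geq 3$ and leaves $\tau^4\Delta^4 h_1 g^3$ as the $\tau$-annihilated source; but then the target would be $\tau^{13}g^8=0$, and the method degenerates. There is no choice of $\tau^a g^k$ that works, so projection to the top cell cannot see extension (1) directly. The Toda-bracket alternative is not carried out, and unlike $\Delta h_1=\langle h_1,h_2,\tau^2 g\rangle$, no bracket decomposition of $\Delta^4 h_1$ is identified.

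The paper resolves this by proving part (3) first and deducing (1) and (2) by \emph{dividing} by $\Delta h_1$-multiplication, precisely because the $147$-stem is where usable leverage exists. Concretely, the hidden $2$ extensions from $\Delta^6 h_2^2$ to $\tau^4\Delta^4 d g^2$ (\cref{2-D^6h2^2}) and from $\Delta^4 d$ to $\tau^6\Delta^2 h_1^2 g^3$ (\cref{prop:h0ext110}, which itself needs an $\mmf/\tau^2$ argument) combine to give a hidden $4\nu$ extension from $\Delta^6 h_2$ to $\tau^{10}\Delta^2 h_1^2 g^5$; the hidden $2$ extension from $2\Delta^6 h_2$ to $\tau\Delta^6 h_1^3$ then converts this into the $\nu$ extension on $\Delta^6 h_1^3$. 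If you want to salvage your outline, you would need an independent handle on $\Delta^4 h_1$ itself, which neither of your proposed mechanisms supplies.
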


\begin{proof}
We prove the third hidden extension.
Then the first two
hidden extensions follow from multiplication by $\Delta h_1$.

\cref{prop:h0ext110} and \cref{2-D^6h2^2} imply that
there is a hidden $4 \nu$ extension from $\Delta^6 h_2$ to
$\tau^{10} \Delta^2 h_1^2 g^5$.
We also have a hidden $2$ extension from $2 \Delta^6 h_2$ to
$\tau \Delta^6 h_1^3$, as shown in \cref{prop:he-mmf/t}.
It follows that there must be a hidden
$\nu$ extension from $\Delta^6 h_1^3$ to $\tau^9 \Delta^2 h_1^2 g^5$.
\end{proof}

\begin{prop}
\label{prop:epsilon-D^4d}
$(110, 2, 56)$
There is a hidden $\epsilon$ extension from $\Delta^4 d$ to
$\tau \Delta^4 h_1^2 g$.
\end{prop}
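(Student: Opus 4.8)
The plan is to mirror the proof of \cref{lemma:ext-cd}, with $\kappa_4$ (detected by $\Delta^4 d$) playing the role of $\kappa$. As there, it is cleanest to establish a hidden $\epsilon$ extension from $\Delta^4 h_1 d$ to $\tau \Delta^4 h_1^3 g$; the stated extension then follows by ``dividing by $h_1$'', since $\tau \Delta^4 h_1^3 g = h_1 \cdot \tau \Delta^4 h_1^2 g$ is a non-hidden $h_1$ extension and $\tau \Delta^4 h_1^2 g$ is the only element of the $E_\infty$-page in its degree that can detect $\epsilon \kappa_4$ (every lower-filtration candidate is ruled out because $\epsilon\kappa_4$ is divisible by $\tau$: indeed $c \cdot \Delta^4 d = \Delta^4 \cdot (cd) = 0$ in the homotopy of $\mmf/\tau$, as $cd = 0$ there by \cref{lemma:ext-cd}). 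Equivalently, we track the product $\eta \epsilon \kappa_4$ in $\pi_{*,*}\mmf$ and show that it is detected by $\tau \Delta^4 h_1^3 g$.

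The relation $h_1 c = h_2^3$ in the mANss $E_2$-page gives $\eta \epsilon = \nu^3$, so $\eta \epsilon \kappa_4 = \nu^3 \kappa_4 = \nu \cdot (\nu^2 \kappa_4)$. Multiplying the relation $h_2^2 d = 4g$ by $\Delta^4$ gives $h_2^2 \cdot \Delta^4 d = 4 \Delta^4 g$ in the $E_2$-page; this element is nonzero in $E_\infty$ for degree reasons, so $\nu^2 \kappa_4$ is detected by $4 \Delta^4 g$. Writing $4\Delta^4 g = 2\cdot(2\Delta^4 g)$ and noting that $2\Delta^4 g$ is a permanent cycle (it is $\bar\kappa D_4$, with $D_4$ detected by the permanent cycle $2\Delta^4$ of \cref{prop:d7two}) that carries no hidden $2$ extension, we get $\nu^2\kappa_4 = 2\mu$ for a class $\mu$ detected by $2\Delta^4 g$, hence $\nu^3\kappa_4 = 2\nu\mu$ with $\nu\mu$ detected by $h_2\cdot 2\Delta^4 g = 2\Delta^4 h_2 g$. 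Finally, multiplying the hidden $2$ extension from $2\Delta^4 h_2$ to $\tau\Delta^4 h_1^3$ of \cref{hidtaumethod} (\cref{prop:he-mmf/t}) by $g$ shows that $2\cdot(\nu\mu) = \nu^3\kappa_4$ is detected by $\tau\Delta^4 h_1^3 g$, as desired.

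The main obstacle is the filtration bookkeeping rather than any conceptual difficulty: one must check at each step that the expected leading term survives and actually detects the product in question (that $4\Delta^4 g$, $2\Delta^4 h_2 g$, and $\tau \Delta^4 h_1^3 g$ are nonzero in $E_\infty$, and that no higher-filtration corrections in the intermediate products $\nu^2\kappa_4$ and $\nu\mu$ propagate into stem $119$ in a way that could spoil the conclusion), and at the end that $\tau \Delta^4 h_1^2 g$ is the unique detecting possibility for $\epsilon \kappa_4$. All of these are straightforward inspections of the charts in \cref{sec:charts}. As an alternative to the last step, after multiplying the proposed extension by a suitable power of $\tau g$ to move both source and target into the $\tau$-torsion, one could instead deduce it directly from \cref{method:ctau} by comparison with the algebraically known homotopy of $\mmf/\tau$.
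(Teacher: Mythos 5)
Your proposal is correct in strategy but takes a genuinely different route from the paper. The paper's proof is a two-step formal manipulation: it takes the hidden $\epsilon$ extension from $d$ to $\tau h_1^2 g$ already established in \cref{lemma:ext-cd}, multiplies it by the permanent cycle $\Delta^4 h_1$ to get the extension on $\Delta^4 h_1 d$, and then divides by $h_1$ exactly as in your final step. You instead re-run the \emph{internal} argument of \cref{lemma:ext-cd} translated by $\Delta^4$: $\eta\epsilon\kappa_4 = \nu^3\kappa_4$, then $\nu^2\kappa_4$ detected by $4\Delta^4 g$, then the hidden $2$ extension from $2\Delta^4 h_2 g$ to $\tau\Delta^4 h_1^3 g$ from \cref{hidtaumethod}. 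This works and is a nice sanity check on the multiplicative structure, but it costs extra inputs (the permanence of $2\Delta^4$, the hidden $2$ extension in the $99$-stem) that the paper's route does not need. Your justification of the division by $h_1$ (via $\tau$-divisibility of $\epsilon\kappa_4$, using $i(\epsilon)i(\kappa_4) = c\cdot\Delta^4 d = 0$ in $\pi_{*,*}\mmf/\tau$) is actually more detailed than what the paper records.

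The one substantive point you defer rather than settle is the passage from ``$\nu^2\kappa_4$ is detected by $4\Delta^4 g$'' to ``$\nu^2\kappa_4 = 2\mu$ on the nose,'' and likewise from ``$\nu\mu$ is detected by $2\Delta^4 h_2 g$'' to ``$2\nu\mu$ is detected by $\tau\Delta^4 h_1^3 g$.'' In \cref{lemma:ext-cd} the analogous identities $\nu^2\kappa = 4\kappabar$ and $4\nu\kappabar = \tau\eta^3\kappabar$ hold exactly because the $20$- and $23$-stems carry nothing in higher filtration; in the $116$- and $119$-stems this is no longer automatic, and a higher-filtration correction to $\nu^2\kappa_4$ could in principle contribute in filtration $7$ of the $119$-stem after multiplying by $\nu$, i.e.\ exactly where your target lives. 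You flag this as chart inspection, and it does check out, but it is the one step of your argument that is not purely formal --- and it is precisely the step that the paper's multiplication-by-$\Delta^4 h_1$ trick is designed to avoid.
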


\begin{proof}
We showed in \cref{lemma:ext-cd} that there is a hidden
$\epsilon$ extension from $d$ to $\tau h_1^2 g$.
Multiply by $\Delta^4 h_1$ to obtain a hidden
$\epsilon$ extension from $\Delta^4 h_1 d$ to $\tau \Delta^4 h_1^2 g$.
Finally, use $h_1$ multiplication to obtain the hidden
extension on $\Delta^4 d$.
\end{proof}
 
\begin{prop}
\label{prop:h2ext135}
$(135, 3, 69)$
There is a hidden $\nu$ extension
from $\Delta^5 h_1 d$ to $ \tau^3\Delta^4 h_1^2 g^2.$
\end{prop}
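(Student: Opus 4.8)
The plan is to run the same computation as in \cref{lemma:h2ext39}, but with each input replaced by its $\Delta^4$-multiplied analogue. First I would note that in the mANss $E_2$-page there is a factorization $\Delta^5 h_1 d = \Delta h_1 \cdot \Delta^4 d$, and that $\Delta h_1$ and $\Delta^4 d$ are permanent cycles detecting $\eta_1$ and $\kappa_4$ respectively (see \cref{htpy-name}). Since $\Delta^5 h_1 d$ is a nonzero element of the $E_\infty$-page --- which is already needed for it to be the source of the asserted extension --- it detects the product $\eta_1 \kappa_4$. (One checks that this product is independent of the choice of $\kappa_4$: the ambiguity lives in filtration above $\Delta^4 d$, detected by $\tau^6 \Delta^2 h_1^2 g^3$, and $\eta_1$ times that ambiguity is detected by $\tau^6 \Delta^3 h_1^3 g^3$, which vanishes in $E_\infty$ because $\tau^3 \Delta^3 h_1^3 g$ is hit by $d_7(\Delta^4)$ from \cref{prop:d7two}.)

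Next I would compute $\nu \cdot \eta_1 \cdot \kappa_4$. By \cref{lemma:h2ext25} there is a hidden $\nu$ extension from $\Delta h_1$ to $\tau^2 c g$, so $\nu \eta_1$ is detected by $\tau^2 c g$ and hence equals $\tau^2 \epsilon \kappabar$ up to higher filtration; thus $\nu \cdot \eta_1 \cdot \kappa_4 = \tau^2 \kappabar \cdot \epsilon \kappa_4$. By \cref{prop:epsilon-D^4d} there is a hidden $\epsilon$ extension from $\Delta^4 d$ to $\tau \Delta^4 h_1^2 g$, so $\epsilon \kappa_4$ is detected by $\tau \Delta^4 h_1^2 g$; here too the choice of $\kappa_4$ is immaterial, since $c$ times the higher-filtration ambiguity is a multiple of $h_1 h_2 = 0$ via the relation $h_1 c = h_2^3$. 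Since $\kappabar$ is detected by $g$, multiplying by $\tau^2 \kappabar$ shows that $\nu \cdot \eta_1 \cdot \kappa_4$ is detected by $\tau^3 \Delta^4 h_1^2 g^2$.

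Finally, I would observe that $h_2 \cdot \Delta^5 h_1 d = 0$ already in the $E_2$-page because of the relation $h_1 h_2 = 0$, so the $\nu$ extension from $\Delta^5 h_1 d$ to $\tau^3 \Delta^4 h_1^2 g^2$ is genuinely hidden, and the minimality clause of \cref{def:hiddenext} holds by inspection of the chart. The main point requiring care --- routine given the charts and the earlier results --- is to verify that the chain of detecting elements $\tau^2 c g$ and $\tau \Delta^4 h_1^2 g$ and $\tau^3 \Delta^4 h_1^2 g^2$ does not collapse into strictly higher filtration at any stage, i.e.\ that each intermediate product is detected exactly where claimed; this bookkeeping is the only place I expect to need to be careful.
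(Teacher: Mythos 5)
Your proposal is correct and follows essentially the same route as the paper: the paper also factors $\Delta^5 h_1 d$ as $\Delta h_1 \cdot \Delta^4 d$, reduces $\nu\eta_1$ to $\tau^2\epsilon\kappabar$, and finishes with the hidden $\epsilon$ extension from $\Delta^4 d$ to $\tau\Delta^4 h_1^2 g$ (\cref{prop:epsilon-D^4d}). The one real difference is where the bracket manipulation happens: the paper keeps $\kappa_4$ inside a single shuffle, writing $\nu\langle\eta,\nu,\tau^2\kappabar\rangle\kappa_4 = \langle\nu,\eta,\nu\rangle\tau^2\kappabar\cdot\kappa_4$ and using \cref{lemma:todaepsilon}, which produces the exact element $\epsilon\cdot\tau^2\kappabar\cdot\kappa_4$ with no filtration ambiguity to track. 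You instead quote the already-proved hidden $\nu$ extension of \cref{lemma:h2ext25}, which only identifies $\nu\eta_1$ modulo higher Adams--Novikov filtration in $\pi_{28,15}\mmf$; the step ``thus $\nu\eta_1\kappa_4 = \tau^2\kappabar\cdot\epsilon\kappa_4$'' therefore needs the check that this indeterminacy contributes nothing after multiplying by $\kappa_4$. That check does succeed --- there is nothing above filtration $6$ in $(28,*,15)$ on the $E_\infty$-page, so $\nu\eta_1$ is exactly $\tau^2\epsilon\kappabar$ --- but you defer it as ``bookkeeping'' rather than carrying it out, and it is precisely the step the paper's shuffle-with-$\kappa_4$ formulation is designed to avoid. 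Either version is fine; if you use yours, make that one verification explicit.
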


\begin{proof}
By \Cref{lemma:toda}, 
the element $\Delta h_1$ detects the Toda bracket
$\langle\eta , \nu, \ta^2 \bar \kappa\rangle$.
Recall from \cref{htpy-name} that $\kappa_4$ is an element of
$\pi_{110,56} \mmf$ that is detected by the permanent 
cycle $\Delta^4 d$.
Then the element $\Delta^5 h_1 d$ detects
$\langle \eta , \nu, \ta^2 \bar \kappa\rangle \kappa_4$.
Now shuffle to obtain
\[
\nu \langle \eta, \nu, \tau^2 \kappabar \rangle  \kappa_4 =
\langle \nv, \eta, \nu \rangle \tau^2 \kappabar \cdot \kappa_4.
\]
Recall from \cref{lemma:todaepsilon} that $\epsilon= \langle \nu,\eta, \nu\rangle$.
Also recall from 
\Cref{prop:epsilon-D^4d} that there is a hidden 
$\epsilon$ extension from $\Delta^4 d$ to $\ta \Delta^4 h_1^2 g$. 
We conclude that 
$\epsilon \cdot \tau^2 \kappabar \cdot \kappa_4$
is detected by $\tau^3 \Delta^4 h_1^2 g^2$.
\end{proof}

\section{The elements $\nu_k$}
\label{sec:nu_k}

The multiplicative structure of classical $\pi_* \tmf$ at the prime $2$
has been completely computed, with one exception \cite[p.\ 19]{BR21}.
We will use the mANss for $\mmf$ in order to resolve this last piece
of $2$-primary multiplicative structure.

As discussed in \cref{edge-homomorphism}, our choices of
homotopy elements are not necessarily strictly compatible
with the choices in \cite{BR21}.  However,
our choices do agree up to multiples of certain powers of $2$.
Our computations 
below in \cref{nu_k-M}, \cref{nu_i-nu_j},
\cref{cor:nu4-nu6}, \cref{nu3-product}, and \cref{nu-D4}
lie in groups of order at most 8,
so the possible discrepancies are irrelevant.

We will frequently multiply by the element
$\tau \kappabar$ in $\pi_{20,11} \mmf$
in order to detect elements and relations.
Beware that multiplication by $\tau \kappabar$ is not injective
in general.  However, in all degrees that we study,
multiplication by
$\tau \kappabar$ is in fact an isomorphism.

Recall the projection $q: \mmf/\tau \rightarrow \mmf$
to the top cell that was discussed in detail in
\cref{subsctn:inclusion-projection}.  We will rely heavily
on this map in order to transfer the algebraic
information in $\pi_{*,*} \mmf/\tau$ into homotopical
information about $\pi_{*,*} \mmf$.

\begin{lem}
\label{q(Delta)-detect}
The element $q(\Delta^{k+1})$ of $\pi_{*,*} \mmf$ is
detected by $-(k+1) \tau \Delta^k h_2 g$ in
Adams-Novikov filtration $5$.
\end{lem}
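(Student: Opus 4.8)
The plan is to apply \cref{topcell} directly, taking $x = \Delta^{k+1}$, which is an element of the mANss $E_2$-page that is not divisible by $\tau$. The key input is a nonzero Adams-Novikov differential of the form $d_{2r+1}(\Delta^{k+1}) = \tau^r y$ with $r = 2$; then \cref{topcell} gives that $q(\Delta^{k+1})$ is detected by $-\tau^{r-1} y = -\tau y$ in the mANss $E_\infty$-page. So the entire task reduces to identifying the $d_5$ differential on $\Delta^{k+1}$.

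First I would recall \cref{prop:d5_D}, which gives $d_5(\Delta) = \tau^2 h_2 g$. Then the Leibniz rule in the form $d_5(\Delta^{k+1}) = (k+1)\Delta^k d_5(\Delta)$ yields
\[
d_5(\Delta^{k+1}) = (k+1)\,\tau^2 \Delta^k h_2 g.
\]
Thus in the notation of \cref{topcell} we have $r = 2$ and $y = (k+1)\Delta^k h_2 g$. One subtlety to address: this formula is only a nonzero differential when $(k+1)\tau^2\Delta^k h_2 g$ is nonzero in the $E_5$-page, i.e.\ when $(k+1)$ is odd (or more precisely when $(k+1)\Delta^k h_2 g$ survives to $E_5$); when $k+1$ is even, $\Delta^{k+1}$ supports a longer differential instead (e.g.\ $d_7$), and the conclusion $q(\Delta^{k+1})$ is detected in filtration $5$ would fail. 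I would therefore state the lemma, as written, with the implicit understanding (consistent with the surrounding text, which applies it to specific cases) that the relevant differential is indeed the $d_5$, and simply invoke \cref{prop:d5_D}, the Leibniz rule, and \cref{topcell} in sequence. If a cleaner statement is wanted, one can note that the formula $d_5(\Delta^{k+1}) = (k+1)\tau^2\Delta^k h_2 g$ holds at the level of $E_5$ regardless, and \cref{topcell} applies whenever this element is nonzero.

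Concretely, the proof runs: by \cref{prop:d5_D}, $d_5(\Delta) = \tau^2 h_2 g$; by the Leibniz rule, $d_5(\Delta^{k+1}) = (k+1)\Delta^k d_5(\Delta) = (k+1)\tau^2 \Delta^k h_2 g$; applying \cref{topcell} with $r = 2$ and $y = (k+1)\Delta^k h_2 g$, the element $q(\Delta^{k+1})$ of $\pi_{*,*}\mmf$ is detected by $-\tau^{2-1} y = -(k+1)\tau \Delta^k h_2 g$ in Adams-Novikov filtration $5$ (the filtration of $h_2 g$ being $5$, since $h_2$ has filtration $1$ and $g$ has filtration $4$).

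\textbf{Main obstacle.} The only real point requiring care is the bookkeeping around when $d_5(\Delta^{k+1})$ is genuinely a nonzero $d_5$ (rather than vanishing on $E_5$ and being replaced by a longer differential). For odd $k+1$ there is nothing to worry about; for the degenerate parity the statement as phrased needs the interpretation above. Everything else is a direct citation of \cref{prop:d5_D}, the Leibniz rule $d_r(x^n) = n x^{n-1} d_r(x)$, and \cref{topcell}, with a trivial filtration count ($h_2 g$ lies in filtration $1 + 4 = 5$).
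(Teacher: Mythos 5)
Your main-case argument coincides with the paper's: cite \cref{prop:d5_D}, apply the Leibniz rule to get $d_5(\Delta^{k+1}) = (k+1)\tau^2\Delta^k h_2 g$, and feed this into \cref{topcell} with $r=2$. However, there are two genuine problems. First, your case division is wrong: $\tau^2\Delta^k h_2 g$ generates a group of order $4$ (the paper notes $4\tau^2 h_2 g = 0$ in \cref{Delta-defn}), so $(k+1)\tau^2\Delta^k h_2 g$ is nonzero precisely when $k+1$ is \emph{not a multiple of $4$}, not when $k+1$ is odd. In particular $d_5(\Delta^2) = 2\tau^2\Delta h_2 g$ is a nonzero $d_5$, so $k+1 \equiv 2 \pmod 4$ is not a degenerate case at all, contrary to your ``for odd $k+1$ there is nothing to worry about'' framing.

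Second, and more seriously, you punt on the cases where $(k+1)\tau\Delta^k h_2 g = 0$, proposing to read the lemma as implicitly restricted to the nondegenerate situation. The paper does not intend this restriction (see \cref{rem:q(Delta)-detect}: ``For uniformity, we have stated \cref{q(Delta)-detect} for all values of $k$''), and the degenerate cases are actually used downstream, e.g.\ in \cref{nu_i-detect} and \cref{nu3-product} for $\nu_3$ and $\nu_7$. When $-(k+1)\tau\Delta^k h_2 g$ vanishes, the lemma asserts that $q(\Delta^{k+1})$ is detected by zero in filtration $5$, i.e.\ detected in strictly higher filtration if nonzero; this is a substantive claim requiring proof, not an empty convention. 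The paper supplies it: for $k+1 \equiv 4 \pmod 8$ one has $d_7(\Delta^{k+1}) = \tau^3\Delta^k h_1^3 g$, so \cref{topcell} places $q(\Delta^{k+1})$ in filtration $7$; for $k+1 \equiv 0 \pmod 8$ the element $\Delta^{k+1}$ is a permanent cycle, so $q(\Delta^{k+1}) = 0$ by \cref{rem:topcell}. Your proof needs both of these additional cases to be a complete proof of the lemma as stated and as used.
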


\begin{proof}
If $k+1$ is not a multiple of $4$, then we have the non-zero
differential $d_5(\Delta^{k+1}) = (k+1) \tau^2 \Delta^k h_2 g$.
\cref{topcell} implies that 
$q(\Delta^{k+1})$ is detected by $-(k+1) \tau \Delta^k h_2 g$.

If $k+1$ is congruent to $4$ modulo $8$, then
we have the non-zero differential
$d_7(\Delta^{k+1}) = \tau^3 \Delta^k h_1^3 g$.
\cref{topcell} implies that 
$q(\Delta^{k+1})$ is detected by $\tau^2 \Delta^k h_1^3 g$
in filtration $7$.  This implies that $q(\Delta^{k+1})$ 
is detected by zero in filtration $5$.

If $k+1$ is a multiple of $8$, then $\Delta^k$ is a permanent cycle,
so $q(\Delta^{k+1})$ equals zero.  This implies that
$q(\Delta^{k+1})$ is detected by zero in filtration $5$.
\end{proof}

\begin{rem}
\label{rem:q(Delta)-detect}
For uniformity,
we have stated \cref{q(Delta)-detect} for all values of $k$.
As shown in the proof of the lemma, there are in fact three
cases, depending on the value of $k$.  If
$k+1$ is not a multiple of $4$, then 
$-(k+1) \tau \Delta^k h_2 g$ is a non-zero element in
the mANss $E_\infty$-page.

On the other hand, if $k+1$ is a multiple of $4$,
then 
$-(k+1) \tau \Delta^k h_2 g$ is zero in the $E_\infty$-page
since $\tau \Delta^k h_2 g$ is an element of order $4$.
In these cases, the lemma says that
$q(\Delta^{k+1})$ is detected by zero in filtration $5$.
In other words, $q(\Delta^{k+1})$ is detected in filtration
strictly greater than 5, if it is non-zero.
In fact, $q(\Delta^{k+1})$ is detected by
$\tau^2 \Delta^k h_1^3 g$ in filtration $7$
when $k+1$ is congruent to $4$ modulo $8$.
Also, $q(\Delta^{k+1})$ is zero when $k+1$ is a multiple of $8$
because $\Delta^{k+1}$ is a permanent cycle.
\end{rem}

\begin{lem}
\label{q(Delta)-kappabar}
The element
$q(\Delta^{k+1})$ is a multiple of $\tau \kappabar$.
\end{lem}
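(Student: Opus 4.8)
The plan is to read off the detecting element of $q(\Delta^{k+1})$ from \cref{q(Delta)-detect} and to observe that in every case it is $\tau g$ times a permanent cycle; since $\tau\kappabar$ is detected by $\tau g$, one can then strip $q(\Delta^{k+1})$ down to a sum of $\tau\kappabar$-multiples, one Adams--Novikov filtration at a time.

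By \cref{q(Delta)-detect} and \cref{rem:q(Delta)-detect} there are three cases. If $8\mid k+1$ then $q(\Delta^{k+1})=0$ is trivially a multiple of $\tau\kappabar$. If $k+1\equiv 4\pmod 8$ then $q(\Delta^{k+1})$ is detected by $\tau^2\Delta^k h_1^3 g=\tau g\cdot(\tau\Delta^k h_1^3)$, and $\tau\Delta^k h_1^3$ is a permanent cycle because $\Delta^k h_1^3=\Delta^{k-3}(\Delta h_1)^3$ with $k-3\equiv 0\pmod 8$, so $\Delta^{k-3}$ is a power of the permanent cycle $\Delta^8$ (which detects $M$) while $\Delta h_1$ detects $\eta_1$. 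If $4\nmid k+1$ then $q(\Delta^{k+1})$ is detected by $-(k+1)\tau\Delta^k h_2 g=\tau g\cdot(-(k+1)\Delta^k h_2)$, and $-(k+1)\Delta^k h_2$ is a permanent cycle: for $k$ even it is a multiple of $\Delta^k h_2$, and for $k\equiv 1\pmod 4$ a multiple of $2\Delta^k h_2$, and in either parity $\Delta^k h_2$ (respectively $2\Delta^k h_2$) factors as a power of $\Delta^8$ times one of the permanent cycles $h_2,\Delta^2 h_2,\Delta^4 h_2,\Delta^6 h_2$ (respectively $2\Delta h_2,2\Delta^3 h_2,2\Delta^5 h_2,2\Delta^7 h_2$) detecting $\nu,\nu_2,\nu_4,\nu_6$ (respectively $\nu_1,\nu_3,\nu_5,\nu_7$).

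In each nontrivial case, write the detecting element of $q(\Delta^{k+1})$ as $\tau g\cdot y$ with $y$ a permanent cycle and let $\xi\in\pi_{*,*}\mmf$ be a homotopy class detected by $y$. Then $\tau\kappabar\cdot\xi$ is detected by $\tau g\cdot y$, the same nonzero $E_\infty$-element detecting $q(\Delta^{k+1})$, so $q(\Delta^{k+1})-\tau\kappabar\cdot\xi$ is detected in strictly higher Adams--Novikov filtration. Iterating, and using that every $E_\infty$-element in the tridegree $(24k+23,\bullet,12k+13)$ of filtration larger than that of the current detecting element is again $\tau g$ times a permanent cycle (a finite inspection of \cref{fig:E-infty}), each subtraction raises the filtration; since the mANss $E_\infty$-page is concentrated in finitely many filtrations in each stem, the process terminates and exhibits $q(\Delta^{k+1})$ as a finite sum of $\tau\kappabar$-multiples. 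The only step that genuinely requires work is this last bookkeeping of higher-filtration corrections; everything else is forced by \cref{q(Delta)-detect} together with the permanent cycles $\Delta^8$, $\Delta h_1$, and the $\nu_j$-detectors.
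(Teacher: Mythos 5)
Your argument is correct and is essentially the paper's proof: both read off the detecting element from \cref{q(Delta)-detect} and then check by inspection that the entire coset of possible values consists of multiples of $\tau\kappabar$; you merely make the ``by inspection'' step explicit as an induction on Adams--Novikov filtration. One small misstatement: $2\Delta^3h_2$ and $2\Delta^7h_2$ are \emph{not} permanent cycles (they support $d_{13}$ differentials by \cref{prop:d13-2D^3h2,prop:d13-2D^7h2}, and $\nu_3$, $\nu_7$ are detected in higher filtration or vanish), but this is harmless because the cases $k\equiv 3,7\pmod 8$ never occur in the branch where you invoke them.
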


\begin{proof}
\cref{q(Delta)-detect} shows that
$q(\Delta^{k+1})$ is detected by $-(k+1) \tau \Delta^k h_2 g$.
By inspection, all possible values of $q(\Delta^{k+1})$ are multiples of $\tau \kappabar$.
\end{proof}

\begin{defn}
\label{defn:nu_i}
Let $\nu_k$ be the element of $\pi_{24k+3,12k+2} \mmf$
such that $q(\Delta^{k+1})$ equals $-\tau \kappabar \cdot \nu_k$.
\end{defn}

Note that $\nu_k$ exists because of \cref{q(Delta)-kappabar}.
Multiplication by
$\tau \kappabar$ is an isomorphism in the relevant degrees,
so $\nu_k$ is specified uniquely.
We choose a minus sign in the defining formula of
\cref{defn:nu_i} for later convenience.

\begin{rem}
\label{honorary}
Bruner and Rognes consider $\nu_3$ and $\nu_7$ to be ``honorary" members
of the family of elements $\nu_k$.  They are not multiplicative generators;
$\nu_3$ is non-zero but decomposable, and $\nu_7$ equals zero.
\cref{defn:nu_i} also implies that $\nu_7$ is zero.  This follows from
the observation that $q(\Delta^8)$ equals zero since
$\Delta^8$ is a permanent cycle.
\end{rem}

The careful reader will note that the elements
$\nu_k$ were already partially defined in \cref{htpy-name} in \cref{subsctn:homotopy-elements}.  The following lemma shows that the two approaches to
$\nu_k$ are compatible.  
\cref{htpy-name} leaves some ambiguity in the definition of $\nu_k$,
and \cref{defn:nu_i} resolves that ambiguity.

\begin{lem}
\label{nu_i-detect}
The element $\nu_k$ is
detected by $(k+1) \Delta^k h_2$ in Adams-Novikov filtration $1$.
\end{lem}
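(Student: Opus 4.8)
The plan is to unwind \cref{defn:nu_i} together with \cref{q(Delta)-detect}, using the multiplicative compatibility between $q$ and the $\mmf$-module structure. By \cref{defn:nu_i}, the element $\nu_k$ is characterized by the equation $q(\Delta^{k+1}) = -\tau\kappabar \cdot \nu_k$, and by \cref{q(Delta)-detect} we know that $q(\Delta^{k+1})$ is detected by $-(k+1)\tau\Delta^k h_2 g$ in Adams-Novikov filtration $5$. Since $\tau\kappabar$ is detected by $\tau g$ in Adams-Novikov filtration $4$, multiplication by $\tau\kappabar$ on the $E_\infty$-page carries the class detecting $\nu_k$ to the class detecting $q(\Delta^{k+1})$. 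So first I would set up this detection bookkeeping precisely, being careful about which coset an $E_\infty$-page element detects.

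The main step is then the following: we want to show $\nu_k$ is detected by $(k+1)\Delta^k h_2$ in filtration $1$. Multiplying $(k+1)\Delta^k h_2$ (filtration $1$) by $\tau g$ (filtration $4$) lands in filtration $5$ and gives $(k+1)\tau\Delta^k h_2 g$, which (up to sign) is exactly the element detecting $q(\Delta^{k+1}) = -\tau\kappabar\cdot\nu_k$. Since multiplication by $\tau\kappabar$ is an isomorphism in this degree (noted just before \cref{defn:nu_i}), and since $\tau g \cdot (k+1)\Delta^k h_2 = (k+1)\tau\Delta^k h_2 g$ is non-zero on the $E_\infty$-page precisely when $k+1$ is not a multiple of $4$, the class $\nu_k$ must be detected in filtration at most $1$ in those cases; the only element available is $(k+1)\Delta^k h_2$, giving the claim. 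When $k+1$ is a multiple of $4$ one needs a separate short argument, since then $(k+1)\tau\Delta^k h_2 g$ is zero on the $E_\infty$-page and the filtration-$5$ detection is by zero — consistent with \cref{rem:q(Delta)-detect}, which says $q(\Delta^{k+1})$ is then detected in filtration $7$ (or is zero when $8 \mid k+1$); in that regime $(k+1)\Delta^k h_2$ itself is zero on the $E_\infty$-page (since $\Delta^k h_2$ has order dividing $4$ after suitable $\tau$-multiplication, or more directly because the differentials $d_7$ have made it so), so the statement ``$\nu_k$ is detected by $(k+1)\Delta^k h_2$ in filtration $1$'' holds vacuously in the sense that $(k+1)\Delta^k h_2 = 0$.

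The subtle point I expect to be the main obstacle is disentangling the cosets: an $E_\infty$-page element generally detects a coset of homotopy elements modulo higher filtration, so the equality $q(\Delta^{k+1}) = -\tau\kappabar\cdot\nu_k$ plus the detection statement for $q(\Delta^{k+1})$ does not immediately pin down the filtration-$1$ detecting element of $\nu_k$ — one must check there is no element of higher filtration in the relevant stem that could also be $\tau g$-divisible onto $-(k+1)\tau\Delta^k h_2 g$, and rule out that $\nu_k$ is actually detected in filtration $\geq 3$ while some higher-filtration permanent cycle accounts for the detection of $q(\Delta^{k+1})$. I would handle this by inspecting the mANss $E_\infty$-page chart in stem $24k+3$ (referring to \cref{fig:E2d7} and the established differentials): the only classes there are in the $(k+1)\Delta^k h_2$ family and $\tau$-power multiples of $h_1^3\Delta^k g$-type classes in higher filtration, and the latter are annihilated by $\tau$, hence cannot be the leading term of $\nu_k$ because $\tau\kappabar\cdot\nu_k = q(\Delta^{k+1})$ is (for $k+1$ not divisible by $4$) not $\tau$-divisible in the way that would force it. Once the coset analysis is clean, the Leibniz-rule input is already packaged inside \cref{q(Delta)-detect}, so no further differential computations are needed.
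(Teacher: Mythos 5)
Your proposal is correct and follows essentially the same route as the paper: combine \cref{q(Delta)-detect} with \cref{defn:nu_i} to see that $-\tau\kappabar\cdot\nu_k$ is detected by $-(k+1)\tau\Delta^k h_2 g$, then divide by $\tau g$ using that this multiplication is an isomorphism in the relevant degrees. Your extra care about cosets and the case $4 \mid k+1$ is exactly what the paper's phrase ``multiplication by $\tau g$ is an isomorphism in the relevant degrees'' (together with \cref{rem:nu_i-detect}) is packaging.
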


\begin{proof}
\cref{q(Delta)-detect} determines the mANss $E_\infty$-page elements
that detect $q(\Delta^{k+1})$.
Then \cref{defn:nu_i} means that
$-\tau \kappabar \cdot \nu_k$ is detected by
those same elements.  Multiplication by $\tau g$ is
an isomorphism in the relevant degrees,
so the detecting elements for $\nu_k$ are then determined.
\end{proof}

\begin{rem}
\label{rem:nu_i-detect}
Similarly to \cref{rem:q(Delta)-detect},
\cref{nu_i-detect} includes three cases.
If $k+1$ is not a multiple of $4$, then
$(k+1) \Delta^k h_2$ is a non-zero element of the mANss
$E_\infty$-page.
If $k+1$ is a multiple of $4$, then
$(k+1) \Delta^k h_2$ is zero since $\Delta^k h_2$ is an
element of order $4$.
This means that $\nu_k$ is detected in filtration strictly
greater than $1$, if it is non-zero.
In fact, $\nu_k$ is detected by
$\tau \Delta^k h_1^3$ in filtration $3$ if $k+1$ is congruent to $4$ modulo $8$,
and $\nu_k$ is zero if $k+1$ is a multiple of $8$.
\end{rem}

\begin{rem}
\label{nu-defn}
Earlier in \cref{h2-choice}, we chose $h_2$ so that it detects the
element $\nu$.  \cref{nu_i-detect} shows that $\nu_0$
is also detected by $h_2$, but that does not guarantee that it equals
$\nu$ because of the presence of $\tau h_1^3$ in higher filtration.
We can only conclude that $\nu$ and $\nu_0$ are equal up to multiples
of $4$.  

If $\nu$ equals $5 \nu_0$, then we compute that
\[
q(5 \Delta) = - 5 \tau \kappabar \cdot \nu_0 = -\tau \kappabar \cdot \nu.
\]
So we may replace $\Delta$ by $5 \Delta$, if necessary,
and assume without loss of generality that $\nu_0$ equals $\nu$.
This replacement is compatible with our previous choice of
$\Delta$ in \cref{Delta-defn}, which specified $\Delta$ only 
up to multiples of $4$.
\end{rem}

\begin{prop}
\label{nu_k-M}
$\nu_{k+8} = \nu_k \cdot M$.
\end{prop}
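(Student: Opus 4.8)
The plan is to obtain the relation directly from the defining formula $q(\Delta^{k+1}) = -\tau\kappabar\cdot\nu_k$ of \cref{defn:nu_i}, using that the top-cell projection $q$ is a map of $\mmf$-modules. The one preliminary point is the choice of $M$: since $\Delta^8$ is a permanent cycle in the mANss for $\mmf$ --- it detects $M$ --- \cref{rem:topcell} shows that $\Delta^8$, viewed as an element of $\pi_{*,*}\mmf/\tau$, lies in the image of $i$. I would therefore fix a preimage and take it as the element $M$; it is detected by $\Delta^8$, so this is a valid choice in the sense of \cref{htpy-name}. Because the groups in the relevant degrees have order at most $8$, this is consistent with the ambiguity in the definition of $M$ discussed in \cref{edge-homomorphism}, and it does not affect the statement to be proved.

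With $i(M) = \Delta^8$ in hand, the proof is a single line. The object $\pi_{*,*}\mmf/\tau$ is the classical Adams-Novikov $E_2$-page for $\tmf$, a ring in which $\Delta^{k+1}\cdot\Delta^8 = \Delta^{k+9}$; combining this with the $\mmf$-linearity of $q$ recorded in \cref{q-module-map} gives
\[
-\tau\kappabar\cdot\nu_{k+8} = q(\Delta^{k+9}) = q\bigl(\Delta^{k+1}\cdot i(M)\bigr) = q(\Delta^{k+1})\cdot M = -\tau\kappabar\cdot\nu_k M ,
\]
where the two outer equalities are \cref{defn:nu_i}. Hence $\tau\kappabar\cdot(\nu_{k+8}-\nu_k M) = 0$, and since multiplication by $\tau\kappabar$ is an isomorphism in degree $(24k+215,\,12k+109)$ --- as it is in every degree we work with --- we conclude $\nu_{k+8} = \nu_k M$.

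The argument is essentially formal once \cref{q-module-map} and \cref{defn:nu_i} are available, and no input from the Adams spectral sequence or any Toda bracket is needed: the whole content rests on the module map $q$ and the algebraically known ring $\pi_{*,*}\mmf/\tau$. The only place demanding a little care is the bookkeeping around the choice of $M$ --- checking that taking $i(M) = \Delta^8$ is compatible with the earlier choices of $\Delta$ and of the edge homomorphism, and that the residual $\tau\kappabar$-torsion ambiguity is killed --- both of which are dispatched by the standing conventions of this section (groups of order at most $8$; $\tau\kappabar$-multiplication an isomorphism in the relevant degrees).
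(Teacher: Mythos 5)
Your proposal is correct and follows essentially the same route as the paper: both arguments rest on the identity $i(M)=\Delta^8$, the $\mmf$-linearity of $q$ from \cref{q-module-map} applied to $q(\Delta^{k+9})=q(\Delta^{k+1}\cdot i(M))$, the defining formula of \cref{defn:nu_i}, and the fact that multiplication by $\tau\kappabar$ is an isomorphism in the relevant degrees. Your extra bookkeeping around the choice of $M$ is harmless but not needed, since the paper's description of $i$ already makes $i(M)=\Delta^8$ equivalent to the statement that $M$ is detected by $\Delta^8$.
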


\begin{proof}
Using \cref{q-module-map}, we have
\[
q(\Delta^{k+9}) = 
q(\Delta^{k+1} \cdot \Delta^8) =
q(\Delta^{k+1} \cdot i(M)) = 
q(\Delta^{k+1}) \cdot M =
- \tau \kappabar \cdot \nu_k \cdot M.
\]
Here we are using that $i(M) = \Delta^8$,
which is equivalent to the definition that
$M$ is detected by $\Delta^8$ (see \cref{htpy-name}).

On the other hand,
$q(\Delta^{k+9})$ equals $-\tau \kappabar \cdot \nu_{k+8}$
by \cref{defn:nu_i}.
Finally, multiplication by $-\tau \kappabar$
is an isomorphism in the relevant degrees.
\end{proof}

\cref{nu_k-M} means that for practical purposes, we only need to consider
the elements $\nu_k$ for $0 \leq k \leq 7$.

\begin{thm}
\label{nu_i-nu_j}
\[
\nu_j \nu_k = (k+1) \nu_{j+k} \nu_0.
\]
\end{thm}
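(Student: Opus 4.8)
The plan is to compute $\nu_j \nu_k$ by running both elements through the projection map $q$ to the top cell of $\mmf/\tau$, where the multiplication is entirely algebraic. The key observation is that $\nu_j \nu_k$ lives in a degree where multiplication by $\tau\kappabar$ is an isomorphism (as noted at the start of \cref{sec:nu_k}), so it suffices to compute $\tau\kappabar \cdot \nu_j \nu_k$, or equivalently $(\tau\kappabar)^2 \nu_j \nu_k$ if that is more convenient. By \cref{defn:nu_i} we have $q(\Delta^{j+1}) = -\tau\kappabar \cdot \nu_j$, so the natural move is to write $\tau\kappabar \cdot \nu_j \nu_k = - q(\Delta^{j+1}) \cdot \nu_k$ and then apply the module formula \cref{q-module-map}:
\[
q(\Delta^{j+1}) \cdot \nu_k = q\bigl( \Delta^{j+1} \cdot i(\nu_k) \bigr).
\]
So the first step is to identify $i(\nu_k) \in \pi_{*,*}\mmf/\tau$. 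Since $i$ sends a homotopy element to a detecting element in the $E_2$-page (when the element is not a $\tau$-multiple), \cref{nu_i-detect} tells us that $i(\nu_k)$ is detected by $(k+1)\Delta^k h_2$ in the algebraic homotopy of $\mmf/\tau$; because $\pi_{*,*}\mmf/\tau$ is just the classical Adams--Novikov $E_2$-page, we should be able to identify $i(\nu_k)$ exactly (up to the relevant ambiguity) as $(k+1)\Delta^k h_2$, using that there are no elements in strictly higher filtration in that degree of $\mmf/\tau$, or handling the higher-filtration contribution by an explicit inspection.

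The second step is the algebra in $\pi_{*,*}\mmf/\tau = E_2^{\mathrm{ANSS}}(\tmf)$: we need $\Delta^{j+1} \cdot (k+1)\Delta^k h_2 = (k+1)\Delta^{j+k+1} h_2$. This is immediate. Then
\[
q\bigl(\Delta^{j+1} \cdot i(\nu_k)\bigr) = (k+1)\, q(\Delta^{j+k+1} h_2) = (k+1)\, q(\Delta^{j+k+1}) \cdot h_2,
\]
where in the last equality I again use \cref{q-module-map} together with the fact that $h_2$ (as an element of $\pi_{3,2}\mmf/\tau$) equals $i(\nu)$ — that is, $h_2$ is the image under $i$ of the homotopy element $\nu$. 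Now apply \cref{defn:nu_i} once more: $q(\Delta^{j+k+1}) = -\tau\kappabar \cdot \nu_{j+k}$. Combining,
\[
\tau\kappabar \cdot \nu_j \nu_k = -q(\Delta^{j+1})\cdot \nu_k = -(k+1)\bigl(-\tau\kappabar \cdot \nu_{j+k}\bigr)\cdot \nu = (k+1)\,\tau\kappabar \cdot \nu_{j+k}\nu.
\]
Finally, cancel $\tau\kappabar$ using that multiplication by $\tau\kappabar$ is an isomorphism in this degree (invoking \cref{nu-defn} to write $\nu = \nu_0$), yielding $\nu_j\nu_k = (k+1)\nu_{j+k}\nu_0$.

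The main obstacle, and the step that needs genuine care, is the identification of $i(\nu_k)$ and the verification that the manipulations above are insensitive to the higher-filtration ambiguities in the definitions of $\nu_k$, $\nu_{j+k}$, and $\nu$. Recall from \cref{subsctn:homotopy-elements} and \cref{rem:nu_i-detect} that $\nu_k$ is only well-defined up to an element of higher Adams--Novikov filtration (the $\tau\Delta^k h_1^3$ ambiguity when $k+1 \equiv 4 \bmod 8$, for instance), and likewise $\nu$ is only defined modulo $\tau h_1^3$. The clean way to handle this is to observe — as the paragraph opening \cref{sec:nu_k} already does — that all the groups in play have order at most $8$, and to track the computation not in $\pi_{*,*}\mmf$ directly but by the detecting elements, where \cref{q-module-map} and \cref{topcell} do all the bookkeeping automatically: since $q$ is an $\mmf$-module map, $q$ of a product is computed correctly regardless of which representative of $i(\nu_k)$ we picked, because $i$ kills exactly the $\tau$-divisible part and $\mmf/\tau$ has no higher-filtration ambiguity. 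One should also double-check the degenerate cases ($\nu_7 = 0$, and $j+k \geq 8$ reducing via \cref{nu_k-M}) are consistent, but these are routine: when $j+k+1$ is a multiple of $8$ both sides vanish because $q(\Delta^{j+k+1}) = 0$, and when $j+k \geq 8$ one applies \cref{nu_k-M}. I would present the argument uniformly for $0 \le j,k$ with these cases folded in, exactly parallel to the uniform statements of \cref{q(Delta)-detect} and \cref{nu_i-detect}.
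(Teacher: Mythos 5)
Your chain of equalities
\[
\tau \kappabar \cdot \nu_j \nu_k \;=\; -q(\Delta^{j+1}) \cdot \nu_k \;=\; -q\bigl(\Delta^{j+1} \cdot i(\nu_k)\bigr) \;=\; -(k+1)\, q(\Delta^{j+k+1}) \cdot \nu_0 \;=\; (k+1)\, \tau \kappabar \cdot \nu_{j+k} \nu_0
\]
is, for $k+1$ not divisible by $4$, exactly the paper's argument written as a single chain rather than as two computations of $q\bigl((k+1)\Delta^{j+k+1} h_2\bigr)$; that part is correct. The genuine gap is the case $4 \mid k+1$, which you relegate to ``bookkeeping'' but which actually breaks the argument. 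When $k+1 \equiv 4 \pmod 8$, the element $\nu_k$ is a multiple of $\tau$ (it is detected by $\tau \Delta^k h_1^3$ per \cref{rem:nu_i-detect}), so $i(\nu_k) = 0$ --- the inclusion $i$ kills $\tau$-multiples outright, it does not record them up to higher filtration. Your chain then only yields $\tau \kappabar \cdot \nu_j \nu_k = 0$, and the final step of cancelling $\tau \kappabar$ is invalid: multiplication by $\tau \kappabar$ is \emph{not} injective on the group containing $\nu_j \nu_k$ in this case. Indeed, when $j+1 \equiv k+1 \equiv 4 \pmod 8$ the product $\nu_j \nu_k$ is a \emph{nonzero} element annihilated by $\tau \kappabar$, detected by $\tau^{10} \Delta^{j+k-4} h_1^2 g^5$, so a uniform application of your argument would ``prove'' the false statement $\nu_j \nu_k = 0$ there. (The blanket remark at the start of \cref{sec:nu_k} about $\tau\kappabar$-multiplication being an isomorphism applies to the degrees where the paper invokes it, not to this one.)

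The paper handles this case in the separate \cref{nu3-product}: one writes $\nu_k = \tau \alpha$ with $\alpha$ detected by $\Delta^k h_1^3$, replaces $i(\nu_k)$ by $i(\alpha) = \Delta^k h_1^3$, computes $q(\Delta^{j+k+1} h_1^3)$ via \cref{topcell} and the $d_{23}$ differential $d_{23}(\Delta^{j+k+1} h_1^3) = \tau^{11} \Delta^{j+k-4} h_1^2 g^6$, cancels only $\kappabar$ (not $\tau\kappabar$), and then matches the two sides of the relation at the level of detecting elements using the hidden $2$ extensions in the $150$-stem. None of this is produced by your framework, which is locked into $i(\nu_k)$ itself. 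Your proposal also asserts that when $j+k+1$ is a multiple of $8$ ``both sides vanish because $q(\Delta^{j+k+1}) = 0$''; that immediately gives the vanishing of the right-hand side ($\nu_{j+k} = \nu_7 M^{\bullet} = 0$), but the vanishing of $\nu_j \nu_k$ still has to come out of the case analysis above, not out of $q(\Delta^{j+k+1}) = 0$ alone. To repair the proof you need to split off the case $4 \mid k+1$ and argue it separately along the lines of \cref{nu3-product}.
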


\begin{proof}
The proof splits into two cases, depending on whether
$k+1$ is a multiple of $4$.  First, we handle the (more interesting)
situation when $k+1$ is not a multiple of $4$.
We address the case when $k+1$ is a multiple of $4$
below in a separate \cref{nu3-product}.  The proof techniques for the
two cases are similar, but the details are somewhat different.

Multiplication by $\tau \kappabar$ is an isomorphism in the relevant degrees,
so it suffices to establish our relation after multiplication by $\tau \kappabar$.

Using \cref{q-module-map}, 
we have
\begin{align*}
& q( (k+1) \Delta^{j+k+1} h_2 ) =
q(\Delta^{j+k+1} \cdot (k+1) h_2 ) =
q (\Delta^{j+k+1} \cdot i( (k+1) \nu_0 )) = \\
& = 
q(\Delta^{j+k+1}) \cdot (k+1) \nu_0 =
-\tau \kappabar \cdot \nu_{j+k} \cdot (k+1) \nu_0.
\end{align*}
Here we are using that $i((k+1)\nu_0) = (k+1)h_2$; in other words,
$(k+1)\nu_0$ is detected by $(k+1) h_2$.
This requires that $k+1$ is not a multiple of $4$.
Otherwise, $(k+1) \nu_0$ is a multiple of $\tau$,
and $i((k+1) \nu_0)$ is zero.

We will now compute $q( (k+1) \Delta^{j+k+1} h_2 )$ another way.
We have $i(\nu_k) = (k+1) \Delta^k h_2$; in other words,
$\nu_k$ is detected by the non-zero element $(k+1) \Delta^k h_2$,
as shown in \cref{nu_i-detect}.  
This requires that $k+1$ is not a multiple of $4$.
Otherwise, $\nu_k$ is a multiple of $\tau$,
and $i(\nu_k)$ is zero.

Then we have
\[
q( (k+1) \Delta^{j+k+1} h_2 ) = 
q (\Delta^{j+1} \cdot (k+1) \Delta^k h_2) =
q( \Delta^{j+1} \cdot i(\nu_k) ) = 
q(\Delta^{j+1}) \cdot \nu_k =
- \tau \kappabar \cdot \nu_j \cdot \nu_k.
\]
\end{proof}

\begin{rem}
\label{nu_i-nu_j-commute}
The exact form of the equation in \cref{nu_i-nu_j} is guided by
the structure of our proof.  One could also write
\[
\nu_i \nu_j = (i+1) \nu \nu_{i+j},
\]
which more closely aligns with the notation in \cite{BR21}.
All of the elements $\nu_k$ are in odd stems, so they pairwise 
anti-commute.  
\end{rem}

\begin{cor}
\label{cor:nu4-nu6}
$(246,2,124)$
$\nu_4 \nu_6 = \nu \nu_2 M$.
\end{cor}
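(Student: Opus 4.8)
The plan is to deduce this directly from \cref{nu_i-nu_j}, together with \cref{nu_k-M} and the graded commutativity of $\pi_{*,*}\mmf$. First I would apply \cref{nu_i-nu_j} with $j = 4$, $k = 6$ to get $\nu_4\nu_6 = 7\,\nu_{10}\nu_0$, and with $j = 6$, $k = 4$ to get $\nu_6\nu_4 = 5\,\nu_{10}\nu_0$. Since $\nu_4$ and $\nu_6$ lie in odd stems they anti-commute (as noted in \cref{nu_i-nu_j-commute}), so $7\,\nu_{10}\nu_0 = \nu_4\nu_6 = -\nu_6\nu_4 = -5\,\nu_{10}\nu_0$, giving $12\,\nu_{10}\nu_0 = 0$. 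Working $2$-locally the factor $3$ is a unit, so in fact $4\,\nu_{10}\nu_0 = 0$, and hence $8\,\nu_{10}\nu_0 = 0$. (Alternatively, this bound is immediate from the fact recorded at the start of this section that $\pi_{246,124}\mmf$ has order at most $8$.)

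Next I would rewrite $\nu_{10}\nu_0$ using \cref{nu-defn} and \cref{nu_k-M}: we have $\nu_0 = \nu$ and $\nu_{10} = \nu_{2+8} = \nu_2 M$. Since $M$ lies in an even stem it is central, while $\nu_2$ and $\nu$ lie in odd stems and anti-commute, so $\nu_{10}\nu_0 = \nu_2 M\,\nu = -\nu\,\nu_2 M$. Then, using $\nu_4\nu_6 = 7\,\nu_{10}\nu_0$, the relation $8\,\nu_{10}\nu_0 = 0$, and $7 \equiv -1 \pmod 8$, I would conclude
\[
\nu_4\nu_6 = 7\,\nu_{10}\nu_0 = -\nu_{10}\nu_0 = \nu\,\nu_2 M .
\]

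I do not expect any genuine obstacle: the statement is a specialization of \cref{nu_i-nu_j}. The only subtle point is cosmetic, namely reconciling the coefficient $(k+1) = 7$ that the Leibniz rule $d_5(\Delta^{k+1}) = (k+1)\,\Delta^k\, d_5(\Delta)$ forces in \cref{nu_i-nu_j} with the coefficient $1$ appearing in the statement; this amounts exactly to the congruence $7\equiv -1\pmod 8$ combined with the $2$-local (or order-$8$) bound above. Equivalently, in the form of \cref{nu_i-nu_j-commute} one has $\nu_4\nu_6 = (4+1)\,\nu\,\nu_{10} = 5\,\nu\,\nu_2 M$, and the relation $4\,\nu\,\nu_2 M = 0$ turns this into $\nu\,\nu_2 M$ at once.
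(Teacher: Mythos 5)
Your proof is correct and follows essentially the same route as the paper: specialize \cref{nu_i-nu_j} to get $\nu_4\nu_6 = 7\,\nu_{10}\nu_0$, rewrite $\nu_{10}\nu_0 = -\nu\nu_2 M$ via \cref{nu_k-M}, \cref{nu-defn}, and graded commutativity, and then absorb the coefficient $7$ using the small order of the ambient group (the paper cites that $\nu\nu_2 M$ lies in a group of order $4$, so $-7 \equiv 1$). Your extra derivation of $4\,\nu_{10}\nu_0 = 0$ from the two orderings of the product is a nice self-contained touch but not needed.
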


\begin{proof}
\cref{nu_i-nu_j} implies that
$\nu_4 \nu_6$ equals $7 \nu_{10} \nu_0$, which equals
$-7 \nu_0 \nu_{10}$ by graded com\-mu\-ta\-tiv\-i\-ty.
By \cref{nu-defn} and \cref{nu_k-M}, the latter expression
equals $-7 \nu \nu_2 M$.  Finally,
$\nu \nu_2 M$ belongs to a group of order $4$, so
$-7 \nu \nu_2 M$ equals $\nu \nu_2 M$.
\end{proof}

We now return to the case of \cref{nu_i-nu_j}
in which $k+1$ is a multiple of $4$.

\begin{prop}
\label{nu3-product}
If $k+1$ is a multiple of $4$, then 
$\nu_j \cdot \nu_k = (k+1) \nu_{j+k} \nu_0$.
\end{prop}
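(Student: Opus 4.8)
The plan is to run the same machinery as in the proof of \cref{nu_i-nu_j} --- translating everything into statements about the top-cell projection $q$ via \cref{defn:nu_i} and \cref{q-module-map} --- but this time to exploit the hypothesis $4 \mid k+1$ to show that \emph{both} sides of the claimed identity are simply zero. Since multiplication by $\tau\kappabar$ is an isomorphism in all of the relevant degrees (as noted at the start of this section), it suffices to show that $\tau\kappabar\cdot\nu_j\nu_k$ and $\tau\kappabar\cdot(k+1)\nu_{j+k}\nu_0$ both vanish.

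First I would handle the left-hand side by showing $i(\nu_k) = 0$ in $\pi_{*,*}\mmf/\tau$. By \cref{rem:nu_i-detect}, when $4 \mid k+1$ the element $\nu_k$ is either zero (if $8 \mid k+1$) or is detected in Adams--Novikov filtration $3$ (if $k+1 \equiv 4 \bmod 8$); in either case it lies in Adams--Novikov filtration at least $3$. Because $i$ is induced by a map of Adams--Novikov towers it does not lower filtration, so $i(\nu_k)$ lies in filtration at least $3$ inside $\pi_{24k+3,12k+2}\mmf/\tau$. But $\pi_{*,*}\mmf/\tau$ is the classical Adams--Novikov $E_2$-page for $\tmf$ equipped with the weight grading $w = \tfrac12(s+f)$, so a nonzero class in stem $24k+3$ and weight $12k+2$ is forced to have $f=1$; hence everything in filtration at least $2$ there vanishes, forcing $i(\nu_k) = 0$. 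Then \cref{q-module-map} together with \cref{defn:nu_i} gives
\[
-\tau\kappabar\cdot\nu_j\nu_k \;=\; q(\Delta^{j+1})\cdot\nu_k \;=\; q\bigl(\Delta^{j+1}\cdot i(\nu_k)\bigr) \;=\; 0 .
\]

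For the right-hand side I would instead route the computation through $i(\nu) = h_2$ (see \cref{htpy-name}; recall $\nu_0 = \nu$ by \cref{nu-defn}). The element $h_2$ has order $4$ in the mANss $E_2$-page, so $(k+1)h_2 = 0$ when $4 \mid k+1$, and therefore $(k+1)\Delta^{j+k+1}h_2 = \Delta^{j+k+1}\cdot(k+1)h_2 = 0$ in $\pi_{*,*}\mmf/\tau$. Applying \cref{q-module-map} and \cref{defn:nu_i} once more,
\[
-(k+1)\tau\kappabar\cdot\nu_{j+k}\nu_0 \;=\; (k+1)\,q(\Delta^{j+k+1})\cdot\nu \;=\; q\bigl((k+1)\Delta^{j+k+1}h_2\bigr) \;=\; 0 .
\]
Combining the two displays, both $\nu_j\nu_k$ and $(k+1)\nu_{j+k}\nu_0$ vanish, hence are equal.

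The substance of the argument is small: there is no multiplicative relation to track, only the two vanishing statements, and the argument is genuinely shorter than the $4\nmid k+1$ case treated in \cref{nu_i-nu_j}. The only points requiring care are purely bookkeeping --- confirming that the inclusion $i$ preserves (does not lower) Adams--Novikov filtration, which is automatic since $i$ is a map of the Adams--Novikov towers, and verifying the elementary tridegree count that a class in stem $24k+3$ and weight $12k+2$ in the mANss for $\mmf/\tau$ must sit in filtration $1$, so that $F^2\pi_{24k+3,12k+2}\mmf/\tau = 0$. I expect no real obstacle here.
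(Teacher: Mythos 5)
Your two displayed computations are fine --- $\tau\kappabar\cdot\nu_j\nu_k$ and $\tau\kappabar\cdot(k+1)\nu_{j+k}\nu_0$ do both vanish, for essentially the reasons you give (though note the paper's simpler justification for $i(\nu_k)=0$: when $k+1\equiv 4 \bmod 8$ the element $\nu_k$ is a $\tau$-multiple, namely $\tau\alpha$ with $\alpha$ detected by $\Delta^k h_1^3$, and $i$ kills $\tau$-multiples). The gap is the final inference. You invoke "multiplication by $\tau\kappabar$ is an isomorphism in all of the relevant degrees," but the relevant degree here is the stem of $\nu_j\nu_k$, namely $24(j+k)+6$, and in the subcase $j+1\equiv k+1\equiv 4 \pmod 8$ (so $j+k\equiv 6 \bmod 8$, the $150$-stem modulo $192$) multiplication by $\tau\kappabar$ is \emph{not} injective there: the bottom class $\tau^{10}\Delta^{j+k-4}h_1^2 g^5$ of that stem is sent into $\tau^{11}\Delta^{j+k-4}h_1^2 g^6$, which dies by the differential $d_{23}(\Delta^{j+k+1}h_1^3)$ of \cref{prop:d23-D^5h1}. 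The paper explicitly warns at the start of \cref{sec:nu_k} that $\tau\kappabar$-multiplication is not injective in general, and its own proof of this proposition is careful to use only $\kappabar$-multiplication (no extra $\tau$), precisely so that the nonzero information in filtration $\geq$ that of $\tau^{10}\Delta^{j+k-4}h_1^2g^5$ is not destroyed.

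Concretely, your argument proves too much: it would show $\nu_3^2=0$ and $4\nu_0\nu_6=0$, and both statements are false. By the hidden $2$ extensions from $\Delta^6 h_2^2$ to $\tau^4\Delta^4 d g^2$ (\cref{2-D^6h2^2}) and from $\Delta^4 d$ to $\tau^6\Delta^2 h_1^2 g^3$ (\cref{prop:h0ext110}), the element $4\nu_6\nu_0$ is detected by the nonzero class $\tau^{10}\Delta^2 h_1^2 g^5$; and the paper's computation $q(\Delta^{j+k+1}h_1^3)=-\kappabar\cdot\nu_j\nu_k$, combined with \cref{topcell} and the $d_{23}$ differential, shows $\nu_3^2$ is detected by that same class. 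So in the subcase where both $j+1$ and $k+1$ are congruent to $4$ modulo $8$ the two sides of the identity are equal but \emph{nonzero}, and an argument that only sees their images under $\tau\kappabar$-multiplication cannot distinguish them from zero. Your approach does correctly dispose of the cases $8\mid k+1$ and ($k+1\equiv 4 \bmod 8$ with $j+1\not\equiv 4 \bmod 8$), where both sides really are zero, but the remaining subcase needs the paper's finer analysis via $q(\Delta^{j+k+1}h_1^3)$ and the hidden $2$ extensions in the $150$-stem.
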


\begin{proof}
First, let $k+1$ be a multiple of $8$, so $\nu_k$ is zero.
The element $\nu_{j+k} \nu_0$ belongs to a group whose order divides $8$, so 
$(k+1) \nu_{j+k} \nu_0$ is zero.  In other words, the equality holds
because both sides are zero.

Next, let $k+1$ be congruent to $4$ modulo $8$.
Let $\alpha$ be an element of $\pi_{*,*}\mmf$ that is detected by
$\Delta^k h_1^3$.  The element $\nu_k$ is detected by $\tau \Delta^k h_1^3$,
according to \cref{rem:nu_i-detect}.  Since there are no elements
in higher filtration, we can conclude that $\nu_k$ equals $\tau \alpha$.
We have 
\[
q(\Delta^{j+k+1} h_1^3) = q(\Delta^{j+1} \cdot \Delta^k h_1^3) = 
q(\Delta^{j+1} \cdot i(\alpha)) = q(\Delta^{j+1}) \cdot \alpha =
- \tau \kappabar \cdot \nu_j \cdot \alpha = -\kappabar \cdot \nu_j \cdot \nu_k.
\]

Now we add the assumption that $j+1$ is not congruent to $4$ modulo $8$.
Given the assumption that $k+1$ is congruent to $4$ modulo $8$, we get that
$j+k+1$ is not congruent to $7$ modulo $8$.
Then $\Delta^{j+k+1} h_1^3$ is a permanent cycle, so
$q(\Delta^{j+k+1} h_1^3)$ is zero.
Together with the computation in the previous paragraph,
this implies that $\nu_j \cdot \nu_k$ is zero since
multiplication by $\kappabar$ is an isomorphism in the relevant degrees.
Note also that $(k+1) \nu_{j+k} \nu_0$ is zero because
it belongs to a group whose order divides $4$.

Finally, we must consider the case when $j+1$ is congruent to $4$ modulo $8$,
i.e., that $j+k+1$ is congruent to $7$ modulo $8$.
Then $q(\Delta^{j+k+1} h_1^3)$ is detected by
$\tau^{10} \Delta^{j+k-4} h_1^2 g^6$ because of \cref{topcell}
and the differential $d_{23}(\Delta^{j+k+1} h_1^3) = \tau^{11} \Delta^{j+k-4} h_1^2 g^6$.
This means that $- \kappabar \cdot \nu_j \cdot \nu_k$
is detected by $\tau^{10} \Delta^{j+k-4} h_1^2 g^6$.
It follows that $\nu_j \cdot \nu_k$ is detected by
$\tau^{10} \Delta^{j+k-4} h_1^2 g^5$.
Finally, this latter element also detects $(k+1) \nu_{j+k} \nu_0$ because
of the hidden $2$ extensions in the $150$-stem and their
multiples under $\Delta^8$ multiplication (see \cref{hidh0}).
\end{proof}

\begin{rem}
As shown in the proof, most cases of \cref{nu3-product} hold because
both sides of the equation are zero.  Both sides of the equation are non-zero
precisely when $j+1$ and $k+1$ are congruent to $4$ modulo $8$.
\end{rem}

Bruner and Rognes establish some relations that reduce the ambiguity
in their definitions of $\nu_k$.  Finally, we will show that our elements
defined in \cref{defn:nu_i} satisfy those same relations.
We have already discussed
the choice of $\nu_0$ in \cref{nu-defn}.  
The only additional requirements are the relations
\begin{align*}
& \nu_0 D_4 = 2 \nu_4 \\
& \nu_1 \nu_5 = 2 \nu_0 \nu_6 \\
& \nu_2 \nu_4 = 3 \nu_0 \nu_6.
\end{align*}
The first formula is proved in \cref{nu-D4}, while the last two
are specific instances of \cref{nu_i-nu_j}.

\begin{prop}
\label{nu-D4}
$(99,1,50)$
$\nu_0 D_4 = 2 \nu_4$.
\end{prop}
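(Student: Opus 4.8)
The plan is to compute $q(2\Delta^5) \in \pi_{119,61}\mmf$ in two ways, using the top-cell projection $q \colon \mmf/\tau \to \Sigma^{1,-1}\mmf$ together with its $\mmf$-module structure. Since $D_4$ is detected by $2\Delta^4$ in Adams-Novikov filtration $0$, it is not divisible by $\tau$, and therefore $i(D_4) = 2\Delta^4$ in $\pi_{*,*}\mmf/\tau$. (As in \cref{nu-defn} and the discussion at the start of \cref{sec:nu_k}, any ambiguity in the choices of $D_4$ and $\Delta$ lies in groups that are too small to affect the conclusion.) On the one hand, $q$ is additive and \cref{defn:nu_i} gives $q(\Delta^5) = -\tau\kappabar \cdot \nu_4$, so
\[
q(2\Delta^5) = 2\, q(\Delta^5) = -2\, \tau\kappabar \cdot \nu_4 .
\]
On the other hand, $2\Delta^5 = \Delta \cdot 2\Delta^4 = \Delta \cdot i(D_4)$ in the ring $\pi_{*,*}\mmf/\tau$, so \cref{q-module-map} gives $q(2\Delta^5) = q(\Delta) \cdot D_4$; and $q(\Delta) = -\tau\kappabar \cdot \nu_0$ by \cref{defn:nu_i} (recall $\nu_0 = \nu$ from \cref{nu-defn}), so $q(2\Delta^5) = -\tau\kappabar \cdot \nu_0 \cdot D_4$.

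Comparing the two expressions yields $\tau\kappabar \cdot (\nu_0 D_4) = \tau\kappabar \cdot (2\nu_4)$ in $\pi_{119,61}\mmf$. Since multiplication by $\tau\kappabar$ is an isomorphism in this degree, exactly as in the other computations of \cref{sec:nu_k}, we conclude $\nu_0 D_4 = 2\nu_4$.

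The argument is essentially formal once the relevant machinery is in place: no new differential, hidden extension, or Toda bracket is needed. The only points requiring attention are the identification $i(D_4) = 2\Delta^4$, which is immediate from the fact that $D_4$ lies in Adams-Novikov filtration $0$, and the injectivity of $\tau\kappabar$ in degree $(119,61)$, which holds for the same reason as in the surrounding results. I do not anticipate any genuine obstacle; the one thing to keep track of is that the detecting element $2\Delta^4$ for $D_4$ is taken with the same choice of $\Delta$ used in \cref{defn:nu_i}, so that the module-map computation $q(\Delta \cdot i(D_4)) = q(\Delta)\cdot D_4$ is consistent.
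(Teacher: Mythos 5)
Your proposal is correct and follows essentially the same route as the paper: both compute $q(2\Delta^5)$ in two ways via \cref{q-module-map}, once as $q(\Delta\cdot i(D_4))=q(\Delta)\cdot D_4$ and once as $2\,q(\Delta^5)$ (the paper phrases the latter as $q(\Delta^5\cdot i(2))=q(\Delta^5)\cdot 2$, which is the same computation), and then cancel the isomorphism given by multiplication by $\tau\kappabar$. No gaps.
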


\begin{proof}
Because of \cref{nu_i-detect},
both products are detected by $2 \Delta^4 h_2$.
However, they are not necessarily equal because of the
presence of $\tau \Delta^4 h_1^3$ in higher filtration.
We will show that $\tau \kappabar \cdot \nu D_4$ equals
$\tau \kappabar \cdot 2 \nu_4$.  Our desired relation follows
immediately because multiplication by $\tau \kappabar$ 
is an isomorphism in the relevant degree.

Using \cref{q-module-map},
we have
\[
q(2 \Delta^5) = q(\Delta \cdot 2 \Delta^4) = 
q(\Delta \cdot i(D_4)) = q(\Delta) \cdot D_4 =
- \tau \kappabar \cdot \nu \cdot D_4.
\]
Here we are using that $i(D_4) = 2 \Delta^4$, which is equivalent to
the definition that $D_4$ is detected by $2 \Delta^4$
(see \cref{htpy-name}).
On the other hand, we also have
\[
q(2\Delta^5) = q(\Delta^5 \cdot 2) = q(\Delta^5 \cdot i(2)) =   q(\Delta^5) \cdot 2 = - \tau \kappabar \cdot \nu_4 \cdot 2.
\]
\end{proof}

\newpage
\section{Tables}
\label{sec:tables}

\begin{longtable}{lllll}
\caption{Adams-Novikov differentials
\label{diff} 
} \\
\toprule
$(s,f,w)$ & $x$ & $r$ & $d_r(x)$& proof \\
\midrule \endfirsthead
\caption[]{Adams-Novikov differentials} \\
\toprule
$(s,f,w)$ & $x$ & $r$ & $d_r(x)$& \\
\midrule \endhead
\bottomrule \endfoot
$(5, 1, 3)$   & $h_1v_1^2$   & 3 & $\tau h_1^4$&\Cref{prop:d3}\\
$(12, 0, 6)$   & $4a$   & 3 & $\tau P h_1^3$&\Cref{prop:d3bopattern}\\
$(24, 0, 12)$ & $\Delta$     & 5 & $\tau^2 h_2g$ & \Cref{prop:d5_D} \\
$(24, 0, 12)$ & $4\Delta$    & 7 & $\tau^3 h_1^3g$ &\Cref{prop:d7}\\
$(48, 0, 24)$ & $2\Delta^2$  & 7 &$\tau^3 \Delta h_1^3 g$ &\Cref{prop:d7}\\
$(96, 0, 48)$ & $\Delta^4$   & 7 & $\tau^3 \Delta^3 h_1^3 g$&\Cref{prop:d7two} \\
$(49, 1, 25)$ & $\Delta^2 h_1$ & 9 & $\tau^4 cg^2$  &\Cref{prop:d9two}\\
$(56, 2, 29)$ & $\Delta^2 c$ & 9 & $\tau^4 h_1 d g^2$ &\Cref{prop:d9-D^2c}\\
$(73, 1, 37)$ & $\Delta^3 h_1$& 9 & $\tau^4 \Delta c g^2$&\Cref{prop:d9two}\\
$(80, 2, 41)$ & $\Delta^3 c$ & 9 & $\tau^4 \Delta h_1 d g^2$ & \cref{prop:d9one} \\
$(145, 1, 73)$ & $\Delta^6 h_1$ & 9 &  $\tau^4 \Delta^4 c g^2$ &\Cref{prop:d9two} \\
$(169, 1, 85)$ & $\Delta^7 h_1$ & 9 &  $\tau^4 \Delta^5 c g^2$ &\Cref{prop:d9two}\\
$(152, 2, 77)$ & $\Delta^6 c$ & 9 & $\tau^4 \Delta^4 h_1 d g^2$ &\Cref{prop:d9-D^6c}\\
$(176, 2, 89)$ & $\Delta^7 c$ & 9 &  $\tau^4 \Delta^5 h_1 d g^2$ &\Cref{prop:d9one}\\
$(62, 2, 32)$ & $\Delta^2 d$ & 11 & $\tau^5 h_1g^3$ &\Cref{prop:d11}\\
$(158, 2, 80)$ & $\Delta^6 d$ & 11 &  $\tau^5 \Delta^4 h_1g^3$ &\Cref{prop:d11}\\
$(75, 1, 38)$ & $2\Delta^3 h_2$ & 13 & $\tau^6 dg^3$ &\Cref{prop:d13-2D^3h2}\\
$(81, 3, 42)$ & $\Delta^3 h_1 c$ & 13 & $2 \tau^6 g^4$&\Cref{prop:d13two}\\
$(171, 1, 86)$ & $2\Delta^7 h_2$ & 13 & $\tau^6 \Delta^4 dg^3$ &\Cref{prop:d13-2D^7h2}\\
$(177, 3, 90)$ & $\Delta^7 h_1 c$ & 13 & $2 \tau^6 \Delta^4 g^4$&\Cref{prop:d13two}\\
$(121, 1, 61)$ & $\Delta^5 h_1$ & 23 & $\tau^{11} g^6$&\Cref{prop:d23-D^5h1}
\end{longtable}

\begin{longtable}{llll}
\caption{Hidden $2$ extensions
\label{hidh0} 
} \\
\toprule
$(s,f,w)$ & source & target& proof \\
\midrule \endfirsthead
\caption[]{Hidden $2$ extensions} \\
\toprule
$(s,f,w)$ & source & target \\
\midrule \endhead
\bottomrule \endfoot
$(3, 1, 2)$    & $2h_2$          &  $\tau h_1^3$ & \Cref{lemma:h0ext3}\\
$(27, 1, 14)$  & $2\Delta h_2$   &  $\tau \Delta h_1^3$ &\Cref{lem:2-Dh0h2}\\
$(51, 1, 26)$  & $2\Delta^2 h_2$ &  $\tau \Delta^2 h_1^3$ &\Cref{prop:he-mmf/t}\\
$(54, 2, 28)$  & $\Delta^2 h_2^2$ & $\tau^4 d g^2$ & \Cref{exmp:hidextn} \\
$(99, 1, 50)$  & $2\Delta^4 h_2$ & $\tau \Delta^4 h_1^3$ &\Cref{prop:he-mmf/t}\\
$(110, 2, 56)$ & $\Delta^4 d$   &$\tau^6 \Delta^2 h_1^2 g^3$ &\Cref{prop:h0ext110} \\
$(123, 1, 62)$ & $2\Delta^5 h_2$ &  $\tau \Delta^5 h_1^3$ &\Cref{prop:he-mmf/t}\\
$(147, 1, 74)$ & $2\Delta^6 h_2$ &  $\tau \Delta^6 h_1^3$ & \Cref{prop:he-mmf/t}\\
$(150, 2, 76)$ & $\Delta^6 h_2^2$ & $\tau^4 \Delta^4 d g^2$ &\Cref{prop:he-mmf/t}\\
\end{longtable}

\newpage

\begin{longtable}{llll}
\caption{Hidden $\eta$ extensions
\label{hidh1} 
} \\
\toprule
$(s,f,w)$ & source & target & proof \\
\midrule \endfirsthead
\caption[]{Hidden $\eta$ extensions} \\
\toprule
$(s,f,w)$ & source & target \\
\midrule \endhead
\bottomrule \endfoot
$(27, 1, 14)$  & $2\Delta h_2$   &  $\tau^2 c g$ & \Cref{lemma:h1ext27}\\
$(39, 3, 21)$  & $\Delta h_1 d$ &  $2 \tau^2 g^2$ & \Cref{lemma:h1ext39} \\
$(51, 1, 26)$  & $\Delta^2 h_2$ & $\tau^2 \Delta c g$ &  \Cref{prop:he-mmf/t}\\
$(99, 1, 50)$ & $\Delta^4 h_2$   & $\tau^9 g^5$& \Cref{prop:he-mmf/t} \\
$(123, 1, 62)$ & $2\Delta^5 h_2$ &  $\tau^2 \Delta^4 c g$ &\Cref{prop:he-mmf/t} \\
$(124, 6, 63)$ & $\ta^2\Delta^4 c g$ &  $\ta^9 \Delta h_1 g^5$ &\Cref{prop:he-mmf/t}\\
$(129, 3, 66)$ & $\Delta^5 h_1 c$ & $\tau^7 \Delta^2 h_1^2 g^4$ &\Cref{prop:he-mmf/t} \\
$(135, 3, 69)$ & $\Delta^5 h_1 d$ & $2 \tau^2 \Delta^4 g^2 $ &  \Cref{prop:he-mmf/t} \\
$(147, 1, 74)$ & $\Delta^6 h_2$   &$\tau^2 \Delta^5 c g$ &\Cref{prop:he-mmf/t} \\
$(161, 3, 82)$ & $\Delta^6 h_2 d$ & $\tau^3 \Delta^5 h_1^2 g^2$ &\Cref{prop:he-mmf/t}\\
\end{longtable}

\begin{longtable}{llll}
\caption{Hidden $\nu$ extensions
\label{hidh2} 
} \\
\toprule
$(s,f,w)$ & source & target & proof \\
\midrule \endfirsthead
\caption[]{Hidden $\nu$ extensions} \\
\toprule
$(s,f,w)$ & source & target \\
\midrule \endhead
\bottomrule \endfoot
$(0, 0, 0)$ & $4$ & $\tau h_1^3$ & \cref{prop:he-mmf/t} \\
$(25, 1, 13)$   & $\Delta h_1$          &  $\tau^2 c g$ &  \Cref{lemma:h2ext25}\\
$(32, 2, 17)$  & $\Delta c$   &  $\tau^2 h_1 d g$ & \Cref{prop:h2ext32}\\
$(39, 3, 21)$  & $\Delta h_1 d$ &$\tau^3 h_1^2 g^2$  &\Cref{lemma:h2ext39}\\
$(48, 0, 24)$  & $4\Delta^2$ & $\tau \Delta^2 h_1^3$ & \cref{prop:he-mmf/t}   \\
$(50, 2, 26)$  & $\Delta^2 h_1^2$ & $\tau^2 \Delta h_1 c g$ & \cref{nu-D^2h1^2} \\
$(51, 1, 26)$  & $2\Delta^2 h_2 $ & $\tau^4 d g^2$ & \cref{prop:he-mmf/t}\\
$(57, 3, 30)$ & $\Delta^2 h_2^3$   & $2 \tau^4 g^3$ & \Cref{prop:he-mmf/t}\\
$(96, 0, 48)$ & $4\Delta^4 $ &  $\tau \Delta^4 h_1^3$  & \cref{prop:he-mmf/t} \\
$(97, 1, 49)$ & $\Delta^4 h_1$ &  $\tau^9 g^5$ & \Cref{prop:h2e-D6h13}\\
$(122,2, 62)$ & $\Delta^5 h_1^2$& $\tau^9 \Delta h_1 g^5 $ & \Cref{prop:h2e-D6h13} \\
$(128, 2, 65)$ & $\Delta^5 c$ & $\tau^2 \Delta^4 h_1 d g$ & \Cref{prop:h2ext32} \\
$(135, 3, 69)$  & $\Delta^5 h_1 d$ &  $\tau^3 \Delta^4 h_1^2 g^2$ &\Cref{prop:h2ext135}\\
$(144, 0, 72)$ & $4\Delta^6 $ &  $\tau \Delta^6 h_1^3$ & \Cref{prop:he-mmf/t} \\
$(147, 1, 74)$ & $2\Delta^6 h_2$   & $\tau^4 \Delta^4 d g^2$ & \Cref{prop:he-mmf/t} \\
$(147, 3, 75)$ & $\Delta^6 h_1^3$   & $\tau^9 \Delta^2 h_1^2 g^5$ & \Cref{prop:h2e-D6h13}\\
$(153, 3, 78)$  & $\Delta^6 h_2^3$ &  $2 \tau^4 \Delta^4 g^3$ & \Cref{prop:he-mmf/t}\\
\end{longtable}

\begin{longtable}{llllll}
\caption{Some Toda brackets
\label{brac} 
} \\
\toprule
$(s,f,w)$ & Toda bracket & detected by & indet & proof & used in  \\
\midrule \endfirsthead \\
\caption[]{Some Toda brackets} \\
\toprule
$(s,f,w)$ & Toda bracket & detected by & indet & proof & used in  \\
\midrule \endhead
\bottomrule \endfoot
$(8, 2, 5)$ & $\langle \nu, \eta, \nu \rangle$ & $c$ & $0$ & \cref{lemma:todaepsilon} & \ref{lemma:h2ext25}, \ref{prop:h2ext135} \\
$(25, 1, 13)$ & $\langle  \eta, \nu, \tau^2 \kappabar \rangle$ & $\Delta h_1$ & $P^3 h_1$ & \cref{lemma:toda} & \ref{lemma:h2ext25}, \ref{lemma:h1ext27}, \ref{prop:h2ext135} \\
$(32, 2, 17)$ & $\langle \nu^2, 2, \eta_1 \rangle$ & $\Delta c$ & $0$ & \cref{lemma:tb-Dc} & \ref{prop:h2ext32} \\
$(128, 2, 65)$ & $\langle \nu_2^2, 2, \eta_1 \rangle$ & $\Delta^5 c$ & $0$ & \cref{lemma:tb-D^5c} & \ref{prop:h2ext32} \\
$(35, 7, 21)$ & $\langle \nu^2, 2, \epsilon \kappabar \rangle$ & $h_1 d g$ & $0$ & \cref{lemma:tb-t2h1dg} & \ref{prop:h2ext32} \\
$(131, 7, 69)$ & $\langle \nu_2^2, 2, \epsilon \kappabar \rangle$ & $\Delta^4 h_1 d g$ & $0$ & \cref{lemma:tb-t2D4h1dg} & \ref{prop:h2ext32} \\
\end{longtable}

\section{Charts}
\label{sec:charts}

The following charts display
the $E_2$-page, $E_9$-page, and
$E_\infty$-page of the mANss for $\mmf$. Each of these pages is
free as a module over $\mathbb{Z}[\Delta^8]$, 
where $\Delta^8$ is a class in the $192$-stem.
For legibility, we display the $v_1$-periodic elements on
separate charts.  See \cref{sec:bo} for discussion of $v_1$-periodicity.
To obtain the full $E_2$-page, one must superimpose
Figures \ref{fig:E2v1} and \ref{fig:E2d7}.  To obtain
the full $E_\infty$-page, one must superimpose
Figures \ref{fig:E4v1} and \ref{fig:E-infty}.

We describe each chart in slightly more detail.
\begin{itemize}
\item
\cref{fig:E2v1} shows the $v_1$-periodic portion of the mANss $E_2$-page, together with all differentials that are supported by the displayed elements.
\item
\cref{fig:E4v1} shows the $v_1$-periodic portion of the mANss $E_\infty$-page.
\item
\cref{fig:E2d7} shows the non-$v_1$-periodic portion of the mANss $E_2$-page, together with all $d_3$, $d_5$, and $d_7$ differentials
that are supported by the displayed elements.
\item
\cref{fig:E9d23} 
shows the non-$v_1$-periodic portion of the mANss $E_9$-page, together with all differentials
that are supported by the displayed elements.
\item
\cref{fig:E-infty} shows the non-$v_1$-periodic portion of the mANss $E_\infty$-page, together with all hidden extensions by $2$,
$\eta$, and $\nu$.
\end{itemize}

\subsection{Elements}
\label{sctn:elements}

For each fixed stem and filtration, the mANss consists of a $\Z[\tau]$-module.
We use a graphical notation to describe these modules.  Our notation
represents the associated graded object of a filtration that is related
to the powers of $2$.
\begin{itemize}
\item
An open box 
\begin{tikzpicture}
		[scale=1.10,
		>=stealth,
		label distance=0,
		label position=below,
		every label/.style={
			inner sep=0,
			scale=0.66},
		every path/.style={thick},
		sq/.style={rectangle, 
			inner sep=0,
			draw={gray},
			minimum size=0.10cm,
			scale=1.10*2},
		]
		\node[sq] at (48.0,0.0) {};
\end{tikzpicture}
indicates a copy of $\Z[\tau]$ in the associated graded object.
\item
A solid gray dot {\color{gray} $\bullet$}
indicates a copy of $\F_2[\tau]$  in the associated graded object. 
\item
A solid colored dot indicates a copy of $\F_2[\tau]/\tau^r$ in the associated
graded object.  The value of $r$ is encoded in the color of the dot,
as shown in \cref{table:chartcolor}.
\item
Short vertical lines indicate extensions by $2$.
\end{itemize}

Our graphical notation has the advantages of flexibility, compactness,
and convenience.  We illustrate with two examples.

\begin{example}
In \cref{fig:E2d7} at degree $(48,0)$, one sees
\begin{tikzpicture}
		[scale=1.10,
		>=stealth,
		label distance=0,
		label position=below,
		every label/.style={
			inner sep=0,
			scale=0.66},
		every path/.style={thick},
		dashed/.style={dash pattern=on 5pt off 2.5pt},
		dot/.style={circle,
			inner sep=0,
			minimum size=0.10cm,
			scale=1.10},
		sq/.style={rectangle, 
			inner sep=0,
			draw={gray},
			minimum size=0.10cm,
			scale=1.10*2},
		tau0/.style={
			draw={gray},
			fill={gray}},
		tau0extn/.style={draw={gray}},
		h0/.style={draw={gray}}]
            \draw[h0, tau0extn] (48.0,0.0+0) -- (48.0,0.24+0);
            \draw[h0, tau0extn] (48.0,0.24+0) -- (48.0,0.48+0);
            \node[dot, tau0, label=$ $] at (48.0,0.0) {};
		\node[dot, tau0, label=$ $] at (48.0,0.24) {};
		\node[sq, label=$ $] at (48.0,0.48) {};
\end{tikzpicture}.
This notation indicates a copy of $\Z[\tau]$.  More precisely, it
represents the filtration 
$4\Z[\tau] \subseteq 2\Z[\tau] \subseteq \Z[\tau]$ whose filtration quotients
are $\Z[\tau]$, $\F_2[\tau]$, and $\F_2[\tau]$.
This particular filtration is relevant for our mANss computation because
$2\Z[\tau]$ is the subgroup of $d_5$ cycles, and
$4\Z[\tau]$ is the subgroup of $d_7$ cycles.
\end{example}

\begin{example}
In \cref{fig:E-infty} at degree $(120,24)$, one sees
		\begin{tikzpicture}
		[scale=1.10,
		>=stealth,
		label distance=0,
		label position=below,
		every label/.style={
			inner sep=0,
			scale=0.66},
		every path/.style={thick},
		dashed/.style={dash pattern=on 5pt off 2.5pt},
		dot/.style={circle,
			inner sep=0,
			minimum size=0.10cm,
			scale=1.10},
		sq/.style={rectangle, 
			inner sep=0,
			draw={gray},
			minimum size=0.10cm,
			scale=1.10*2},
		tau2/.style={
			draw={blue},
			fill={blue}},
		tau6/.style={
			draw={magenta},
			fill={magenta}},
		tau11/.style={
			draw={orange},
			fill={orange}},
            tau2extn/.style={draw={blue}},
		tau6extn/.style={draw={magenta}},
		h0/.style={draw={gray}}]
			\draw[h0, tau6extn] (120.0,24.0+0) -- (120.0,24.24+0);
			\draw[h0, tau2extn] (120.0,24.24+0) -- (120.0,24.48+0);
			\node[dot, tau11, label=$ $] at (120.0,24.0) {};
			\node[dot, tau6, label=$ $] at (120.0,24.24) {};
			\node[dot, tau2, label=$ $] at (120.0,24.48) {};
    \end{tikzpicture}.
This notation indicates the $\Z[\tau]$-module
\[
\frac{\Z[\tau]}{8, 4 \tau^2, 2 \tau^6, \tau^{11}},
\]
which is somewhat cumbersome to describe in traditional notation.
More precisely, it represents the filtration
\[
\frac{4\Z[\tau]}{8, 4 \tau^2} \subseteq
\frac{2\Z[\tau]}{8, 4 \tau^2, 2 \tau^6} \subseteq
\frac{\Z[\tau]}{8, 4 \tau^2, 2 \tau^6, \tau^{11}  }.
\]
whose filtration quotients are 
$\F_2[\tau]/\tau^2$, $\F_2[\tau]/\tau^6$, and $\F_2[\tau]/\tau^{11}$.
The blue, magenta, and orange dots correspond to these filtration
quotients, as shown in \cref{table:chartcolor}.
\end{example}

\begin{longtable}{ll}
\caption{Color interpretations for elements
\label{table:chartcolor}
} \\
\toprule
		$n$ & color \\
		\midrule \endfirsthead
		\caption[]{Color interpretations for elements}\\
		\toprule
		$n$ & color \\
		\midrule \endhead
		\bottomrule \endfoot
$\F_2[\tau]$ & {\color{gray} $\bullet$ gray } \\
$\F_2[\tau]/\tau$ & {\color{red} $\bullet$ red } \\
$\F_2[\tau]/\tau^2$ & {\color{blue} $\bullet$ blue } \\
$\F_2[\tau]/\tau^3$ & {\color{green} $\bullet$ green } \\
$\F_2[\tau]/\tau^4$ & {\color{cyan} $\bullet$ cyan } \\
$\F_2[\tau]/\tau^5$ & {\color{brown} $\bullet$ brown } \\
$\F_2[\tau]/\tau^6$ & {\color{magenta} $\bullet$ magenta } \\
$\F_2[\tau]/\tau^{11}$ & {\color{orange} $\bullet$ orange } \\
\end{longtable}

\subsection{Differentials}

Lines of negative slope indicate Adams-Novikov differentials.
The differentials are colored according to their lengths,
as described in \cref{table:diffcolor}.
These color choices are compatible with our choice of colors
for $\tau$ torsion in \cref{sctn:elements}, in the following sense.
An Adams-Novikov $d_{2r+1}$ differential always takes the form
$d_{2r+1}(x) = \tau^r y$, and it creates $\tau^r$ torsion in the following
page.  We use matching colors for $d_{2r+1}$ and for $\tau^r$ torsion.

\begin{longtable}{lll}
\caption{Color interpretations for Adams-Novikov differentials
\label{table:diffcolor}
} \\
\toprule
color & slope & $d_r$ \\
\midrule \endfirsthead
\caption[]{Color interpretations for Adams-Novikov differentials}\\
\toprule
color & slope & $d_r$ \\
\midrule \endhead
\bottomrule \endfoot
{\color{red} red} & $-3$ & $d_3$ \\
{\color{blue} blue} & $-5$ & $d_5$ \\
{\color{green} green} & $-7$ & $d_7$ \\
{\color{cyan} cyan} & $-9$ & $d_9$ \\
{\color{brown} brown} & $-11$ & $d_{11}$ \\
{\color{magenta} magenta} & $-13$ & $d_{13}$ \\
{\color{orange} orange} & $-23$ & $d_{23}$ \\
\end{longtable}

\subsection{Extensions}

\begin{itemize}
\item
Solid lines of slope $1$ indicate $h_1$ multiplications.  The colors of
these lines are determined by the $\tau$ torsion of the targets.
\item
Arrows of slope $1$ indicate infinite families of elements that are connected by $h_1$ multiplications.  The colors of the arrows reflect the $\tau$ torsion
of the elements.
\item
Solid lines of slope $1/3$ indicate $h_2$ multiplications.  The colors of
these lines are determined by the $\tau$ torsion of the targets.
\item
Dashed lines indicate hidden extensions by $2$, $\eta$, and $\nu$.
Some of these lines are curved solely for the purpose of legibility.
\item
The colors of dashed lines indicate the $\tau$ torsion of the targets of
the extensions.
For example, the vertical dashed line in the $23$-stem of
\cref{fig:E-infty} is blue because its value $\tau h_1^3 g$ is annihilated
by $\tau^2$.
\end{itemize}

\cref{fig:E-infty} shows an $h_1$ extension and also a hidden
$\eta$ extension on the element $\Delta^4 c g$ in degree $(124,6,65)$.
See \cref{eta-t^2D^4cg} for an explanation.

\KOMAoption{paper}{landscape,8.4in:5.3in}
\KOMAoption{DIV}{last}
\newgeometry{margin=0in,footskip=0.3in}
\begin{figure}[H]
\begin{center}
\makebox[0.95\textwidth]{\includegraphics[trim={0cm, 0cm, 0cm, 0cm},clip,page=1,scale=0.2]{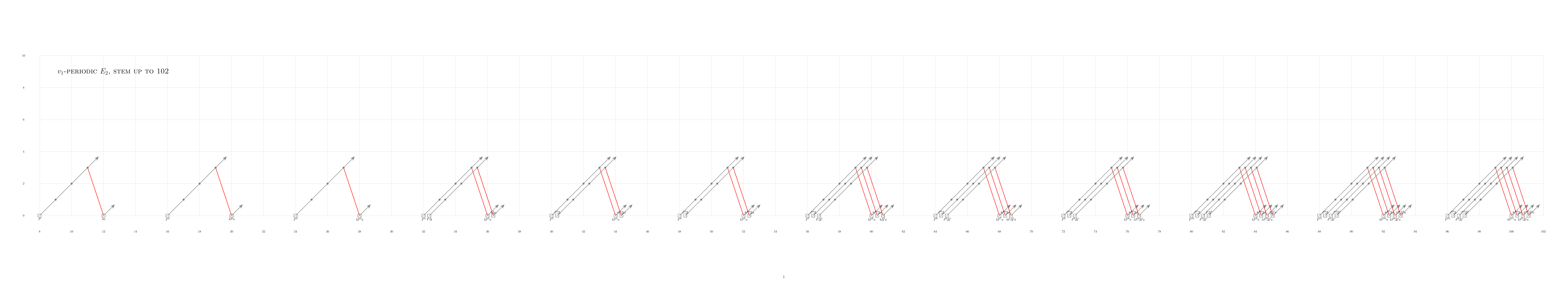}}
\caption{The $v_1$-periodic portion of the $\C$-motivic 
Adams-Novikov $E_2$-page for $\mmf$}
\label{fig:E2v1}
\hfill
\end{center}
\end{figure}

\begin{figure}[H]
\begin{center}
\makebox[0.95\textwidth]{\includegraphics[trim={0cm, 0cm, 0cm, 0cm},clip,page=1,scale=0.2]{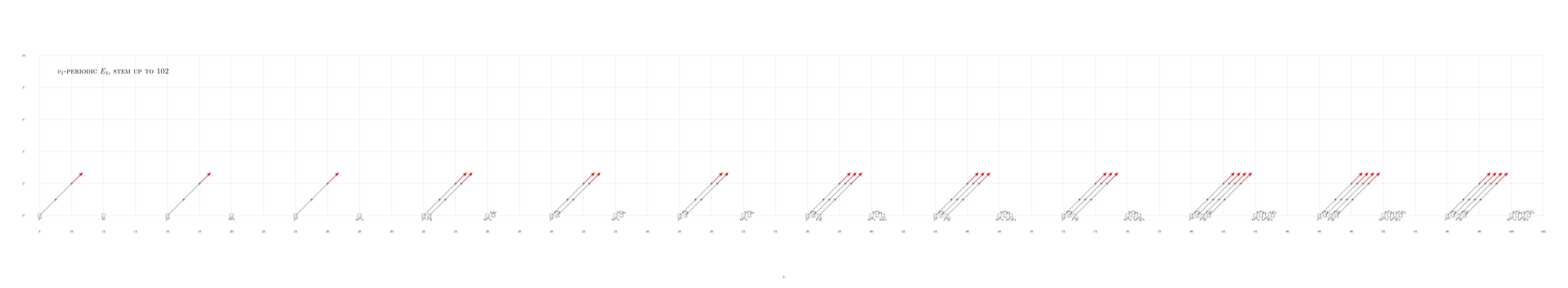}}
\caption{The $v_1$-periodic portion of the $\C$-motivic
Adams-Novikov $E_\infty$-page for $\mmf$}
\label{fig:E4v1}
\hfill
\end{center}
\end{figure}

\newpage
\KOMAoption{paper}{portrait, 18.2in:18.2in}
\KOMAoption{DIV}{last}
\newgeometry{margin=0in,footskip=0.3in}

\begin{figure}[H]
\begin{center}
{\includegraphics[trim={0cm, 0cm, 0cm, 0cm},clip,page=1,scale=0.2]{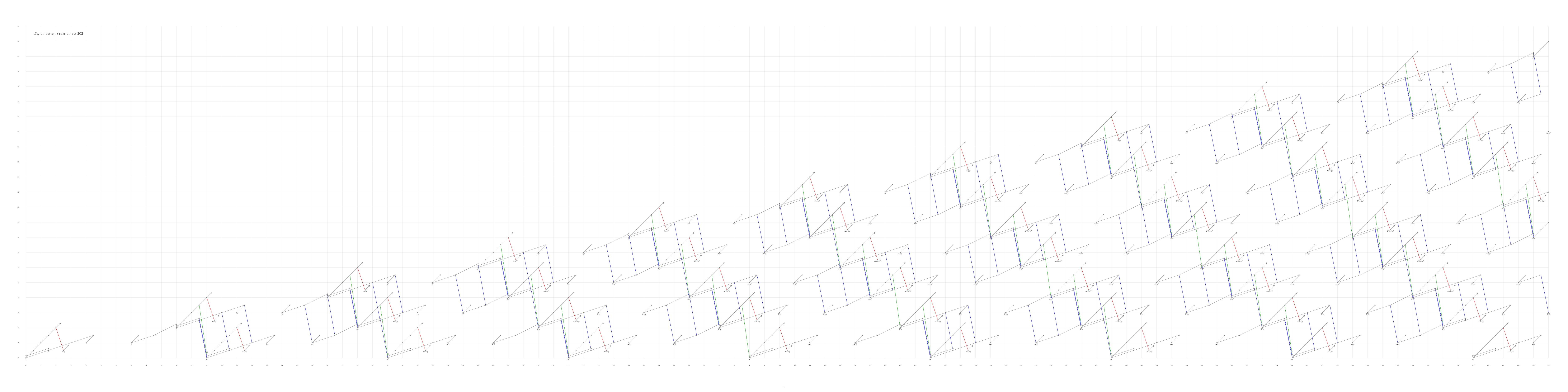}}
\caption{The $\C$-motivic Adams-Novikov $E_2$-page for $\mmf$
with differentials of length at most $7$}
\label{fig:E2d7}
\hfill
\end{center}
\end{figure}

\begin{figure}[H]
\begin{center}
{\includegraphics[trim={0cm, 0cm, 0cm, 0cm},clip,page=1,scale=0.2]{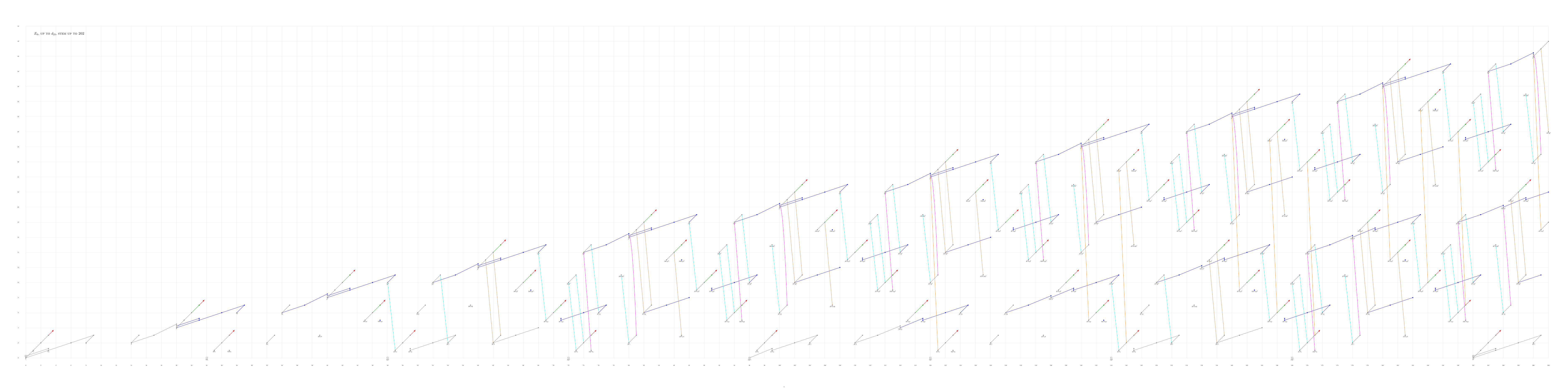}}
\caption{The $\C$-motivic Adams-Novikov $E_9$-page for $\mmf$
with differentials of length at least $9$}
\label{fig:E9d23}
\hfill
\end{center}
\end{figure}

\begin{figure}[H]
\begin{center}
{\includegraphics[trim={0cm, 0cm, 0cm, 0cm},clip,page=1,scale=0.2]{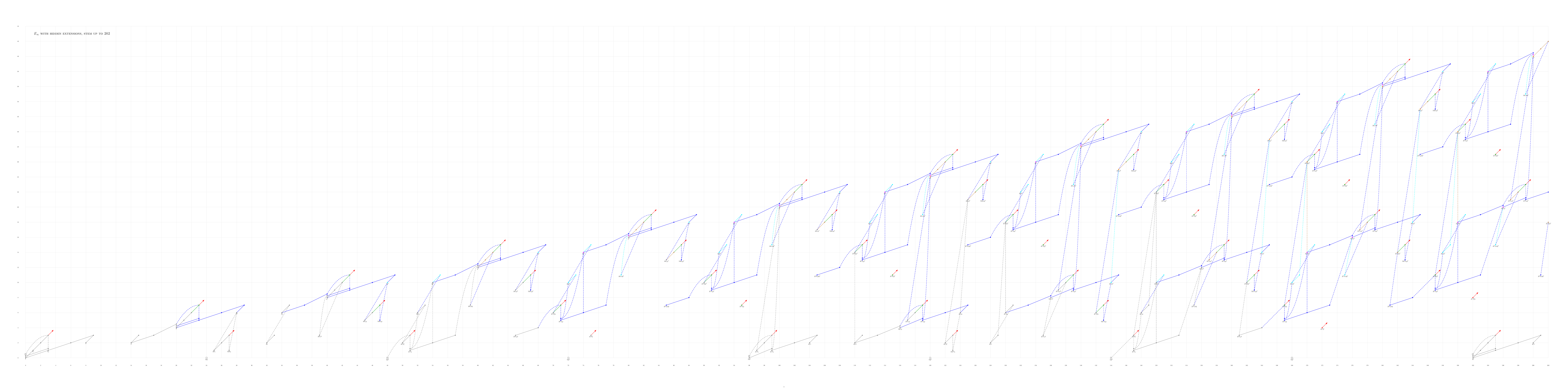}}
\caption{The $\C$-motivic Adams-Novikov $E_\infty$-page for $\mmf$
with hidden extensions by $2$, $\eta$, and $\nu$}
\label{fig:E-infty}
\hfill
\end{center}
\end{figure}

\newpage
\KOMAoption{paper}{portrait,8.5in:11in}
\KOMAoption{DIV}{last}
\newgeometry{margin=1in,footskip=0.5in}

	\bibliographystyle{alpha}
	\bibliography{bib}
	
\end{document}